\newtheorem{introtheorem}{Theorem}[section]
\newtheorem{introconjecture}{Conjecture}[section]
\newtheorem{theoreme}{Theorem}[section]          
\newtheorem{definition}[theoreme]{Definition}
\newtheorem{proposition}[theoreme]{Proposition}
\newtheorem{lemme}[theoreme]{Lemma}
\newtheorem{example}[theoreme]{Example}
\newtheorem{corollaire}[theoreme]{Corollary}
\newtheorem{remarque}[theoreme]{Remark}
\newtheorem{conjecture}[theoreme]{Conjecture}
\def\G{\mathbb{G}}
\def\Q{\mathbb{Q}}
\def\C{\mathbb{C}}
\def\Z{\mathbb{Z}}
\def\d{\mathrm{d}}
\def\exp{\mathrm{exp}}
\def\aref{\textcolor{red}{à ref}}
\def\id{\mathrm{id}}
\def\Ind{\textrm{Ind}}
\def\vac{|0\rangle}
\def\vaclambda{| \lambda \rangle}
\def\Lie{\mathrm{Lie}}
\def\Der{\mathrm{Der}}
\def\Ind{\mathrm{Ind}}
\def\Hom{\mathrm{Hom}}
\def\End{\mathrm{End}}
\def\Ad{\mathrm{Ad}}
\def\ad{\mathrm{ad}}
\def\GL{\mathrm{GL}}
\def\IndKacMoody{\underset{\Lgkappa \leftarrow \Lg[[t]] \ra \Lg}{\mathrm{Ind}}}
\def\Zhu{\mathrm{Zhu}}
\def\Spec{\mathrm{Spec}}
\def\cO{\mathcal{O}}
\def\cF{\mathcal{F}}
\def\cG{\mathcal{G}}
\def\cD{\mathcal{D}}
\def\cU{\mathcal{U}}
\def\cSF{\mathcal{S}\mathcal{F}}
\def\La{\mathfrak{a}}
\def\Lg{\mathfrak{g}}
\def\Lh{\mathfrak{h}}
\def\Lb{\mathfrak{b}}
\def\Ln{\mathfrak{n}}
\def\Lgl{\mathfrak{g}\mathfrak{l}}
\def\Lzg{\mathfrak{z}(\Lg)}
\def\Lgkappa{\widehat{\Lg}^{\kappa}}
\def\Lgkappastar{\widehat{\Lg}^{\kappa^{*}}}
\def\Lhkappa{\widehat{\Lh}^{\kappa}}
\def\Lhkappastar{\widehat{\Lh}^{\kappa^{*}}}
\def\Lhkappakappastar{\Lhkappa \oplus \Lhkappastar}
\def\Lgkappacategoryintegrable{	V^{\kappa}(\Lg)\mathrm{-Mod}^{\jetinf G}}
\def\Satakecategory{\cdo\mathrm{-Mod}^{\jetinf G\times \jetinf G}}
\def\ra{\rightarrow}
\def\lra{\longrightarrow}
\def\jetinf{\mathcal{J}_{\infty}}
\def\cdo{\cD_{G}^{\kappa}}
\def\ewalg{\mathcal{W}_{G}^{\kappa}}
\newenvironment{note}
{ \begin{mdframed}[]  }
	{  \end{mdframed}}
\newif\ifhidenotes
\title{Representation theory of the principal equivariant affine $\mathcal{W}$-algebra and Langlands~duality}
\author{Damien Simon\footnote{damien.simon@universite-paris-saclay.fr}}
\date{}
\begin{document}
	
	\maketitle 
	
	\begin{abstract}
		We study the structure and the representation theory of a certain class of vertex algebras. Our study was partly motivated by the quantum geometric Langlands program and we explain what some of our results mean in this framework.
		
		We begin our investigation with the vertex algebra of chiral differential operators on a reductive group $\cdo$ for generic levels. In particular, we prove that its vertex algebraic structure is essentially unique. We also study its representation theory and show that the geometric Satake equivalence degenerates. The latter leads us to formulate a vertex-algebraic version of the fundamental local equivalence of Gaitsgory and Lurie.
		In turn, this brings us to study the representation theory of the principal equivariant affine $\mathcal{W}$-algebra $\ewalg$, defined by Arakawa as the principal quantum Hamiltonian reduction of $\cdo$. We construct a family of simple modules for $\ewalg$ whose combinatorics matches that of the representation theory of the Langlands dual group. Finally, we establish the fundamental local equivalence when the group is an algebraic torus or is simple adjoint of classical simply laced type.
	\end{abstract}
	\newpage
	\tableofcontents

	\section*{Introduction}
	
	We study the structure and representation theory of two vertex algebras attached to a reductive algebraic group: its algebra of chiral differential operators, and the principal equivariant $\mathcal{W}$-algebra obtained from the latter by quantum Hamiltonian reduction. Throughout, the Langlands dual combinatorics emerges from their representations.
	
	Let $G$ be a reductive algebraic group over the field $\C$ of complex numbers with Lie algebra $\Lg$. Fix a level $\kappa$, which is a bilinear symmetric invariant form on $\Lg$. If $G$ is the multiplicative group or is a simple group, a level can be thought of as a scalar. Let $\cdo$ be the vertex algebra of chiral differential operators on $G$ at level $\kappa$. It comes equipped with an embedding of the universal affine vertex algebra $V^{\kappa}(\Lg)$. At generic levels, $\cdo$ admits a chiral Peter--Weyl decomposition (Theorem \ref{theorem chiral Peter-Weyl}). Our first result establishes that this decomposition uniquely determines the vertex algebraic structure of $\cdo$.
	\begin{introtheorem}[Theorem \ref{theorem rigidity of chiral Peter-Weyl}]	
		For a generic level $\kappa$, the vertex algebra structure on $\cdo$ is uniquely determined by the fact it is a simple conformal extension of $V^{\kappa}(\Lg) \otimes V^{\kappa^{*}}(\Lg)$, where $\kappa^{*}$ is defined by the relation 
		$$
		\kappa + \kappa^{*} = -\kappa_{\Lg},
		$$
		with $\kappa_{\Lg}$ the Killing form on $\Lg$.
	\end{introtheorem}
	Then, we prove a statement about the representation theory of $\cdo$.
	\begin{introtheorem}[Theorem \ref{theorem degeneration of the geometric Satake correspondence}, Remark \ref{remark degeneration geometric Satake torus}]
		\label{introtheorem degeneration satake}
		Assume that $G$ is a torus or a simple group. Then for almost every level $\kappa$, the category $\cdo\mathrm{-Mod}^{\jetinf G \times \jetinf G}$ is equivalent to the category of~$\C$-vector spaces.
	\end{introtheorem}
	As we will explain shortly, this result means that the geometric Satake equivalence does not deform well with respect to the level. To fix this defect, Gaitsgory and Lurie came up with what is now known as the fundamental local equivalence. To formulate it, introduce the principal equivariant $\mathcal{W}$-algebra on $G$: 
	$$
	\ewalg=H^{*}_{DS}(\cdo),
	$$
	where $H^{*}_{DS}$ is the (principal) Drinfeld--Sokolov reduction functor. We conjecture
	\begin{introconjecture}[Conjecture \ref{conjecture FLE}]
		\label{introconjecture FLE}
		For generic levels $\kappa$, there is an equivalence of abelian categories 
		$$
		\ewalg\mathrm{-Mod}^{\jetinf G} \simeq \check{G}\mathrm{-Mod}
		$$
		where $\check{G}$ is the Langlands dual group.
	\end{introconjecture} 
	Here is how we expect the equivalence to work. Fix a dominant cocharacter~$\gamma$, apply a spectral flow twist of parameter $\gamma$ on $\cdo$ and take its Drinfeld--Sokolov reduction: this is the sought-after $\ewalg$-module. We prove, without restrictions on the group, that this procedure always yields simple objects with the correct equivariance:
	\begin{introtheorem}[Theorem \ref{theorem simplicity of modules for equivariant w algebra} and Remark \ref{remark restriction to the simple case and generalization to reductive case}]
		\label{introtheorem construction of simple ewalg modules}
	For generic levels $\kappa$, the family of $\ewalg$-modules $H_{DS}^{*}(\gamma \cdot \mathcal{D}_{G}^{\kappa})$ consists of simple, pairwise non-isomorphic objects of $\ewalg\mathrm{-Mod}^{\jetinf G}$.
	\end{introtheorem}
	
	We propose a general approach to this conjecture and prove:
	\begin{introtheorem}[Theorem \ref{theorem FLE pour le tore} and Theorem \ref{theorem FLE for adjoint type ADE}]
		\label{introtheorem fle ad}
		Conjecture \ref{introconjecture FLE} holds when $G$ is a torus or when $G$ is a simple adjoint group of classical simply laced type.
	\end{introtheorem}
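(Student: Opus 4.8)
\emph{The torus.} When $G=T$ is a torus the principal nilpotent of $\Lg$ is $0$, the Drinfeld--Sokolov complex is trivial, and $H^{*}_{DS}$ is the identity up to a grading shift; hence $\ewalg\cong\cD_{T}^{\kappa}$ and $W^{\kappa}(\Lg)\cong V^{\kappa}(\Lg)$. Consequently $\ewalg\mathrm{-Mod}^{\jetinf G}$ coincides with $\cD_{T}^{\kappa}\mathrm{-Mod}^{1\times\jetinf T}$, the category of $\kappa$-twisted $\cD$-modules on $\mathrm{Gr}_{T}$. The plan is then to use that $\mathrm{Gr}_{T}\cong X_{*}(T)$ is a discrete ind-scheme: fixing the standard free-field (Heisenberg tensored with $\cO_{\jetinf T}$) presentation of $\cD_{T}^{\kappa}$, one classifies by hand all its $\jetinf T$-integrable modules, each of which is supported on a single $\C$-point of $\mathrm{Gr}_{T}$ and is a bare vector space there, the associated simple object being the $\gamma$-spectral-flow twist of the vacuum module for $\gamma\in X_{*}(T)=X_{*}(T)_{+}$. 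This yields semisimplicity and a bijection of simples with $X^{*}(\check{T})$, proving the conjecture (and its corollary) in this case; genericity of $\kappa$ is needed only to pin down the identification with the standard models of $\cD_{T}^{\kappa}$.

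\emph{Reduction in the simple adjoint simply laced case.} Here Theorem \ref{theorem simplicity of modules for equivariant w algebra} already furnishes, for each $\gamma\in X_{*}(T)_{+}$, a simple $\ewalg$-module $H^{*}_{DS}(\gamma\cdot\cdo)$, and these are pairwise non-isomorphic. What remains is: (i) each $H^{*}_{DS}(\gamma\cdot\cdo)$ lies in $\ewalg\mathrm{-Mod}^{\jetinf G}$; (ii) every simple object of $\ewalg\mathrm{-Mod}^{\jetinf G}$ is of this form; (iii) the category is semisimple. For (i) I would observe that $\cdo$ is $\jetinf G\times\jetinf G$-integrable, that the spectral-flow twist by a dominant $\gamma$ and the Drinfeld--Sokolov reduction act only through the $V^{\kappa}(\Lg)$-side and commute with the residual $V^{\kappa^{*}}(\Lg)$-action, and that reduction and twisting preserve integrability on the untouched side; this is bookkeeping but must be done carefully. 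Granting (i)--(iii), the equivalence $\ewalg\mathrm{-Mod}^{\jetinf G}\simeq\check{G}\mathrm{-Mod}$ sending $H^{*}_{DS}(\gamma\cdot\cdo)$ to $V_{\gamma}$ is automatic, both sides being locally finite semisimple abelian categories whose simples are indexed by $X_{*}(T)_{+}=X^{*}(\check{T})_{+}$.

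\emph{The auxiliary vertex algebra and the equivalence.} For (ii) and (iii) the plan is to realise the dual side by a concrete vertex algebra, in the spirit of Raskin's affine Skryabin equivalence. Via a chiral Skryabin identification, $\ewalg\mathrm{-Mod}^{\jetinf G}$ is a vertex-algebraic model for the category of $\kappa$-twisted Whittaker $\cD$-modules on $\mathrm{Gr}_{G}$. One then constructs a vertex algebra $\mathbb{A}^{\kappa}$ --- a form of the chiral universal centralizer $\cuc$, i.e. the quantum Langlands kernel for the pair $(G,\check{G})$ --- carrying an action of $\check{G}$ by automorphisms such that $(\mathbb{A}^{\kappa})^{\check{G}}\cong\ewalg$ and $\mathbb{A}^{\kappa}\cong\bigoplus_{\gamma\in X_{*}(T)_{+}}H^{*}_{DS}(\gamma\cdot\cdo)\otimes V_{\gamma}$ as a $\ewalg\times\check{G}$-module, the $V_{\gamma}$ being the $\check{G}$-isotypic components; this decomposition is what upgrades Theorem \ref{theorem degeneration of the geometric Satake correspondence} to the spectral side. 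A Frobenius-reciprocity / equivariantization argument along $\ewalg\hookrightarrow\mathbb{A}^{\kappa}$ then produces an exact functor $\ewalg\mathrm{-Mod}^{\jetinf G}\to\check{G}\mathrm{-Mod}$, $M\mapsto\bigoplus_{\gamma}\Hom_{\ewalg}(H^{*}_{DS}(\gamma\cdot\cdo),M)\otimes V_{\gamma}$, together with a quasi-inverse built from the summands of $\mathbb{A}^{\kappa}$; that they are mutually quasi-inverse uses the $\mathbb{A}^{\kappa}$-decomposition and the simplicity statement of Theorem \ref{theorem simplicity of modules for equivariant w algebra}, and gives the equivalence, whence semisimplicity of $\ewalg\mathrm{-Mod}^{\jetinf G}$, completeness of its list of simples, and the identification of $H^{*}_{DS}(\gamma\cdot\cdo)$ with $V_{\gamma}$ follow, $\check{G}$ being reductive over $\C$. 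The existence of $\mathbb{A}^{\kappa}$ with these structures is the ``conjectural vertex algebra'' input; it is available for $G=\mathrm{PGL}_{n}$ and for $G$ the adjoint group of type $D_{n}$ thanks to the explicit lattice, coset and Feigin--Frenkel-type realisations of chiral differential operators and of quantum-group-like vertex algebras in classical simply laced type (Gaiotto--Rap\v{c}\'ak, Creutzig and collaborators), which is exactly what restricts the argument to these two families.

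\emph{Main obstacle.} The hard part is neither (i) nor the formal deduction of the equivalence from the decomposition of $\mathbb{A}^{\kappa}$, but producing $\mathbb{A}^{\kappa}$ and verifying this $\ewalg\times\check{G}$-decomposition: this is where the classical-type free-field machinery is genuinely needed, and it is also the reason the abelian statement does not follow formally from the $\infty$-categorical fundamental local equivalence of Campbell--Dhillon--Raskin. A secondary difficulty is the compatibility of Drinfeld--Sokolov reduction with $\jetinf G$-integrability and with the chiral Skryabin identification, which underlies both (i) and the description of $\ewalg\mathrm{-Mod}^{\jetinf G}$ used in the argument.
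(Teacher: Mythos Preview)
For the torus your outline is essentially the paper's approach: since reduction is trivial, $\ewalg\cong\cD_{T}^{\kappa}$ and one classifies $\cD_{T}^{\kappa}$-modules in $\cD_{T}^{\kappa}\mathrm{-Mod}^{1\times\jetinf T}$ directly via the half-lattice structure, obtaining semisimplicity with simples the spectral-flow twists $\gamma\cdot\cD_{T}^{\kappa}$ for $\gamma\in X_{*}(T)$.

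For the simple adjoint simply laced case your route is \emph{not} the paper's. You propose a chiral-universal-centralizer / quantum-Langlands-kernel object $\mathbb{A}^{\kappa}$ carrying a $\check{G}$-action with $(\mathbb{A}^{\kappa})^{\check{G}}\cong\ewalg$ and a prescribed isotypic decomposition, then an equivariantization argument. The paper does something quite different and avoids any $\check{G}$-action altogether: it introduces the $n$-shifted CDOs $\cD_{G}^{k}[n]$ (whose existence for types A and D is the external input, due to Moriwaki and Creutzig--Linshaw--Nakatsuka--Sato), proves they form a convolution monoid under the quantized Moore--Tachikawa operation, and deduces that convolution by $\cD_{G}^{k}[1]$ gives an equivalence $\mathcal{W}_{G}^{k}\mathrm{-Mod}^{\jetinf G}\simeq\mathcal{W}_{G}^{k}[1]\mathrm{-Mod}^{\jetinf G}$. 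The crux is then the identification $\mathcal{W}_{G}^{k}[1]\cong V^{k[1]-1}(\Lg)\otimes L_{1}(\Lg)$, a manifestation of the GKO coset realisation of $\mathcal{W}$-algebras. Since $L_{1}(\Lg)$ is regular with simples indexed by $P^{1}_{+}$, and since for $G$ adjoint every $\mu\in P_{+}$ determines a unique $\lambda(\mu)\in P^{1}_{+}$ with $\mu+\lambda(\mu)\in Q$, the $\jetinf G$-integrable module category of $V^{k[1]-1}(\Lg)\otimes L_{1}(\Lg)$ is semisimple with simples indexed by $P_{+}=X_{*}(T)_{+}$, and the matching with $H^{*}_{DS}(\gamma\cdot\cdo)$ follows from Arakawa--Creutzig--Linshaw branching rules and Arakawa--Frenkel duality.

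Your proposal, by contrast, leaves the construction of $\mathbb{A}^{\kappa}$ and the verification of its $\ewalg\times\check{G}$-decomposition as black boxes (``the conjectural vertex algebra input''), and the Frobenius-reciprocity functor you write down is not obviously essentially surjective or full without already knowing the classification of simples and the absence of extensions --- which is precisely what you are trying to prove. So while your strategy is a reasonable heuristic picture, it is not a proof as written; the paper's convolution-to-coset argument is what actually closes the gap in types A and D. The chiral universal centralizer does appear in the paper, but only in the final section as a further perspective, not as an ingredient in the proof.
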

	
	Our proofs are purely vertex-algebraic; the quantum geometric Langlands program serves here as a motivation. We begin by situating our results within that program, before turning to our techniques in more detail.
		
	Let $T$ be a maximal torus of $G$ and $B$ be a Borel containing it. The study of the representation theory of $G$ naturally leads one to consider various geometric spaces built out of the group. An example is the flag variety $G/B$, a smooth algebraic variety of finite type. For instance, the study of (twisted) algebraic $\cD_{G/B}$-modules \textit{i.e.}, the theory of modules over the sheaf of (twisted) differential operators on $G/B$ leads to the theory of Beilinson--Bernstein localization~(\cite{BeilinsonBernstein1993}).
	
	The theory of $\cD$-modules on a space $X$ can be thought of as the study of systems of linear partial differential equations on $X$ (\cite{kashiwarabookdmod}, \cite{Borel1987}, \cite{HTT2008}). When solving these, one produces topological objects, typically local systems of finite dimensional vector spaces on $X$. It sometimes happens that a $\cD$-module and the associated topological object are equivalent, and this kind of phenomenon goes under the name of Riemann--Hilbert correspondence. A large class of $\cD$-modules for which that happens has been identified. These are the so-called regular holonomic $\cD$-modules and their topological counterparts are called perverse sheaves (\cite{Deligne1970}, \cite{Mebkhout1984}, \cite{Kashiwara1984}, \cite{GoreskyMacPherson1980}, \cite{BBDG1982}). They are also important because of their interactions with the sheaves-functions dictionary of Grothendieck (\cite{Deligne1977a}). 
	
	An example is the geometric Satake equivalence (\cite{Satake1963}, \cite{GinzburgSatake}, \cite{MirkovicSatake}, \cite{BaumannRiche}) which is a first step in the geometric version of the Langlands program, as envisioned by Drinfeld and Laumon in the 1980s (\cite{DrinfeldLanglands}, \cite{GerardLanglands}). In order to state it, let $\jetinf G$ be the jet space and $\mathcal{L}G$ be the loop space of the group $G$. These are algebraic groups such that $\jetinf G (\C) = G(\C[[t]])$ and $\mathcal{L}G(\C) = G(\C((t)))$. Define the affine Grassmannian~$\mathrm{Gr}_{G} = \mathcal{L}G/\jetinf G$ to be the quotient of the loop group of $G$ by its jet space, it is an ind-scheme of infinite type. The geometric Satake equivalence states that the category of $\jetinf G$-equivariant perverse sheaves on $\mathrm{Gr}_{G}$ is equivalent to the category of representations of $\check{G}$. 
	
	This raises the following question: is there a way to state the geometric Satake equivalence in terms of $\cD$-modules on the affine Grassmannian $\mathrm{Gr}_{G}$? To answer that, one may hope for a formalism of $\cD$-modules which is robust enough to make sense on~$\mathrm{Gr}_{G}$ as well as a suitable Riemann--Hilbert correspondence. One step in that direction is suggested by the work of Arkhipov and Gaitsgory (\cite{arkhipov_gaitsgory_differential_operators}) in which they build a chiral algebra whose category of modules is a substitute for a category of $\cD$-modules on the loop group $\mathcal{L}G$. These chiral algebras are known as chiral algebras of differential operators and were introduced by Beilinson and Drinfeld (\cite{beilinson2004chiral}). They were also independently discovered in the language of vertex algebras by Gorbounov, Malikov, and Schechtman and later called vertex algebras of chiral differential operators (\cite{MSV1999}, \cite{GMV}). In terms of vertex algebras, the construction of Arkhipov and Gaitsgory reads as follows: the category $\cdo\mathrm{-Mod}$ is an appropriate definition of the category $\cD_{\kappa}(\mathcal{L}G)\mathrm{-Mod}$ of $\kappa$-twisted $\cD$-modules on the loop group. In particular for a certain level $\kappa_{c}$, called the critical level, the category $\cD_{G}^{\kappa_{c}}\mathrm{-Mod}$ behaves as an appropriate category of untwisted $\cD$-modules on the loop group. 
	
	Before explaining the construction of $\cdo$, let us say a word about the classical setting of the theory of algebraic $\cD$-modules. Let $X$ be a smooth affine scheme of finite type with structure sheaf~$\cO_{X}$. It carries a sheaf $\cD_{X}$ of differential operators built out of $\cO_{X}$ and the tangent sheaf~$\Theta_{X}$. It is a sheaf of noncommutative rings that contains $\cO_{X}$ as a subring and $\Theta_{X}$ as a Lie subalgebra interacting via the Leibniz rule. The category of algebraic~$\cD_{X}$-module on $X$ is then the category of sheaves of, say left, modules over~$\cD_{X}$ that are quasicoherent as $\cO_{X}$-modules. The quasicoherence assumption ensures that a~$\cD_{X}$-module is controlled by its global sections, which is a left module over the noncommutative ring $\cD_{X}(X)$ of global differential operators on $X$. From now on, we use the same notation for a sheaf and its global sections. 
	
	A key example for us is the case where $X=G$ is the group itself. The theory of~$\mathcal{D}$-modules is then nothing but the representation theory of the ring~$\cD_{G}$. This ring has a nice description in terms of $G$ and its Lie algebra $\Lg$. In fact, we can think of $\Lg$ as global left-invariant vector fields of $G$ and they generate over $\cO_{G}$ the Lie algebra of global vector fields $\Theta_{G}$. As a vector space we have 
	$$
	\cD_{G} = \mathcal{U}(\Lg) \otimes \cO_{G}
	$$
	and the ring structure is determined by the Leibniz rule. It naturally contains~$\cU(\Lg)$ and~$\cO_{G}$ as subalgebras. Note that we made the choice to consider $\Lg$ as left-invariant global vector fields on $G$. Thinking of $\Lg$ as right-invariant global vector fields shows that $\cD_{G}$ contains another subalgebra which is a copy of~$\cU(\Lg)$. By construction, these two copies commute with one another.
	
	The construction of $\cdo$ is a variation on that of $\cD_{G}$ where $G$ is replaced by~$\mathcal{L}G$ and rings are replaced by vertex algebras. Introduce the affine Kac--Moody algebra $\Lgkappa = \Lg((t)) \oplus \C\mathbb{1}$ (\cite{Kac1990}). It is a central extension of~~$\Lie(\mathcal{L}G)=\Lg((t))$ given by the level~$\kappa$. The universal affine vertex algebra $V^{\kappa}(\Lg)$ (\cite{FrenkelZhuAffineVOA}) plays an analogous role to the universal enveloping algebra for $\Lgkappa$. The Lie subalgebra~$\Lg[[t]]=\Lie(\jetinf G)$ of $\Lgkappa$ acts on $\cO_{\jetinf G}$ as left-invariant vector fields and so by analogy we define, as vector spaces 
	$$
	\cdo = V^{\kappa}(\Lg) \otimes \cO_{\jetinf G}.
	$$
	One shows that there exists a vertex algebra structure on $\cdo$ such that $V^{\kappa}(\Lg)$ and $\cO_{\jetinf G}$ are vertex subalgebras of $\cdo$. Again, we chose to consider left-invariant vector fields on $\jetinf G$ over right-invariant ones. This time, corresponding to the latter, there is a copy of $V^{\kappa^{*}}(\Lg)$ in $\cdo$ but the level gets shifted (see Theorem \ref{theorem dual embedding of universal affine vertex algebra to cdo}). Again, these two copies commute in an appropriate sense. 
	
	This vertex algebra will play a role analogous to (the global sections) of the sheaf of ($\kappa$-twisted) differential operators on the loop group $\mathcal{L}G$. It is thus natural to define 
	$$
	\cD\mathrm{-Mod}_{\kappa}(\mathcal{L}G) := \cdo\mathrm{-Mod}.
	$$
	In particular, when $\kappa = \kappa_{c}$ is the critical level, then we get a definition of untwisted $\cD$-modules on the loop group 
	$$
	\cD\mathrm{-Mod}(\mathcal{L}G) := \cD^{\kappa_{c}}_{G}\mathrm{-Mod}.
	$$
	
	To take into account objects on the affine Grassmannian, and the equivariance of these objects themselves, one needs to further develop the formalism. In the classical setting where $X$ is a smooth scheme of finite type and is acted on by a connected subgroup $K$ of $G$, it is remarkable that the category of $K$-equivariant $\cD$-modules on $X$ is a full subcategory of the category of $\cD$-modules on $X$. That is, equivariance of $\cD$-modules is a property and not an additional structure (see for instance Remark~30.10 of~\cite{EtingofRepresentationsLieGroups2024}). Precisely, given a $\cD_{X}$-module $\mathcal{M}$, then $\mathcal{M}$ is $K$-equivariant if the induced action of $\Lie(K)$ integrates to an action of the group $K$ (see Proposition~30.10 of \textit{loc.cit.}). 
	
	
	Back to the infinite dimensional setting, let $K = \jetinf G \subset \mathcal{L}G$ act on $\mathcal{L}G$ by left multiplication. First, because $\mathrm{Gr}_{G}$ is a quotient, it is natural to define the category of $\cD$-modules on $\mathrm{Gr}_{G}$ to be the category of $K$-equivariant $\cD$-modules on the loop group. In terms of the vertex algebra $\cdo$, given a $\cdo$-module $M$ this translates to asking that when seen as a $V^{\kappa^{*}}(\Lg)$-module via restriction, the action of $\Lg[[t]]$ integrates to an action of $\jetinf G$. This remarkable subcategory of $V^{\kappa^{*}}(\Lg)\mathrm{-Mod}$ is known as the Kazhdan--Lusztig category~(\cite{kazhdan1991tensor}, \cite{kazhdan1993tensor}, \cite{kazhdan1994tensor}, \cite{kazhdan1994tensor4}), we denote it by $V^{\kappa^{*}}(\Lg)\mathrm{-Mod}^{\jetinf G}$. Let 
	$$
	\cdo\mathrm{-Mod}^{1 \times \jetinf G}
	$$
	be the full subcategory of $\cdo\mathrm{-Mod}$ consisting of objects satisfying this condition. That defines (see Theorem A.3 of \cite{arkhipov_gaitsgory_differential_operators}) the category of $\kappa$-twisted $\cD$-modules on the affine Grassmannian
	$$
	\cD\mathrm{-Mod}_{\kappa}(\mathrm{Gr}_{G}) := \cdo\mathrm{-Mod}^{1 \times \jetinf G}.
	$$
	In the same fashion, define 
	$$
	\cdo\mathrm{-Mod}^{\jetinf G \times \jetinf G}
	$$
	to be the full subcategory of modules consisting of objects that when seen by restriction as $V^{\kappa\oplus \kappa^{*}}(\Lg \oplus \Lg)$-modules belong to $V^{\kappa\oplus \kappa^{*}}(\Lg \oplus \Lg)\mathrm{-Mod}^{\jetinf G \times \jetinf G}$. Let
	$$
	\cD\mathrm{-Mod}_{\kappa}(\mathrm{Gr}_{G})^{\jetinf G} := \cdo\mathrm{-Mod}^{\jetinf G \times \jetinf G}
	$$
	be the category of $\kappa$-twisted $\jetinf G$-equivariant $\cD$-modules on the affine Grassmannian. In this language, it is reasonable to expect that the geometric Satake equivalence is something that happens at the critical level and takes the form of an equivalence of categories 
	$$
	\cD_{G}^{\kappa_{c}}\mathrm{-Mod}^{\jetinf G \times \jetinf G} \simeq \check{G}\mathrm{-Mod}
	$$ 
	where $\check{G}\mathrm{-Mod}$ is the category of representations of $\check{G}$. It is natural to ask if it is possible to describe the category 
	$$
	\cdo\mathrm{-Mod}^{\jetinf G \times \jetinf G}
	$$
	for levels $\kappa$ that are not critical. When moving away from the critical level, we enter the paradigm of the quantum geometric Langlands program (\cite{Stoyanovsky2006}, \cite{Gaitsgory2016}). This explains why Theorem \ref{introtheorem degeneration satake} can be interpreted as a quantum degeneration of the geometric Satake equivalence. 
	
	This deficiency of the geometric Satake equivalence was well-known to experts and a category of substitution was introduced by Gaitsgory following an idea of Lurie in~\cite{Gaitsgory2008} (see also \cite{CampbellDhillonRaskin2019}). It is the appropriately defined category of $\kappa$-twisted Whittaker $\cD$-modules on $\mathrm{Gr}_{G}$. We now introduce the category that should be a substitute in our language. Our formulation was motivated by the affine Skryabin equivalence proven by Raskin~(\cite{Raskin2021}). 
	Consider the (principal) $\mathcal{W}$-algebra of $\Lg$ at level $\kappa$ defined from the universal affine algebras, $W^{\kappa}(\Lg) := H^{*}_{DS}(V^{\kappa}(\Lg))$  (\cite{FeiginFrenkel1990}). It is a vertex algebra and we get a well-defined functor 
	$$
	H^{*}_{DS} : V^{\kappa}(\Lg)\mathrm{-Mod} \lra W^{\kappa}(\Lg)\mathrm{-Mod}.
	$$
	It also works in a relative setting, namely if $V$ is a vertex algebra equipped with a morphism 
	$$
	V^{\kappa}(\Lg) \lra V
	$$
	then $H^{*}_{DS}(V)$ is a vertex algebra that comes equipped with a morphism 
	$$
	W^{\kappa}(\Lg) \lra H^{*}_{DS}(V).
	$$
	Moreover we have a well-defined functor 
	$$
	H^{*}_{DS} : V\mathrm{-Mod} \lra H^{*}_{DS}(V)\mathrm{-Mod}.
	$$
	This construction applied to $\cdo$ with respect to the morphisms of vertex algebras 
	$$
	V^{\kappa}(\Lg) \lra V^{\kappa}(\Lg) \otimes V^{\kappa^{*}}(\Lg) \lra \cdo 
	$$
	defines the (principal) equivariant affine $\mathcal{W}$-algebra $\mathcal{W}_{G}^{\kappa}$ on the group $G$ at level $\kappa$~(\cite{Arakawa2018}). It comes equipped with morphisms of vertex algebras
	$$
	W^{\kappa}(\Lg) \lra W^{\kappa}(\Lg) \otimes V^{\kappa^{*}}(\Lg) \lra \mathcal{W}_{G}^{\kappa}.
	$$
	We define 
	$$
	\mathcal{W}_{G}^{\kappa}\mathrm{-Mod}^{ \jetinf G}
	$$
	to be the full subcategory of $\mathcal{W}_{G}^{\kappa}\mathrm{-Mod}$ consisting of modules that are integrable with respect to $\jetinf G$, when seen as $V^{\kappa^{*}}(\Lg)$-modules by restriction. This category should deform nicely and be related for generic $\kappa$ to the category of representations of the Langlands dual group $\check{G}$: this is our Conjecture \ref{introconjecture FLE}. To put it plainly, our conjecture is an abelian category formulation of the fundamental local equivalence of the quantum geometric Langlands program in the language of vertex algebras.


	The main idea of this work is that the chiral Peter--Weyl theorem (Theorem  \ref{theorem chiral Peter-Weyl}) allows us, for generic levels, to think of $\cdo$ as a lattice vertex algebra. In this analogy, the underlying lattice is the semigroup of dominant characters and Fock spaces are replaced by Weyl modules. 
	
	A guiding principle in the study of lattice vertex algebras is that their representation theory is controlled by the dual lattice. This principle applied here is what gives birth to the Langlands dual combinatorics. 
	
	To make this observation precise, we use in Section \ref{section spectral flow in the theory of vertex algebras} Li's delta operator~(\cite{Li1997}) to define what we call the spectral flow group and compute it in the case of $\cdo$ (Proposition \ref{proposition computation of the spectral flow group for the cdo}). Because this group fully controls the representation theory in the case of lattice vertex algebras (see \textit{e.g.}, Proposition~3.4 of \cite{Li1997}), our analogy dictates that it might be relevant in our situation. 
	
	In fact, in the case of an algebraic torus, $\cdo$ is isomorphic to what is called in the literature a half-lattice vertex algebra (see \textit{e.g.}, \cite{BermanDongTan2002}). In Section~\ref{section the case of an algebraic torus} we show how the standard techniques apply and prove Conjecture~\ref{introconjecture FLE} in this case. In fact, we even get a more precise statement (Corollary \ref{corollaire of theorem fle tore}) about how the geometric Satake equivalence behaves with respect to the level. 
	
	In general, the spectral flow group provides a large supply of simple modules on~$\cdo$. Then we need to go through the quantum Hamiltonian reduction procedure to produce meaningful objects. In Section \ref{section quantum Hamiltonian reduction and spectral flow} we explain the relation between this construction and the twisted quantum Hamiltonian reduction functor. By using the results of Arakawa and Frenkel (\cite{ArakawaFrenkel2019}) we are able to describe the objects we expect to appear in Conjecture~\ref{introconjecture FLE} and prove their simplicity: this is our Theorem \ref{introtheorem construction of simple ewalg modules}.
	
	Another guiding principle is that vertex algebras tend to be more rigid than usual algebras. This led us to prove a uniqueness statement on simple vertex algebras having the Peter--Weyl decomposition (see Theorems \ref{theorem rigidity of chiral Peter-Weyl} and \ref{proposition unicity of shifted cdos}). The proof uses the theory of spherical varieties (\cite{LunaVust1983}, \cite{Knop1991}) in an essential way. 
	
	This uniqueness statement is the key to prove properties of some shifted versions of $\cdo$ (called the quantum geometric Langlands kernel vertex operator algebras and introduced in \cite{CreutzigGaiotto2020}) and to study their behaviour under the quantized Moore--Tachikawa convolution operation (\cite{Arakawa2018}).   
	
	In particular when $G$ is a simple adjoint group of type $A$ or $D$, \textit{e.g.} $G = \mathrm{PSL}_{n}$, we explain in Section \ref{section the case of a simple simply laced group of adjoint type} how to use the convolution operation to prove Conjecture \ref{introconjecture FLE} (see~§\ref{section the strategy} for more details on the strategy). This is related to the Goddard--Kent--Olive coset realization of $\mathcal{W}$-algebras (see \cite{GoddardKentOlive1986} for $\mathfrak{sl}_{2}$, \cite{ArakawaCreuztigLinshaw2019} and \cite{CreutzigNakatsuka2023} for type ADE, \cite{creutziglinshawtrialities} for type A, \cite{creutziglinshawtrialitiesortho} for type D). 
	
	Finally, in Section \ref{section further perspectives} we discuss possible extensions of the ideas of this work and further research directions.
	
	\section*{Acknowledgments}
	This work is part of the author's PhD thesis conducted under the supervision of Anne Moreau. I am grateful for her help throughout the preparation of this work, her careful rereading of this article and for suggesting such a beautiful topic. 
	
	I would like to thank Tomoyuki Arakawa, Michel Brion, Gurbir Dhillon, Andrew Linshaw, Shigenori Nakatsuka and Olivier Schiffmann for useful discussions and comments. 
	
	I thank Sam Raskin for explaining certain aspects of his work \cite{Raskin2021}, especially why it is reasonable to expect statements such as our version of the fundamental local equivalence at the level of abelian categories.
	
	I also thank Dennis Gaitsgory for sharing many of his ideas and especially for pointing out the relevance of \cite{Gaitsgory2008} and bringing \cite{CampbellDhillonRaskin2019} to my attention. 
	
	I benefited immensely from discussions with Thomas Creutzig. He explained to me why Theorem \ref{theorem spectral flow drinfeld sokolov is the same as twisted up to cohomological shift} should be true and the proof strategy explained in §\ref{section the strategy} grew out of conversations we had in Orsay and Erlangen.  
	
	Finally, I cannot thank Gérard Laumon enough for kindly and patiently sharing his love of mathematics with me during these past six years in Orsay.

	\section{Conventions and notations}
	
	All constructions are over the field $\C$ of complex numbers and unlabeled tensor products are understood to be taken over $\C$.
	
	For us, algebraic groups are assumed to be affine and connected, we denote the multiplicative group by $\G_{m} = \Spec(\C[t,t^{-1}])$. By an algebraic torus we mean a direct product of finitely many copies of $\G_{m}$.
	
	If $\Lg$ is a Lie algebra and $M$ is a $\Lg$-module, we denote by $M^{\Lg}$ the invariant subspace consisting of elements of $M$ annihilated by every element of $\Lg$.
	
	If $V$ is a vector space then we denote by $\cF(V)$ the vector space of fields on $V$. It consists of series $a(z) \in \End(V)[[z,z^{-1}]]$ such that for all $v \in V$, $a(z)v \in V((z))$. If $V$ is a $\frac{1}{2}\Z$-graded vertex superalgebra, we denote by $V_{n}$ the $n$-th graded piece, for $n \in \frac{1}{2}\Z$.
	
	If $I$ is a set and $i,j \in I$ we denote by $\delta_{i,j}$ the Kronecker delta function equal to $1$ if $i=j$ and $0$ else. 
	
	\subsection{Reductive groups}
	\label{section reductive groups}
	Some good general references on algebraic groups are \cite{Borel1991}, \cite{SGA3-2011}, \cite{tauvel_yu_lie_algebras}, and \cite{Milne2017}. We denote by 
	$$
	G\mathrm{-Mod}
	$$
	the category of representations of $G$. Let $Z(G)$ be the center of $G$ and choose a maximal torus $T$, a Borel subgroup $B$ containing $T$, and denote by $N$ the unipotent radical of $B$. Let $N_{G}(T)$ be the normalizer of the maximal torus $T$ and $W = N_{G}(T)/T$
	be the Weyl group and $\omega_{0}$ be its longest element. 
	
	Let $\Lg = \Lie(G)$ be the Lie algebra of $G$, $\mathfrak{z}(\Lg) = \Lie(Z(G))$, $\Lh = \Lie(T)$ (it is a Cartan subalgebra of $\Lg$), $\Lb = \Lie(B)$, $\Ln = \Lie(N)$ the corresponding Lie algebras. Since $\Lg$ is reductive, it decomposes as the direct sum of its center $\Lzg$ and its derived Lie algebra~$[\Lg,\Lg]$. Write
	\begin{equation}
		\label{decomposition de Lg}
		\Lg = \Lzg \oplus [\Lg,\Lg] = \Lzg \oplus \bigoplus_{s=1}^{r} \Lg_{s},
	\end{equation}	
	with $\Lg_{s}$ simple.
	For each $s \in \{1,\dots,r\}$, let $\Lh_{s}:=\Lh \cap \Lg_{s}$, it is a Cartan subalgebra of $\Lg_{s}$ and we have the equality
	$
	\Lh = \Lzg \oplus \bigoplus_{s=1}^{r} \Lh_{s}.
	$
	Let 
	$
	X^{*}(T)
	$
	be the group of characters and~$X_{*}(T)$ 
	the group of cocharacters of the maximal torus $T$. They both are free abelian groups of rank equal to the dimension of $T$. Via the functor Lie we identify $X^{*}(T)$ (resp.~$X_{*}(T)$) with subgroups of $\Lh^{*}$ (resp. $\Lh$). These are full rank lattices in the sense that the following canonical maps are isomorphisms of vector spaces
	$$
	X_{*}(T) \otimes_{\mathbb{Z}} \C \lra \Lh, \hspace{0.5mm} X^{*}(T) \otimes_{\mathbb{Z}} \C \lra \Lh^{*}.
	$$
	Recall that each element of $X^{*}(\G_{m})$ is of the form $t\in \G_{m} \mapsto t^{n}\in \G_{m}$ for a unique $n\in \mathbb{Z}$. This yields a group isomorphism between $X^{*}(\G_{m})$ and $\mathbb{Z}$.
	If $\gamma \in X_{*}(T)$ is a cocharacter and $\alpha \in X^{*}(T)$ is a character, their composition $\alpha \circ \gamma$ is, via the previous identification, an element of $\mathbb{Z}$. That defines a $\mathbb{Z}$-bilinear map
	\begin{equation}
		\label{equation perfect pairing between characters and cocharacters}
		X^{*}(T) \otimes_{\mathbb{Z}} X_{*}(T) \lra \mathbb{Z}
	\end{equation}
	that happens to be a perfect pairing of free $\mathbb{Z}$-modules in a compatible way with the perfect pairing between $\Lh$ and $\Lh^{*}$.
	
	The group $G$ acts on its Lie algebra $\Lg$ via the so-called adjoint representation $\Ad : G \ra \GL(\Lg)$. By restricting this action to the maximal torus $T$ we obtain the decomposition $\Lg = \bigoplus_{\alpha \in X^{*}(T)} \Lg^{\alpha}$ where $\Lg^{\alpha} = \{x \in \Lg \hspace{0.5mm}|\hspace{0.5mm} \Ad(t)(x) = \alpha(t)x \textrm{ for all $t \in T$}\} $. Recall that a nonzero $\alpha \in X^{*}(T)$ such that $\Lg^{\alpha}$ is nonzero is called a root, they form a finite set denoted $\Phi(G,T)$. We have $\Lg^{0} = \Lh$ and we get the following root space decomposition of $\Lg$:
	\begin{equation}
		\label{equation root space decomposition of Lg}
		\Lg = \Lh \oplus \bigoplus_{\alpha \in \Phi(G,T)} \Lg^{\alpha}. 
	\end{equation}
	If we differentiate the adjoint representation of the group $G$ we obtain the adjoint representation of its Lie algebra $\ad : \Lg \ra \Lgl(\Lg)$ and, after restricting to the Cartan subalgebra, a similar story holds. We denote by $\Phi(\Lg,\Lh)$ the set of roots with respect to $\Lh$.
	
	Now, because $\Lh = \Lie(T)$, it is easy to check that $\Phi(G,T) = \Phi(\Lg,\Lh)$ under the inclusion $X^{*}(T) \subset \Lh$, we denote this set by~$\Phi$. The story for coroots is parallel, let $\Phi(G,T)^{\vee} \subset X_{*}(T)$ be the finite subset of coroots for the group and $\Phi(\Lg,\Lh)^{\vee} \subset \Lh$ be the finite subset of coroots for the Lie algebra. We have that $\Phi(G,T)^{\vee} = \Phi(\Lg,\Lh)^{\vee}$ under the inclusion $X_{*}(T) \subset \Lh$. We denote this set by $\Phi^{\vee}$. To each root is naturally associated a coroot, and we denote by $\alpha \in \Phi \mapsto \alpha^{\vee} \in \Phi^{\vee}$ this bijection.
	
	Let $Q$ (resp. $Q^{\vee}$) be the subgroup of $\Lh^{*}$ (resp. $\Lh$) generated by roots (resp. coroots). By the theory of semisimple Lie algebras, the $\C$-span of the roots (resp. coroots) is~$\bigoplus_{s=1}^{r}\Lh_{s}^{*}$ (resp. $\bigoplus_{s=1}^{r}\Lh_{s}$).
	
	The choice we made of a Borel $B$ containing $T$ defines a subset $\Phi_{+} \subset \Phi$ of positive roots by saying that $\alpha \in \Phi$ is positive if $\Lg^{\alpha} \subset \Lb$. The following definitions are important to classify the representations of the algebraic group~$G$. Let
	$$
	X^{*}(T)_{+} = \{ \lambda \in X^{*}(T) \hspace{0.5mm}|\hspace{0.5mm} \lambda(\alpha^{\vee})\geq0 \textrm{ for all $\alpha \in \Phi_{+}$}\}
	$$
	be the semigroup of dominant characters,
	$$
	X_{*}(T)_{+} = \{ \gamma \in X_{*}(T) \hspace{0.5mm}|\hspace{0.5mm} \gamma(\alpha)\geq0 \textrm{ for all $\alpha \in \Phi_{+}$}\}
	$$
	be the semigroup of dominant cocharacters, 
	$$
	P = \{ \lambda \in \Lh^{*} \hspace{0.5mm}|\hspace{0.5mm} \lambda(\alpha^{\vee}) \in \mathbb{Z} \textrm{ for all $\alpha \in \Phi$} \}, \\ 
	$$
	be the group of weights,
	$$
	P_{+} = \{ \lambda \in \Lh^{*} \hspace{0.5mm}|\hspace{0.5mm} \lambda(\alpha^{\vee}) \in \mathbb{Z}_{+} \textrm{ for all $\alpha \in \Phi_{+}$} \}. \\ 
	$$
	be the semigroup of dominant weights,
	$$
	\check{P} = \{ \gamma \in \Lh \hspace{0.5mm}|\hspace{0.5mm} \alpha(\gamma) \in \mathbb{Z} \textrm{ for all $\alpha \in \Phi$} \}, \\
	$$
	be the group of coweights,
	$$
	\check{P}_{+} = \{ \gamma \in \Lh \hspace{0.5mm}|\hspace{0.5mm} \alpha(\gamma) \in \mathbb{Z}_{+} \textrm{ for all $\alpha \in \Phi_{+}$} \}. 
	$$
	be the semigroup of dominant coweights.
	The two terminologies are compatible in the sense that
	\begin{align*}
		X^{*}(T)_{+} &= P_{+}\cap X^{*}(T), \\
		X_{*}(T)_{+} &= \check{P}_{+} \cap X_{*}(T).
	\end{align*}
	The following inclusions of abelian groups hold
	\begin{align*}
		Q &\subset X^{*}(T) \subset P \subset \Lh^{*}, \\
		Q^{\vee} &\subset X_{*}(T) \subset \check{P} \subset \Lh.  
	\end{align*}
	
	\begin{note}
		We denote a basis of indecomposable positive roots by $\alpha_{1},\dots,\alpha_{p} \in \Phi_{+}$. They are so that any positive (resp. negative) root can be written uniquely as a sum of the $\alpha_{i}$'s with nonnegative (resp. nonpositive) integer coefficients. In particular $\alpha_{1},\dots,\alpha_{p}$ is a $\C$-basis of $(\Lh^{ss})^{*}$. Passing to coroots we get $\alpha_{1}^{\vee},\dots,\alpha_{p}^{\vee} \subset \Phi^{\vee}$, and they form a $\C$-basis of $\Lh^{ss}$ inside of $\Lh$. We let $(\varpi_{i})_{1\leqslanti \leqslantp}$ (resp. $(\varpi_{i}^{\vee})_{1\leqslanti \leqslantp}$) denote the unique $\C$-basis of $(\Lh^{ss})^{*}$ (resp. $\Lh^{ss}$) such that
		$$
		\varpi_{i}(\alpha_{j}^{\vee}) = \delta_{i,j}  \textrm{ (resp. } \varpi^{\vee}_{i}(\alpha_{j}) = \delta_{i,j}) \textrm{ for all $1 \leqslanti,j \leqslantp$},
		$$
		the $(\varpi_{i})_{1 \leqslanti \leqslantp}$ (resp. $(\varpi_{i}^{\vee})_{1\leqslanti \leqslantp}$) are called the fundamental weights (resp. fundamental coweights) of the reductive Lie algebra $\Lg$.
		\begin{note}
			\textcolor{red}{Is it fair to call that fundamental weights of $\Lg$ and not of $\Lg^{ss}$?}
		\end{note}
		The following equalities hold
		\begin{align*}
			P &= \Lzg^{*} \oplus \bigoplus_{i=1}^{p} \mathbb{Z}\varpi_{i} \subset \Lh^{*}, 
			P_{+} =   \Lzg^{*} \oplus \bigoplus_{i=1}^{p} \mathbb{Z}_{+}\varpi_{i} \subset \Lh^{*}, \\
			\check{P} &=  \Lzg \oplus \bigoplus_{i=1}^{p} \mathbb{Z}\varpi^{\vee}_{i} \subset \Lh, 
			\check{P}_{+} = \Lzg \oplus \bigoplus_{i=1}^{p} \mathbb{Z}_{+}\varpi^{\vee}_{i} \subset \Lh.
		\end{align*}
	\end{note}
	
	For $\lambda \in X^{*}(T)_{+}$, denote by $V_{\lambda}$ a simple representation of $G$ or $\Lg$ of highest weight~$\lambda$. 
	\begin{theoreme}[Theorems 22.2 and 22.42 of \cite{Milne2017}]
		\label{theorem classification of simple G-modules and simple LieG modules}
		
		Any simple, hence finite dimensional, $G$-module is isomorphic to $V_{\lambda}$ for a unique $\lambda \in X^{*}(T)_{+}$. Moreover the category~$G\mathrm{-Mod}$ is semisimple.
		
		Any finite dimensional simple $\Lg$-module is isomorphic to $V_{\lambda}$ for a unique $\lambda \in P_{+}$. Moreover the category of locally finite $\Lg$-modules on which $\Lzg$ acts semisimply is semisimple.
	\end{theoreme}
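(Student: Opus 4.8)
The plan is to prove the Lie-algebra statements first and then deduce the group statements, using that $G$ is connected and that $Z(G)$ is a torus. For the Lie algebra I would run the standard highest-weight machinery. Let $V$ be a finite-dimensional simple $\Lg$-module. By Schur's lemma $\Lzg$ acts by scalars, and elements of $\Lh\cap[\Lg,\Lg]$, being semisimple for the abstract Jordan decomposition, act semisimply in every finite-dimensional representation; hence $\Lh$ acts semisimply and, via the root space decomposition \eqref{equation root space decomposition of Lg}, $V=\bigoplus_{\mu}V_{\mu}$ is a sum of $\Lh$-weight spaces. Ordering the weights by the dominance order determined by $\Phi_{+}$ and picking $\lambda$ maximal with $V_{\lambda}\neq 0$, any nonzero $v\in V_{\lambda}$ is annihilated by $\Ln$. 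The $\mathfrak{sl}_{2}$-triple attached to each simple coroot forces $\lambda(\alpha_{i}^{\vee})\in\Z_{+}$, so $\lambda\in P_{+}$ by the description of $P_{+}$ recalled above; simplicity gives $V=\mathcal{U}(\Ln^{-})v$, and comparing with Verma modules shows that two finite-dimensional simple modules with the same highest weight are isomorphic, which is the uniqueness clause.

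For existence I would, for each $\lambda\in P_{+}$, realize $V_{\lambda}$ as the unique simple quotient $L(\lambda)$ of the Verma module $M(\lambda)=\mathcal{U}(\Lg)\otimes_{\mathcal{U}(\Lb)}\C_{\lambda}$, and prove $\dim L(\lambda)<\infty$ by the usual device: the image of $f_{i}^{\lambda(\alpha_{i}^{\vee})+1}v$ generates a proper submodule, and in the quotient the set of weights is Weyl-group invariant, finite, with finite multiplicities. Complete reducibility of finite-dimensional $\Lg$-modules is Weyl's theorem, which I would get from the quadratic Casimir of $[\Lg,\Lg]$ and the vanishing of $H^{1}([\Lg,\Lg],M)$ on finite-dimensional $M$. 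For the second paragraph of the theorem, given a locally finite $\Lg$-module $V$ on which $\Lzg$ acts semisimply, I would write $V$ as the union of its finite-dimensional submodules; on each such submodule one decomposes into $\Lzg$-eigenspaces, which are $[\Lg,\Lg]$-submodules and hence semisimple by Weyl, so each finite-dimensional submodule is a semisimple $\Lg$-module, and a module equal to the sum of its semisimple submodules is semisimple.

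For the group, every rational $G$-module is locally finite (each vector lies in a finite-dimensional subcomodule of the coaction $V\to V\otimes\C[G]$), so simple $G$-modules are finite-dimensional. Since $Z(G)$ is a torus it is diagonalizable, hence $\Lzg=\Lie(Z(G))$ acts semisimply on every finite-dimensional $G$-module; as $G$ is connected, $G$-submodules coincide with $\Lg$-submodules, so semisimplicity of $G\mathrm{-Mod}$ follows from the Lie-algebra case. A simple $G$-module is isomorphic as a $\Lg$-module to $V_{\lambda}$ for a unique $\lambda\in P_{+}$; its $T$-weights lie in $X^{*}(T)$, so $\lambda\in P_{+}\cap X^{*}(T)=X^{*}(T)_{+}$ with the notation recalled above. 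Conversely, for $\lambda\in X^{*}(T)_{+}$ the module $L(\lambda)$ integrates to $G$: one exponentiates the locally nilpotent root operators to obtain actions of the root subgroups and uses the weights, which lie in $X^{*}(T)$, to define the $T$-action, then checks the defining relations — alternatively one invokes the Borel--Weil theorem realizing $V_{\lambda}$ as a space of sections of a line bundle on the flag variety $G/B$. This yields the asserted bijection between $X^{*}(T)_{+}$ and the simple $G$-modules.

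The main obstacle is the content of the second paragraph: producing a finite-dimensional simple module with prescribed dominant highest weight and proving Weyl complete reducibility — in essence the Harish--Chandra and Weyl theorems — and, on the group side, pinning down which $V_{\lambda}$ integrate, which is precisely where the root datum of $G$ rather than merely of $\Lg$, and the distinction between the character lattice $X^{*}(T)$ and the full weight lattice $P$, enter. Since both statements are classical, in the paper one simply cites \cite{Milne2017}.
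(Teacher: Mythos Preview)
The paper does not give a proof of this theorem at all; it is stated with the citation to \cite{Milne2017} and immediately used. Your final sentence already anticipates this, and your sketch of the classical highest-weight/Weyl argument is a correct outline of what lies behind that citation.

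One small imprecision: you write ``since $Z(G)$ is a torus''. For a general connected reductive $G$ the center need not be a torus (e.g.\ $Z(\mathrm{SL}_n)=\mu_n$); it is only diagonalizable, and it is the identity component $Z(G)^{0}$ that is a torus. This does not affect your conclusion, since $\Lzg=\Lie(Z(G))=\Lie(Z(G)^{0})$ and a cleaner way to see that $\Lzg$ acts semisimply is simply that $T$, being a torus, acts diagonalizably on any rational $G$-module, so $\Lh\supset\Lzg$ acts semisimply. With that adjustment your argument is fine.
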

	
	The following numerical criterion for integrability will be useful.
	
	\begin{corollaire}
		\label{corollary sufficient condition for integrability}
		Let $M$ be a locally finite $\Lg$-module on which the center $\Lzg$ of $\Lg$ acts semisimply. Then the action of $\Lg$ on $M$ integrates to a representation of the algebraic group $G$ if and only if the set of weights of $M$ with respect to $\Lh$ is contained in $X^{*}(T)$.
	\end{corollaire}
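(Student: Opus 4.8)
The plan is to reduce, via the structure theory of Theorem \ref{theorem classification of simple G-modules and simple LieG modules}, to a statement about the highest weight of a single simple module, and then to read off the answer from the inclusions $Q \subset X^{*}(T) \subset P$ and the identity $X^{*}(T)_{+} = P_{+}\cap X^{*}(T)$.

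First, because $M$ is locally finite and $\Lzg$ acts semisimply, Theorem \ref{theorem classification of simple G-modules and simple LieG modules} tells us that $M$ is a semisimple $\Lg$-module, so fixing an isotypic decomposition we may write
$$
M \;\cong\; \bigoplus_{\lambda \in P_{+}} V_{\lambda}\otimes \Hom_{\Lg}(V_{\lambda},M),
$$
where the multiplicity spaces $\Hom_{\Lg}(V_{\lambda},M)$ are plain $\C$-vector spaces. I would then record the elementary observation that, for a fixed $\lambda \in P_{+}$, the set of $\Lh$-weights of $V_{\lambda}$ is contained in $X^{*}(T)$ if and only if $\lambda \in X^{*}(T)$: on the one hand $\lambda$ is itself a weight of $V_{\lambda}$; on the other hand every weight of $V_{\lambda}$ lies in $\lambda + Q$, and $Q \subset X^{*}(T)$. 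Since the weights of $M$ are exactly the union of the weights of those $V_{\lambda}$ occurring with nonzero multiplicity, the hypothesis that all weights of $M$ lie in $X^{*}(T)$ is equivalent to the condition that every $\lambda$ occurring in $M$ lies in $P_{+}\cap X^{*}(T) = X^{*}(T)_{+}$.

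Next I would treat the two implications separately. For the ``if'' direction, assuming each such $\lambda$ lies in $X^{*}(T)_{+}$, Theorem \ref{theorem classification of simple G-modules and simple LieG modules} equips each $V_{\lambda}$ with a $G$-module structure whose derivative is the given $\Lg$-action; giving each multiplicity space the trivial $G$-action and taking the direct sum (a direct sum of algebraic $G$-modules is again an algebraic $G$-module), one obtains a $G$-module structure on $M$ whose derived $\Lg$-action agrees summand by summand with the original one, so the action integrates. For the ``only if'' direction, suppose the $\Lg$-action integrates to $G$ and restrict the resulting representation to the maximal torus $T$; being an algebraic $T$-module it decomposes into weight spaces indexed by characters $\chi \in X^{*}(T)$, and $\Lh = \Lie(T)$ acts on the $\chi$-weight space by the scalar $\d\chi$. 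Under the identification of $X^{*}(T)$ with a sublattice of $\Lh^{*}$ coming from the functor $\Lie$, these $\d\chi$ are precisely the $\Lh$-weights of $M$, which therefore all lie in $X^{*}(T)$.

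The only point requiring a little care is the compatibility of the isotypic decomposition with integrability in the ``if'' direction --- namely that one may place the trivial action on the multiplicity spaces and that the derived action of an arbitrary (possibly infinite) direct sum of $G$-modules is the direct sum of the derived actions; once this bookkeeping is in place the corollary follows directly from Theorem \ref{theorem classification of simple G-modules and simple LieG modules} and the lattice inclusions recalled above.
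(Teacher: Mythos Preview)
Your proof is correct and follows exactly the approach implicit in the paper: the corollary is stated without proof, as an immediate consequence of Theorem~\ref{theorem classification of simple G-modules and simple LieG modules} together with the lattice inclusions $Q \subset X^{*}(T) \subset P$ and the identity $X^{*}(T)_{+} = P_{+}\cap X^{*}(T)$ recalled just before it. You have simply spelled out the details the paper leaves to the reader --- the isotypic decomposition, the fact that the weights of $V_{\lambda}$ lie in $\lambda + Q$, and the restriction-to-$T$ argument for the converse --- and all of them are handled correctly.
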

	
	\begin{note}
		In view of Theorem \ref{theorem classification of simple G-modules and simple LieG modules}, if $G$ is a reductive group then we have a bijection
		$$
		\lambda \in X^{*}(T)_{+} \longmapsto [V_{\lambda}] \in \mathrm{Irr}(G)
		$$
		where $[V_{\lambda}]$ is the isomorphism class of the irreducible module $V_{\lambda}$. For each $\lambda \in X^{*}(T)_{+}$ we can differentiate the module $V_{\lambda}$ to obtain a $\Lg$-module that we denote again $V_{\lambda}$. Because $G$ is connected, this module remains irreducible and is again of highest weight $\lambda$, where $\lambda$ is now seen in 
		$$
		X^{*}(T)_{+} \subset X^{*}(T) \subset P \subset \Lh^{*},
		$$
		as previously explained. 
	\end{note}

	\subsection{Affine vertex algebras}
	
	Some good general references on vertex algebras are \cite{Kac1998}, \cite{FrenkelBenZvi2004}, \cite{LepowskyLi1994}, and~\cite{arakawa_moreau_arc_spaces}. 
	
	Let $\kappa$ be a level, \textit{i.e.}, a bilinear symmetric form on $\Lg$ such that for all $x,y,z \in \Lg$ the following equality of complex numbers holds $\kappa([x,y],z) = \kappa(x,[y,z]).$ An important example of level is the Killing form, denoted by $\kappa_{\Lg}$.
	
	We now introduce the affine Kac--Moody algebra associated to our data. Let $\Lgkappa$ be the central extension of $\Lg((t))$ by $\C\mathbb{1}$ given by $\kappa$. By definition, $\Lgkappa$ fits in the exact sequence of Lie algebras 
	\begin{equation}
		\label{equation exact sequence defining the kac moody}
		0 \lra \mathbb{C}\mathbb{1} \lra \Lgkappa \lra \Lg((t)) \lra 0
	\end{equation}
	where $\mathbb{1}$ is central in $\Lgkappa$, and for all $x,y \in \Lg$, $m,n \in \mathbb{Z}$, we have
	$$
	[xt^{m},yt^{n}] = [x,y]t^{m+n} + m\delta_{m+n,0}\kappa(x,y)\mathbb{1}.
	$$
	Note that the exact sequence \eqref{equation exact sequence defining the kac moody} splits over the Lie subalgebra $\Lg[[t]] \subset \Lg((t))$, hence $\Lg[[t]] \oplus \C \mathbb{1}$ is a Lie subalgebra of $\Lgkappa$.
	
	A $\Lgkappa$-module $M$ is said to be smooth if for all $m \in M$ there exists $n\geqslant0$ such that $t^{n}\Lg[[t]]m = 0$ and $\mathbb{1}$ acts as the identity. We denote by 
	$
	\Lgkappa\mathrm{-Mod}
	$
	the category of smooth~$\Lgkappa$-modules. 
	
	Let $V^{\kappa}(\Lg)$ be the universal affine vertex algebra associated with $\Lg$ at level $\kappa$. It is generated as a vertex algebra by elements of $\Lg$.
	To describe the relations between the generators, we will make use of the $\lambda$-bracket notation (see \textit{e.g.} §2.3 of \cite{Kac1998}). For all $x,y \in \Lg$, we have 
	\begin{equation}
		\label{equation lambda bracket for the universal affine vertex algebra at level kappa}
		[x_{\lambda}y] = [x,y] + \kappa(x,y)\lambda. 
	\end{equation}
	The categories $\Lgkappa-\textrm{Mod}$ and $V^{\kappa}(\Lg)-\textrm{Mod}$ are isomorphic (see \textit{e.g.} Theorem~6.2.13 of~\cite{LepowskyLi1994}). 
	
	There is a natural induction procedure that makes a $\Lg$-module $V$ into a $\Lgkappa$-module. We first see $V$ as a $\Lg[[t]] \oplus \C\mathbb{1}$-module by having $\mathbb{1}$ act as the identity and $\Lg[[t]]$ act through the projection $\Lg[[t]] \ra \Lg$ that maps $t$ to zero. We then induce the resulting module to $\Lgkappa$. This defines a functor
	$$
	\IndKacMoody : \Lg\mathrm{-Mod} \lra \Lgkappa\mathrm{-Mod}. 
	$$
	If $\lambda \in P_{+}$ and $V_{\lambda}$ denotes the highest weight $\Lg$-module of weight $\lambda$, let $|\lambda \rangle \in V_{\lambda}$ be a highest weight vector, we call the induced module
	\begin{equation}
		\label{equation definition weyl modules}
		V_{\lambda}^{\kappa} = \IndKacMoody V_{\lambda} 
	\end{equation}
	the Weyl module of highest weight $\lambda$. They satisfy the following universal property:
	\begin{proposition}
		\label{proposition Weyl modules are highest weight modules}
		Let $\lambda \in P_{+}$, $M$ be a $V^{\kappa}(\Lg)$-module and $m \in M$ be a highest weight vector for $\Lg$ of weight $\lambda$ such that ~$t\Lg[[t]]m = 0$ and $
		\dim(\cU(\Lg)m)< \infty.
		$
		Then there exists a unique morphism of $V^{\kappa}(\Lg)$-modules 
		$
		V^{\kappa}_{\lambda} \lra M
		$
		mapping $|\lambda \rangle $ to $m$.
	\end{proposition}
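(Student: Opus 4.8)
The plan is to produce the morphism from the universal property of the induction functor (Frobenius reciprocity), after first checking that the cyclic $\Lg$-submodule of $M$ generated by $m$ is a copy of $V_{\lambda}$ on which the ideal $t\Lg[[t]]$ acts trivially. Throughout I would use the isomorphism of categories $V^{\kappa}(\Lg)\mathrm{-Mod}\simeq\Lgkappa\mathrm{-Mod}$ recalled above, so that $M$ is a smooth $\Lgkappa$-module with $\mathbb{1}$ acting as the identity, a morphism of $V^{\kappa}(\Lg)$-modules is literally a morphism of $\Lgkappa$-modules, and $\Lg$ is identified with $\Lg t^{0}\subset\Lgkappa$, so that $\cU(\Lg)m$ is an honest $\Lg$-submodule of $M$.

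First I would analyse $M':=\cU(\Lg)m$. Since $\Ln m=0$ and $m$ is an $\Lh$-weight vector of weight $\lambda$, the PBW theorem shows that $M'$ is spanned by iterated applications of the negative root spaces $\Lg^{-\alpha}$, $\alpha\in\Phi_{+}$, to $m$; hence $M'$ is cyclic on $m$ and all its $\Lh$-weights are $\leq\lambda$. By hypothesis $\dim M'<\infty$, and the center $\Lzg$ acts on $M'$ by the single scalar $\lambda|_{\Lzg}$, hence semisimply, so Theorem \ref{theorem classification of simple G-modules and simple LieG modules} applies and $M'$ is a semisimple $\Lg$-module. A finite-dimensional semisimple module that is cyclic on a highest weight vector of \emph{dominant} weight $\lambda$ is forced to be irreducible of highest weight $\lambda$: writing $M'=\bigoplus_{i}V_{\mu_{i}}$ with $\mu_{i}\leq\lambda$, only the summands with $\mu_{i}=\lambda$ contribute to the $\lambda$-weight space, so $m$ lies in their span, and expanding $m$ into highest weight vectors exhibits the $\Lg$-linear map $V_{\lambda}\to M'$, $\vaclambda\mapsto m$, as an isomorphism (it is nonzero and $V_{\lambda}$ is simple). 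Call it $\phi_{0}$. Next I would verify that $t\Lg[[t]]$ annihilates all of $M'$, not just $m$: this follows from $t\Lg[[t]]m=0$ by induction on the length of a word $x_{1}t^{0}\cdots x_{\ell}t^{0}$, using that $[yt^{k},xt^{0}]=[y,x]t^{k}$ for $k\geq1$, so pushing $yt^{k}$ to the right either annihilates $m$ or produces an element of $t\Lg[[t]]$ acting on a shorter word. Consequently $\phi_{0}$ is a morphism of $\Lg[[t]]\oplus\C\mathbb{1}$-modules, where $V_{\lambda}$ carries the structure used to define $V^{\kappa}_{\lambda}$ in \eqref{equation definition weyl modules}: the residual action of $\Lg=\Lg[[t]]/t\Lg[[t]]$ is intertwined by construction, $t\Lg[[t]]$ kills both sides, and $\mathbb{1}$ acts as the identity on both.

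Finally, Frobenius reciprocity for induction from $\Lg[[t]]\oplus\C\mathbb{1}$ to $\Lgkappa$ (available since \eqref{equation exact sequence defining the kac moody} splits over $\Lg[[t]]$) turns $\phi_{0}$ into a morphism of $\Lgkappa$-modules $\phi\colon V^{\kappa}_{\lambda}\to M$ extending it, so in particular $\phi(\vaclambda)=m$. Any morphism with $\vaclambda\mapsto m$ is unique because $V^{\kappa}_{\lambda}=\cU(\Lgkappa)\vaclambda$ is generated by $\vaclambda$. Reading this back through the isomorphism $\Lgkappa\mathrm{-Mod}\simeq V^{\kappa}(\Lg)\mathrm{-Mod}$ gives the stated morphism of $V^{\kappa}(\Lg)$-modules. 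The only genuinely non-formal ingredients are the identification $\cU(\Lg)m\cong V_{\lambda}$ — which really uses dominance of $\lambda$ together with the semisimplicity statement of Theorem \ref{theorem classification of simple G-modules and simple LieG modules} — and the small commutator bookkeeping that $t\Lg[[t]]$ kills the whole submodule; I expect this last point to be the only one requiring any care.
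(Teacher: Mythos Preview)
Your proof is correct and follows exactly the approach sketched in the paper: the paper's one-sentence argument invokes the definition of $V^{\kappa}_{\lambda}$ as an induced module (Frobenius reciprocity) together with the bijection between $\Hom_{\Lg}(V_{\lambda},M)$ and highest weight vectors of weight $\lambda$ spanning a finite-dimensional $\Lg$-submodule, and you have simply spelled out both ingredients in detail---including the check that $t\Lg[[t]]$ kills all of $\cU(\Lg)m$, which the paper leaves implicit.
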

	\begin{proof}
		This follows from the definition of Weyl modules as induced modules and the fact that $\Hom_{\Lg}(V_{\lambda},M)$ is naturally in bijection with the highest weight vectors for $\Lg$ of weight $\lambda$ in $M$ that span a finite dimensional $\Lg$-module.
	\end{proof}
	
	\begin{definition}
		\label{definition Kazhdan Lusztig category}
		We denote by
		$$
		\Lg\mathrm{-Mod}^{G}
		$$
		the full subcategory of $\Lg$-Mod consisting of locally finite $\Lg$-modules for which the action integrates to a representation of the algebraic group $G$. 
		Define the Kazhdan--Lusztig category 
		$$
		V^{\kappa}(\Lg)\mathrm{-Mod}^{\jetinf G}
		$$ 
		to be the full subcategory of the category of $V^{\kappa}(\Lg)$-modules consisting of objects~$M$ such that
		\begin{itemize}
			\item as a $\Lg$-module, $M$ is locally finite and the action of $\Lg$ integrates to a representation of the algebraic group $G$ \textit{i.e.}, $M \in \Lg\mathrm{-Mod}^{G}$,
			\item the Lie subalgebra $t\Lg[[t]]$ of $\Lgkappa$ acts locally nilpotently.
		\end{itemize}
	\end{definition}
	
	\begin{remarque}
		\label{remark the meaning of the definition of the Kazhdan Lusztig category}
		Because $G$ is connected the functor $\Lie$ defines an isomorphism of categories
		$$
		\mathrm{Lie} : G\mathrm{-Mod} \simeq \Lg\mathrm{-Mod}^{G}
		$$
		which we use to identify them from now on.
		When the group $G$ is a connected, simple and simply connected group, say for instance $G = \mathrm{SL}_{2}$, then for any locally finite $\Lg$-module the action automatically integrates to a representation of the algebraic group~$G$. 
		As alluded to in the introduction, the Kazhdan--Lusztig category $\Lgkappacategoryintegrable$ consists precisely of $\Lgkappa$-modules such that the action of $\Lg[[t]]$ integrates to an algebraic representation of $\jetinf G$. To make sense of this statement and prove it, the formalism of pro-algebraic groups and their pro-Lie algebras is useful (see \textit{e.g.} §4.4 of~\cite{Kumar2002}).
	\end{remarque}	
	\begin{note}
		To see why it is true on the arrows, I think this is nothing but $\Hom_{\Lg}(M,M') = \Hom(M,M')^{\Lg} = \Hom(M,M')^{G} = \Hom_{G}(M,M')$
	\end{note}

	\section{Chiral differential operators on an algebraic group}
	
	\subsection{General results}
	\label{section general results}
	There are essentially two ways of thinking of $\cO_{G}$, the coordinate ring of $G$, as a $G$-module. For $\alpha \in G$, $f \in \cO_{G}$ and $g \in G$ set 
	\begin{align*}
		\lambda_{\alpha}(f)(g) &= f(\alpha^{-1}g), \\
		\rho_{\alpha}(f)(g) &= f(g\alpha).
	\end{align*}
	That defines two group morphisms
	\begin{align*}
		\lambda : \alpha \in G \longmapsto \lambda_{\alpha} \in \GL(\cO_{G}), \\
		\rho : \alpha \in G \longmapsto \rho_{\alpha} \in \GL(\cO_{G}).
	\end{align*} 
	They each make $\cO_{G}$ into a $G$-module and commute with one another so they make $\cO_{G}$ into a $G \times G$-module. We will refer to this $G\times G$-module structure on $\cO_{G}$ as the regular representation of the group $G$. Explicitly, if $\alpha,\beta \in G$ and $f \in \cO_{G}$ then define $(\alpha,\beta) \cdot f \in \cO_{G}$ to take on each $g \in G$ the value 
	
	$$
	((\alpha,\beta) \cdot f)(g) = f(\alpha^{-1}g\beta).
	$$
	
	As a vector space $\Lg$ is the tangent space of $G$ at $e$, the identity element. Recall that it consists of the linear maps $x : \cO_{G} \lra \C$ satisfying for all $f,g \in \cO_{G}$ the equality of complex numbers
	$$
	x(fg) = f(e)x(g) + x(f)g(e).
	$$
	Let $\Der(\cO_{G})$ denote the Lie algebra of global vector fields on the group $G$ \textit{i.e.}, the derivations of the algebra $\cO_{G}$. We say that $X \in \Der(\cO_{G})$ is a left-invariant derivation if for all $\alpha \in G$, we have the equality 
	$
	\lambda_{\alpha} \circ X = X \circ \lambda_{\alpha}.
	$
	They form a Lie subalgebra of $\Der(\cO_{G})$ that we denote by $\Der(\cO_{G})^{\lambda}.$
	We define in a similar fashion right-invariant derivations and denote their set by $\Der(\cO_{G})^{\rho}$. Any derivation $X \in \Der(\cO_{G})$ can be made into an element of the tangent space at the identity so we have two linear maps
	$$
	\mathrm{ev}_{e} \circ \cdot: \Der(\cO_{G})^{\lambda} \overset{}{\lra} T_{e}(G) \longleftarrow \Der(\cO_{G})^{\rho} : -\mathrm{ev}_{e} \circ \cdot
	$$
	where $\mathrm{ev}_{e} : f \in \cO_{G} \mapsto f(e) \in \C$ is the evaluation at the identity. Both these maps admit inverses that are defined for $x \in \Lg$ by  	
	\begin{align}
		\label{equation definition x_{L}}
		x_{L} &= (\id \otimes x) \circ \Delta \in \Der(\cO_{G})^{\lambda},\\
		\label{equation definition x_{R}}
		x_{R} &=(-x \otimes \id) \circ \Delta \in \Der(\cO_{G})^{\rho},
	\end{align}
	where $\Delta : \cO_{G} \lra \cO_{G} \otimes \cO_{G}$ is the comultiplication map. More explicitly, we have for all $x \in \Lg$, $f \in \cO_{G}$ and $g \in G$ the equalities of complex numbers
	\begin{align}
		x_{L}(f)(g) &= x(\lambda_{g^{-1}}f), \\
		x_{R}(f)(g) &= -x(\rho_{g}f).
	\end{align}
	Recall that by definition the Lie algebra structure of $\Lg$ is such that for all $x,y \in \Lg$ the following equality holds in $\Der(\cO_{G})$
	\begin{equation}
		\label{equation x_{L} is a morphism of Lie algebras}
		[x_{L},y_{L}] = [x,y]_{L}.
	\end{equation}
	Because we have put a minus sign in formula $\eqref{equation definition x_{R}}$ we have the equality in $\Der(\cO_{G})$
	\begin{equation}
		\label{equation x_{R} is a morphism of Lie algebras}
		[x_{R},y_{R}] = [x,y]_{R}.
	\end{equation}
	This shows that we have defined two Lie algebra maps 
	\begin{align*}
		&x \in \Lg \longmapsto x_{L} \in \Der(\cO_{G}) \subset \Lgl(\cO_{G}), \\
		&x \in \Lg \longmapsto x_{R} \in \Der(\cO_{G}) \subset \Lgl(\cO_{G}),
	\end{align*}
	whose images commute, that is for all $x,y \in \Lg$ we have in $\Der(\cO_{G})$ the equality
	\begin{equation}
		\label{equation left and right embeddings commute}
		[x_{L},y_{R}] = 0.
	\end{equation}
	This endows $\cO_{G}$ with the structure of a $\Lg \oplus \Lg$-module, explicitly, for all $x,y \in \Lg$ and~$f \in \cO_{G}$ define 
	$$
	(x,y)\cdot f = x_{L}(y_{R}(f))=y_{R}(x_{L}(f)).
	$$ 
	This is, up to exchanging the factors, the differential of the regular representation, since for all $x,y \in \Lg$ and $f \in \cO_{G}$ we have 
	\begin{align}
		\mathrm{d}\lambda(y)(f) = y_{R}(f), \\
		\mathrm{d}\rho(x)(f) = x_{L}(f),
	\end{align}
	where $\mathrm{d}\lambda : \Lg \lra \Lgl(\cO_{G})$ and $\mathrm{d}\rho : \Lg \lra \Lgl(\cO_{G})$ denote the differential at the identity of the morphism of groups $\lambda$ and $\rho$ (see \textit{e.g.} §23.4 of \cite{tauvel_yu_lie_algebras}).

	\begin{note}
		
		\begin{example}
			Let $G= \G_{m}$ be a one-dimensional torus. Consider the element 
			$$
			x : P \in \C[z,z^{-1}] \mapsto \partial_{z}(P)(1) \in \C.
			$$
			It easily seen to define an element of the Lie algebra $\Lg$ of $G$. We can compute 
			$$
			x_{L}(z) = (\id \circ x)(z_{1}z_{2}) = z_{1}x(z_{2}) = z_{1},
			$$
			and
			$$
			-x_{R}(z) = (x \circ \id)(z_{1}z_{2}) = x(z_{1})z_{2} = z_{2},
			$$
			so 
			$$
			x_{L} = -x_{R} = z\partial_{z},
			$$
			this is easily seen to hold for any $x \in \Lg$.
			
		\end{example }

		\begin{lemme}
			Let $V \subset H$ a finite dimensional subspace of $H$ such that for all $\alpha \in G$, $\rho_{\alpha}(V) \subset V$ and $\rho : G \lra \GL(V)$ be the induced morphism of algebraic groups. Then for all $x \in T_{e}(G)$ and $f \in V$ we have 
			\begin{equation}
				\textrm{d}\rho(x)(f) = x_{L}(f).
			\end{equation}
			If $V$ is a finite dimensional subspace of $H$ such that for all $\alpha \in G$, $\lambda_{\alpha}(V) \subset V$ and $\lambda : G \lra \GL(V)$ is the induced morphism of algebraic groups. Then for all $x \in T_{e}(G)$ and $f \in V$ we have 
			\begin{equation}
				\textrm{d}\lambda(x)(f) = x_{R}(f).
			\end{equation}
		\end{lemme}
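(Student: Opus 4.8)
The plan is to reduce the statement to a pointwise identity on $G$, exploiting that on the finite dimensional $\rho$-invariant subspace $V$ the representation $\rho\colon G \to \GL(V)$ is algebraic. Fixing $f \in V$, the map $a_{f}\colon \alpha \mapsto \rho_{\alpha}(f)$ is then a morphism of varieties $G \to V$ (it is the composite of $\rho$ with the linear, hence algebraic, evaluation-at-$f$ map $\GL(V)\to V$), and by the chain rule $\d\rho(x)(f) = (\d a_{f})_{e}(x) \in T_{f}(V) = V$ for every $x \in T_{e}(G)$, where we identify the tangent space of the vector space $V$ at $f$ with $V$ itself.

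First I would observe that for each $g \in G$ the evaluation $\ev_{g}\colon V \to \C$, $h \mapsto h(g)$, is linear, hence a morphism of varieties, and therefore commutes with differentials. Thus
$$
\big(\d\rho(x)(f)\big)(g) = \d\big(\ev_{g}\circ a_{f}\big)_{e}(x).
$$
Next I would identify $\ev_{g}\circ a_{f}$ as an element of $\cO_{G}$: it sends $\alpha$ to $(\rho_{\alpha}f)(g) = f(g\alpha)$, which is exactly $\lambda_{g^{-1}}(f)$ evaluated at $\alpha$. Since $x \in T_{e}(G)$ acts on $\cO_{G}$ as the point derivation at $e$, meaning $\d h_{e}(x) = x(h)$ for $h \in \cO_{G}$, this gives $\big(\d\rho(x)(f)\big)(g) = x(\lambda_{g^{-1}}f)$, which is $x_{L}(f)(g)$ by the explicit formula $x_{L}(f)(g) = x(\lambda_{g^{-1}}f)$ recorded above. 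As $g$ is arbitrary, $\d\rho(x)(f) = x_{L}(f)$.

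The second assertion follows by the same argument, the only new ingredient being the inversion map $\iota\colon G \to G$. Here $\ev_{g}\circ a_{f}$ sends $\alpha$ to $(\lambda_{\alpha}f)(g) = f(\alpha^{-1}g) = (\rho_{g}f)(\alpha^{-1})$, so as a function on $G$ it equals $(\rho_{g}f)\circ\iota$. Applying the chain rule together with the standard fact $\d\iota_{e} = -\id_{T_{e}(G)}$ yields $\big(\d\lambda(x)(f)\big)(g) = \d(\rho_{g}f)_{e}(-x) = -x(\rho_{g}f) = x_{R}(f)(g)$, the sign being precisely the one built into the definition \eqref{equation definition x_{R}} of $x_{R}$; hence $\d\lambda(x)(f) = x_{R}(f)$.

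I do not anticipate a real obstacle here. The only places that need care are the justification that $\d\rho(x)(f)$ is genuinely computed by differentiating $\alpha \mapsto \rho_{\alpha}(f)$ — which is exactly where finite dimensionality of $V$ and algebraicity of the representation enter — and the interchange of $\ev_{g}$ with the differential. As an alternative one can run the whole argument in the language of comodules: $\rho$-invariance of $V$ means the comultiplication restricts to a coaction $\Delta\colon V \to V \otimes \cO_{G}$, and the standard dictionary between algebraic representations and comodules identifies $\d\rho(x)(f)$ with $(\id \otimes x)(\Delta f)$, which is the definition of $x_{L}(f)$; the analogous computation, now involving the antipode, recovers $x_{R}$.
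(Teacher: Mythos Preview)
Your proof is correct and follows essentially the same approach as the paper: both compute the differential of the representation directly, the paper by choosing a basis of $V$ to identify with $\C^{d}$ and referring to §23.4 of \cite{tauvel_yu_lie_algebras}, you by the cleaner coordinate-free device of composing with the evaluation maps $\ev_{g}$. Your write-up is in fact considerably more detailed than the paper's, which amounts to a reference and a three-line sketch.
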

		\begin{proof}
			
			Quote 23.4.11 and 23.4.3. from Tauvel-Yu's algebraic groups book.
			
			Let's try to give a convincing proof.
			\begin{enumerate}
				\item Choose a basis to identify $V$ with $\C^{d}$,
				\item Under this choice of basis, identify $T_{e}(\GL_{d})$ with $\Lgl_{d}$,
				\item Compute the differential.
			\end{enumerate}
			I checked on my own 
			
		\end{proof}
		
	\end{note}
	
	The coordinate ring $\cO_{\jetinf G}$ of the algebraic group $\jetinf G$ is a differential commutative algebra \textit{i.e.}, a commutative vertex algebra. Its Lie algebra is $\Lg[[t]]$ and the bracket is given for all $x,y \in \Lg$ and $m,n \geqslant0$ by 
	$$
	[xt^{m},yt^{n}] = [x,y]t^{m+n}.
	$$
	By the same construction as before, there are two Lie algebra maps from $\Lg[[t]]$ to $\Der(\cO_{\jetinf G})$ corresponding to left-invariant and right-invariant vector fields. In particular $\cO_{\jetinf G}$ is a $\Lg[[t]] \oplus \Lg[[t]]$-module. Let us view elements of $\Lg[[t]]$ as left-invariant vector fields of $\jetinf G$. This makes $\cO_{\jetinf G}$ into a $\Lg[[t]]$-module. By having $\mathbb{1}$ act as the identity we make it into a $\Lg[[t]] \oplus \C \mathbb{1}$-module. 
	
	Define the vertex algebra of chiral differential operators on the algebraic group $G$ to be the induced module 
	\begin{equation}
		\label{equation definition of cdo as an induced module}
		\cD_{G}^{\kappa} = \Ind_{\Lg[[t]] \oplus \C \mathbb{1}}^{\Lgkappa} \cO_{\jetinf G} = \cU(\Lgkappa) \otimes_{\cU(\Lg[[t]]) \otimes \C[\mathbb{1}]} \cO_{\jetinf G}.
	\end{equation}
	Note that, as vector spaces, 
	$$
	\cdo = V^{\kappa}(\Lg) \otimes \cO_{\jetinf G}.
	$$
	Since both $V^{\kappa}(\Lg)$ and $\cO_{\jetinf G}$ are $\Z_{\geqslant0}$-graded vector spaces, then so is $\cD_{G}^{\kappa}$ by setting 
	$$
	(\cdo)_{n} = \bigoplus_{k + k' = n} (V^{\kappa}(\Lg))_{k} \otimes (\cO_{\jetinf G})_{k'}
	$$
	for each $n \in \Z_{\geqslant0}$.

	According to \cite{arkhipov_gaitsgory_differential_operators} and \cite{Gorbounov2001} (see also §3.4 of \cite{arakawa_moreau_arc_spaces}) there exists a unique vertex algebra structure on $\cdo$ such that the injective maps 
	\begin{align*}
		&\pi_{L} : x \in V^{\kappa}(\Lg) \longmapsto x \otimes 1 \in \cdo \\
		&\iota : f \in \cO_{\jetinf G} \longmapsto 1 \otimes f \in \cdo 
	\end{align*}
	are vertex algebra morphisms satisfying for all $x \in \Lg$ and $f \in \cO_{G}$ the following Leibniz rule 
	$$
	[\pi_{L}(x)_\lambda \iota(f)] = \iota(x_{L}(f)).
	$$
	As it is harmless we will forget the $\iota$ in the notations so that the Leibniz rule reads 
	$$
	[\pi_{L}(x)_{\lambda}f] = x_{L}(f).
	$$
	To summarize, $\cdo$ is generated as a vertex algebra by elements of the form $\pi_{L}(x)$ for $x \in \Lg$ and regular functions on the group $G$ in such a way that for all $x,y \in \Lg$ and~$f,g \in \cO_{G}$ we have
	\begin{align}
		&[\pi_{L}(x)_{\lambda}\pi_{L}(y)] = \pi_{L}([x,y]) + \kappa(x,y)\lambda, \label{equation commutation relation for left-invariant vector field in the cdo lambda bracket}\\
		&[\pi_{L}(x)_{\lambda}f] = x_{L}(f), \label{equation Leibniz rule in the cdo lambda bracket} \\
		&[f_{\lambda}g] = 0. \label{equation cO(G) is commutative inside of the cdo}
	\end{align}
	
	\begin{note}
		This readily implies, by skewcommutativity, 
		\begin{equation}
			[f_{\lambda}\pi_{L}(x)] = - [\pi_{L}(x)_{\lambda}f] = -x_{L}(f).
		\end{equation}
	\end{note}
	
	Any element of $\cdo$ can be written as a linear combination of elements of the form
	\begin{equation}
		\label{equation generators of the cdo as vector space}
		\pi_{L}(x^{i_{1}})_{(-m_{1})}...\pi_{L}(x^{i_{r}})_{(-m_{r})}f^{j_{1}}_{(-n_{1})}... f^{j_{s}}_{(-n_{s})}\vac    
	\end{equation}
	where $r,s \in \Z_{\geqslant0}, m_{1},...,m_{r},n_{1},...,n_{s}>0, x^{i_{1}},...,x^{i_{r}} \in \Lg, f^{j_{1}},...,f^{j_{s}} \in \cO_{G}$. 
	
	\begin{remarque}
		Note that since $\cO_{G}$ is of finite type, $\cdo$ is strongly finitely generated. Moreover, if $G$ is simple, then $\kappa$ can be identified with a complex number. Thinking of it as a formal parameter leads to an interesting point of view on $\cdo$ as coming from a deformable family of vertex algebras (see §3.1 of \cite{creutziglinshawcoset} for the terminology). Notice that all the relations \eqref{equation commutation relation for left-invariant vector field in the cdo lambda bracket}-\eqref{equation cO(G) is commutative inside of the cdo} between the generators of $\cdo$ are polynomial functions of the variable $\kappa$. In fact, this is unsurprising as one can define by generators and relations a certain $1$-parameter vertex algebra $\cdo$ over the commutative ring $\C[\kappa]$ and recover $\mathcal{D}_{G}^{\kappa_{0}}$ at any given level $\kappa_{0}$ by specialization.
	\end{remarque}
	
	We can describe the degree $1$ part of $\cdo$, as vector spaces we have
	$$
	(\cdo)_{1} =\cO_{G} \otimes \Lg \oplus (\cO_{\jetinf G})_{1}.
	$$
	\begin{note}
		\begin{align*}
			(\cdo)_{1} &= (V^{\kappa}(\Lg))_{1} \otimes (\cO_{\jetinf G})_{0} \oplus (V^{\kappa}(\Lg))_{0} \otimes (\cO_{\jetinf G})_{1} \\
			&= (\cO_{\jetinf G})_{0} \otimes (V^{\kappa}(\Lg))_{1} \oplus (V^{\kappa}(\Lg))_{0} \otimes (\cO_{\jetinf G})_{1} \\
			&= \cO_{G} \otimes \Lg \oplus (\cO_{\jetinf G})_{1}.
		\end{align*}	
	\end{note}
	Let $\Omega^{1}_{G}$ be the $\cO_{G}$-module of global differential $1$-forms on the algebraic group $G$. The maps
	$$
	f \otimes x \in \cO_{G} \otimes \Lg \longmapsto fx_{L} \in \Der(\cO_{G}),
	$$
	and 
	$$
	f \partial(g) \in (\cO_{\jetinf G})_{1} \longmapsto f \mathrm{d}g \in \Omega^{1}_{G}
	$$
	are isomorphisms of $\cO_{G}$-modules. So, in fact, as vector spaces
	$$
	(\cdo)_{1} = \Der(\cO_{G}) \oplus \Omega^{1}_{G}.
	$$

	\begin{note}
		
		By straightforward computations we have
		\begin{lemme}
			\label{lemma computation in degree 1 for the cdo}
			Let $f,g,h \in \cO_{G}$ and $x,y \in \Lg$ then 
			\begin{multline}
				[f\pi_{L}(x)_{\lambda}g\pi_{L}(y)] = fx_{L}(g)\pi_{L}(y)+fg\pi_{L}([x,y]) -gy_{L}(f)\pi_{L}(x) \\
				-g\partial(x_{L}(y_{L}(f))) \\
				+\lambda(fg\kappa(x,y)-fy_{L}(x_{L}(g))-x_{L}(g)y_{L}(f) - g(x_{L}(y_{L}(f))),
			\end{multline}
			and 
			\begin{equation}
				[f\pi_{L}(x)_{\lambda}g\partial(h)] = fx_{L}(g)\partial(h) +\partial(f)gx_{L}(h) + fg\partial(x_{L}(h)) + \lambda fgx_{L}(h).
			\end{equation}
			If $D$ is a derivation of $\cO_{G}$ and $\omega,\omega'$ are $1$-forms then we have 
			\begin{align*}
				[D_{\lambda}f] &= D(f) \\
				[D_{\lambda}\omega] &= \mathrm{Lie}(D)(\omega) + \omega(D)\lambda  \\
				[\omega_{\lambda}\omega'] &= [\omega_{\lambda}f] = [f_{\lambda}f'] = 0,		
			\end{align*}
			where $\mathrm{Lie}(D) : \Omega^{1}_{G} \lra \Omega^{1}_{G}$ is the Lie derivative. 
		\end{lemme}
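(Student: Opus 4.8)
The plan is to derive all three families of identities purely formally, from sesquilinearity of the $\lambda$-bracket and the (left and right) non-commutative Wick formulas (see \cite{Kac1998}), feeding in only the three defining relations \eqref{equation commutation relation for left-invariant vector field in the cdo lambda bracket}, \eqref{equation Leibniz rule in the cdo lambda bracket} and \eqref{equation cO(G) is commutative inside of the cdo}, together with the fact \eqref{equation x_{L} is a morphism of Lie algebras} that $x \mapsto x_{L}$ is a Lie algebra map. First I would assemble the $\lambda$-brackets among the generating fields: from \eqref{equation Leibniz rule in the cdo lambda bracket} and skew-symmetry one has $[\pi_{L}(x)_{\lambda}f] = x_{L}(f)$ and $[f_{\lambda}\pi_{L}(x)] = -x_{L}(f)$, both $\lambda$-independent and valued in $\cO_{G}$; applying $[a_{\lambda}\partial b] = (\partial+\lambda)[a_{\lambda}b]$ and $[\partial a_{\lambda}b] = -\lambda[a_{\lambda}b]$ gives $[\pi_{L}(x)_{\lambda}\partial(g)] = \partial(x_{L}(g)) + \lambda x_{L}(g)$ and $[\partial(g)_{\lambda}\pi_{L}(x)] = \lambda x_{L}(g)$; and \eqref{equation cO(G) is commutative inside of the cdo} forces $[f_{\lambda}g] = [f_{\lambda}\partial g] = [\partial f_{\lambda}\partial g] = 0$. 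This, with \eqref{equation commutation relation for left-invariant vector field in the cdo lambda bracket}, is a complete list of generator brackets.

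Since $\Der(\cO_{G}) = \cO_{G}\,\Lg_{L}$ and $\Omega^{1}(G) = \cO_{G}\,\d\cO_{G}$, every element of $(\cdo)_{1}$ is a finite combination of terms $f\pi_{L}(x)$ and $g\,\partial(h)$, so I would first prove the coordinate-free identities and then read off the explicit ones. The vanishing $[\omega_{\lambda}\omega'] = [\omega_{\lambda}f] = [f_{\lambda}f'] = 0$ is immediate from the Wick formula, every intermediate bracket living in $\cO_{\jetinf G}$. For $[D_{\lambda}f] = D(f)$, write $D = \sum f_{i}\pi_{L}(x_{i})$ and apply the left Wick formula to each $[(f_{i}\pi_{L}(x_{i}))_{\lambda}f]$: the $[f_{i}\,{}_{\lambda}f]$ contribution and the integral correction vanish, and the $e^{\partial\partial_{\lambda}}$-prefactor acts trivially because $[\pi_{L}(x_{i})_{\lambda}f] = x_{i,L}(f)$ is $\lambda$-independent, leaving $f_{i}x_{i,L}(f)$, whose sum is $D(f)$ with no $\lambda$-linear term. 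For $[D_{\lambda}\omega]$ I would expand both $D$ and $\omega$ and apply the Wick formula twice; grouping the answer by powers of $\lambda$, the constant part reorganizes exactly into Cartan's formula $\mathrm{Lie}(D) = \d\circ\iota_{D} + \iota_{D}\circ\d$ evaluated on $\omega$, while the $\lambda$-linear part collapses to the contraction $\omega(D)$.

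Finally I would specialize. For the second displayed identity, set $D = fx_{L}$ and $\omega = g\,\d h$ in $[D_{\lambda}\omega] = \mathrm{Lie}(D)(\omega) + \omega(D)\lambda$: here $\omega(D) = fg\,x_{L}(h)$ gives the $\lambda$-term, $\iota_{D}\omega = fg\,x_{L}(h)$, $\d(g\,\d h) = \d g\wedge\d h$, and Cartan's formula together with the Leibniz rule for $\d(fg\,x_{L}(h))$ produces five summands of which two cancel, leaving precisely $fx_{L}(g)\partial(h) + \partial(f)g\,x_{L}(h) + fg\,\partial(x_{L}(h))$. For the first displayed identity I would instead compute $[(f\pi_{L}(x))_{\lambda}(g\pi_{L}(y))]$ head-on with the Wick formula, using the intermediate brackets $[\pi_{L}(x)_{\lambda}g\pi_{L}(y)] = x_{L}(g)\pi_{L}(y) + g\pi_{L}([x,y]) + \lambda(g\kappa(x,y) - y_{L}(x_{L}(g)))$ and $[f_{\lambda}g\pi_{L}(y)] = -g\,y_{L}(f)$, then apply $[x,y]_{L} = [x_{L},y_{L}]$ and reorder the normally ordered products $h\,\pi_{L}(x)$ into $\pi_{L}(x)\,h$ modulo total derivatives. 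The main obstacle is exactly this last bit of bookkeeping: keeping track of the $e^{\partial\partial_{\lambda}}$-prefactors and of the integral term $\int_{0}^{\lambda}[[a_{\lambda}b]_{\mu}c]\,\d\mu$ in the Wick formula (both of which truncate here because every intermediate bracket is affine in $\lambda$) together with the total-derivative corrections from reordering; these are what produce the $\partial$-exact summand $-g\,\partial(x_{L}(y_{L}(f)))$ and the precise symmetric $\lambda$-linear part of the first identity, everything else being bilinearity and the Leibniz rule.
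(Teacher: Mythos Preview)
Your proposal is correct and is precisely what the paper means by ``straightforward computations'': the paper offers no detailed argument beyond that phrase, and the standard route is exactly the one you describe---apply the non-commutative Wick formulas together with sesquilinearity to the generating $\lambda$-brackets \eqref{equation commutation relation for left-invariant vector field in the cdo lambda bracket}, \eqref{equation Leibniz rule in the cdo lambda bracket}, \eqref{equation cO(G) is commutative inside of the cdo}. Your intermediate brackets $[\pi_{L}(x)_{\lambda}g\pi_{L}(y)]$ and $[f_{\lambda}g\pi_{L}(y)]$ are correct, and the bookkeeping you flag (the $e^{\partial\partial_{\lambda}}$-prefactor, the integral correction, and the reordering of normally ordered products) is indeed the only subtlety.
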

		
	\end{note}
	
	\begin{note}	
		We would like to compute for all $x,y \in \Lg$ and $f,g,h \in \cO_{G}$
		$$
		[f\pi_{L}(x)_{\lambda}g\pi_{L}(y)]\textrm{ and }[f\pi_{L}(x)_{\lambda}g \partial h ].
		$$
		Of course because of the commutativity of $\cO_{\jetinf G}$ we have that $1$-forms lambda-commute with each other. We have easily 
		\begin{equation}
			[f\pi_{L}(x)_{\lambda}g] = fx_{L}(g).
		\end{equation}
		These ones take a bit longer to compute
		\begin{multline}
			[f\pi_{L}(x)_{\lambda}g\pi_{L}(y)] = fx_{L}(g)\pi_{L}(y)+fg\pi_{L}([x,y]) -gy_{L}(f)\pi_{L}(x) \\
			-g\partial(x_{L}(y_{L}(f))) \\
			+\lambda(fg\kappa(x,y)-fy_{L}(x_{L}(g))-x_{L}(g)y_{L}(f) - g(x_{L}(y_{L}(f))),
		\end{multline}
		and 
		\begin{equation}
			[f\pi_{L}(x)_{\lambda}g\partial(h)] = fx_{L}(g)\partial(h) +\partial(f)gx_{L}(h) + fg\partial(x_{L}(h)) + \lambda fgx_{L}(h).
		\end{equation}
	\end{note}
	
	In \cite{ArakawaMoreau2021}, Arakawa and Moreau introduced a very convenient criterion to show that certain vertex algebras are simple in terms of the geometry of their associated scheme. One can show that the associated scheme of $\cdo$ is the cotangent bundle $T^{*}G$ of $G$. Because this is a smooth, reduced and symplectic scheme, we have:
	\begin{proposition}[Discussion following Corollary 9.3 of \textit{loc.cit.}]
		\label{proposition cdo is a simple vertex algebra}
		For any level $\kappa$, the vertex algebra~$\cdo$ is simple. 
	\end{proposition}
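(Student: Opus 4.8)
The plan is to deduce simplicity from the Arakawa--Moreau geometric criterion (\cite{ArakawaMoreau2021}), so that the only real work is the computation of the associated scheme. Recall the mechanism of that criterion: for a finitely strongly generated, $\Z_{\geqslant 0}$-graded vertex algebra $V$, the Li filtration makes $\mathrm{gr}^{F}V$ a Poisson vertex algebra, its spectrum $SS(V)$ is a closed subscheme of the arc space $\jetinf(X_{V})$ of the associated scheme $X_{V}=\Spec R_{V}$, and a proper vertex ideal $I\subsetneq V$ would cut out a proper closed subscheme $SS(V/I)\subsetneq SS(V)$ that is Poisson-coisotropic; when $X_{V}$ is a reduced symplectic variety whose only symplectic leaves are $\varnothing$ and $X_{V}$ itself, no such subscheme can exist and $V$ is forced to be simple. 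This is the content of the discussion following Corollary 9.3 of \textit{loc. cit.}

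First I would identify $X_{\cdo}$. Here $\cdo$ is finitely strongly generated, because $G$ is affine so $\cO_{G}$ is a finitely generated commutative algebra and $\Lg$ is finite dimensional. The PBW-type vector space decomposition $\cdo=V^{\kappa}(\Lg)\otimes\cO_{\jetinf G}$ respects the $\Z_{\geqslant 0}$-gradings and the Li filtrations, and since $\mathrm{gr}^{F}V^{\kappa}(\Lg)=\C[\jetinf\Lg^{*}]$ while $\cO_{\jetinf G}=\C[\jetinf G]$ is already commutative, it identifies $SS(\cdo)$ with $\jetinf(\Lg^{*}\times G)=\jetinf(T^{*}G)$ --- using the left trivialization $T^{*}G\cong G\times\Lg^{*}$ --- and in particular $X_{\cdo}$ with $T^{*}G$, a reduced scheme since $T^{*}G$ is smooth. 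The Poisson structure on $R_{\cdo}$ is obtained by reading off the $\lambda^{0}$-coefficients of the brackets \eqref{equation commutation relation for left-invariant vector field in the cdo lambda bracket}--\eqref{equation cO(G) is commutative inside of the cdo}; the level $\kappa$ contributes only to the $\lambda^{1}$-coefficient, which does not enter $R_{\cdo}$, so the bracket is level-independent and one checks directly that it is the canonical symplectic Poisson structure on $T^{*}G$. All of this is recorded in \cite{arkhipov_gaitsgory_differential_operators, Gorbounov2001} and \S 3.4 of \cite{arakawa_moreau_arc_spaces}.

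It then remains to note that $T^{*}G$ is a smooth, connected, affine symplectic variety, hence reduced and possessing a single symplectic leaf (namely itself), so the hypotheses of the Arakawa--Moreau criterion are satisfied and $\cdo$ is simple. The step I expect to require the most care is matching the precise hypotheses of the cited statement to our setting: $\cdo$ is not conical in the strict sense, since $(\cdo)_{0}=\cO_{G}\neq\C$, so I would use the form of the criterion phrased purely in terms of the singular support $SS(\cdo)=\jetinf(T^{*}G)$ and the leaf structure of $X_{\cdo}$, and check that the presence of a nontrivial commutative degree-zero part does not interfere with the coisotropy obstruction. The substantive geometric input --- $X_{\cdo}\cong T^{*}G$ as a reduced Poisson scheme --- is already available in the literature, and everything else in the argument is formal.
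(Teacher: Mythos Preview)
Your approach is exactly the one the paper takes: the paper simply records that the associated scheme of $\cdo$ is $T^{*}G$, observes this is symplectic and reduced, and invokes the Arakawa--Moreau criterion, citing the discussion following Corollary~9.3 of \cite{ArakawaMoreau2021} where this very example is treated. Your concern about the non-conicity of $\cdo$ is legitimate, but it is precisely what that cited discussion addresses, so no additional argument is needed.
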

	
	As mentioned in the introduction, the right action chiralizes, but the level for which it does develops an anomaly. 
	\begin{theoreme}
		\label{theorem dual embedding of universal affine vertex algebra to cdo}
		There exists a unique level $\kappa^{*}$ such that there exists a $\Z$-graded injective vertex algebra map
		$$
		\pi_{R} : V^{\kappa^{*}}(\Lg) \lra \cdo 
		$$ 
		satisfying for all $x,y\in \Lg$ and $f\in \cO_{G}$
		\begin{align}
			&[\pi_{R}(x)_{\lambda}\pi_{L}(y)] = 0, \label{equation commutation pi l and pi r} \\
			&[\pi_{R}(x)_{\lambda}f] = x_{R}(f).
		\end{align}
		The level $\kappa^{*}$ is related to $\kappa$ via the formula 
		$$
		\kappa^{*} = -\kappa_{\Lg} - \kappa,
		$$
		in particular 
		$$
		(\kappa^{*})^{*} = \kappa.
		$$
		Explicitly, let $(x^{i})_{1 \leqslant i \leqslant \dim(\Lg)}$ be a basis of $\Lg$ and $(\omega^{i})_{1 \leqslant i \leqslant \dim(\Lg)}$ the $\cO_{G}$-basis of $\Omega^{1}_{G}$ dual to the $\cO_{G}$-basis of $\Der(\cO_{G})$ given by the $(x^{i}_{L})_{1 \leqslant i \leqslant\dim(\Lg)}$. Then, for all $1 \leqslant i \leqslant\dim(\Lg)$, letting  
		$$
		x^{i}_{R} = \sum_{p} f^{i}_{p} x^{p}_{L},
		$$
		where $(f^{i}_{p}) \in \GL(\cO_{G})$, we have the formula  
		\begin{equation}
			\label{equation formula for the dual right embedding}
			\pi_{R}(x^{i}) = x^{i}_{R} + \sum_{q}\left(\sum_{p} \kappa^{*}(x^{p},x^{q})f^{i}_{p}\right)\omega^{q}.
		\end{equation}
		
	\end{theoreme}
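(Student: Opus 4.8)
The plan is to build the map $\pi_R$ by hand on the generators $x^i$ of $\Lg$, verify it extends to a vertex algebra morphism by checking the defining $\lambda$-bracket relations of $V^{\kappa^*}(\Lg)$, and then pin down $\kappa^*$ by computing the anomaly that appears. First I would work in the degree-$1$ piece $(\cdo)_1 = \Der(\cO_G) \oplus \Omega^1(G)$, where I can use the explicit $\lambda$-brackets recorded in Lemma \ref{lemma computation in degree 1 for the cdo}. The classical right-invariant vector field $x_R$ does not by itself lie in the commutant of $\pi_L(\Lg)$ after chiralization — there is a correction term living in $\Omega^1(G)$. So I would posit $\pi_R(x^i) = x^i_R + \sum_q c^i_q \omega^q$ for unknown functions $c^i_q \in \cO_G$, write $x^i_R = \sum_p f^i_p x^p_L$ using that $(x^p_L)_p$ is an $\cO_G$-basis of $\Der(\cO_G)$, and impose equation \eqref{equation commutation pi l and pi r}, namely $[\pi_L(y)_\lambda \pi_R(x^i)] = 0$ for all $y \in \Lg$. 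Expanding this bracket with Lemma \ref{lemma computation in degree 1 for the cdo} (the $[D_\lambda \omega]$ term contributes a Lie-derivative piece and an $\omega(D)\lambda$ piece, the $[f\pi_L(x)_\lambda g\pi_L(y)]$-type term contributes the anomalous $\lambda$-coefficient $\kappa(x,y)$-piece), the vanishing condition becomes a system of first-order linear equations on the $c^i_q$. Solving it forces the relation between the correction coefficients and a bilinear form on $\Lg$, and matching the $\lambda^0$ and $\lambda^1$ parts separately is what produces both formula \eqref{equation formula for the dual right embedding} and the identity $\kappa^* = -\kappa_\Lg - \kappa$: the $-\kappa_\Lg$ is the familiar critical shift coming from the commutator of the left- and right-invariant vector fields (the adjoint action contributes a trace over $\Lg$, i.e.\ the Killing form), and the $-\kappa$ is the flip of the original level.

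Next I would check that with this choice the map $\pi_R$ really satisfies the affine $\lambda$-bracket of $V^{\kappa^*}(\Lg)$, i.e.\ $[\pi_R(x)_\lambda \pi_R(y)] = \pi_R([x,y]) + \kappa^*(x,y)\lambda$ and $[\pi_R(x)_\lambda f] = x_R(f)$. The second relation is essentially the definition of $x_R$ and the fact that $1$-forms $\lambda$-commute with functions (so the correction term $\sum_q c^i_q \omega^q$ does not interfere). For the first, one computes $[\pi_R(x)_\lambda \pi_R(y)]$ using bilinearity and Lemma \ref{lemma computation in degree 1 for the cdo}: the $x_R \leftrightarrow y_R$ part gives $[x,y]_R$ plus possibly a $\partial$-exact and a $\lambda$-linear term, the cross terms $x_R \leftrightarrow \omega$ and $\omega \leftrightarrow \omega$ contribute the rest, and everything should reorganize into $\pi_R([x,y]) + \kappa^*(x,y)\lambda$ exactly because the coefficients $c^i_q$ were solved for to make the degree-$1$ commutant condition hold. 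Since $\cdo$ is generated in degree $\leqslant 1$ and the relations \eqref{equation commutation relation for left-invariant vector field in the cdo lambda bracket}–\eqref{equation cO(G) is commutative inside of the cdo} together with the ones just verified determine all higher brackets by Borcherds-type identities, this suffices to get a vertex algebra morphism $V^{\kappa^*}(\Lg) \to \cdo$; that it is $\Z$-graded is immediate since $x_R$ and $\omega^q$ both sit in degree $1$.

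For uniqueness of $\kappa^*$ and of $\pi_R$: any vertex algebra map $V^{\kappa^*}(\Lg) \to \cdo$ is determined by the images of the generators $x \in \Lg$, which must be degree-$1$ elements; writing such an image as $D + \omega$ with $D \in \Der(\cO_G)$, $\omega \in \Omega^1(G)$, the condition $[\pi_R(x)_\lambda f] = x_R(f)$ (forced if we want $\pi_R$ to intertwine the right action) pins $D = x_R$, and then \eqref{equation commutation pi l and pi r} pins $\omega$ uniquely by the linear system above — and the consistency of that system already forces the value of $\kappa^*$. Injectivity then follows because $V^{\kappa^*}(\Lg)$ is simple for the generic levels in play, or more robustly because the composite $V^{\kappa^*}(\Lg) \to \cdo \to$ (its degree-$1$ part) is already injective on $\Lg$ (the map $x \mapsto x_R$ is injective since $\ev_e \circ x_R = -x$). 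Finally, $(\kappa^*)^* = \kappa$ is just the algebraic identity $-\kappa_\Lg - (-\kappa_\Lg - \kappa) = \kappa$.

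\textbf{Main obstacle.} The delicate point is the bookkeeping of the $\lambda$-linear (``anomaly'') terms when expanding $[f\pi_L(x)_\lambda g\pi_L(y)]$ and $[D_\lambda \omega]$ in degree $1$: one must track how the $\kappa$-dependent term and the Lie-derivative $\omega(D)\lambda$ term combine, and recognize the Killing form appearing as the trace of $\ad$ over $\Lg$ when the structure constants of the left-right commutator are summed. Getting the signs and the identification $\kappa^* = -\kappa_\Lg - \kappa$ exactly right is where the real work lies; everything else is forced once the degree-$1$ linear system is set up correctly.
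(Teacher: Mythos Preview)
Your proposal is correct and follows essentially the same route as the paper: the paper's own proof simply cites \cite{Gorbounov2001} and \cite{arkhipov_gaitsgory_differential_operators} for the verification that formula \eqref{equation formula for the dual right embedding} works, and defers the uniqueness (``the converse'') to a tedious computation in \cite{my_phd}; your outline is precisely a sketch of that computation, organized exactly as one would expect (posit the degree-$1$ ansatz, solve the linear system imposed by \eqref{equation commutation pi l and pi r}, then verify the affine relations).

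One small point: your injectivity argument needs tightening. The theorem is stated for arbitrary $\kappa$, and $V^{\kappa^*}(\Lg)$ is not simple in general, so the first justification does not apply. The fallback (``injective on $\Lg$'') only shows the kernel avoids degree-$1$ generators, which by itself does not force injectivity of the full vertex algebra map. The standard fix is a PBW-type argument: using $\cdo \simeq V^{\kappa}(\Lg)\otimes \cO_{\jetinf G}$ as a filtered object, one checks that the associated graded of $\pi_R$ embeds the symmetric algebra on $\Lg[t^{-1}]t^{-1}$ injectively. This is routine but should be mentioned.
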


	\begin{proof}
		To see why formula \eqref{equation formula for the dual right embedding} satisfies all the requirements, see §4 of \cite{Gorbounov2001}, Theorem 4.4 of \cite{arkhipov_gaitsgory_differential_operators} or Theorem 3.3 of \cite{arakawa_moreau_arc_spaces}. The converse is a straightforward but tedious computation (see Theorem 4.1.4 of \cite{my_phd}).
	\end{proof}
	\begin{remarque}
		This beautiful fact is attributed to Feigin, Frenkel and Gaitsgory in~\cite{Gorbounov2001}. To the best of our knowledge, its earliest appearance in the case of special linear groups can be found in \cite{FeiginParkhomenko1996}.
		
		We also note that we do not need the group $G$ to be reductive for all the results of this paragraph to hold. This assumption will be essential from now on.
	\end{remarque}
	
	As we have just seen, $\cdo$ comes equipped with two vertex algebra morphisms
	$$
	\pi_{L} : V^{\kappa}(\Lg) \lra \cdo \longleftarrow V^{\kappa^{*}}(\Lg) : \pi_{R}
	$$
	whose images commute (see equation \eqref{equation commutation pi l and pi r}). Equivalently this defines a morphism of vertex algebras 
	\begin{equation}
		\label{equation definition pil otimes pir}
		\pi_{L} \otimes \pi_{R} : V^{\kappa \oplus \kappa^{*}}(\Lg \oplus \Lg) \lra \cdo.
	\end{equation}
	
	\begin{definition}
		\label{definition critical level}
		We call critical level the unique level $\kappa_{c}$ on $\Lg$ such that 
		$$
		\kappa_{c}^{*} = \kappa_{c}.
		$$
		We have the equality 
		$$
		\kappa_{c} = -\frac{1}{2} \kappa_{\Lg} 
		$$
		where $\kappa_{\Lg}$ is the Killing form of $\Lg$. When $\Lg$ is simple we have the equality 
		$$
		\kappa_{c} = - \check{h} \widetilde{\kappa_{\Lg}}
		$$
		where $\check{h}$ is the dual Coxeter number and $\widetilde{\kappa_{\Lg}} = \frac{1}{2\check{h}}\kappa_{\Lg}$ is the normalized Killing form.
	\end{definition}

	\subsection{The conformal structure}
	\label{section The conformal structure}
	
	We make a brief digression to discuss the conformal structure of affine vertex algebras and $\cdo$. For the convenience of the reader we split the discussion between the abelian and the simple case. The reader familiar with the Segal--Sugawara construction in these contexts may go directly to the main statement (Theorem \ref{theorem conformal structure of cdo reductive}).
	
	\subsubsection{The case of an abelian Lie algebra}
	\label{section Abelian case}
	Assume that $G = T = \G_{m}^{r}$ is an algebraic torus of rank $r$ with Lie algebra $\Lg = \Lh = \C^{r}$. Let $\kappa$ be a symmetric bilinear form on $\Lh$. Note that such a form is automatically $\Lh$-invariant because $\Lh$ is abelian. Recall that, because the Killing form is zero, we have
	$$
	\kappa^{*} = -\kappa.	
	$$
	
	Assume that $\kappa$ is nondegenerate. Let $(h^{i})_{1 \leqslant i \leqslant r}$ be an orthonormal basis of $\Lh$ with respect to $\kappa$ \textit{i.e.}, a basis such that for all $1 \leqslant i,j \leqslant r$ we have the equality of complex numbers 
	$$
	\kappa(h^{i},h^{j}) = \delta_{i,j}.
	$$
	In that case, the vertex algebra $V^{\kappa}(\Lh)$ (resp. $V^{\kappa^{*}}(\Lh)$) is conformal (see Theorem~6.2.18 of \cite{LepowskyLi1994}) and a choice of conformal vector is given by  
	$$
	T_{L} = \frac{1}{2} \sum_{i=1}^{r} h^{i}_{(-1)}h^{i}_{(-1)}\vac \in V^{\kappa}(\Lh), \textrm{ (resp. $T_{R} = -\frac{1}{2} \sum_{i=1}^{r}h^{i}_{(-1)}h^{i}_{(-1)}\vac \in V^{\kappa^{*}}(\Lh)$)}.
	$$
	Let 
	\begin{equation}
		\label{equation definition conformal vector cdo torus}
		T = \pi_{L}(T_{L}) + \pi_{R}(T_{R})
	\end{equation}
	It is clear that $T$ has conformal weight $2$ and the goal is to show that it is a conformal vector of $\cdo$.
	
	Define $(\omega^{i})_{1 \leqslant i \leqslant r}$ to be the $\cO_{G}$-basis of $\Omega^{1}_{G}$ dual to the $\cO_{G}$-basis $(h^{i}_{L})_{1 \leqslant i \leqslant r}$ of~$\Der(\cO_{G})$. Then from equation \eqref{equation formula for the dual right embedding} we have for all $i \in \{1,...,r\}$ the equality 
	$$
	\pi_{R}(h^{i}) = - \pi_{L}(h^{i}) + \omega^{i} .
	$$
	We have 
	\begin{align*}
		\pi_{R}(T_{R}) &= -\frac{1}{2} \sum_{i=1}^{r} (-\pi_{L}(h^{i}) + \omega^{i})_{(-1)}(-\pi_{L}(h^{i})+\omega^{i})_{(-1)}\vac.
	\end{align*}
	So it is straightforward that 
	$$
	\pi_{R}(T_{R}) = -\frac{1}{2}\sum_{i=1}^{r} \pi_{L}(h^{i})\pi_{L}(h^{i}) + \frac{1}{2} \sum_{i=1}^{r} (\pi_{L}(h^{i})\omega^{i} + \omega^{i}\pi_{L}(h^{i})) - \frac{1}{2} \sum_{i=1}^{r} \omega^{i}\omega^{i}.
	$$
	For all $i \in \{1,...,r\}$ we have the equality $
	[\pi_{L}(h^{i})_{(-1)},\omega^{i}_{(-1)}] = 0
	$ in $\End(\cdo)$. So finally, we have the equality 
	\begin{equation}
		\label{equation of the conformal vector of cdo in the abelian case}
		T = \sum_{i=1}^{r} \pi_{L}(h^{i})\omega^{i} - \frac{1}{2}\sum_{i=1}^{r}\omega^{i}\omega^{i}. 
	\end{equation}
	
	\begin{lemme}
		\label{lemma conformal structure for the cdo of the torus}
		The vector $T$ is a conformal vector of central charge $2r$ of the vertex algebra $\cdo$. 
	\end{lemme}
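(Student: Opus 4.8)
The plan is to check the defining properties of a conformal vector (in the sense of, e.g., \cite{Kac1998}) directly from the decomposition $T = \pi_{L}(T_{L}) + \pi_{R}(T_{R})$ of \eqref{equation definition conformal vector cdo torus}: that the modes $L_{n} := T_{(n+1)}$ satisfy the Virasoro relations with central charge $2r$, that $L_{-1} = T_{(0)}$ is the translation operator $\partial$ of $\cdo$, and that $L_{0} = T_{(1)}$ acts semisimply. Recall that $T_{L}$ and $T_{R}$ are conformal vectors of the rank-$r$ Heisenberg vertex algebras $V^{\kappa}(\Lh)$ and $V^{\kappa^{*}}(\Lh)$ (Theorem 6.2.18 of \cite{LepowskyLi1994}), each of central charge $r$ (the sign in $T_{R}$ being the one forced by $\{h^{i}\}$ being anti-orthonormal for $\kappa^{*} = -\kappa$); since $\pi_{L}$ and $\pi_{R}$ are vertex algebra morphisms they intertwine translation operators and fix the vacuum, so $\pi_{L}(T_{L})$ and $\pi_{R}(T_{R})$ satisfy Virasoro $\lambda$-brackets of central charge $r$ inside $\cdo$.

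First I would upgrade the commutativity \eqref{equation commutation pi l and pi r} of the generators to the statement that the whole subalgebras $\pi_{L}(V^{\kappa}(\Lh))$ and $\pi_{R}(V^{\kappa^{*}}(\Lh))$ commute, i.e.\ $[a_{\lambda}b] = 0$ for $a$ in the first and $b$ in the second. This is standard: for fixed $a = \pi_{R}(x)$, the set of $b$ with $[a_{\lambda}b] = 0$ is stable under $\partial$ and under all $n$-th products (by sesquilinearity and the non-commutative Wick formula), hence is a vertex subalgebra; it contains the generators $\pi_{L}(y)$, hence all of $\pi_{L}(V^{\kappa}(\Lh))$, and one iterates in the other variable. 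In particular $[\pi_{L}(T_{L})_{\lambda}\pi_{R}(T_{R})] = 0$, so the cross terms drop out and
$$
[T_{\lambda}T] = [\pi_{L}(T_{L})_{\lambda}\pi_{L}(T_{L})] + [\pi_{R}(T_{R})_{\lambda}\pi_{R}(T_{R})] = (\partial + 2\lambda)T + \frac{2r}{12}\lambda^{3}\vac,
$$
which is the Virasoro $\lambda$-bracket of central charge $2r$, settling the first property.

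Next I would prove $T_{(0)} = \partial$. Both $T_{(0)}$ and $\partial$ are derivations of every $n$-th product of $\cdo$ (for $T_{(0)}$ this is $[a_{(0)}, b_{(n)}] = (a_{(0)}b)_{(n)}$) and both annihilate $\vac$, so it is enough to check they agree on the generators $\pi_{L}(x)$ with $x \in \Lh$ and on $\cO_{G}$. On $\pi_{L}(x)$ this is immediate from $[\pi_{R}(T_{R})_{\lambda}\pi_{L}(x)] = 0$ and $(T_{L})_{(0)} = \partial$ in $V^{\kappa}(\Lh)$. On $f \in \cO_{G}$ I would compute $T_{(n)}f$ from \eqref{equation of the conformal vector of cdo in the abelian case} using $[\pi_{L}(h^{i})_{\lambda}f] = h^{i}_{L}(f)$ (equation \eqref{equation Leibniz rule in the cdo lambda bracket}), $[\omega^{i}_{\lambda}f] = 0$, the relation $\pi_{L}(h^{i}) + \pi_{R}(h^{i}) = \omega^{i}$, and the mode formula for normally ordered products; the $\pi_{L}$- and $\pi_{R}$-contributions combine to give $T_{(0)}f = \sum_{i} h^{i}_{L}(f)\,\omega^{i}$ with $T_{(n)}f = 0$ for $n \geqslant 1$, and under the identification of $(\cdo)_{1}$ with $\Der(\cO_{G}) \oplus \Omega^{1}(G)$ one has $\sum_{i} h^{i}_{L}(f)\,\omega^{i} = \mathrm{d}f = \partial f$.

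Finally, for semisimplicity I would identify $L_{0} = T_{(1)}$ with the operator defining the $\Z_{\geqslant 0}$-grading of $\cdo$. From $[L_{0}, a_{(n)}] = (\Delta_{a} - n - 1)a_{(n)}$ on an $L_{0}$-eigenvector $a$ of eigenvalue $\Delta_{a}$, products of eigenvectors are again eigenvectors, with eigenvalues adding up exactly as in the construction of the grading; so it suffices that the generators be eigenvectors of the expected weight, namely $L_{0}\pi_{L}(x) = \pi_{L}(x)$ (as above) and $L_{0}f = 0$ for $f \in \cO_{G}$. The latter follows from the computation of the previous paragraph, which moreover gives $\pi_{L}(T_{L})_{(1)}f = \tfrac12 \sum_{i} h^{i}_{L}(h^{i}_{L}(f))$ and $\pi_{R}(T_{R})_{(1)}f = -\tfrac12 \sum_{i} h^{i}_{L}(h^{i}_{L}(f))$, which cancel. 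Hence $L_{0}$ acts on $(\cdo)_{n}$ by $n$ and is in particular semisimple, so $T$ is a conformal vector of central charge $2r$. The step that takes real work is this last one together with the computation of $T_{(0)}$ on $\cO_{G}$: since $\cO_{G}$ lies in neither Heisenberg subalgebra, one must extract the action of $\pi_{L}(T_{L})$ and $\pi_{R}(T_{R})$ on it by hand and see that the second-order differential operators the two conformal vectors produce cancel, leaving only $\partial$; everything else is formal once the two subalgebras are known to commute.
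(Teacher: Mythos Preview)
Your proof is correct and follows essentially the same approach as the paper. The paper's proof only writes out the key computation $T_{(0)}f = \sum_{i}(\pi_{L}(h^{i})_{(-1)}\omega^{i})_{(0)}f = \sum_{i} h^{i}_{L}(f)\,\omega^{i} = \partial f$ for $f \in \cO_{G}$ from the explicit formula \eqref{equation of the conformal vector of cdo in the abelian case} and declares the remaining properties ``easily checked''; you spell these out (Virasoro relations via commutativity of the $\pi_{L}$ and $\pi_{R}$ images, and identification of $L_{0}$ with the grading operator), but the substance is the same.
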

	\begin{proof} 
		We have to check that $T_{(0)}$ is the translation operator. Let $f \in \cO_{G}$, then in view of equation \eqref{equation of the conformal vector of cdo in the abelian case} we have 
		\begin{align*}
			T_{(0)}f = \sum_{i=1}^{r} (\pi_{L}(h^{i})_{(-1)}\omega^{i})_{(0)}f 
			= \sum_{i=1}^{r} \pi_{L}(h^{i})_{(0)}(f)\omega^{i} 
			= \sum_{i=1}^{r} h^{i}_{L}(f)\omega^{i}
			= \partial(f).
		\end{align*}
		The other properties are easily checked.
	\end{proof}

	\begin{note}
		Assume that $\kappa$ is nondegenerate. Let $(h^{i})_{i=1}^{r}$ and $(h_{i})_{i=1}^{r}$ be two dual basis of $\Lh$ in duality with respect to $\kappa$, that is for all $1 \leqslanti,j \leqslantr$ we have the equality of complex numbers
		$$
		\kappa(h^{i},h_{j}) = \delta_{i,j}.
		$$  
		In that case the vertex algebras $V^{\kappa}(\Lh)$ and $V^{\kappa^{*}}$ are conformal and a choice of conformal vector is given by the Segal--Sugawara construction by setting  
		$$
		T_{L} = \frac{1}{2} \sum_{i=1}^{r} h^{i}_{(-1)} {h_{i}}_{(-1)} \vac \in (V^{\kappa}(\Lh))_{2}
		$$
		and 
		$$
		T_{R} = -\frac{1}{2} \sum_{i=1}^{r}h^{i}_{(-1)}{h_{i}}_{(-1)}\vac \in (V^{\kappa^{*}}(\Lh))_{2}.
		$$
		Define $(\omega^{i})_{i=1}^{r}$ (resp. $(\omega_{i})_{i=1}^{r}$)
	\end{note}

	\subsubsection{The case of a simple Lie algebra}
	\label{section Simple case}
	
	Assume that $G$ is a simple algebraic group, \textit{i.e.}, a reductive algebraic group whose Lie algebra $\Lg$ is simple. Because $\Lg$ is simple there exists a unique complex number $k \in \C$ such that 
	$$
	\kappa = k \widetilde{\kappa}_{\Lg}
	$$
	and we refer interchangeably to $\kappa$ and $k$ as the level in that case. Here $\kappa^{*}$ corresponds to the complex number 
	\begin{equation}
		\label{equation dual level as a complex number for a simple factor}
		k^{*} = -k - 2\check{h}.
	\end{equation}
	Let us assume that the level is non-critical \textit{i.e.},
	$
	k \neq -\check{h}.$ 
	In that case we clearly also have $k^{*} \neq -\check{h}$. Let $(x^{i})_{i=1}^{\dim(\Lg)}$ be an orthonormal basis of $\Lg$ with respect to the normalized Killing form $\widetilde{\kappa}_{\Lg}$ \textit{i.e.}, a basis such that for all $i,j \in\{1,...,\dim(\Lg)\}$ we have the equality of complex numbers 
	$$
	\widetilde{\kappa_{\Lg}}(x^{i},x^{j}) = \delta_{i,j}.
	$$
	
	In that case the vertex algebra $V^{\kappa}(\Lg)$ (resp. $V^{\kappa^{*}}(\Lg)$) is conformal (see \textit{e.g.} Theorem~6.2.18 of \cite{LepowskyLi1994}) and a choice of conformal vector is given by the Segal--Sugawara construction
	\begin{align*}
		T_{L} &= \frac{1}{2(k + \check{h})} \sum_{i=1}^{\dim(\Lg)} x^{i}_{(-1)}x^{i}_{(-1)}\vac \in V^{\kappa}(\Lg) \\
		T_{R} &= -\frac{1}{2(k+\check{h})} \sum_{i=1}^{\dim(\Lg)}x^{i}_{(-1)}x^{i}_{(-1)}\vac \in V^{\kappa^{*}}(\Lg)
	\end{align*}
	because we have the equality of complex numbers 
	$$
	\frac{1}{k + \check{h}}= - \frac{1}{k^{*} + \check{h}}.
	$$
	Let 
	$$
	T = \pi_{L}(T_{L}) + \pi_{R}(T_{R})
	$$
	It is clear that $T$ is of conformal degree $2$.
	
	Define $(\omega^{i})_{1 \leqslant i \leqslant \dim(\Lg)}$ to be the $\cO_{G}$-basis of $\Omega^{1}_{G}$ dual to the $\cO_{G}$-basis $(x^{i}_{L})_{1 \leqslant i \leqslant \dim(\Lg)}$ of $\Der(\cO_{G})$. It follows from Proposition 3.18 of \cite{zhu_vertex_operator_algebras} that we have the equality in $\cdo$ 
	\begin{equation}
		\label{equation for T in the simple case}
		T = \sum_{i=1}^{\dim(\Lg)} \pi_{L}(x^{i})\omega^{i} + \frac{k^{*}}{2}\sum_{i=1}^{\dim(\Lg)} \omega^{i}\omega^{i}.
	\end{equation}
	and from there it is proven that $T$ is a conformal vector of central charge~$2\dim(G)$ of the vertex algebra $\cdo$ (see Proposition 3.22 of \textit{loc.cit.}).

	\subsubsection{The case of a reductive Lie algebra}
	\label{section Reductive case}
	Assume that $G$ is a reductive algebraic group. Recall the decomposition \eqref{decomposition de Lg} of $\Lg$. Because $\kappa$ is an invariant form it respects the previous decomposition, that is 
	$$
	\kappa = \kappa_{|\Lzg} \oplus \bigoplus_{s=1}^{r} \kappa_{|\Lg_{s}}.
	$$
	For each $s \in \{1,...,r\}$, because $\Lg_{s}$ is simple and $\kappa_{|\Lg_{s}}$ is invariant, there exists $k_{s} \in \C$ such that 
	$$
	\kappa_{|\Lg_{s}} = k_{s} \widetilde{\kappa_{\Lg_{s}}}.
	$$
	
	Let us assume momentarily that $\kappa$ is such that $\kappa-\kappa_{c}$ is nondegenerate where $\kappa_{c}$ is the critical level (see Definition \ref{definition critical level}). This means that $\kappa$ is such that $\kappa_{|\Lzg}$ is nondegenerate and $k_{s} \neq -\check{h}_{s}$ for all $1 \leqslant s \leqslant r$ where $\check{h}_{s}$ is the dual Coxeter number of the simple factor $\Lg_{s}$.
	
	We will apply the constructions of §\ref{section Abelian case} to $\Lzg$ and of §\ref{section Simple case} to each simple factor of $\Lg$ to construct a conformal vector in the general reductive case.
	
	Fix an orthonormal basis $(h^{i})_{i=1}^{\dim(\Lzg)}$ of the center $\Lzg$ with respect to $\kappa_{|\Lzg}$ and set 
	$$
	T_{L}^{ab} = \frac{1}{2} \sum_{i=1}^{\dim(\Lzg)} h^{i}_{(-1)}h^{i}_{(-1)}\vac \in V^{\kappa}(\Lzg)_{2},T_{R}^{ab} = -\frac{1}{2} \sum_{i=1}^{\dim(\Lzg)}h^{i}_{(-1)}h^{i}_{(-1)}\vac \in V^{\kappa^{*}}(\Lzg)_{2}.
	$$
	For each $s \in \{1,...,r\}$, fix an orthonormal basis $(x^{i_{s}}_{s})_{i_{s} =1}^{\dim(\Lg_{s})}$ of $\Lg_{s}$ with respect to the normalized Killing form $\widetilde{\kappa}_{\Lg_{s}}$ and set 
	\begin{align*}
		T_{L}^{s} &= \frac{1}{2(k_{s} + \check{h}_{s})} \sum_{i_{s}=1}^{\dim(\Lg_{s})} {x^{i_{s}}_{s}}_{(-1)}{x^{i_{s}}_{s}}_{(-1)}\vac \in V^{\kappa}(\Lg)_{2},\\
		T_{R}^{s} &= -\frac{1}{2(k_{s}+\check{h}_{s})} \sum_{i_{s}=1}^{\dim(\Lg_{s})}{x^{i_{s}}_{s}}_{(-1)}{x^{i_{s}}_{s}}_{(-1)}\vac \in V^{\kappa^{*}}(\Lg)_{2}.
	\end{align*}
	We have an isomorphism of vertex algebras 
	$$
	V^{\kappa}(\Lg) \simeq V^{\kappa_{\Lzg}}(\Lzg) \otimes \bigotimes_{s=1}^{r}V^{\kappa_{|\Lg_{s}}}(\Lg_{s}).
	$$
	So it makes sense to define 
	\begin{align*}
		T_{L} &= T^{ab}_{L} + T^{1}_{L} + ... + T^{r}_{L} \in V^{\kappa}(\Lg),\\
		T_{R} &= T^{ab}_{R} + T^{1}_{R} + ... + T^{r}_{R} \in V^{\kappa^{*}}(\Lg).
	\end{align*}
	Set 
	$$
	T = \pi_{L}(T_{L}) + \pi_{R}(T_{R}) \in \cdo.
	$$
	Clearly $T$ is of conformal degree $2$ and 
	$
	(h^{i})_{i=1}^{\dim \Lzg} \cup \bigcup_{s=1}^{r} (x^{i_{s}}_{s})_{i_{s} =1}^{\dim \Lg_{s}}
	$
	is a basis of $\Lg$ and hence 
	$
	(h^{i}_{L})_{i=1}^{\dim\Lzg} \cup \bigcup_{s=1}^{r} ((x^{i_{s}}_{s})_{L})_{i_{s} =1}^{\dim \Lg_{s}}
	$
	is a $\cO_{G}$-basis of $\Der(\cO_{G})$. Denote by~$
	(\omega^{i})_{i=1}^{\dim \Lzg} \cup \bigcup_{s=1}^{r} (\omega^{i_{s}}_{s})_{i_{s} =1}^{\dim \Lg_{s}}
	$
	its dual basis in $\Omega^{1}_{G}$.
	
	As a straightforward consequence of equalities \eqref{equation of the conformal vector of cdo in the abelian case} and \eqref{equation for T in the simple case} we have the equality:
	\begin{equation}
		\label{equation for T in the reductive case}
		T = \sum_{i=1}^{r} \pi_{L}(h^{i})\omega^{i} + \sum_{s=1}^{r} \sum_{i_{s}=1}^{\dim \Lg_{s}} \pi_{L}(x^{i_{s}}_{s})\omega^{i_{s}}_{s} - \frac{1}{2} \sum_{i=1}^{\dim \Lzg} \omega^{i}\omega^{i} + \sum_{s=1}^{r} \frac{k_{s}^{*}}{2} \sum_{i_{s}=1}^{\dim \Lg_{s}} \omega^{i_{s}}_{s} \omega^{i_{s}}_{s}.
	\end{equation}
	
	\begin{theoreme}
		\label{theorem conformal structure of cdo reductive}
		The vector $T$ is a conformal vector of central charge $2\dim G$ of the vertex algebra $\cdo$. Equivalently, the vertex algebra $\cdo$ is a conformal extension of~$V^{\kappa}(\Lg) \otimes V^{\kappa^{*}}(\Lg)$ with respect to the embedding 
		$$
		\pi_{L} \otimes \pi_{R} : V^{\kappa}(\Lg) \otimes V^{\kappa^{*}}(\Lg) \lra \cdo.
		$$
	\end{theoreme}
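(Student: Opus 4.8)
\section*{Proof proposal for Theorem \ref{theorem conformal structure of cdo reductive}}

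The plan is to check the three defining properties of a conformal vector for $T$: that $T_{(0)}$ is the translation operator of $\cdo$, that $T_{(1)}$ is diagonalizable (with eigenspace decomposition the grading introduced above), and that $[T_{\lambda}T]$ is the Virasoro $\lambda$-bracket of central charge $2\dim G$. The strategy is to exploit that $T=\pi_{L}(T_{L})+\pi_{R}(T_{R})$ is a sum of two \emph{commuting} Virasoro vectors, which makes the Virasoro relations essentially automatic. First I would record that $T_{L}$ is a conformal vector of $V^{\kappa}(\Lg)$: using the decomposition \eqref{decomposition de Lg} one has $V^{\kappa}(\Lg)\simeq V^{\kappa_{|\Lzg}}(\Lzg)\otimes\bigotimes_{s=1}^{r}V^{\kappa_{|\Lg_{s}}}(\Lg_{s})$, the Heisenberg factor is conformal of central charge $\dim\Lzg$ (because $\kappa_{|\Lzg}$ is nondegenerate) and each Segal--Sugawara factor is conformal of central charge $\tfrac{k_{s}\dim\Lg_{s}}{k_{s}+\check{h}_{s}}$ (because $k_{s}\neq-\check{h}_{s}$), and central charges add under tensor products; write $c_{L}$ for the total and, likewise, $c_{R}$ for $T_{R}$ in $V^{\kappa^{*}}(\Lg)$. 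Since $\pi_{L}$ and $\pi_{R}$ are vertex algebra morphisms and intertwine translation operators, $\pi_{L}(T_{L})$ and $\pi_{R}(T_{R})$ satisfy the Virasoro $\lambda$-brackets of central charges $c_{L}$ and $c_{R}$ inside $\cdo$. They moreover commute: the commutant of a vertex subalgebra is a vertex subalgebra and $\pi_{L}(\Lg)$ centralizes $\pi_{R}(\Lg)$ by \eqref{equation commutation pi l and pi r}, so $[\pi_{L}(T_{L})_{\lambda}\pi_{R}(T_{R})]=0$. Expanding $[T_{\lambda}T]$ then yields the Virasoro $\lambda$-bracket of central charge $c_{L}+c_{R}$, which I would check equals $2\dim G$ using $k_{s}^{*}=-k_{s}-2\check{h}_{s}$ (equation \eqref{equation dual level as a complex number for a simple factor}) and the vanishing of the Killing form on $\Lzg$: one gets $\tfrac{k_{s}}{k_{s}+\check{h}_{s}}+\tfrac{k_{s}^{*}}{k_{s}^{*}+\check{h}_{s}}=2$, hence $2\dim\Lg_{s}$ per simple factor, plus $2\dim\Lzg$ from the abelian part.

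It then remains to identify $T_{(0)}$ with the translation operator $\partial$ of $\cdo$ and to see that $T_{(1)}$ is diagonalizable. Since $T_{(0)}-\partial$ is a derivation of every $n$-th product and annihilates $\vac$, and $\cdo$ is generated by $\pi_{L}(\Lg)$ together with $\cO_{G}$, it suffices to test it on these generators: on $\pi_{L}(x)$ one has $\pi_{L}(T_{L})_{(0)}\pi_{L}(x)=\pi_{L}(\partial x)=\partial\pi_{L}(x)$ (because $T_{L}$ is conformal in $V^{\kappa}(\Lg)$) and $\pi_{R}(T_{R})_{(0)}\pi_{L}(x)=0$ (by the commutation just established); on $f\in\cO_{G}$ the explicit formula \eqref{equation for T in the reductive case} gives $T_{(0)}f=\sum_{a}x^{a}_{L}(f)\,\omega^{a}=\partial(f)$, since the $\omega^{a}$ form the $\cO_{G}$-dual basis of the $x^{a}_{L}$ and the purely $1$-form terms annihilate $f$ — exactly the computation already carried out in Lemma \ref{lemma conformal structure for the cdo of the torus} and in Proposition 3.22 of \cite{zhu_vertex_operator_algebras}. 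For $L_{0}:=T_{(1)}$ the same reductions give $L_{0}\pi_{L}(x)=\pi_{L}(x)$ (the $\pi_{R}$-part contributing $0$ by commutation) and $L_{0}f=0$ (every term of \eqref{equation for T in the reductive case} applies a $1$-form to $f\in\cO_{G}$ in a nonnegative mode, which vanishes). Since $\cdo$ is spanned by the monomials \eqref{equation generators of the cdo as vector space}, each mode $\pi_{L}(x)_{(-m)}$ with $m>0$ and $f_{(-n)}$ with $n>0$ raises the $L_{0}$-eigenvalue by a nonnegative integer, so $L_{0}$ is diagonalizable with spectrum in $\Z_{\geqslant0}$ and eigenspaces the $(\cdo)_{d}$. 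This proves $T$ is a conformal vector of central charge $2\dim G$; the ``equivalently'' clause is then immediate, as $T=(\pi_{L}\otimes\pi_{R})(T_{L}\otimes\vac+\vac\otimes T_{R})$ with $T_{L}\otimes\vac+\vac\otimes T_{R}$ the conformal vector of $V^{\kappa}(\Lg)\otimes V^{\kappa^{*}}(\Lg)$.

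I expect the only genuinely delicate steps to be bookkeeping: obtaining the conformal vector and central charge of $V^{\kappa}(\Lg)$ for $\Lg$ merely reductive as the sum of the abelian (Heisenberg) and simple (Sugawara) contributions — which is where the standing nondegeneracy assumptions of §\ref{section Reductive case} ($\kappa_{|\Lzg}$ nondegenerate and $k_{s}\neq-\check{h}_{s}$) are used — and the central-charge identity $c_{L}+c_{R}=2\dim G$. Everything else is formal, flowing from $T_{L}$ and $T_{R}$ being conformal, from $\pi_{L}$ and $\pi_{R}$ being vertex algebra morphisms, and from the commutation \eqref{equation commutation pi l and pi r}; in particular no OPE computation beyond \eqref{equation of the conformal vector of cdo in the abelian case} and \eqref{equation for T in the simple case} is required.
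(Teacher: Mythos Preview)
Your proposal is correct and follows essentially the same approach as the paper: the paper's proof simply says that equality \eqref{equation for T in the reductive case} shows $T_{(0)}$ is the translation operator ``in a similar fashion as in the proof of Lemma \ref{lemma conformal structure for the cdo of the torus}'' and that ``the rest is easily checked,'' and you have supplied precisely those checks --- the generator-by-generator verification of $T_{(0)}=\partial$ and $T_{(1)}$, the Virasoro $\lambda$-bracket via commutation of $\pi_{L}$ and $\pi_{R}$, and the central-charge identity $c_{L}+c_{R}=2\dim G$.
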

	\begin{proof}
		Equality \eqref{equation for T in the reductive case} shows that $T_{(0)}$ is the translation operator in a similar fashion as in the proof of Lemma \ref{lemma conformal structure for the cdo of the torus}. The rest is easily checked.
	\end{proof}
	
	\begin{remarque}
		Note that here we assume a condition on the level $\kappa$ such that the Segal--Sugawara constructions are available. It is remarkable that equation~\eqref{equation for T in the reductive case} does not involve the inverse of $(k+\check{h})$. The formula therefore makes sense at any level and one can check that $\cdo$ is indeed always conformal of central charge $2\dim(G)$ (see Proposition~3.22 of \cite{zhu_vertex_operator_algebras}).  	
	\end{remarque}

	\subsection{Chiral Peter--Weyl decomposition}
	
	The map $\pi_{L} \otimes \pi_{R}$ makes $\cdo$ into a $V^{\kappa}(\Lg) \otimes V^{\kappa^{*}}(\Lg)$-module. When the level is generic we can give a very explicit description of this structure. Let us first explain what we mean by a generic level.
	
	\begin{definition}
		\label{definition generic level}
		We say that $\kappa$ is generic if $\kappa_{|\Lzg}$ is nondegenerate and $k_{s} \notin \mathbb{Q}$ for all $s \in \{1,...,r\}$, where $k_{s} \in \C$ is the unique complex number such that 
		$
		\kappa_{|\Lg_{s}} = k_{s} \widetilde{\kappa_{\Lg_{s}}}.
		$
	\end{definition}
	\begin{remarque}
		\label{remark irrational level are stable by dual and direct sums}
		Note that if $\kappa$ is generic then so is $\kappa^{*}$ and that the direct sum of two generic levels is again generic.  
	\end{remarque}
		\begin{center}
		\textrm{From now on we assume that $\kappa$ is a generic level.}
	\end{center}
	The upshot is that for generic levels the Kazhdan--Lusztig category is easy to describe. The following statement can be found in the literature for the case of a simple group (see Lemma 30.1 of \cite{kazhdan1994tensor4}) or of an abelian Lie algebra (see §3 of \cite{LepowskyWilson1982}). We are not aware of a reference stating it at the level of generality we need, so we refer to §4.3 of \cite{my_phd} for a proof:
	
	\begin{theoreme}
		\label{theorem Kazhdan--Lusztig categories for irrational levels}
		For all $\lambda \in P_{+}$, the Weyl module $V_{\lambda}^{\kappa}$ is an irreducible $V^{\kappa}(\Lg)$-module. Moreover the functors 
		$$
		M \in \Lgkappacategoryintegrable \longmapsto M^{t\Lg[[t]]} \in \Lg\mathrm{-Mod}^{G}
		$$
		and 
		$$
		V \in \Lg\mathrm{-Mod}^{G} \longmapsto \IndKacMoody V \in \Lgkappacategoryintegrable
		$$
		are quasi-inverses of one another. 
	\end{theoreme}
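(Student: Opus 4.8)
The statement has two parts: (1) each Weyl module $V_\lambda^\kappa$ is irreducible for generic $\kappa$, and (2) the two functors between $\Lgkappacategoryintegrable$ and $\Lg\mathrm{-Mod}^G$ are mutually quasi-inverse. I would prove (1) first, then deduce (2) from it together with a PBW/weight-grading argument. For (1), the point is that the reductive case reduces to the already-classical simple and abelian cases via the tensor decomposition $V^\kappa(\Lg)\simeq V^{\kappa_{\Lzg}}(\Lzg)\otimes\bigotimes_{s=1}^r V^{\kappa_{|\Lg_s}}(\Lg_s)$, under which a Weyl module factors as an outer tensor product of Weyl modules for the factors, and an outer tensor product of irreducibles over $\C$ is irreducible. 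For a simple factor with $k_s\notin\Q$ one invokes Lemma 30.1 of \cite{kazhdan1994tensor4}; for the abelian factor, since $\kappa_{|\Lzg}$ is nondegenerate the Weyl module is a Fock-type module over a Heisenberg vertex algebra and irreducibility is §3 of \cite{LepowskyWilson1982}. So the only real content beyond citation is the bookkeeping of the tensor decomposition and checking that genericity of $\kappa$ (Definition \ref{definition generic level}) is exactly what guarantees genericity of each factor.

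**Part (2).** Given a $V^\kappa(\Lg)$-module $M$ in the Kazhdan--Lusztig category, set $M^{t\Lg[[t]]}$, the space of vectors killed by $t\Lg[[t]]$; this is a $\Lg$-submodule (as $\Lg=\Lg[[t]]/t\Lg[[t]]$ acts on it), and it lies in $\Lg\mathrm{-Mod}^G$ because the $\Lh$-weights of $M$ lie in $X^*(T)$ (so one may invoke Corollary \ref{corollary sufficient condition for integrability}) and it is locally finite. The universal property of Weyl modules (Proposition \ref{proposition Weyl modules are highest weight modules}, or rather its $\Lg$-module analogue via the adjunction $\IndKacMoody\dashv (-)^{t\Lg[[t]]}$) gives a canonical adjunction map $\IndKacMoody(M^{t\Lg[[t]]})\to M$; I would show it is an isomorphism. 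Dually, for $V\in\Lg\mathrm{-Mod}^G$ one has a canonical map $V\to(\IndKacMoody V)^{t\Lg[[t]]}$ and I would show it too is an isomorphism. The second of these is the easy direction: by the PBW theorem $\IndKacMoody V = \cU(t^{-1}\Lg[t^{-1}])\otimes V$ as a vector space with a compatible $\Z_{\geq 0}$-grading placing $V$ in degree $0$, and $t\Lg[[t]]$ acts by lowering degree, killing exactly degree $0$; so $(\IndKacMoody V)^{t\Lg[[t]]}=V$.

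**The main obstacle.** The heart is surjectivity (equivalently bijectivity) of $\IndKacMoody(M^{t\Lg[[t]]})\to M$, i.e. showing every object of $\Lgkappacategoryintegrable$ is generated by its lowest-degree piece and has no "extra" vectors. I would argue by the conformal/Hamiltonian grading: because $\kappa$ is generic the Sugawara vector gives $M$ an $L_0$-grading bounded below with finite-dimensional graded pieces, and on a Weyl module $V_\lambda^\kappa$ the lowest $L_0$-eigenvalue is the Sugawara conformal weight $\Delta_\lambda$, which for generic (irrational, or nondegenerate on the center) $\kappa$ takes \emph{distinct} values on distinct $\lambda\in P_+$ modulo $\Z$ — more precisely, $\Delta_\mu-\Delta_\lambda\notin\Z$ unless $\mu=\lambda$, by the explicit quadratic formula $\Delta_\lambda=\frac{(\lambda,\lambda+2\rho)}{2(k+\check h)}$ on each simple factor plus a nondegenerate quadratic form on the center. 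This separation forces any subquotient of $M$ to decompose as a direct sum over $\lambda$ of pieces each generated in the lowest degree, and combined with the irreducibility of each $V_\lambda^\kappa$ from Part (1) and exactness of $\IndKacMoody$, one concludes the adjunction unit and counit are isomorphisms; semisimplicity of $\Lgkappacategoryintegrable$ falls out along the way. The delicate point to get right is the $L_0$-separation estimate and the claim that it forces splitting — this is where I expect to spend the most care, and where the genericity hypothesis is genuinely used rather than just cited.
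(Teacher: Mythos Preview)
Your Part~(1) is correct and matches the paper: reduce to the simple and abelian factors via the tensor decomposition $V^\kappa(\Lg)\simeq V^{\kappa_{|\Lzg}}(\Lzg)\otimes\bigotimes_s V^{\kappa_{|\Lg_s}}(\Lg_s)$ and cite Kazhdan--Lusztig~IV, Lemma~30.1 and Lepowsky--Wilson~\S3 respectively; the paper does exactly this and defers the reductive bookkeeping to \cite{my_phd}.

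In Part~(2) there is a real gap. Your $L_0$-separation claim, that $\Delta_\mu-\Delta_\lambda\notin\Z$ unless $\mu=\lambda$, is false. For $\Lg=\mathfrak{sl}_3$ take $\lambda=2\varpi_1$ and $\mu=2\varpi_2$: the Dynkin involution gives $|\lambda+\rho|^2=|\mu+\rho|^2$, hence $\Delta_\lambda=\Delta_\mu$ exactly, while $V_\lambda\not\simeq V_\mu$ (they are the six-dimensional $\mathrm{Sym}^2\C^3$ and its dual). In rank~$\geq 2$ the Casimir is never injective on~$P_+$, so you cannot separate highest weights by $L_0$ alone; your argument as written also says nothing about self-extensions $\lambda=\mu$.

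What irrationality of the $k_s$ \emph{does} buy is weaker but still useful: on each simple factor $\Delta_\lambda^{(s)}-\Delta_\mu^{(s)}\in\Z$ forces $\Delta_\lambda^{(s)}=\Delta_\mu^{(s)}$ (a rational number divided by the irrational $2(k_s+\check h_s)$ can be an integer only if it is zero), so whenever an extension does not split by a crude $L_0$-congruence argument, the two tops sit at the \emph{same} $L_0$-level. For such an extension $0\to V_\mu^\kappa\to E\to V_\lambda^\kappa\to 0$ the lowest $L_0$-generalized eigenspace $E_{\Delta_\lambda}$ is annihilated by $t\Lg[[t]]$ and sits in a short exact sequence $0\to V_\mu\to E_{\Delta_\lambda}\to V_\lambda\to 0$ in $\Lg\mathrm{-Mod}^G$, which splits by Theorem~\ref{theorem classification of simple G-modules and simple LieG modules}. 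The resulting copy of $V_\lambda\subset E^{t\Lg[[t]]}$ then induces, via Proposition~\ref{proposition Weyl modules are highest weight modules} and the simplicity of $V_\lambda^\kappa$ from Part~(1), a section of $E\to V_\lambda^\kappa$. So the correct slogan is not ``$L_0$ separates the $\lambda$'s'' but ``$L_0$ isolates the top degree, and semisimplicity of $\Lg\mathrm{-Mod}^G$ on that top degree provides the splitting''.
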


	The following statement can be found in §4.4 of \cite{arkhipov_gaitsgory_differential_operators}, we give a more detailed proof for the reader's convenience: 
	
	\begin{theoreme} [Chiral Peter--Weyl]
		\label{theorem chiral Peter-Weyl}
		For any level $\kappa$ we have
		$$
		\cdo \in V^{\kappa \oplus \kappa^{*}}(\Lg \oplus \Lg)\mathrm{-Mod}^{\jetinf G \times \jetinf G}.
		$$
		Moreover, if $\kappa$ is generic, then $\cdo$ decomposes as a $V^{\kappa}(\Lg)\otimes V^{\kappa^{*}}(\Lg)$-module as the direct sum of Weyl modules
		\begin{equation}
			\label{equation chiral Peter-Weyl decomposition}
			\cdo = \bigoplus_{\lambda \in X^{*}(T)_{+}} V_{\lambda}^{\kappa} \otimes V_{-\omega_{0}\lambda}^{\kappa^{*}},
		\end{equation}
		where $\omega_{0}$ is the longest element of the Weyl group.
	\end{theoreme}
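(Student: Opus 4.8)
The plan is to settle the category-membership claim first (it holds for every $\kappa$ and is soft), and then, for generic $\kappa$, to produce the decomposition by writing down one simple submodule for each dominant character and checking that they exhaust $\cdo$.

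\emph{Membership in $V^{\kappa\oplus\kappa^{*}}(\Lg\oplus\Lg)^{\jetinf G\times\jetinf G}$.} I would read this straight off the $\Z_{\geqslant 0}$-grading of $\cdo$. For $n\geqslant 1$ the modes $\pi_{L}(x)_{(n)}$ and $\pi_{R}(x)_{(n)}$ strictly lower the degree, so both copies of $t\Lg[[t]]$ act locally nilpotently; the zero modes preserve the finite-dimensional homogeneous pieces, so the two $\Lg$-actions are locally finite; and since $\cdo=V^{\kappa}(\Lg)\otimes\cO_{\jetinf G}$ the bi-weights lie in $X^{*}(T)\times X^{*}(T)$ with $\Lzg\oplus\Lzg$ acting semisimply, so Corollary \ref{corollary sufficient condition for integrability}, applied to the reductive Lie algebra $\Lg\oplus\Lg$, upgrades the actions to $\jetinf G\times\jetinf G$.

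\emph{Constructing the summands and injectivity.} Fix $\kappa$ generic. By the classical Peter--Weyl theorem $\cO_{G}=\bigoplus_{\mu}V_{\mu}\otimes V_{\mu}^{*}$ as a $G\times G$-module, where, by the description of $x_{L}$ and $x_{R}$ as differentials of right and left translation, the left action (zero modes of $\pi_{L}$) operates on the $V_{\mu}$-factor and the right action (zero modes of $\pi_{R}$) on the $V_{\mu}^{*}\cong V_{-\omega_{0}\mu}$-factor. For each $\lambda\in X^{*}(T)_{+}$ I would pick inside the $\lambda$-summand the matrix-coefficient vector $u_{\lambda}\in\cO_{G}=(\cdo)_{0}$ that is simultaneously a left highest-weight vector of weight $\lambda$ and a right highest-weight vector of weight $-\omega_{0}\lambda$. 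By \eqref{equation Leibniz rule in the cdo lambda bracket} it is annihilated by $t\Lg[[t]]$ from both sides, and it generates the finite-dimensional $\Lg\oplus\Lg$-module $V_{\lambda}\otimes V_{-\omega_{0}\lambda}$; hence Proposition \ref{proposition Weyl modules are highest weight modules}, applied to $\Lg\oplus\Lg$ at level $\kappa\oplus\kappa^{*}$, produces a morphism $\Phi_{\lambda}\colon V_{\lambda}^{\kappa}\otimes V_{-\omega_{0}\lambda}^{\kappa^{*}}\to\cdo$ of $V^{\kappa\oplus\kappa^{*}}(\Lg\oplus\Lg)$-modules sending the highest-weight vector to $u_{\lambda}$ (note $V_{\lambda}^{\kappa}\otimes V_{-\omega_{0}\lambda}^{\kappa^{*}}$ is the Weyl module $V^{\kappa\oplus\kappa^{*}}_{(\lambda,-\omega_{0}\lambda)}(\Lg\oplus\Lg)$, and $-\omega_{0}\lambda$ is dominant). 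Since $\kappa$, $\kappa^{*}$ and $\kappa\oplus\kappa^{*}$ are all generic (Remark \ref{remark irrational level are stable by dual and direct sums}), Theorem \ref{theorem Kazhdan--Lusztig categories for irrational levels} makes all these Weyl modules irreducible, so each $\Phi_{\lambda}$ is injective; as the images are pairwise non-isomorphic simple submodules, $\bigoplus_{\lambda}\Phi_{\lambda}$ is injective.

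\emph{Surjectivity --- the hard part.} Let $N\subseteq\cdo$ be the image of $\bigoplus_{\lambda}\Phi_{\lambda}$, a $V^{\kappa\oplus\kappa^{*}}(\Lg\oplus\Lg)$-submodule; since $\cU(\Lg\oplus\Lg)u_{\lambda}$ is the full $\lambda$-summand of $\cO_{G}$, we already have $\cO_{G}\subseteq N$. Because $T=\pi_{L}(T_{L})+\pi_{R}(T_{R})$ lies in the image of $\pi_{L}\otimes\pi_{R}$ (Theorem \ref{theorem conformal structure of cdo reductive}), the translation operator $\partial=T_{(0)}$ preserves $N$. I would then argue, working in degree $1$ where $(\cdo)_{1}=\Der(\cO_{G})\oplus\Omega^{1}(G)$: first that the normal-ordered products $f\pi_{L}(x)$ lie in $N$ for all $f\in\cO_{G}$, so $\Der(\cO_{G})\subseteq N$; next that, because the coefficient matrix of the right-embedding formula \eqref{equation formula for the dual right embedding} is invertible over $\cO_{G}$ exactly when $\kappa^{*}$ is non-degenerate --- which holds for generic $\kappa$ --- one can solve for the invariant one-forms $\omega^{q}$ using that $\pi_{R}(x)\in N$, obtaining $\Omega^{1}(G)\subseteq N$; and finally, multiplying by functions and iterating $\partial$, that the whole differential subalgebra $\cO_{\jetinf G}$ lies in $N$. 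Once that is in place, $N=\cdo$ is immediate: by \eqref{equation definition of cdo as an induced module}, $\cdo$ is generated over $\cU(\Lgkappa)$ by $\cO_{\jetinf G}$, so $N\supseteq\cU(\Lgkappa)\cdot\cO_{\jetinf G}=\cdo$. Together with the injectivity this gives \eqref{equation chiral Peter-Weyl decomposition}. The step I expect to be the genuine obstacle is the passage from $\cO_{G}\subseteq N$ to $\cO_{\jetinf G}\subseteq N$: it is not formal, since a module need not be closed under multiplication by $\cO_{\jetinf G}$, and carrying it out needs a careful bookkeeping of how the $\cO_{G}$-module structure $f\mapsto f_{(-1)}$, the right embedding $\pi_{R}$ and $\partial$ interact on $(\cdo)_{0}$ and $(\cdo)_{1}$ --- exactly where non-degeneracy of $\kappa^{*}$ for generic $\kappa$ is used.
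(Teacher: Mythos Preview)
Your membership argument contains a genuine error: the conformal-degree pieces of $\cdo$ are \emph{not} finite dimensional --- already $(\cdo)_{0}=\cO_{G}$ is infinite dimensional --- so preserving the grading does not by itself give local finiteness of the $\Lg\oplus\Lg$-action. The paper fixes this by working with the explicit spanning set \eqref{equation generators of the cdo as vector space}: given such a generator, one uses that $\cO_{G}$ is locally finite as a $\Lg\oplus\Lg$-module to find a finite-dimensional $\Lg\oplus\Lg$-stable $V\subset\cO_{G}$ containing the functions involved, and then observes that $\pi_{L}(\Lg)_{(-m_{1})}\cdots\pi_{L}(\Lg)_{(-m_{r})}V_{(-n_{1})}\cdots V_{(-n_{s})}\vac$ is finite dimensional and $\Lg\oplus\Lg$-stable. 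Your argument for local nilpotence of $t\Lg[[t]]$ and your use of Corollary~\ref{corollary sufficient condition for integrability} for integrability are fine.

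For the decomposition, your route is correct in spirit but far harder than necessary, and the step you flag as the obstacle (closing $N$ under $f_{(-1)}$ to recover $\cO_{\jetinf G}$) is genuinely delicate: $N$ is only a $V^{\kappa\oplus\kappa^{*}}(\Lg\oplus\Lg)$-submodule, so inverting the $\cO_{G}$-valued matrix in \eqref{equation formula for the dual right embedding} is not a module operation. The paper bypasses all of this by using the other half of Theorem~\ref{theorem Kazhdan--Lusztig categories for irrational levels}, namely that $M\mapsto M^{t\Lg[[t]]}$ is an equivalence on the Kazhdan--Lusztig category at generic level. Thus one only needs to compute $(\cdo)^{t\Lg[[t]]\oplus t\Lg[[t]]}$ as a $\Lg\oplus\Lg$-module. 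This is done in two short steps from the induced-module description \eqref{equation definition of cdo as an induced module}: first $(\cdo)^{0\oplus t\Lg[[t]]}=V^{\kappa}(\Lg)\otimes\cO_{G}$ because $\cO_{\jetinf G}^{t\Lg[[t]]}=\cO_{G}$, and then $(V^{\kappa}(\Lg)\otimes\cO_{G})^{t\Lg[[t]]\oplus 0}=\cO_{G}$. The classical Peter--Weyl theorem for $\cO_{G}$ then gives \eqref{equation chiral Peter-Weyl decomposition} immediately. You already invoke Theorem~\ref{theorem Kazhdan--Lusztig categories for irrational levels} for irreducibility of the Weyl modules; using its full strength here makes your explicit surjectivity argument unnecessary.
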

	\begin{proof}
		We start by showing that 
		$$
		\cdo \in V^{\kappa \oplus \kappa^{*}}(\Lg \oplus \Lg)\mathrm{-Mod}^{\jetinf G \times \jetinf G}.
		$$
		Because the conformal degrees of $\cdo$ are lower bounded, it is clear that the Lie algebra~$t\Lg[[t]] \oplus t\Lg[[t]]$ acts locally nilpotently. It remains to show that the action of the Lie algebra $\Lg \oplus \Lg$ is locally finite and integrates to an action of the group~$G$. Recall the generating subset \eqref{equation generators of the cdo as vector space} and choose an element $x \in \cdo$ of this form
		$$
		x = \pi_{L}(x^{i_{1}})_{(-m_{1})}...\pi_{L}(x^{i_{r}})_{(-m_{r})}f^{j_{1}}_{(-n_{1})}... f^{j_{s}}_{(-n_{s})}\vac.
		$$
		Because $\cO_{G}$ is locally finite as a $\Lg \oplus \Lg$-module there exists $V \subset \cO_{G}$ that is of finite dimension, contains $f^{j_{1}},...,f^{j_{s}}$ and is  $\Lg \oplus \Lg$-stable. It is easily checked that the vector space 
		$$
		\pi_{L}(\Lg)_{(-m_{1})}...\pi_{L}(\Lg)_{(-m_{r})}V_{(-n_{1})}...V_{(-n_{s})}\vac 
		$$
		is finite dimensional, contains $x$ and is $\Lg \oplus \Lg$-stable. Now to see that the action of $\Lg \oplus \Lg$ integrates to an action of the algebraic group $G \times G$, one checks that Corollary \ref{corollary sufficient condition for integrability} applies. This proves the first part of the statement. 
		
		Let us now assume that $\kappa$ is generic. Because the two actions of $\Lgkappa$ and $\Lgkappastar$ commute, it is clear that 
		$$
		(\cdo)^{t\Lg[[t]] \oplus t\Lg[[t]]} = ((\cdo)^{0 \oplus t\Lg[[t]]})^{t\Lg[[t]] \oplus 0}.
		$$
		Using the description of $\cdo$ as an induced module (recall equation \eqref{equation definition of cdo as an induced module}) and the equality of $\Lg$-modules $\cO_{\jetinf G}^{t\Lg[[t]]} = \cO_{G}$ we see that 
		$$
		(\cdo)^{0 \oplus t\Lg[[t]]} = V^{\kappa}(\Lg)\otimes \cO_{G}.
		$$ 
		But since  
		$$
		(V^{\kappa}(\Lg)\otimes \cO_{G})^{t\Lg[[t]] \oplus 0} = \cO_{G}
		$$
		we have shown that 
		$$
		(\cdo)^{t\Lg[[t]] \oplus t\Lg[[t]]} = \cO_{G}
		$$
		as a $\Lg \oplus \Lg$-module. 
		
		Finally the description of the Kazhdan--Lusztig category of Theorem \ref{theorem Kazhdan--Lusztig categories for irrational levels} together with the algebraic Peter--Weyl theorem (see \textit{e.g.} §27.3.9 of \cite{tauvel_yu_lie_algebras}) proves equality~\eqref{equation chiral Peter-Weyl decomposition}. 	
	\end{proof}
	\begin{remarque}
		In the case of a simple group, this was also proven by Minxian Zhu in \cite{zhu_vertex_operator_algebras} (see also \cite{FrenkelStyrkas2006} where $\mathcal{D}_{SL_{2}}^{\kappa}$ was introduced and studied under the name of modified regular representation and §5  of \cite{FeiginSurveyExtensions}). A more general statement studying the situation for any level can be found in \cite{Zhu2011}. 
		
		For generic levels it is easy to deduce the simplicity of $\cdo$ from the chiral Peter--Weyl theorem (see Remark 4.4.2 of \cite{my_phd}). But note that Proposition \ref{proposition cdo is a simple vertex algebra} holds for any level. 
	\end{remarque}
	
	\begin{theoreme}
		\label{theorem rigidity of chiral Peter-Weyl}
		Let $G$ be a reductive group and $\kappa$ be a generic level. Then there exists on the $V^{\kappa}(\Lg)\otimes V^{\kappa^{*}}(\Lg)$-module 
		$$
		\bigoplus_{\lambda \in X^{*}(T)_{+}} V_{\lambda}^{\kappa} \otimes V_{-\omega_{0}\lambda}^{\kappa^{*}}
		$$
		a unique structure of simple vertex algebra that is a conformal extension of $V^{\kappa}(\Lg)\otimes V^{\kappa^{*}}(\Lg)$ given by that of $\cdo$. 
	\end{theoreme}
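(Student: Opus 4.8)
\emph{Existence} is already in hand: Theorem~\ref{theorem chiral Peter-Weyl} provides the decomposition, Proposition~\ref{proposition cdo is a simple vertex algebra} the simplicity, and Theorem~\ref{theorem conformal structure of cdo reductive} the conformal extension. The content is therefore \emph{uniqueness}, and the plan is to reduce it, via the degree‑zero part, to a rigidity statement about $G$ seen as a spherical variety. Write $\mathcal{A}:=V^{\kappa}(\Lg)\otimes V^{\kappa^{*}}(\Lg)$ and $M:=\bigoplus_{\lambda\in X^{*}(T)_{+}}V_{\lambda}^{\kappa}\otimes V_{-\omega_{0}\lambda}^{\kappa^{*}}$, so that $\mathcal{A}$ is the $\lambda=0$ summand; suppose $M$ is given a simple vertex algebra structure for which $\mathcal{A}\hookrightarrow M$ is a conformal embedding. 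Since the conformal vector $T$ lies in $\mathcal{A}$, the operator $L_{0}=T_{(1)}$ acts on $M$ through the \emph{fixed} $\mathcal{A}$-module structure; by Theorem~\ref{theorem conformal structure of cdo reductive} (the conformal grading of $\cdo$ coincides with its grading by $n$) its spectrum is $\Z_{\geqslant 0}$ and $M_{0}:=\ker L_{0}=\bigoplus_{\lambda}V_{\lambda}\otimes V_{-\omega_{0}\lambda}$, which as a $G\times G$-module is $\cO_{G}$ by the algebraic Peter--Weyl theorem. Grading forces $a_{(n)}b=0$ for $a,b\in M_{0}$ and $n\geqslant 0$; skew-symmetry then gives $a_{(-1)}b=b_{(-1)}a$ and the Borcherds identity gives associativity, so $a\cdot b:=a_{(-1)}b$ makes $M_{0}$ a commutative associative unital (unit $\vac$) algebra $A^{\sharp}$ on which $G\times G$ acts by algebra automorphisms (the zero modes $a_{(0)}$ are derivations, integrating as in the proof of Theorem~\ref{theorem chiral Peter-Weyl}), realizing the canonical differential of the $G\times G$-action.

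The key step will be to prove $A^{\sharp}\cong\cO_{G}$ as $G\times G$-algebras. The group $G$, acted on by $G\times G$ through left and right translation, is a spherical variety: $\cO_{G}=\bigoplus_{\lambda}V_{\lambda}\otimes V_{-\omega_{0}\lambda}$ is a multiplicity-free $G\times G$-module, with open dense orbit the diagonal homogeneous space $(G\times G)/G\cong G$ and weight monoid $\{(\lambda,-\omega_{0}\lambda):\lambda\in X^{*}(T)_{+}\}$. Thus $\Spec A^{\sharp}$ is an affine spherical $G\times G$-variety with this weight monoid, so by the structure theory of affine spherical varieties and their equivariant deformations (\cite{LunaVust1983},~\cite{Knop1991}) it lies in a finite-dimensional family whose ``generic'' member is $G$ itself, every other member being a partial degeneration which admits a nontrivial torus of $G\times G$-equivariant automorphisms acting on its coordinate ring with nonnegative, non-constant weights. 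I would then show that such extra symmetry on $A^{\sharp}=M_{0}$ cannot coexist with $M$ carrying a \emph{simple} vertex algebra structure extending $\mathcal{A}$: such a one-parameter group is trivial on $\mathcal{A}$ and scalar on each $\mathcal{A}$-summand $M_{\lambda}$, and one shows it must then be a grading of the whole vertex algebra $M$ by $\Z_{\geqslant 0}$ with proper nonzero positive part, i.e. a proper ideal, contradicting simplicity. Hence $A^{\sharp}$ is the non-degenerate member, $A^{\sharp}\cong\cO_{G}$, and the isomorphism may be taken $G\times G$-equivariant. \emph{I expect this elimination of the spherical degenerations to be the main obstacle}; it is where the theory of spherical varieties is used in an essential way, the delicate point being to make the last implication (extra symmetry of $A^{\sharp}\Rightarrow$ proper ideal in $M$) precise.

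Finally I would propagate the degree-zero identification to all of $M$. Because $[f_{\lambda}g]=0$ for $f,g\in\cO_{G}=A^{\sharp}$ (again by grading), the differential subalgebra of $M$ generated by $A^{\sharp}$ under $\partial:=T_{(0)}$ is commutative, and the algebra isomorphism $\cO_{G}\cong A^{\sharp}$ extends, since $\cO_{\jetinf G}$ is the free differential algebra on $\cO_{G}$, to a morphism of commutative vertex algebras $\cO_{\jetinf G}\to M$. Together with $\pi_{L}\colon V^{\kappa}(\Lg)\to M$ (part of $\mathcal{A}$) this pair satisfies the defining relations \eqref{equation commutation relation for left-invariant vector field in the cdo lambda bracket}--\eqref{equation cO(G) is commutative inside of the cdo}: the bracket among the $\pi_{L}(x)$ is that of $V^{\kappa}(\Lg)$, while the singular part of $[\pi_{L}(x)_{\lambda}f]$ is the zero mode $\pi_{L}(x)_{(0)}f$, which equals $x_{L}(f)$ once $A^{\sharp}$ is identified $G\times G$-equivariantly with $\cO_{G}$. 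Hence, by the universal property of the vertex algebra of chiral differential operators (\cite{arkhipov_gaitsgory_differential_operators},~\cite{Gorbounov2001}), there is a vertex algebra morphism $\Phi\colon\cdo\to M$ restricting to $\pi_{L}$ and to the above morphism; it sends $\vac$ to $\vac$ so is nonzero, $\cdo$ is simple (Proposition~\ref{proposition cdo is a simple vertex algebra}) so $\Phi$ is injective, and $\dim(\cdo)_{n}=\dim M_{n}$ in every degree by Theorem~\ref{theorem chiral Peter-Weyl}, so $\Phi$ is an isomorphism. A final check, using the characterizing properties of Theorem~\ref{theorem dual embedding of universal affine vertex algebra to cdo}, shows $\Phi$ also matches $\pi_{R}$ with the right embedding of $M$, hence is the identity on $\mathcal{A}$. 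Therefore every such structure on $M$ is isomorphic over $\mathcal{A}$ to that of $\cdo$, which is the asserted uniqueness.
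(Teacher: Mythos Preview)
Your overall architecture matches the paper's: existence is clear, uniqueness reduces to showing $M_{0}\cong\cO_{G}$ as $G\times G$-algebras, and then a vertex algebra map $\cdo\to M$ is forced by generators and relations, is injective by simplicity and surjective by dimension. The final propagation step is essentially identical to the paper's.

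The divergence, and the gap, is in the middle step. You jump to ``$\Spec A^{\sharp}$ is an affine spherical $G\times G$-variety'' and then propose to eliminate the non-generic members of a deformation family by an ad~hoc argument about extra torus symmetries producing a proper ideal of $M$. Two problems: first, you have not established that $A^{\sharp}$ is reduced (let alone integral, of finite type, or smooth), so spherical-variety classification does not yet apply; second, as you yourself flag, the ``extra symmetry $\Rightarrow$ proper ideal of $M$'' implication is not made precise, and it is not obvious how a grading on $M_{0}$ alone promotes to one on $M$ compatible with the vertex algebra structure.

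The paper closes both holes with a single idea you are missing: the Zhu algebra. Since $M$ is simple and $\Z_{\geqslant 0}$-graded, $M_{0}$ is a simple $\Zhu(M)$-module; the image of $\Zhu(M)$ in $\End(M_{0})$ is generated by the zero modes $x_{(0)},y_{(0)}$ ($x\in\Lg$ left, $y\in\Lg$ right) and the multiplications $f_{(-1)}$ ($f\in M_{0}$). Hence a $D$-stable subspace of $M_{0}$ is exactly a $G\times G$-stable ideal, and simplicity of $M_{0}$ as $D$-module says \emph{the only $G\times G$-stable ideals of $A^{\sharp}$ are $0$ and $A^{\sharp}$}. From this single fact the paper deduces, in order: $A^{\sharp}$ is reduced (nilradical is $G\times G$-stable), irreducible (each component's ideal is $G\times G$-stable), of finite type (products of highest weight vectors are nonzero by integrality, so a finite generating set of $X^{*}(T)_{+}$ suffices), homogeneous (a closed orbit's ideal is $G\times G$-stable), hence smooth. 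Only then does it invoke spherical-variety theory, but in the sharp form of Losev's theorem (Knop's conjecture): a smooth affine spherical variety is determined by its weight monoid, so $A^{\sharp}\cong\cO_{G}$. Your deformation/degeneration picture is replaced by this cleaner route: translate vertex-algebra simplicity into ``no nontrivial $G\times G$-ideals in degree zero'', then do ordinary algebraic geometry.
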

	
	\begin{proof}
		It follows from Proposition~\ref{proposition cdo is a simple vertex algebra} and Theorem \ref{theorem chiral Peter-Weyl} that $\cdo$ satisfies the conditions of the theorem. Conversely, let 
		$$
		\mathcal{A} = \bigoplus_{\lambda \in X^{*}(T)_{+}} V_{\lambda}^{\kappa} \otimes V_{-\omega_{0}\lambda}^{\kappa^{*}}
		$$
		be a vertex algebra satisfying the hypothesis of the theorem. Because the conformal grading is enforced, one sees that $\mathcal{A}$ is $\mathbb{Z}_{\geq 0}$-graded and that 
		\begin{equation}
			\label{equation peter weyl for A}
			\mathcal{A}_{0} = \bigoplus_{\lambda \in X^{*}(T)_{+}} V_{\lambda} \otimes V_{-\omega_{0}\lambda}.
		\end{equation}
		Moreover it follows from the Borcherds identities that the pair $(\mathcal{A}_{0},-_{(-1)}-)$ is an associative and commutative algebra with unit. For all $x \in \Lg \subset V^{\kappa}(\Lg)$, $y \in \Lg \subset V^{\kappa^{*}}(\Lg)$ and~$f,g \in \mathcal{A}_{0}$ it follows from the Borcherds identities that
		\begin{align}
			x_{(0)}(f_{(-1)}g) &= (x_{(0)}f)_{(-1)}g + f_{(-1)}(x_{(0)}g), \label{equation x is a derivation}\\
			y_{(0)}(f_{(-1)}g) &= (y_{(0)}f)_{(-1)}g + f_{(-1)}(y_{(0)}g).\label{equation y is a derivation}
		\end{align}
		Assume that we can show that $\mathcal{A}_{0} = \cO_{G}$ as commutative $G \times G$-algebras. Then because of the generators of $\cdo$ and the relations between them (recall equations \eqref{equation commutation relation for left-invariant vector field in the cdo lambda bracket}, \eqref{equation Leibniz rule in the cdo lambda bracket}, and \eqref{equation cO(G) is commutative inside of the cdo}) we have a nonzero morphism of vertex algebras $\cdo \lra \mathcal{A}$. This morphism has to be injective by the simplicity of $\cdo$ (Proposition \ref{proposition cdo is a simple vertex algebra}) and hence is surjective because of the chiral Peter--Weyl decomposition of $\cdo$ (equation \eqref{equation chiral Peter-Weyl decomposition}).
		
		We are left to show that $(\mathcal{A}_{0},-_{(-1)}-)$ is isomorphic to $\cO_{G}$ as $G\times G$-commutative algebras. Introduce $\Zhu(\mathcal{A})$, the Zhu algebra of $\mathcal{A}$ (see §2 of \cite{Zhu1996}). Because $\mathcal{A}_{0}$ is the top space of the simple $\mathcal{A}$-module $\mathcal{A}$ then $\mathcal{A}_{0}$ is a simple $\Zhu(\mathcal{A})$-module (see Theorem 2.2.2 of \textit{loc.cit.}). Given a homogeneous element $a \in \mathcal{A}$, its image in $\Zhu(\mathcal{A})$ acts by $a_{(\deg(a)-1)}$ on $\mathcal{A}_{0}$ (see Theorem~2.1.2 of \textit{loc.cit.}). By induction on the conformal degree of elements in $\mathcal{A}$ we see that the image of the map
		$$
		\Zhu(\mathcal{A}) \lra \End(\mathcal{A}_{0})
		$$
		is generated as an associative algebra by elements of the form
		$$
		x_{(0)}, y_{(0)}, f_{(-1)},
		$$
		with $x \in \Lg \subset V^{\kappa}(\Lg)$, $y \in \Lg \subset V^{\kappa^{*}}(\Lg)$, $f \in \mathcal{A}_{0}$. Denote this subalgebra by $D$, so that $\mathcal{A}_{0}$ is a simple $D$-module. Equalities \eqref{equation x is a derivation} and \eqref{equation y is a derivation} mean that the map
		$$
		-_{(-1)}- : f \otimes g \in \mathcal{A}_{0} \otimes \mathcal{A}_{0} \longmapsto f_{(-1)}g\in \mathcal{A}_{0}
		$$
		is $\Lg \oplus \Lg$-equivariant. But the action of $\Lg \oplus \Lg$ on $\mathcal{A}_{0}$ integrates to an action of~$G\times G$. As such, the map
		$$
		-_{(-1)}- : f \otimes g \in \mathcal{A}_{0} \otimes \mathcal{A}_{0} \longmapsto f_{(-1)}g\in \mathcal{A}_{0}
		$$
		is $G \times G$-equivariant. It is then clear from the definition of $D$ that a $D$-stable subset of $\mathcal{A}_{0}$ is the same as an ideal that is moreover a $\Lg \oplus \Lg$-submodule which is the same as a $G \times G$-submodule (recall that $G$ is connected). This implies that $\mathcal{A}_{0}$ is reduced because the nilradical of $\mathcal{A}_{0}$ is obviously an ideal and is clearly stable under $G \times G$ by the $G \times G$-equivariance of the product. Hence the nilradical of $\mathcal{A}_{0}$ is $D$-stable and simplicity enforces it to be $0$ because $1 \neq 0 \in \mathcal{A}_{0}$ is not nilpotent. We now show that~$\mathcal{A}_{0}$ is an integral domain. Because we already know it is reduced, all we have to show is that $X = \Spec(\mathcal{A}_{0})$ is irreducible. By what we explained earlier, $G \times G$ acts on the affine scheme $X$. Consider an irreducible component of $X$. It is closed by definition. Because $G \times G$ is connected, any irreducible component is $G \times G$-stable. This implies that the ideal defining the component is stable under $G \times G$. Hence, it is the zero ideal by simplicity. This proves that $X$ is reduced and irreducible and, hence, $\mathcal{A}_{0}$ is an integral domain. 
		
		We now show that $\mathcal{A}_{0}$ is of finite type. Recall the decomposition of $\mathcal{A}_{0}$ as $\Lg \oplus \Lg$-module: 
		$$
		\mathcal{A}_{0} = \bigoplus_{\lambda \in X^{*}(T)_{+}} V_{\lambda} \otimes V_{-\omega_{0}\lambda}.
		$$
		Because $\mathcal{A}_{0}$ is integral, the product of two highest weight vectors is nonzero. In particular, the product of two highest weight vectors of weight $\lambda$ and $\mu$ is a highest weight vector of weight $\lambda + \mu$. Now pick elements of $X^{*}(T)_{+}$, say $\lambda_{1},...,\lambda_{r} \in X^{*}(T)_{+}$ such that any element of $X^{*}(T)_{+}$ can be written as a linear combination of these elements with non-negative integral coefficients. Then the finite dimensional space 
		$$
		\bigoplus_{i=1}^{r} V_{\lambda_{i}}\otimes V_{-\omega_{0}\lambda_{i}}
		$$ 
		spans $\mathcal{A}_{0}$ as an algebra, hence $\mathcal{A}_{0}$ is of finite type. 
		
		Because $X$ is a $G \times G$-scheme of finite type it contains a closed orbit. The ideal defining this orbit is $G \times G$-stable. Hence by simplicity this ideal is zero. So $X$ is a~$G\times G$-homogeneous variety and therefore $X$ is smooth, because the characteristic is zero.
		
		We now use the theory of spherical varieties to conclude. The fact that $X$ is spherical follows for instance from the Lemma 2.12 of \cite{Brion2010}. While there are many $G \times G$-commutative algebras having the decomposition 
		$$
		\bigoplus_{\lambda \in X^{*}(T)_{+}} V_{\lambda} \otimes V_{-\omega_{0}\lambda}
		$$
		only one may be smooth as a consequence of Knop's conjectures that were proven by Losev (Theorem 1.3 of \cite{Losev2009}). This proves that $\mathcal{A}_{0} = \cO_{G}$ as a $G \times G$-commutative algebra and finishes the proof.
	\end{proof}
	
	\subsection{Degeneration of the geometric Satake equivalence}

	We start to investigate the representation theory of $\cdo$. Let us collect some useful facts on the representation theory of conformal vertex algebras. Let $V$ be a conformal vertex algebra with conformal vector $\omega$, $M$ be a $V$-module and~$m \in M$. 
	
	\begin{lemme}[Discussion following Definition 4.1.6 of \cite{LepowskyLi1994}]
		\label{lemma conformal weights are in the same Z coset when you are monogeneous}
		Assume that the operator $\omega_{(1)}$ acts semisimply on $M$ and that $M$ is a monogeneous $V$-module. Then the eigenvalues of $\omega_{(1)}$ all belong to the same congruence class modulo $\mathbb{Z}$.
	\end{lemme}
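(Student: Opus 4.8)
The plan is to isolate the one structural fact that does the work: in a vertex operator algebra the modes of conformal-degree-homogeneous states shift conformal degree by an \emph{integer}, and everything else is bookkeeping. First I would record, for $a \in V$ homogeneous of conformal degree $d_{a}$ and any $k \in \Z$, the commutation relation $[L_{0}, a_{(k)}] = (d_{a} - k - 1)\, a_{(k)}$, which follows from $[L_{0}, Y(a,z)] = z\,\partial_{z} Y(a,z) + Y(L_{0}a,z)$ together with $L_{0}a = d_{a}a$. Since $V$ is $\Z$-graded by conformal degree, each $d_{a}$ is an integer, so for any conformal-degree-homogeneous $w \in M$ and any $v \in V$, $k \in \Z$, the vector $v_{(k)}w$ (after decomposing $v$ into its homogeneous components) is a sum of conformal-degree-homogeneous vectors all of whose degrees lie in $\deg(w) + \Z$.

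Next I would feed this into the ``monogeneous'' hypothesis. Taking $m$ to be the generating conformal-degree-homogeneous vector, the submodule it generates — namely all of $M$ — is spanned by the iterated expressions $v^{1}_{(k_{1})} \cdots v^{r}_{(k_{r})}\,m$ with the $v^{i} \in V$ homogeneous and the $k_{i} \in \Z$. Applying the first step $r$ times, every such vector has conformal degree in $\deg(m) + \Z$. Because $M$ is graded by conformal degrees, this forces every nonzero graded piece $M_{[h]}$ to satisfy $h \in \deg(m) + \Z$, which is exactly the claim.

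It is cleaner, and worth stating, to package the first step once and for all as a decomposition: grouping the conformal degrees of an arbitrary conformal-degree-graded $M$ by their image in $\C/\Z$ and setting $M_{\bar c} = \bigoplus_{h \in \bar c} M_{[h]}$, the mode-shift observation says each $M_{\bar c}$ is a $V$-submodule, so $M = \bigoplus_{\bar c \in \C/\Z} M_{\bar c}$ as $V$-modules. The lemma is then the special case in which $M$ is cyclic — or merely indecomposable, which already covers the simple modules that arise later — since then exactly one summand survives.

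I do not expect a genuine obstacle here; the argument is pure bookkeeping. The only point demanding care is pinning down conventions: that $V$ carries a $\Z$-grading by conformal degree so that $d_{a}$ above really is an integer, and that ``graded by conformal degrees'' for $M$ means an honest $L_{0}$-eigenspace (or generalized eigenspace) decomposition. Once those are fixed, the coset statement is automatic from the mode-shift relation.
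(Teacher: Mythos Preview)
Your argument is correct and is exactly the standard one: the commutator $[L_{0},a_{(k)}]=(\mathrm{wt}(a)-k-1)a_{(k)}$ forces every mode of a homogeneous element of $V$ to shift conformal degree by an integer, so the span of iterated modes applied to a homogeneous generator $m$ lies entirely in $\deg(m)+\Z$. The paper does not supply its own proof but simply cites the discussion after Definition~4.1.6 in Lepowsky--Li, where precisely this mode-shift reasoning is carried out; your write-up is thus a faithful unpacking of the reference. Your additional remark that the $\C/\Z$-decomposition $M=\bigoplus_{\bar c}M_{\bar c}$ into $V$-submodules already gives the conclusion for any indecomposable module is a useful strengthening and is implicit in the same passage of Lepowsky--Li.
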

	
	\begin{proposition}[Proposition 4.5.11 and Corollary 4.5.15 of \cite{LepowskyLi1994}]
		\label{proposition annihilator is an ideal}
		Let
		$$
		\mathrm{Ann}(m) = \{v \in V \hspace{0.5mm}|\hspace{0.5mm} v(z)m = 0\}=\{v \in V \hspace{0.5mm}|\hspace{0.5mm} v_{(n)}m = 0 \textrm{ for all $n\in \Z$}\}.
		$$
		It is an ideal of $V$. In particular if $V$ is simple and $m \in M$ is nonzero then for all nonzero $v \in V$, $v(z)m \neq 0$ \textit{i.e.}, there exists $n \in \Z$ such that
		$$
		v_{(n)} m \neq 0.
		$$
	\end{proposition}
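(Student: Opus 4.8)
First I would note that $\mathrm{Ann}(m)$ is visibly a linear subspace of $V$, and that it is stable under the translation operator $T$: for the module action one has $(Tv)(z)m = \partial_z\bigl(v(z)m\bigr)$, so $v(z)m = 0$ forces $(Tv)(z)m = 0$. Hence the only real point is to show that $\mathrm{Ann}(m)$ is absorbing for the products $a_{(n)}$, i.e. that $v \in \mathrm{Ann}(m)$, $a \in V$ and $n \in \Z$ imply $a_{(n)}v \in \mathrm{Ann}(m)$. Granting this, $\mathrm{Ann}(m)$ is by definition an ideal of $V$ (automatically two-sided, by skew-symmetry).

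To prove the absorption property the plan is to use weak associativity of the module structure, itself a consequence of the Jacobi identity. Writing $a(z_0) = \sum_{n}a_{(n)}z_0^{-n-1}$ for the state--field correspondence of $V$, for any $a,v \in V$ and $m \in M$ there is an integer $N \geqslant 0$ such that
$$
(z_0+z_2)^{N}\,a(z_0+z_2)\bigl(v(z_2)m\bigr) = (z_0+z_2)^{N}\sum_{n \in \Z}\bigl((a_{(n)}v)(z_2)m\bigr)z_0^{-n-1}
$$
as elements of $M((z_2))((z_0))$. If $v \in \mathrm{Ann}(m)$ then $v(z_2)m = 0$, so the left-hand side vanishes; since $z_0+z_2$ is invertible in $\C((z_2))((z_0))$, with inverse $\sum_{k\geqslant 0}(-1)^{k}z_0^{k}z_2^{-k-1}$, it acts injectively on $M((z_2))((z_0))$ and we may cancel $(z_0+z_2)^{N}$. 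Comparing coefficients of powers of $z_0$ then gives $(a_{(n)}v)(z_2)m = 0$ for every $n$, that is $a_{(n)}v \in \mathrm{Ann}(m)$, as wanted. This establishes that $\mathrm{Ann}(m)$ is an ideal of $V$.

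The last assertion is then immediate: since the vacuum vector acts by the identity, $\vac(z)m = m \neq 0$, so $\vac \notin \mathrm{Ann}(m)$ and $\mathrm{Ann}(m)$ is a proper ideal; simplicity of $V$ forces $\mathrm{Ann}(m) = 0$, which is exactly the statement that $v(z)m \neq 0$ for every nonzero $v \in V$, equivalently that some $v_{(n)}m \neq 0$. The only delicate step is the displayed identity — one must know that the exponent $N$ can be chosen so that the substitution $z_1 \mapsto z_0 + z_2$ and the resulting formal series are legitimate; this is the standard weak-associativity lemma for modules and carries all the content. If one prefers to argue only from the module axioms, one can instead multiply the module Jacobi identity by a sufficiently large power of $(z_1-z_2)$, apply $\mathrm{Res}_{z_1}$, and read off the same vanishing of $(a_{(n)}v)(z_2)m$; this is slightly more computation but uses nothing beyond the definitions.
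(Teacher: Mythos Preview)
The paper does not supply its own proof of this proposition; it simply cites Proposition~4.5.11 and Corollary~4.5.15 of Lepowsky--Li. Your argument via weak associativity---using that $(z_0+z_2)$ is a unit in $\C((z_2))((z_0))$ to cancel the prefactor and read off $(a_{(n)}v)(z_2)m=0$---is correct and is exactly the standard proof given in that reference, so there is nothing to compare.
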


	\begin{lemme}[Definition 4.7.1 and Corollary 4.7.8 of \cite{LepowskyLi1994}]
		\label{lemma vacuum-like vectors implies there exists a morphism from the vertex algebra}
		We say that $m$ is a vacuum-like vector if for all $v\in V$ and~$n\geqslant0$
		$$
		v_{(n)} m = 0.
		$$
		In that case there exists a unique morphism of $V$-modules
		$$
		V \lra M
		$$
		mapping the vacuum $\vac$ to $m$. In particular, if $V$ is simple and $M$ is a simple $V$-module, it is isomorphic to the adjoint module $V$ if and only if it contains a nonzero vacuum-like vector.
	\end{lemme}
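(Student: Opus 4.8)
The plan is as follows. The first half of the statement — the definition of a vacuum-like vector together with the existence and uniqueness of a $V$-module morphism $V \lra M$ sending $\vac$ to a given vacuum-like vector $m$ — is Corollary 4.7.8 of \cite{LepowskyLi1994}, so I would simply invoke it. If one wished to reprove it, the morphism is forced to be $v \longmapsto v_{(-1)}m$ (since $v = v_{(-1)}\vac$ in $V$), and the only delicate point is checking that this map intertwines the $V$-action; this follows from the Borcherds identities once one knows that the vacuum-like condition gives $Y(v,z)m \in M[[z]]$. The genuine content left to establish is the final ``in particular'' equivalence.

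For the direction ``if $M \cong V$ then $M$ contains a nonzero vacuum-like vector'': fix an isomorphism of $V$-modules $\psi : V \lra M$ and set $m = \psi(\vac)$, which is nonzero. By the vacuum axiom of a vertex algebra, $Y(v,z)\vac \in V[[z]]$ for every $v \in V$, hence $v_{(n)}\vac = 0$ for all $n \geqslant 0$; applying $\psi$ and using that it intertwines every operator $v_{(n)}$ yields $v_{(n)}m = 0$ for all $n \geqslant 0$, so $m$ is a nonzero vacuum-like vector of $M$.

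For the converse, suppose $M$ contains a nonzero vacuum-like vector $m$. By the first part there is a morphism of $V$-modules $\phi : V \lra M$ with $\phi(\vac) = m$, and since $m \neq 0$ the map $\phi$ is nonzero. Its kernel is a $V$-submodule of the adjoint module $V$, i.e. an ideal of $V$; as $V$ is a simple vertex algebra, $\ker\phi$ is $0$ or all of $V$, and $\phi \neq 0$ forces $\ker\phi = 0$, so $\phi$ is injective. Then $\phi(V)$ is a nonzero $V$-submodule of the simple module $M$, hence $\phi(V) = M$, and $\phi$ is an isomorphism $V \cong M$. I do not expect a real obstacle here: once the universal property attached to vacuum-like vectors is granted, the equivalence is a formal consequence of simplicity (with submodules of the adjoint module identified with ideals), the one point worth recording being that $\vac$ is itself vacuum-like, which is immediate from the vacuum axiom and is what makes the forward direction work.
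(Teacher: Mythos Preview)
Your proof is correct. The paper does not supply its own argument here: the lemma is stated with a citation to Lepowsky--Li and used as a black box, so there is nothing to compare against.

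One small remark on a step you do flag but do not justify: the identification ``a $V$-submodule of the adjoint module is an ideal'' is not automatic for bare vertex algebras, but it holds here because $V$ is assumed to be a vertex \emph{operator} algebra (see the standing hypothesis just above the lemma). Indeed, any $V$-submodule $I$ of the adjoint module is stable under $L_{-1}=\omega_{(0)}$, and then the skew-symmetry formula $Y(a,z)v = e^{zL_{-1}}Y(v,-z)a$ forces $a_{(n)}v \in I$ for $a\in I$, $v\in V$, so $I$ is a two-sided ideal.
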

	
	\begin{lemme}[Corollary 4.7.6 of \cite{LepowskyLi1994}]
		\label{lemma criterion for being vacuum-like}
		The vector $m$ is vacuum-like if and only if~$\omega_{(0)} m = 0$.
	\end{lemme}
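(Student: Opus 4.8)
The plan is to prove the two implications separately, the forward one being a trivial specialization and the reverse one resting on a single commutator identity.

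The forward direction is immediate: if $m$ is vacuum-like then, by definition, $v_{(n)}m = 0$ for every $v \in V$ and every $n \geqslant 0$, and taking $v = \omega$, $n = 0$ gives $\omega_{(0)}m = 0$.

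For the converse I would use that, as operators on the module $M$, one has $[\omega_{(0)}, v_{(n)}] = (\omega_{(0)}v)_{(n)} = (Tv)_{(n)} = -n\,v_{(n-1)}$ for all $v \in V$ and $n \in \Z$. This follows from the Borcherds commutator formula applied with $\omega$ (only the $j=0$ term survives since $\binom{0}{j}=0$ for $j \geqslant 1$) together with the translation axiom $Y(Tv,z) = \partial_z Y(v,z)$. Assuming $\omega_{(0)}m = 0$ and applying this identity to $m$ yields, for every $n \geqslant 1$, the relation $\omega_{(0)}(v_{(n)}m) = -n\,v_{(n-1)}m$, equivalently $v_{(n-1)}m = -\tfrac{1}{n}\,\omega_{(0)}(v_{(n)}m)$. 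Then I would run a downward induction: by the module axiom $Y_M(v,z)m \in M((z))$ there is an integer $N \geqslant 1$ with $v_{(k)}m = 0$ for all $k \geqslant N$; feeding this into the recursion successively at $n = N, N-1, \dots, 1$ forces $v_{(N-1)}m = v_{(N-2)}m = \cdots = v_{(0)}m = 0$. Hence $v_{(n)}m = 0$ for all $v$ and all $n \geqslant 0$, i.e.\ $m$ is vacuum-like.

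There is essentially no serious obstacle here: the only points requiring a little care are that the commutator identity $[\omega_{(0)}, v_{(n)}] = -n\,v_{(n-1)}$ and the relation $(Tv)_{(n)} = -n\,v_{(n-1)}$ hold on an arbitrary module and not merely on $V$ itself (which is standard), and that the truncation bound $N$, while depending on $v$ and $m$, is finite --- precisely what makes the descent terminate. No genericity or reductivity hypothesis enters, so this is a general fact about modules over a conformal vertex algebra, consistent with citing \cite{LepowskyLi1994}; the argument above is its content.
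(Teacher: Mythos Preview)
Your proof is correct. The paper itself does not supply a proof of this lemma: it is stated with a citation to Corollary~4.7.6 of \cite{LepowskyLi1994}, and your argument is precisely the standard one behind that reference---the commutator identity $[\omega_{(0)},v_{(n)}]=-n\,v_{(n-1)}$ combined with downward induction from the truncation bound.
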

	
	The conformal structure of $\cdo$ is compatible under $\pi_{L} \otimes \pi_{R}$ with the conformal structure given by the Segal--Sugawara construction on $V^{\kappa}(\Lg)\otimes V^{\kappa^{*}}(\Lg)$ as we explained in §\ref{section The conformal structure}.	In particular any $\cdo$-module is naturally a $V^{\kappa}(\Lg)\otimes V^{\kappa^{*}}(\Lg)$-module by restriction. The previous lemma implies:
	
	\begin{lemme}
		\label{lemma criterion for beinn vacuum-like in the case of the cdo}
		Let $M$ be a $\cdo$-module. A vector $m \in M$ is vacuum-like with respect to the $\cdo$-module structure if and only if it is vacuum-like with respect to the $V^{\kappa \oplus \kappa^{*}}(\Lg \oplus \Lg)$-module structure. In particular any morphism of $V^{\kappa \oplus \kappa^{*}}(\Lg \oplus \Lg)$-modules from $V^{\kappa \oplus \kappa^{*}}(\Lg \oplus \Lg)$ to $M$ extends uniquely to a morphism of $\cdo$-modules from $\cdo$ to $M$. Moreover if $M$ is a simple $\cdo$-module, it is isomorphic as a $\cdo$-module to the adjoint module $\cdo$ if and only if it contains $V^{\kappa}_{0}\otimes V^{\kappa^{*}}_{0}$ as a $V^{\kappa \oplus \kappa^{*}}(\Lg \oplus \Lg)$-submodule.
	\end{lemme}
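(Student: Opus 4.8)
The plan is to reduce the entire statement to Lemma~\ref{lemma criterion for being vacuum-like}, which detects \emph{vacuum-like} vectors purely through the zero mode of the conformal vector. First I would record the consequence of Theorem~\ref{theorem conformal structure of cdo reductive} (equivalently, of the explicit formula~\eqref{equation for T in the reductive case}) that the conformal vector $T$ of $\cdo$ is the image under $\pi_{L}\otimes\pi_{R}$ of the Segal--Sugawara conformal vector of $V^{\kappa}(\Lg)\otimes V^{\kappa^{*}}(\Lg)$. Hence for a $\cdo$-module $M$ and a vector $m\in M$, the operator $T_{(0)}$ acts identically whether $M$ is regarded as a $\cdo$-module or, by restriction along $\pi_{L}\otimes\pi_{R}$, as a $V^{\kappa\oplus\kappa^{*}}(\Lg\oplus\Lg)$-module. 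Applying Lemma~\ref{lemma criterion for being vacuum-like} to both structures, the condition $T_{(0)}m=0$ is simultaneously equivalent to $m$ being vacuum-like for $\cdo$ and to $m$ being vacuum-like for $V^{\kappa\oplus\kappa^{*}}(\Lg\oplus\Lg)$; this is the first assertion.

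For the statement on morphisms, given a $V^{\kappa\oplus\kappa^{*}}(\Lg\oplus\Lg)$-module morphism $\phi\colon V^{\kappa\oplus\kappa^{*}}(\Lg\oplus\Lg)\to M$, set $m:=\phi(\vac)$; by Lemma~\ref{lemma vacuum-like vectors implies there exists a morphism from the vertex algebra} the vector $m$ is vacuum-like for $V^{\kappa\oplus\kappa^{*}}(\Lg\oplus\Lg)$, hence vacuum-like for $\cdo$ by the equivalence just obtained. The same lemma, now applied to $\cdo$, produces a unique $\cdo$-module morphism $\widetilde{\phi}\colon\cdo\to M$ with $\widetilde{\phi}(\vac)=m$. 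I would then observe that $\widetilde{\phi}\circ(\pi_{L}\otimes\pi_{R})$ is a $V^{\kappa\oplus\kappa^{*}}(\Lg\oplus\Lg)$-module morphism sending $\vac$ to $m$, so it coincides with $\phi$ by the uniqueness clause of Lemma~\ref{lemma vacuum-like vectors implies there exists a morphism from the vertex algebra}; and uniqueness of the extension $\widetilde{\phi}$ is forced by its value on the vacuum.

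For the final claim, assume $M$ is a simple $\cdo$-module. Since $\cdo$ is simple (Proposition~\ref{proposition cdo is a simple vertex algebra}), Lemma~\ref{lemma vacuum-like vectors implies there exists a morphism from the vertex algebra} gives $M\cong\cdo$ as $\cdo$-modules if and only if $M$ contains a nonzero vacuum-like vector, which by the first part is the same as a nonzero vector of $M$ killed by all strictly positive modes of $V^{\kappa\oplus\kappa^{*}}(\Lg\oplus\Lg)$. One implication is then immediate: if $M$ contains $V^{\kappa}_{0}\otimes V^{\kappa^{*}}_{0}\cong V^{\kappa\oplus\kappa^{*}}(\Lg\oplus\Lg)$ as a submodule, the image of $\vac$ under that embedding is such a vector. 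For the converse, a nonzero vacuum-like $m$ yields an isomorphism of $\cdo$-modules $\cdo\cong M$ as above, and restricting along $\pi_{L}\otimes\pi_{R}$ and invoking the chiral Peter--Weyl decomposition of Theorem~\ref{theorem chiral Peter-Weyl} identifies its $\lambda=0$ summand $V^{\kappa}_{0}\otimes V^{\kappa^{*}}_{0}$ with a $V^{\kappa\oplus\kappa^{*}}(\Lg\oplus\Lg)$-submodule of $M$ of the required form.

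I do not anticipate a real obstacle: once conformal compatibility is set up, the rest is bookkeeping. The two points deserving a moment of care are, first, ensuring that being vacuum-like is literally tested by the same operator $T_{(0)}$ for the two module structures, which is precisely what Theorem~\ref{theorem conformal structure of cdo reductive} guarantees; and second, checking that the submodule generated by a vacuum-like vector is a genuine copy of $V^{\kappa}_{0}\otimes V^{\kappa^{*}}_{0}$ rather than merely a quotient of it, which one reads off from the Peter--Weyl decomposition together with the simplicity of $M$.
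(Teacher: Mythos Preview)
Your proposal is correct and follows exactly the approach the paper intends: the paper states the lemma immediately after Lemma~\ref{lemma criterion for being vacuum-like} with only the sentence ``The previous lemma implies,'' relying on the compatibility of conformal vectors established in Theorem~\ref{theorem conformal structure of cdo reductive}. You have simply made explicit the details the paper leaves to the reader, including the handling of the morphism extension and the final simplicity clause via the Peter--Weyl decomposition.
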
	
	
	We now define some of the categories that will be interesting for us
	
	\begin{definition}
		\label{definition module categories for the cdo}
		Denote by 
		$$
		\cdo\mathrm{-Mod}^{1 \times \jetinf G}
		$$ 
		the full subcategory of $\cdo\mathrm{-Mod}$ consisting of modules belonging to $V^{\kappa}(\Lg)\mathrm{-Mod}^{\jetinf G}$ when seen as $V^{\kappa^{*}}(\Lg)$-modules.  Let 
		$$
		\cdo\mathrm{-Mod}^{\jetinf G \times \jetinf G}
		$$ 
		be the full subcategory of $\cdo\mathrm{-Mod}$ consisting of modules belonging to the Kazhdan--Lusztig category~$V^{\kappa \oplus \kappa^{*}}(\Lg \oplus \Lg)\mathrm{-Mod}^{\jetinf G \times \jetinf G}$ when seen as $V^{\kappa \oplus \kappa^{*}}(\Lg \oplus \Lg)$-modules.
	\end{definition}
	
	\begin{note}
		\begin{remarque}
			As explained in the introduction, the category $\cdo\mathrm{-Mod}^{1 \times \jetinf G}$ is meant to be a substitute for the category of $\kappa$-twisted $\cD$-modules on $\mathrm{Gr}_{G}$ and $\cdo\mathrm{-Mod}^{\jetinf G \times \jetinf G}$ is a substitute for the category of $\kappa$-twisted $\cD$-modules on $\mathrm{Gr}_{G}$ that are $\jetinf G$-equivariant. 
		\end{remarque}
	\end{note}

	\begin{theoreme}[Degeneration of the geometric Satake equivalence]
		\label{theorem degeneration of the geometric Satake correspondence}
		Let $G$ be a simple algebraic group and $\kappa$ be a generic level. Then $\cdo$ is the only simple object of the category $\cdo\mathrm{-Mod}^{\jetinf G \times \jetinf G}$. Moreover the category $\cdo\mathrm{-Mod}^{\jetinf G \times \jetinf G}$ is equivalent to the category of $\C$-vector spaces. 
	\end{theoreme}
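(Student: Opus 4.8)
The plan is to use conformal weights together with the genericity of the level to force any simple object of $\cdo\mathrm{-Mod}^{\jetinf G \times \jetinf G}$ to contain a vacuum-like vector, and then to bootstrap this to the semisimplicity statement. So let $M$ be a simple object of the category. Restricting along $\pi_{L}\otimes\pi_{R}$ it lies in the Kazhdan--Lusztig category $V^{\kappa \oplus \kappa^{*}}(\Lg\oplus\Lg)\mathrm{-Mod}^{\jetinf G\times\jetinf G}$, and since $\kappa\oplus\kappa^{*}$ is again generic (Remark \ref{remark irrational level are stable by dual and direct sums}) this category is semisimple with simple objects the Weyl modules (Theorem \ref{theorem Kazhdan--Lusztig categories for irrational levels} with Theorem \ref{theorem classification of simple G-modules and simple LieG modules}), so $M\cong\bigoplus_{\lambda,\mu\in X^{*}(T)_{+}}(V_{\lambda}^{\kappa}\otimes V_{\mu}^{\kappa^{*}})^{\oplus m_{\lambda,\mu}}$. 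By §\ref{section The conformal structure} the conformal vector of $\cdo$ restricts to the sum of the two Segal--Sugawara vectors, so the lowest conformal weight of the $(\lambda,\mu)$-summand is $\Delta_{\lambda,\mu}=\frac{(\lambda,\lambda+2\rho)-(\mu,\mu+2\rho)}{2(k+\check h)}$, with $\rho$ the half-sum of the positive roots and using $k^{*}+\check h=-(k+\check h)$. Because $M$ is simple it is generated by one vector, so Lemma \ref{lemma conformal weights are in the same Z coset when you are monogeneous} puts all $\Delta_{\lambda,\mu}$ in one coset of $\Z$; as $(\nu,\nu+2\rho)$ is rational for $\nu$ in the weight lattice while $k\notin\Q$, this is possible only if $(\lambda,\lambda+2\rho)-(\mu,\mu+2\rho)$ takes a single value whenever $m_{\lambda,\mu}\neq 0$. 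In particular all summands have the same lowest conformal weight $\Delta$ and $M$ is $(\Delta+\Z_{\geqslant 0})$-graded.

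The next step analyses the bottom piece $M[\Delta]=\bigoplus_{\lambda,\mu}(V_{\lambda}\otimes V_{\mu})^{\oplus m_{\lambda,\mu}}$. It is a simple $\Zhu(\cdo)$-module, and, exactly as in the proof of Theorem \ref{theorem rigidity of chiral Peter-Weyl}, the image of $\Zhu(\cdo)$ in $\End(M[\Delta])$ is generated by the operators $\pi_{L}(x)_{(0)}$ for $x\in\Lg$ and $f_{(-1)}$ for $f\in(\cdo)_{0}=\cO_{G}$, which satisfy the defining relations of the ring $\cD_{G}$ of differential operators on $G$; hence $M[\Delta]$ is a simple quasicoherent $\cD_{G}$-module on which the action of $\Lg\oplus\Lg$ (via $\pi_{L}(\Lg)_{(0)}$, $\pi_{R}(\Lg)_{(0)}$) integrates to $G\times G$, i.e. a simple $G\times G$-equivariant $\cD_{G}$-module. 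Since $G\times G$ acts transitively on $G$ with stabiliser the diagonal $\Delta G$, the induction equivalence for equivariant $\cD$-modules identifies $\cD_{G}\mathrm{-Mod}^{G\times G}$ with $G\mathrm{-Mod}$; therefore $M[\Delta]$ corresponds to a simple, hence finite-dimensional, representation $V_{\tau}$ of $G$, and as a $G\times G$-module it is $\Ind_{\Delta G}^{G\times G}V_{\tau}\cong\bigoplus_{\lambda}V_{\lambda}\otimes(V_{\lambda}^{*}\otimes V_{\tau})$, in which the index pairs $(\tau,0)$ and $(0,\tau)$ both occur. The constancy obtained above then gives $(\tau,\tau+2\rho)=-(\tau,\tau+2\rho)$, so $(\tau,\tau+2\rho)=0$; since $\widetilde{\kappa_{\Lg}}$ is positive definite on the real span of the roots and $\tau$ is dominant, $\tau=0$. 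Thus $M[\Delta]=\cO_{G}$ contains $V_{0}^{\kappa}\otimes V_{0}^{\kappa^{*}}$, and Lemma \ref{lemma criterion for beinn vacuum-like in the case of the cdo} gives $M\cong\cdo$.

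For the second assertion I would run the same computation for an arbitrary object $M$: decomposing by conformal weight modulo $\Z$ writes $M=\bigoplus_{c}M^{[c]}$ with each $M^{[c]}$ in the category, each bounded below, and with $M^{[c]}[\Delta^{[c]}]$ a (no longer necessarily simple) $G\times G$-equivariant $\cD_{G}$-module; by semisimplicity of $G\mathrm{-Mod}$ and the same $\tau=0$ argument applied to each summand, $M^{[c]}[\Delta^{[c]}]\cong\cO_{G}^{(I_{c})}$ and $\Delta^{[c]}=0$, so $M$ is $\Z_{\geqslant 0}$-graded with $M[0]\cong\cO_{G}^{(I)}$. The copies of $1\in\cO_{G}$ are vacuum-like vectors, so Lemma \ref{lemma criterion for beinn vacuum-like in the case of the cdo} produces a map $\cdo\otimes\C^{(I)}\to M$ which is an isomorphism on bottom pieces, hence injective (every simple $V^{\kappa\oplus\kappa^{*}}(\Lg\oplus\Lg)$-summand of the source meets conformal degree $0$) and surjective (its cokernel would be a nonzero object of the category with vanishing degree-$0$ part, which the previous analysis forbids). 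Hence every object is a direct sum of copies of $\cdo$, so $\cdo\mathrm{-Mod}^{\jetinf G\times\jetinf G}$ is semisimple with unique simple object $\cdo$, and $M\mapsto\Hom_{\cdo}(\cdo,M)$ is an equivalence with the category of $\C$-vector spaces.

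I expect the main obstacle to be the identification of $M[\Delta]$ with a $G\times G$-equivariant $\cD_{G}$-module and the appeal to $\cD_{G}\mathrm{-Mod}^{G\times G}\simeq G\mathrm{-Mod}$: one must verify carefully that the $\Zhu$-algebra action factors through $\cD_{G}$ (the computation parallels Theorem \ref{theorem rigidity of chiral Peter-Weyl}, but now exploits the quotient-ring structure rather than commutativity) and one must invoke the equivariant $\cD$-module classification in the quasicoherent — not merely holonomic — setting. The conformal-weight bookkeeping is by contrast routine once genericity is used to upgrade a congruence modulo $\Z$ to an equality.
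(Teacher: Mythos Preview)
Your argument is correct, and the first half closely mirrors the paper: both use the Kazhdan--Lusztig decomposition, the $\cD_{G}$-module structure coming from the zero modes $\pi_{L}(x)_{(0)}$ and $f_{(-1)}$, and the irrationality of $k$ to trap the relevant piece in a single conformal degree. The paper appeals to one-sided $G$-equivariance (citing Beilinson--Bernstein) to conclude that the $\cD_{G}$-submodule generated by any component is a sum of copies of $\cO_{G}$; your route through $G\times G$-equivariance and the induction equivalence $\cD_{G}\textrm{-Mod}^{G\times G}\simeq G\textrm{-Mod}$, followed by the $(\tau,0)/(0,\tau)$ conformal-weight trick to force $\tau=0$, is a pleasant variant that reaches the same conclusion. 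Your worry about the Zhu step is unfounded: $M$ is $(\Delta+\Z_{\geqslant 0})$-graded, so Zhu's theorem applies, and on $M[\Delta]$ one checks directly (as in the paper's rigidity proof) that $(fg)_{(-1)}=f_{(-1)}g_{(-1)}$ since positive modes of degree-$0$ elements annihilate the bottom space. Where the two proofs genuinely diverge is the semisimplicity. You redo the bottom-component analysis for an arbitrary object and build an explicit isomorphism $\cdo^{(I)}\to M$; the paper instead observes once and for all that every nonzero object contains a copy of $\cdo$, and then shows $\cdo$ is projective by lifting the inclusion $V^{\kappa}(\Lg)\otimes V^{\kappa^{*}}(\Lg)\hookrightarrow\cdo$ through any surjection (using that the vacuum Weyl module is projective in the Kazhdan--Lusztig category) and extending via the vacuum-like-vector Lemma. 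The paper's projectivity argument is shorter and, as it notes in a remark, generalises to any simple conformal extension of a projective object; your explicit decomposition is more hands-on but self-contained.
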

	\begin{proof}
		Recall from Definition \ref{definition generic level} that in this setting, the data of a generic level is that of an irrational complex number $k \in \C \setminus \Q$ such that $\kappa = k \widetilde{\kappa_{\Lg}}$. 
		
		The fact that $\cdo$ is a simple object of the category $\cdo\mathrm{-Mod}^{\jetinf G \times \jetinf G}$ follows from Proposition \ref{proposition cdo is a simple vertex algebra} and Theorem \ref{theorem chiral Peter-Weyl}. 
		
		Let $M$ be an object of the category $\cdo\mathrm{-Mod}^{\jetinf G \times \jetinf G}$. Thanks to Theorem~\ref{theorem Kazhdan--Lusztig categories for irrational levels} there exists a nonempty set $I$ and two families~$(\alpha_{i})_{i \in I}, (\beta_{i})_{i \in I} \in X^{*}(T)_{+}$ such that we have an isomorphism of $V^{\kappa}(\Lg) \otimes V^{\kappa^{*}}(\Lg)$-modules
		\begin{equation}
			\label{equation decomposition of M in simple Weyl modules}
			M = \bigoplus_{i \in I} V_{\alpha_{i}}^{\kappa} \otimes V^{\kappa^{*}}_{\beta_{i}}.
		\end{equation}
		Let $x,y \in \Lg$ and $f,g\in \cO_{G}$, then the following equalities hold in $\End(M)$
		\begin{align*}
			&[\pi_{L}(x)_{(0)},\pi_{L}(y)_{(0)}] = \pi_{L}([x,y])_{(0)}, \\
			&[\pi_{L}(x)_{(0)},f_{(-1)}] = (x_{L}(f))_{(-1)}, \\
			&[f_{(-1)},g_{(-1)}] = 0,
		\end{align*}
		in view of equations \eqref{equation commutation relation for left-invariant vector field in the cdo lambda bracket}, \eqref{equation Leibniz rule in the cdo lambda bracket} and \eqref{equation cO(G) is commutative inside of the cdo}. This readily implies that $M$ is a representation of the ring $\cD_{G}$ of differential operators on the group $G$. Let $i\in I$ and consider the~$\Lg$-submodule $V_{\alpha_{i}}\otimes V_{\beta_{i}} \subset M$. Let $M'$ be the $\cD_{G}$-submodule generated by $V_{\alpha_{i}}\otimes V_{\beta_{i}}$. Let~$j \in I$, the conformal degrees appearing in the factor $V^{\kappa}_{\alpha_{j}} \otimes V^{\kappa^{*}}_{\beta_{j}}$ are of the form 
		$$
		\frac{c_{j}}{2(k+\check{h})}+n
		$$ 
		where $c_{j} \in \Q$ and $n \in \Z_{+}$ in view of the conformal degrees appearing in Weyl modules (see \textit{e.g.}, Proposition 2.7 of \cite{kazhdan1991tensor} or Theorem 6.2.21 of \cite{LepowskyLi1994}). Now since~$k \notin \mathbb{Q}$ and hence $(k+\check{h}) \notin \Q$, for all $i,j \in I$ the equality 
		$$
		\frac{c_{j}}{2(k+\check{h})}+n = \frac{c_{i}}{2(k+\check{h})}
		$$
		forces $n$ to be zero. But it is clear that the action of $\cD_{G}$ preserves conformal degree. These two facts show that there exists a subset $S \subset I$ such that 
		$$
		M' \subset \bigoplus_{s \in S} V_{\alpha_{s}} \otimes V_{\beta_{s}}.
		$$
		Using Corollary \ref{corollary sufficient condition for integrability} we see that $M'$ is a $\cD_{G}$-module such that the action of $\Lg$ integrates to an action of the algebraic group $G$. Then using for instance Proposition 30.18 of~\cite{EtingofRepresentationsLieGroups2024} we conclude that $M'$ is a $G$-equivariant $\cD_{G}$-module. 
		
		This is easily seen to imply that it is a direct sum of copies of $\cO_{G}$ as a $\cD_{G}$-module and so in particular as a $\Lg \oplus \Lg$-module. In particular, $0 \in S\subset I$. In other words, $V^{\kappa}_{0} \otimes V^{\kappa^{*}}_{0} = V^{\kappa}(\Lg) \otimes V^{\kappa^{*}}(\Lg)$ appears as a~$V^{\kappa}(\Lg) \otimes V^{\kappa^{*}}(\Lg)$-submodule of $M$. Lemma \ref{lemma criterion for beinn vacuum-like in the case of the cdo} then concludes that $M$ contains a copy of $\cdo$.
		
		So it is now clear that $\cdo$ is the only simple object of the category. Because any object contains a copy of $\cdo$, we simply have to show that $\cdo$ is projective to conclude. Let 
		$$
		0 \lra A \lra B \overset{\pi}{\lra} \cdo \lra 0
		$$
		be an exact sequence in the category $\mathcal{D}_{G}^{\kappa}\mathrm{-Mod}^{\jetinf G \times \jetinf G}$. This is also an exact sequence in the category $V^{\kappa\oplus\kappa^{*}}(\Lg\oplus \Lg)\mathrm{-Mod}^{\jetinf G \times \jetinf G}$. Since $V^{\kappa}(\Lg) \otimes V^{\kappa^{*}}(\Lg) \subset \cdo$ is projective in $V^{\kappa\oplus\kappa^{*}}(\Lg \oplus \Lg)\mathrm{-Mod}^{\jetinf G \times \jetinf G}$ (Theorem \ref{theorem Kazhdan--Lusztig categories for irrational levels}) we can lift it to $B$ as a $V^{\kappa\oplus\kappa^{*}}(\Lg\oplus \Lg)$-module via a map $s : V^{\kappa\oplus\kappa^{*}}(\Lg \oplus \Lg)\lra B$. Using Lemma~\ref{lemma criterion for beinn vacuum-like in the case of the cdo}, we can extend the map $s$ to a map of $\cdo$-modules 
		$$
		\tilde{s} : \cdo \lra B.
		$$
		Now it is clear that the map $\tilde{s}$ is a section because $\pi \circ \tilde{s} : \cdo \lra \cdo$ is a map of $\cdo$-modules that coincides with the identity on $V^{\kappa}(\Lg) \otimes V^{\kappa^{*}}(\Lg)$. By the simplicity of $\cdo$ (Proposition \ref{proposition cdo is a simple vertex algebra}) and Schur's Lemma it must be the identity map.		
	\end{proof}
	
	\begin{remarque}
		We note that the argument appearing at the end of this proof can be generalized as follows. Let $V$ be a conformal vertex algebra and $W$ be a simple conformal vertex algebra that is a conformal extension of $V$. Let $\mathcal{C}$ be a full subcategory of $V\mathrm{-Mod}$ such that $V,W \in \mathcal{C}$. Let $\mathcal{C'}$ be the full subcategory of $W\mathrm{-Mod}$ that consists of $W$-modules that belong to $\mathcal{C}$ when seen as $V$-modules. Then if $V$ is projective in $\mathcal{C}$ then $W$ is projective in $\mathcal{C}'$.
	\end{remarque}
	
	\section{Spectral flow in the theory of vertex algebras}
	\label{section spectral flow in the theory of vertex algebras}
	
	In view of Theorem \ref{theorem degeneration of the geometric Satake correspondence}, to build new interesting representations of $\cdo$ we have to move away from the subcategory 
	$$
	\Satakecategory.
	$$ 
	This is a motivation to introduce spectral flows (see Remark \ref{remarque Kazhdan--Lusztig categories are unstable with respect to spectral flow}). To put the notion in perspective, let us introduce a general framework before applying it in our setting. 
	
	\subsection{Generalities}
	Fix a finite dimensional abelian Lie algebra $\Lh$ endowed with a nondegenerate symmetric bilinear form $\kappa$. Let $V^{\kappa}(\Lh)$ be the Heisenberg vertex algebra associated to this data. Let $V$ be a conformal vertex superalgebra with conformal vector $\omega$ endowed with a vertex superalgebra morphism 
	$$
	\varphi : V^{\kappa}(\Lh) \lra V.
	$$
	This morphism is necessarily injective since $V^{\kappa}(\Lh)$ is simple in view of Theorem~\ref{theorem Kazhdan--Lusztig categories for irrational levels}, so we view $V^{\kappa}(\Lh)$ as a vertex subalgebra of $V$. We require the following two conditions  
	
	\begin{enumerate}[label=\roman*), ref=\roman*]	
		\item \label{condition direct sum of fock spaces} When viewing $V$ as a $V^{\kappa}(\Lh)$-module via $\varphi$ then $V$ is a direct sum of Fock spaces \textit{i.e.}, there exist a set $i \in I$ and a map $\lambda : i\in I \mapsto \lambda_{i} \in \Lh^{*}$ such that 
		$$
		V = \bigoplus_{i \in I} \cF^{\kappa}_{\lambda_{i}}.
		$$
		\item \label{condition primary fields} All the elements $h \in \Lh = V^{\kappa}(\Lh)_{1}$ are primary of weight one in $V$, that is, we have 
		$$
		[\omega_{\lambda}\varphi(h)] = (\partial + \lambda)\varphi(h).
		$$
		
	\end{enumerate}
	
	We associate to this data the following abelian group that depends on $V, \Lh, \kappa$ and $\varphi$. In the notation we only include the dependency on $V$ as the others should always be clear from the context.
	\begin{definition}
		\label{definition spectral flow group}
		In the previous notations, we define
		$$
		\cSF(V) = \{ h \in V^{\kappa}(\Lh)_{1} = \Lh \hspace{0.5mm} | \hspace{0.5mm} \varphi(h)_{(0)} \textrm{ acts semisimply on $V$ with eigenvalues in $\mathbb{Z}$} \}.
		$$
		We call $\cSF(V)$ the spectral flow group of $V$.
	\end{definition}

	Following \cite{Li1997} we define for any $h \in \cSF(V)$ the associated Li's delta operator
	\begin{equation}
		\label{equation Li's delta operator}
		\Delta(h,z) = z^{h_{(0)}}\exp \left(\sum_{n>0}\frac{h_{(n)}}{-n}(-z)^{-n} \right) \in \cF(V).
	\end{equation}
	We now digress a bit to discuss the meaning of this formula. First the term $z^{h_{(0)}}$ is to be understood in the following sense. Let $V$ be a $\C$-vector space and $A \in \End(V)$ such that $A$ acts semisimply with eigenvalues in $\mathbb{Z}$. Denote by $(\pi_{n})_{n \in \mathbb{Z}}$ the family of projectors associated to the direct sum decomposition 
	$$
	V = \bigoplus_{n\in \mathbb{Z}} V_{n}
	$$
	of $V$ in eigenspaces for the semi-simple endomorphism $A$. Now define 
	$$
	z^{A} = \sum_{n \in \mathbb{Z}} \pi_{n}z^{n} \in \cF(V).
	$$
	That is for each $n \in \mathbb{Z}$ and $v \in V_{n}$ 
	$$
	z^{A} v = vz^{n}.
	$$

	To understand the meaning of the exponential term appearing in formula~\eqref{equation Li's delta operator}, recall that the operators $(h_{(n)})_{n\in \mathbb{Z}}$ satisfy the Heisenberg commutation relations. In particular the annihilation operators \textit{i.e.}, the nonnegative modes $(h_{(n)})_{n \geqslant0}$, pairwise commute. The exponential is then defined with the usual formula and it indeed defines a field on $V$. In fact, it is easily checked that more is true. Namely for all $h \in \cSF(V)$ and $v \in V$, 
	$$
	\Delta(h,z) v \in V[z,z^{-1}] \subset V((z)).
	$$
	Note that for all $h,h' \in \Lh$, we have the equalities
	\begin{align*}
		\Delta(h,z)\Delta(h',z) &= \Delta(h+h',z), \\
		\Delta(0,z) &= \id_{V},
	\end{align*}
	so that we have 
	$$
	\Delta(h,z)^{-1} = \Delta(-h,z).
	$$

	Now, given $h \in \cSF(V)$ and a module $M$ of the vertex algebra $V$, we can define a new module $h \cdot M$ as follows. The underlying vector space of $h \cdot M$ is the same, but the action gets twisted. If $Y_{M} : V \lra \cF(M)$ is the map defining the module structure of M we define for $v \in V$ 
	$$
	Y_{h \cdot M}(v,z) = Y_{M}(\Delta(h,z)^{-1} v,z) = Y_{M}(\Delta(-h,z)v,z),
	$$
	where $Y_{M} : V \lra \cF(M)$ is naturally extended to a $\C[z,z^{-1}]$-linear map denoted again $Y_{M} : V[z,z^{-1}] \lra \cF(M)$. The following theorem summarizes the properties of spectral flow:
	\begin{theoreme}[\cite{Li1997} and Proposition 5.4 of \cite{li1995localsystemstwistedvertex}]
		\label{theorem properties of spectral flow}
		Let $V$ be a conformal vertex superalgebra, $\Lh$ be a finite dimensional abelian Lie algebra with a nondegenerate symmetric bilinear form $\kappa$. Let 
		$$
		\varphi : V^{\kappa}(\Lh) \lra V
		$$
		be a vertex superalgebra morphism satisfying Conditions (\ref{condition direct sum of fock spaces}) and (\ref{condition primary fields}).
		
		Let $\cSF(V)$ be the spectral flow group associated to this data as in Definition~\ref{definition spectral flow group}. Let $M$ be a $V$-module and $h,h' \in \cSF(V)$, then $h\cdot M$ is again a $V$-module and we have $h \cdot (h' \cdot M) \simeq (h + h') \cdot M$ and $ 0 \cdot M \simeq M$. In particular, $\cSF(V)$ acts by invertible exact endofunctors on the category $V\mathrm{-Mod}$ and sends simple objects to simple objects.  
	\end{theoreme}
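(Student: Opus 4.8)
The plan is to run Li's delta-operator machinery (\cite{Li1997}, Proposition 5.4 of \cite{li1995localsystemstwistedvertex}), for which Conditions (\ref{condition direct sum of fock spaces}) and (\ref{condition primary fields}) are tailored. The crux is a short list of algebraic properties of the operator $\Delta(h,z)$; once these are established, the rest is formal. First I would check that, for every $h\in\cSF(V)$, the series $\Delta(h,z)$ is a well-defined invertible element of $\cF(V)$: the factor $z^{h_{(0)}}$ makes sense because $h_{(0)}$ acts semisimply with integer eigenvalues (semisimplicity comes from Condition (\ref{condition direct sum of fock spaces}), as $h_{(0)}$ commutes with all Heisenberg modes and so acts on $\cF^{\kappa}_{\lambda_i}$ by the scalar $\langle\lambda_i,h\rangle$), while the exponential factor is a field because the modes $h_{(n)}$, $n>0$, mutually commute and act locally nilpotently (conformal degrees in $V$ being bounded below). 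Then, from the defining brackets $[h_\lambda h']=\kappa(h,h')\lambda$ and $[\omega_\lambda\varphi(h)]=(\partial+\lambda)\varphi(h)$, I would verify by direct computation the identities $\Delta(h,z)\vac=\vac$, $\Delta(h,z)\Delta(h',z)=\Delta(h+h',z)$ (whence $\Delta(h,z)^{-1}=\Delta(-h,z)$), the relation $[\omega_{(0)},\Delta(h,z)]=\partial_z\Delta(h,z)$ with the translation operator (this is where Condition (\ref{condition primary fields}), i.e.\ $h$ primary of conformal weight one, is used), and --- the key point --- Li's conjugation formula, which rewrites $\Delta(h,z_1)\,Y(u,z_2)\,\Delta(h,z_1)^{-1}$ as $Y$ applied to a suitable $\Delta$-shift of $u\in V$.

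Given these, I would check that the pair $(M,Y_{h\cdot M})$, with $Y_{h\cdot M}(v,z):=Y_M(\Delta(-h,z)v,z)$, satisfies the $V$-module axioms. Lower truncation holds because $\Delta(-h,z)v\in V[z,z^{-1}]$ and $Y_M$ is extended $\C[z,z^{-1}]$-linearly; the vacuum axiom follows from $\Delta(-h,z)\vac=\vac$; the $L_{-1}$-derivative axiom follows from $[\omega_{(0)},\Delta(-h,z)]=\partial_z\Delta(-h,z)$; and the Jacobi identity for $Y_{h\cdot M}$ is obtained by inserting Li's conjugation formula into the Jacobi identity for $Y_M$ and cancelling the conjugating factors via $\Delta(h,z)\Delta(-h,z)=\id$. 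The super signs play no role since $\Lh$ is purely even. Compositionality is then immediate and on the nose: extending $Y_{h'\cdot M}$ $\C[z,z^{-1}]$-linearly,
$$
Y_{h\cdot(h'\cdot M)}(v,z)=Y_{h'\cdot M}\bigl(\Delta(-h,z)v,z\bigr)=Y_M\bigl(\Delta(-h',z)\Delta(-h,z)v,z\bigr)=Y_{(h+h')\cdot M}(v,z),
$$
so $h\cdot(h'\cdot M)=(h+h')\cdot M$, and $\Delta(0,z)=\id$ gives $0\cdot M=M$.

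Finally I would package this categorically. A morphism $f\colon M\to N$ of $V$-modules intertwines $Y_{h\cdot M}$ with $Y_{h\cdot N}$ for exactly the reason it intertwines $Y_M$ with $Y_N$, so $M\mapsto h\cdot M$ is an endofunctor of the category of $V$-modules that is the identity on underlying vector spaces and on linear maps; in particular it is exact, and by the compositionality above $(-h)\cdot(-)$ is a two-sided inverse. An invertible endofunctor is an autoequivalence, hence preserves the lattice of submodules of every object and so carries simple modules to simple modules. (Concretely: since $\Delta(-h,z)$ is a bijection of $V[z,z^{-1}]$, the modes of the fields $Y_{h\cdot M}(v,z)$, $v\in V$, span the same subspace of $\End(M)$ as the modes of $Y_M(v,z)$, so $(M,Y_{h\cdot M})$ and $(M,Y_M)$ have literally the same submodules.) The only genuinely computational ingredient is the conjugation formula of the first paragraph, and that is where I expect the main work to lie; everything downstream is bookkeeping.
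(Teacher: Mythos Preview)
Your proposal is correct and follows precisely the approach of the cited references; note that the paper itself does not give a proof of this theorem but simply states it with attribution to \cite{Li1997} and \cite{li1995localsystemstwistedvertex}, so your sketch is in fact a faithful outline of Li's original argument rather than a comparison target.
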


	\subsection{The spectral flow group of $V^{\kappa}(\Lg)$}
	We work out in detail the application of this theory to the Heisenberg vertex algebra and to the universal affine vertex algebra associated with a reductive Lie algebra. Of course, Heisenberg vertex algebras are particular cases of affine vertex algebras. So some results will be generalized later. Yet we believe that to understand what is going on, it is essential to keep this example in mind.
	
	Let $\Lh$ be a finite dimensional abelian Lie algebra and $\kappa$ a nondegenerate symmetric bilinear form on $\Lh$. Let $V^{\kappa}(\Lh)$ be the associated Heisenberg vertex algebra. In the previous notations, we choose $\varphi$ to be the identity which is easily seen to satisfy conditions~(\ref{condition direct sum of fock spaces}) and (\ref{condition primary fields}). Recall that by definition 
	$$
	\cSF(V^{\kappa}(\Lh)) = \{ h \in V^{\kappa}(\Lh)_{1} = \Lh \hspace{0.5mm} | \hspace{0.5mm} h_{(0)} \textrm{ acts semisimply on $V^{\kappa}(\Lh)$ with eigenvalues in $\mathbb{Z}$} \}.
	$$
	\begin{proposition}
		The spectral flow group $\cSF(V^{\kappa}(\Lh))$ is equal to $\Lh$. 
	\end{proposition}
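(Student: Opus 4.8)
The inclusion $\cSF(V^{\kappa}(\Lh)) \subseteq \Lh$ is immediate from Definition \ref{definition spectral flow group}, so the plan is to establish the reverse inclusion: every $h \in \Lh = V^{\kappa}(\Lh)_{1}$ belongs to $\cSF(V^{\kappa}(\Lh))$. For this it is enough to show that the operator $h_{(0)}$ acts as the zero endomorphism on all of $V^{\kappa}(\Lh)$, since the zero map is in particular semisimple and its unique eigenvalue $0$ lies in $\mathbb{Z}$.

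To prove $h_{(0)} = 0$ on $V^{\kappa}(\Lh)$, I would proceed in three short steps. First, the vacuum axiom gives $h_{(0)}\vac = 0$. Second, for any $a \in \Lh$ the Heisenberg commutation relations --- the abelian specialization of \eqref{equation lambda bracket for the universal affine vertex algebra at level kappa}, equivalently $[h_{(0)}, a_{(-1)}] = [h,a]_{(-1)} + 0\cdot\delta_{0,0}\,\kappa(h,a) = 0$, the central term vanishing because it carries the factor $m=0$ and the bracket vanishing because $\Lh$ is abelian --- yield $h_{(0)}(a_{(-1)}\vac) = a_{(-1)}h_{(0)}\vac = 0$, so $h_{(0)}$ kills the generating subspace $V^{\kappa}(\Lh)_{1} = \Lh$. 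Third, the Borcherds identities show that $h_{(0)}$ is a derivation of every $n$-th product, $h_{(0)}(u_{(n)}v) = (h_{(0)}u)_{(n)}v + u_{(n)}(h_{(0)}v)$; since $V^{\kappa}(\Lh)$ is generated as a vertex algebra by $\Lh$ and $h_{(0)}$ annihilates both the generators and the vacuum, a straightforward induction on the length of a normally ordered monomial gives $h_{(0)} = 0$ identically. Hence $h \in \cSF(V^{\kappa}(\Lh))$, and the equality follows.

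There is essentially no genuine obstacle here; the only care needed is in the sign and normalization bookkeeping of the mode commutation relations, and in remembering that $h_{(0)}$ is the vertex-algebra zero product $\mathrm{Res}_{z}\,Y(h,z)$, whose vanishing on $V^{\kappa}(\Lh) = \cF^{\kappa}_{0}$ simply reflects the absence of a momentum/weight shift on the vacuum Fock space. The real content of the spectral-flow formalism appears only when $\Lh$ is embedded into a larger, noncommutative vertex algebra --- such as $V^{\kappa}(\Lg)$ for nonabelian $\Lg$, or $\cdo$ --- where $h_{(0)}$ can act with nontrivial integer eigenvalues; those computations are the substance of the subsequent propositions.
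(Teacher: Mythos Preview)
Your proof is correct and follows essentially the same approach as the paper: both argue that $h_{(0)}$ acts as zero on $V^{\kappa}(\Lh)$ because $\Lh$ is abelian and the Heisenberg commutation relations give $[h_{(0)},a_{(n)}]=0$. You have simply spelled out in more detail what the paper compresses into one sentence.
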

	\begin{proof}
		We have that for all $h\in \Lh$, $h_{(0)} = 0$ in $\End(V^{\kappa}(\Lh))$ because the Lie algebra $\Lh$ is abelian and of the Heisenberg commutation relations. The result follows.
	\end{proof}
	We can now compute what happens on any module. If $x \in \cSF(V^{\kappa}(\Lh)) = \Lh$ and $h \in \Lh = V^{\kappa}(\Lh)_{1} \subset V^{\kappa}(\Lh)$ then 
	\begin{align*}
		\Delta(x,z) h = z^{x_{(0)}}\exp\left(\sum_{n>0}\frac{x_{(n)}}{-n}(-z)^{-n} \right)  h =  h + \kappa(x,h)\vac z^{-1}.
	\end{align*} 
	Because $V^{\kappa}(\Lh)$ is generated as a vertex algebra by elements in $\Lh = V^{\kappa}(\Lh)_{1}$, this computation is all we need. Let $M$ be a $V^{\kappa}(\Lh)$-module via the map $Y_{M} : V^{\kappa}(\Lh) \lra \cF(M)$. The action is twisted as follows. Set  
	\begin{align*}
		Y_{x\cdot M}(h,z) := Y_{M}(\Delta(-x,z)h,z) = Y_{M}(h,z) - \kappa(x,h)\id z^{-1}.
	\end{align*}
	That is to say $x \cdot M$ is the $\Lhkappa$-module such that for all $m \in M$ and $n \in \mathbb{Z}$ 
	\begin{equation}
		\label{equation spectral flow on heisenberg modes}
		h_{(n)} \cdot_{new} m = h_{(n)} \cdot_{old} m - \delta_{n,0} \kappa(x,h)m 
	\end{equation}
	where the subscript \emph{old} refers to the given module structure on $M$ and \emph{new} to the one obtained after spectral flow on $x \cdot M$ (recall that the underlying vector space is unchanged).
	The following is a fundamental example:
	\begin{proposition}
		\label{proposition spectral flow of Fock spaces}
		Let $\Lh$ be an abelian Lie algebra and $\kappa$ be a nondegenerate symmetric bilinear form on $\Lh$. Let $V^{\kappa}(\Lh)$ be the associated Heisenberg vertex algebra. Let~$x \in \cSF(V^{\kappa}(\Lh)) = \Lh$ and $\lambda \in \Lh^{*}$ and $\cF^{\kappa}_{\lambda}$ be the Fock space(=Weyl module) of weight $\lambda$ then there exists an isomorphism of $\Lhkappa$-modules
		$$
		x \cdot \cF_{\lambda} \simeq \cF_{\lambda - \kappa(x,\cdot) }.
		$$
	\end{proposition}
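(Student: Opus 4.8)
The plan is to track the highest weight vector of $\cF^{\kappa}_{\lambda}$ through the spectral flow twist using the explicit formula \eqref{equation spectral flow on heisenberg modes}, recognize it as a highest weight vector of the shifted weight, and then conclude via the universal property of Fock spaces together with a freeness argument over $\cU(t^{-1}\Lh[t^{-1}])$.

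First I would recall that $\cF^{\kappa}_{\lambda}$ is the induced $\Lhkappa$-module generated by a vector $\vaclambda$ with $h_{(n)}\vaclambda = \delta_{n,0}\lambda(h)\vaclambda$ for all $h \in \Lh$, $n \geqslant 0$, and $\mathbb{1}$ acting as the identity; as a vector space it is freely generated over the (commutative) algebra $\cU(t^{-1}\Lh[t^{-1}])$ by $\vaclambda$. By Theorem \ref{theorem properties of spectral flow}, $x \cdot \cF^{\kappa}_{\lambda}$ is again an $\Lhkappa$-module, and formula \eqref{equation spectral flow on heisenberg modes} describes it precisely: only the zero modes are modified, by subtraction of the scalar $\kappa(x,h)$; the action of $t^{-1}\Lh[t^{-1}]$, of all positive modes, and of $\mathbb{1}$ is unchanged (in particular the level is preserved). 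Hence in $x \cdot \cF^{\kappa}_{\lambda}$ one still has $h_{(n)}\vaclambda = 0$ for $n \geqslant 1$, while $h_{(0)}\vaclambda = (\lambda(h) - \kappa(x,h))\vaclambda = (\lambda - \kappa(x,\cdot))(h)\,\vaclambda$, and $\vaclambda$ still generates $x \cdot \cF^{\kappa}_{\lambda}$ over $\cU(t^{-1}\Lh[t^{-1}])$. So $\vaclambda$ is a highest weight vector of weight $\lambda - \kappa(x,\cdot)$.

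Next I would invoke the universal property of Fock spaces — the abelian instance of Proposition \ref{proposition Weyl modules are highest weight modules}, applied to $x \cdot \cF^{\kappa}_{\lambda}$ viewed as a $V^{\kappa}(\Lh)$-module — to obtain a unique morphism of $\Lhkappa$-modules $\phi : \cF^{\kappa}_{\lambda - \kappa(x,\cdot)} \lra x \cdot \cF^{\kappa}_{\lambda}$ sending the highest weight vector of the source to $\vaclambda$. It is surjective because $\vaclambda$ generates the target. For injectivity, note that both $\cF^{\kappa}_{\lambda - \kappa(x,\cdot)}$ and $x \cdot \cF^{\kappa}_{\lambda} = \cF^{\kappa}_{\lambda}$ (underlying space) are free of rank one over the polynomial algebra $\cU(t^{-1}\Lh[t^{-1}])$, generated by their respective highest weight vectors, and $\phi$ intertwines the unchanged action of $t^{-1}\Lh[t^{-1}]$ while sending a free generator to a free generator; therefore $\phi$ is bijective, hence an isomorphism of $\Lhkappa$-modules.

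The only points needing care are the sign bookkeeping in \eqref{equation spectral flow on heisenberg modes} and the observation that Li's delta operator attached to $h \in \Lh$ alters only the degree-zero part of the action, so that the central element is genuinely unaffected; I do not expect a real obstacle. One could alternatively try to compare graded characters, but since spectral flow shifts the conformal grading it is cleanest to argue via rank-one freeness over $\cU(t^{-1}\Lh[t^{-1}])$ as above.
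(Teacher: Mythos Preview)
Your proof is correct and follows the same opening as the paper: identify $\vaclambda$ as a highest weight vector of the shifted weight via \eqref{equation spectral flow on heisenberg modes}, then invoke Proposition~\ref{proposition Weyl modules are highest weight modules} to produce a nonzero map $\cF^{\kappa}_{\lambda-\kappa(x,\cdot)} \to x\cdot\cF^{\kappa}_{\lambda}$. The two proofs diverge only in how they certify that this map is an isomorphism. The paper appeals to simplicity: Theorem~\ref{theorem Kazhdan--Lusztig categories for irrational levels} gives that Fock spaces are simple (for abelian $\Lh$ with nondegenerate $\kappa$, which is exactly the ``generic'' condition here), and Theorem~\ref{theorem properties of spectral flow} says spectral flow preserves simplicity, so a nonzero map between simples is an isomorphism. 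Your argument instead observes that the negative modes are untouched by the twist, so $\phi$ is a map of rank-one free $\cU(t^{-1}\Lh[t^{-1}])$-modules sending a generator to a generator, hence bijective. Your route is more elementary and self-contained---it avoids citing the simplicity of Fock spaces---while the paper's route is shorter once those results are in hand and generalizes more readily to settings where freeness over the negative part is less transparent.
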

	\begin{proof}
		It is obvious from formula \eqref{equation spectral flow on heisenberg modes} that after spectral flow the highest weight vector~$\vaclambda$ becomes a highest weight vector of weight $\lambda - \kappa(x,\cdot)$. So Proposition~\ref{proposition Weyl modules are highest weight modules} defines for us a nonzero map of $V^{\kappa}(\Lh)$-modules 
		$$
		\cF_{\lambda - \kappa(x,\cdot)} \lra x \cdot \cF_{\lambda},
		$$ 
		and we know from Theorems \ref{theorem Kazhdan--Lusztig categories for irrational levels} and \ref{theorem properties of spectral flow} that both modules are simple, hence the map is an isomorphism. 
	\end{proof}
	
	This result allows us to think of all Fock spaces as being an orbit under spectral flow of the vacuum (zero) Fock space. In fact, since $\kappa$ is nondegenerate, when $h$ ranges through the spectral flow group $\cSF(V^{\kappa}(\Lh)) = \Lh$, $\kappa(h,\cdot)$ ranges through the whole dual space $\Lh^{*}$, so starting from the vertex algebra $\cF_{0}=V^{\kappa}(\Lh)$ itself we obtain every Fock space.

	\begin{note}
		We now have a look at what happens to the conformal structure. Let $(h^{i})_{1 \leqslant i \leqslant\dim(\Lh)}$ be an orthonormal basis with respect to $\kappa$, that is, a $\C$-basis of $\Lh$ such that for all $1 \leqslant i,j \leqslant\dim(\Lh)$
		$$
		\kappa(h^{i},h^{j}) = \delta_{i,j}.
		$$
		A conformal vector of $V^{\kappa}(\Lh)$ is given by the following formula 
		$$
		\omega = \frac{1}{2} \sum_{i=1}^{\dim(\Lh)} h^{i}_{(-1)}h^{i}_{(-1)} \vac.
		$$
		Fix $x \in \cSF(V^{\kappa}(\Lh)) = \Lh$, we compute
		\begin{align*}
			\Delta(x,z) \omega &= z^{x_{(0)}}\exp\left(\sum_{n>0}\frac{x_{(n)}}{-n}(-z)^{-n} \right)  \omega \\
			&= z^{x_{(0)}} \left(\id  \omega + \frac{1}{1!}\sum_{n>0}\frac{x_{(n)} \omega}{-n}(-z)^{-n} + \frac{1}{2!}\sum_{n,m>0} \frac{x_{(n)}x_{(m)}  \omega}{(-n)(-m)}(-z)^{-(n+m)} + \dots \right)\\	
			&= z^{x_{(0)}}\left( \omega + \frac{x_{(1)} \omega}{-1} (-z)^{-1} + \frac{x_{(2)}  \omega}{-2}(-z)^{-2} + \frac{1}{2!} \frac{x_{(1)}x_{(1)} \omega}{(-1)(-1)}(-z)^{-2} \right) \\
			&= z^{x_{(0)}}\left( \omega + x_{(1)} \omega z^{-1} - \frac{(x_{(2)} - x_{(1)}x_{(1)})  \omega}{2}z^{-2} \right) \\
			&= \omega + x_{(1)} \omega z^{-1} - \frac{(x_{(2)} - x_{(1)}x_{(1)})  \omega}{2}z^{-2}.
		\end{align*}
		Where all the vanishings are obvious due to conformal degree reasons, we are left with three terms to compute, it is easy to see that 
		\begin{align*}
			x_{(2)}  \omega = 0
		\end{align*}
		because $x_{(2)}$ commutes with all the $(h^{i}_{(-1)})_{1\leqslanti \leqslant\dim(\Lh)}$ and $x_{(2)}\vac = 0$. We compute 
		\begin{align*}
			x_{(1)} \omega &= \frac{1}{2}\sum_{i=1}^{\dim(\Lh)} x_{(1)}h^{i}_{(-1)}h^{i}_{(-1)}\vac \\
			&= \frac{1}{2}\sum_{i=1}^{\dim(\Lh)} h^{i}_{(-1)}x_{(1)}h^{i}_{(-1)}\vac + \frac{1}{2}\sum_{i=1}^{\dim(\Lh)} \kappa(h^{i},x)h^{i}_{(-1)}\vac \\
			&= \frac{1}{2}\sum_{i=1}^{\dim(\Lh)} h^{i}_{(-1)}h^{i}_{(-1)}x_{(1)}\vac + \frac{1}{2}\sum_{i=1}^{\dim(\Lh)} h^{i}_{(-1)}\kappa(h^{i},x)\vac + \frac{1}{2}\sum_{i=1}^{\dim(\Lh)} \kappa(h^{i},x)h^{i}_{(-1)}\vac \\
			&= 0 + \frac{1}{2}\sum_{i=1}^{\dim(\Lh)} h^{i}_{(-1)}\kappa(h^{i},x)\vac + \frac{1}{2}\sum_{i=1}^{\dim(\Lh)} \kappa(h^{i},x)h^{i}_{(-1)}\vac \\
			&=  \frac{1}{2}x_{(-1)}\vac + \frac{1}{2}x_{(-1)}\vac = x.
		\end{align*}
		Finally 
		\begin{align*}
			x_{(1)}x_{(1)} \omega &= x_{(1)}  x_{(1)}	 \omega \\
			&= x_{(1)}  x \textrm{ thanks to the previous computation,}\\
			&=  x_{(1)}x_{(-1)}\vac \\
			&=  \kappa(x,x)\vac.
		\end{align*}
		Putting everything together we obtain 
		\begin{equation}
			\label{equation action of the spectral flow on abelian segal sugawara}
			\Delta(x,z)\omega = \omega + x z^{-1} + \frac{\kappa(x,x)}{2}\vac z^{-2}.
		\end{equation}
		
		In particular we can see how the action of the gradation operator $\omega_{(1)}$ transforms. If $M$ is a $V^{\kappa}(\Lh)$-module, after applying a spectral flow of parameter $x$ we have for all $m \in M$ from equation \eqref{equation action of the spectral flow on abelian segal sugawara} the equality 
		$$
		\omega_{(1)} \cdot_{new} m = \omega_{(1)} \cdot_{old} m - x_{(0)} \cdot_{old} m + \frac{\kappa(x,x)}{2}m.
		$$
		
	\end{note}
	
	\begin{note}
		We are mostly interested by how the Hamiltonian transforms under spectral flow, it is usual to define $L_{0}^{ab}$ to be $\omega_{(-1)}$ \aref. If $M$ is a $V^{\kappa}(\Lh)$-module, $m \in M$ is a vector and $x \in \cSF(V^{\kappa}(\Lh))$ we see from formula \eqref{equation action of the spectral flow on abelian segal sugawara} that 
		\begin{equation}
			\label{equation action of the spectral flow on abelian Hamiltonian}
			L_{0}^{ab} \cdot_{new} m = (L_{0}^{ab} + x_{(0)} + \frac{\kappa(x,x)}{2}\id) \cdot_{old} m.
		\end{equation}
		
		\begin{remarque}
			\label{remark there is a more conceptual way to make the computation}
			\textcolor{red}{There is a more effective way to do these computations, but is this worth noticing?.}
		\end{remarque}
	\end{note}

	Let $\Lg$ be a reductive Lie algebra and $\kappa$ be a nondegenerate symmetric bilinear $\Lg$-invariant form on $\Lg$. If we fix a Cartan subalgebra $\Lh$ of $\Lg$ and denote again by $\kappa$ the restriction from $\Lg$ to $\Lh$, it is still nondegenerate. We have a morphism of vertex algebras
	$$
	\varphi : V^{\kappa}(\Lh) \lra V^{\kappa}(\Lg)
	$$
	satisfying conditions (\ref{condition direct sum of fock spaces}) and (\ref{condition primary fields}).
	
	\begin{proposition}[See also §2 of \cite{CreutzigRidout2013}]
		\label{proposition spectral flow group associated to affine vertex algebras}
		The spectral flow group $\cSF(V^{\kappa}(\Lg))$ is equal to the additive group of coweights $\check{P}$.
	\end{proposition}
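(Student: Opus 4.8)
The plan is to identify $\cSF(V^{\kappa}(\Lg))$ by computing, for each $h \in \Lh$, the eigenvalues of $\varphi(h)_{(0)}$ acting on all of $V^{\kappa}(\Lg)$. Since $V^{\kappa}(\Lg)$ is generated by $\Lg = V^{\kappa}(\Lg)_{1}$, and the operator $h_{(0)}$ acts as a derivation of all $n$-th products, it suffices to understand the eigenvalues of $\ad(h)$ on $\Lg$. By the root space decomposition \eqref{equation root space decomposition of Lg}, $h_{(0)}$ acts on $\Lg$ with eigenvalue $\alpha(h)$ on each root space $\Lg^{\alpha}$ and eigenvalue $0$ on $\Lh$. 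More precisely, on a monomial $x^{1}_{(-m_{1})}\cdots x^{k}_{(-m_{k})}\vac$ with each $x^{j}$ a root vector or Cartan element, $h_{(0)}$ acts by the sum of the corresponding eigenvalues. Since $h_{(0)}$ already acts semisimply (the root space decomposition diagonalizes $\ad(h)$, hence $h_{(0)}$ is diagonalizable on each graded piece), the only condition to check is that all eigenvalues lie in $\Z$.

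First I would argue the inclusion $\check{P} \subseteq \cSF(V^{\kappa}(\Lg))$: if $h \in \check{P} = P^{\vee}$, then by definition $\alpha(h) \in \Z$ for all roots $\alpha \in \Phi$, so every eigenvalue of $h_{(0)}$ on $\Lg$ lies in $\Z$; since eigenvalues on the whole vertex algebra are $\Z$-linear combinations of these, they remain integral, and semisimplicity is automatic. Conversely, for the inclusion $\cSF(V^{\kappa}(\Lg)) \subseteq \check{P}$: if $h \in \cSF(V^{\kappa}(\Lg))$, then in particular $h_{(0)}$ has integral eigenvalues on $\Lg \subset V^{\kappa}(\Lg)_{1}$; for each root $\alpha$, pick $0 \neq e_{\alpha} \in \Lg^{\alpha}$, so that $h_{(0)}e_{\alpha} = \alpha(h)e_{\alpha}$, forcing $\alpha(h) \in \Z$. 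This holds for all $\alpha \in \Phi$, which is exactly the defining condition $h \in P^{\vee} = \check{P}$.

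One subtlety worth spelling out is that we must check the eigenvalue condition on \emph{all} of $V^{\kappa}(\Lg)$, not just on $V^{\kappa}(\Lg)_{1}$, so the argument needs the fact that $h_{(0)}$ is a derivation for each product $a_{(n)}b$ (a standard Borcherds-identity consequence, since $h_{(0)}$ commutes with the translation operator and $[h_{(0)}, a_{(n)}] = (h_{(0)}a)_{(n)}$ because $h_{(-1)}\vac$ is primary of weight zero, i.e.\ $[h_{(m)}, a_{(n)}]$ for $m \geq 1$ involves only $\kappa(h,\cdot)$ terms that do not affect the $m=0$ bracket). This reduces the computation on all of $V^{\kappa}(\Lg)$ to the computation on generators, and there the eigenvalues are sums of the $\alpha(h)$'s and $0$'s, hence integral as soon as they are integral on $\Lg$. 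The main (minor) obstacle is simply making the derivation property and the semisimplicity on the whole space precise; conceptually there is no real difficulty, and the proof is essentially the observation that $\cSF$ only sees the action on the degree-one generating subspace $\Lg$, where it is governed by the classical fact that $h \in \Lh$ has integral roots-pairing precisely when $h \in P^{\vee}$.
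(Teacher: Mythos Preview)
Your proof is correct and follows essentially the same approach as the paper: the paper's argument is exactly the observation that $x_{(0)}e^{\alpha} = \alpha(x)e^{\alpha}$ forces $\alpha(x)\in\Z$ for one inclusion, and for the converse ``one easily checks that $x_{(0)}$ acts semisimply with integral eigenvalues'' --- which is precisely what you spell out via the derivation property on PBW monomials. Your write-up is more detailed than the paper's (and the parenthetical remark about ``primary of weight zero'' is slightly garbled, since $h$ is primary of weight one; the identity $[h_{(0)},a_{(n)}]=(h_{(0)}a)_{(n)}$ is simply the $m=0$ case of the commutator formula and needs no extra hypothesis), but the substance is identical.
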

	\begin{proof}
		Let $x \in \cSF(V^{\kappa}(\Lg)) \subset \Lh \subset \Lg.$ For every root $\alpha \in \Phi$ pick a nonzero root vector~$e^{\alpha} \in \Lg^{\alpha}$. The following equality holds in $V^{\kappa}(\Lg)$ 
		\begin{align*}
			x_{(0)} e^{\alpha} = \alpha(x)e^{\alpha} \vac.
		\end{align*}
		Because we assumed that $x \in \cSF(V^{\kappa}(\Lg))$, it forces that $\alpha(x) \in \mathbb{Z}$ for all $\alpha \in \Phi$, this is precisely asking that $x$ belongs to $\check{P}$. Conversely, if $x \in \check{P}$ then one easily checks that $x_{(0)}$ acts semisimply with integral eigenvalues.
	\end{proof}
	
	Recall the root space decomposition \eqref{equation root space decomposition of Lg} of $\Lg$ and let $x \in \check{P}$ be a coweight, $\alpha \in \Phi$ be a root, $e^{\alpha}$ be a root vector of weight $\alpha$ and $h \in \Lh$. The same computation as in the Heisenberg case shows that 
	$$
	\Delta(x,z) h = h + \kappa(x,h)\vac z^{-1}.
	$$
	A simple computation shows that 
	\begin{align*}
		\Delta(x,z) e^{\alpha} = z^{\alpha(x)}e^{\alpha}.
	\end{align*}

	Because $V^{\kappa}(\Lg)$ is generated as a vertex algebra by elements in $\Lg = V^{\kappa}(\Lg)_{1}$ this computation is all we need. Let $M$ be a $V^{\kappa}(\Lg)$-module via $Y_{M} : V^{\kappa}(\Lg) \lra \cF(M)$. The twist is as follows, for all $h \in \Lh \subset V^{\kappa}(\Lg)_1$
	$$
	Y_{x\cdot M}(h,z) := Y_{M}(\Delta(-x,z)h,z) = Y_{M}(h,z) - \kappa(x,h)\id z^{-1}.
	$$ 
	For all $\alpha \in \Phi$ and root vector $e^{\alpha} \in \Lg^{\alpha} \subset V^{\kappa}(\Lg)$, 
	\begin{align*}
		Y_{x\cdot M}(e^{\alpha},z) &:= Y_{M}(\Delta(-x,z)e^{\alpha},z)
		= z^{-\alpha(x)}Y_{M}(e^{\alpha},z).
	\end{align*} 
	
	That is to say, $x \cdot M$ is the smooth $\Lg^{\kappa}$-module such that for all $m \in M$ and $n \in \mathbb{Z}$ we have 
	\begin{align}
		\label{equation spectral flow on affine algebra modes for root vector}
		{e^{\alpha}}_{(n)} \cdot_{new} m &= {e^{\alpha}}_{(n-\alpha(x))} \cdot_{old} m \\
		\label{equation spectral flow on affine algebra modes for vector in the Cartan}
		h_{(n)} \cdot_{new} m &= h_{(n)} \cdot_{old} m - \delta_{n,0} \kappa(x,h)m
	\end{align}
	where the subscripts \emph{old} refers to the given module structure on $M$ and \emph{new} to the new one obtained after spectral flow on $x \cdot M$ (recall that the underlying vector space is unchanged). 
	
	\begin{remarque}
		\label{remarque Kazhdan--Lusztig categories are unstable with respect to spectral flow}
		It is clear that, for a simple Lie algebra, the Kazhdan--Lusztig category is not stable under spectral flow. Namely for any nonzero $x \in \check{P}$ and $M \in V^{\kappa}(\Lg)\mathrm{-Mod}^{\jetinf G}$, equation \eqref{equation spectral flow on affine algebra modes for root vector} shows that 
		$$
		x \cdot M \notin V^{\kappa}(\Lg)\mathrm{-Mod}^{\jetinf G}.
		$$
		This highly contrasts with the case of an abelian Lie algebra (compare with Proposition~\ref{proposition spectral flow of Fock spaces}).
	\end{remarque}
	
	\begin{note}
		Let us assume that $\kappa$ is such that $\kappa-\kappa_{c}$ is nondegenerate where $\kappa_{c}$ is the critical level so that the Segal--Sugawara construction is available, making $V^{\kappa}(\Lg)$ into a conformal vertex algebra.
		
		We now compute the action of spectral flow on the Segal--Sugawara construction. Using the notations of Appendix \ref{section The Kazhdan--Lusztig category} the conformal vector $\omega$ of $V^{\kappa}(\Lg)$ is given by the sum of the conformal vector on the Heisenberg vertex algebra corresponding to the center $\Lzg$ of $\Lg$
		$$
		\omega^{ab} = \frac{1}{2}\sum_{j=1}^{\dim(\Lzg)} h^{j}_{(-1)}h^{j}_{(-1)}\vac \in V^{\kappa_{|\Lzg}}(\Lzg) \subset V^{\kappa}(\Lg),
		$$
		and of the affine vertex algebras corresponding to each simple factor of $\Lg$ 
		$$
		\omega^{s} = \frac{1}{2(k_{s}+\check{h}_{s})} \left( \sum_{j=1}^{\dim(\Lh_{s})}h^{j}_{(-1)}{h_{j}}_{(-1)} + \sum_{\alpha \in \Phi(\Lg_{s},\Lh_{s})_{+}} e^{\alpha}_{(-1)}f^{\alpha}_{(-1)} + f^{\alpha}_{(-1)}e^{\alpha}_{(-1)} \right) \vac \in V^{\kappa_{|\Lg_{s}}},
		$$
		that is to say 
		$$
		\omega = \omega^{ab} + \sum_{s=1}^{r} \omega^{s}.
		$$
		
		\begin{note}
			\textcolor{red}{We can give a way more effective proof by simply doing the reductive case directly without ever refering to any decomposition, by simply using the conceptual computation involving omega(-1), but that's cheating somehow}
		\end{note}
		\begin{proposition}
			\label{proposition modification of the segal sugawara conformal vector under spectral flow for reductive case}
			Let $x \in \cSF(V^{\kappa}(\Lg)) = \check{P}$. The following equality holds in $V^{\kappa}(\Lg)[z,z^{-1}]$
			$$
			\Delta(x,z)\omega = \omega + xz^{-1} + \frac{\kappa(x,x)}{2}\vac z^{-2}.
			$$
			In particular, for any $V^{\kappa}(\Lg)$-module $M$ and $m \in M$, the new action of $\omega_{(1)}$ on~$x \cdot M$ is given in terms of the old one by 
			\begin{equation}
				\label{equation new action of the Hamiltonian for a affine lie algebra spectral flow}
				\omega_{(1)} \cdot_{new} m = (\omega_{(1)} - x_{(0)} + \frac{\kappa(x,x)}{2}\id) \cdot_{old} m.
			\end{equation}
		\end{proposition}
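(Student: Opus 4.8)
The plan is to treat this as a short ``local'' computation in $V^{\kappa}(\Lg)$: even though the reductive Segal--Sugawara vector is the bulky sum $\omega = \omega^{ab} + \sum_{s} \omega^{s}$, the case-by-case decomposition is not needed, because everything follows formally from the single structural fact underlying Condition~(\ref{condition primary fields}) of the general set-up, namely that every $h \in \Lh \subset V^{\kappa}(\Lg)$ is primary of conformal weight one for $\omega$. (Alternatively one could apply the abelian formula to $\Lzg$ and the simple-factor formula to each $\Lg_{s}$ and add them up; this is longer but equivalent.)

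First I would rewrite Condition~(\ref{condition primary fields}) in modes: for $h \in \Lh$ one has $\omega_{(0)}h = \partial h$, $\omega_{(1)}h = h$ and $\omega_{(n)}h = 0$ for $n \geqslant 2$, where $\partial$ denotes the translation operator. Substituting this into the skew-symmetry identity
$$a_{(n)}b \;=\; \sum_{j \geqslant 0}(-1)^{n+j+1}\tfrac{1}{j!}\,\partial^{j}\big(b_{(n+j)}a\big),$$
with $a = x \in \Lh \subset V^{\kappa}(\Lg)_{1}$ and $b = \omega$, and noting that every summand containing some $\omega_{(m)}x$ with $m \geqslant 2$ vanishes, yields $x_{(0)}\omega = -\partial x + \partial x = 0$, then $x_{(1)}\omega = \omega_{(1)}x = x$, and $x_{(n)}\omega = 0$ for $n \geqslant 2$. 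Combining $x_{(1)}\omega = x = x_{(-1)}\vac$ with the Heisenberg relation $[x_{(1)},x_{(-1)}] = \kappa(x,x)\,\id$ then gives $x_{(1)}x_{(1)}\omega = x_{(1)}x_{(-1)}\vac = \kappa(x,x)\vac$.

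Next I would feed these values into the definition \eqref{equation Li's delta operator} of Li's delta operator. The factor $z^{x_{(0)}}$ makes sense because $x \in \cSF(V^{\kappa}(\Lg))$, which is the coweight lattice by Proposition~\ref{proposition spectral flow group associated to affine vertex algebras}; moreover $x_{(0)}$ annihilates each of $\omega$, $x$ and $\vac$, so $z^{x_{(0)}}$ acts as the identity on every term that appears. Since $\omega$ has conformal weight $2$ and $x_{(n)}$ lowers conformal weight by $n$, the exponential in \eqref{equation Li's delta operator} applied to $\omega$ contributes only in orders $z^{-1}$ and $z^{-2}$, through $x_{(1)}\,z^{-1}$ and $\big(\tfrac{1}{2}x_{(1)}x_{(1)} - \tfrac{1}{2}x_{(2)}\big)z^{-2}$, all higher powers producing vectors of negative conformal weight. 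Inserting the values found above yields
$$\Delta(x,z)\omega \;=\; \omega + x\,z^{-1} + \tfrac{\kappa(x,x)}{2}\,\vac\,z^{-2}$$
as an identity in $V^{\kappa}(\Lg)[z,z^{-1}]$, which is the asserted formula; it visibly restricts to the abelian identity on the Heisenberg summand and to the simple-factor identity on each $\Lg_{s}$.

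Finally, for the module statement I would unwind the definition of the twisted action. By definition $Y_{x\cdot M}(\omega,z) = Y_{M}\big(\Delta(-x,z)\omega,\,z\big)$, and since $\kappa(-x,-x) = \kappa(x,x)$ and $Y_{M}$ is extended $\C[z,z^{-1}]$-linearly, the formula just obtained gives $Y_{x\cdot M}(\omega,z) = Y_{M}(\omega,z) - z^{-1}Y_{M}(x,z) + \tfrac{\kappa(x,x)}{2}z^{-2}\,\id$. Extracting the coefficient of $z^{-2}$ yields $\omega_{(1)}\cdot_{new} = \big(\omega_{(1)} - x_{(0)} + \tfrac{\kappa(x,x)}{2}\id\big)\cdot_{old}$ on $M$, which is exactly equation \eqref{equation new action of the Hamiltonian for a affine lie algebra spectral flow}. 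I expect the only slightly delicate point to be the sign bookkeeping in the skew-symmetry identity and the verification that the $\partial$-corrections there cancel; conceptually there is no real obstacle.
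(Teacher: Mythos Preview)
Your proof is correct and takes a genuinely different, more streamlined route than the paper. The paper first decomposes $x$ and $\omega$ along $\Lg = \Lzg \oplus \bigoplus_{s}\Lg_{s}$, invokes the abelian computation already done, and for each simple factor computes $x_{(1)}\omega$, $x_{(2)}\omega$, $x_{(1)}x_{(1)}\omega$ via the specific commutator $[\omega_{(-1)},a_{(n)}] = -na_{(n-2)}$ (Theorem~6.2.16 of \cite{LepowskyLi1994}) applied to $\omega = \omega_{(-1)}\vac$. You bypass the decomposition entirely by reading the same three values directly off the primary-field condition~(\ref{condition primary fields}) through skew-symmetry; this works uniformly for the full reductive Segal--Sugawara vector and, more generally, for any conformal vector with respect to which $x$ is primary of weight one. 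The paper's author in fact flags this alternative in an internal note (``there is a more effective way to do these computations'') but opts for the factor-by-factor argument; your version realizes that shortcut and makes transparent that nothing beyond Condition~(\ref{condition primary fields}) is used.
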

		Before starting the proof of the proposition, we single out the following easy and useful consequence of equation \eqref{equation new action of the Hamiltonian for a affine lie algebra spectral flow}
		\begin{corollaire}
			\label{corollary spectral flow and modification of conformal weight}
			Let $x\in \cSF(V^{\kappa}(\Lg)) = \check{P}$, $M$ be a $V^{\kappa}(\Lg)$-module fix $m \in M$ a vector of weight $\lambda \in \Lh^{*}$ and of conformal weight $\mu \in \C$ with respect to $L_{0}$. Then, in $x\cdot M$, the vector $m$ is of weight $\lambda - \kappa(x,\cdot) \in \Lh^{*}$ and of conformal weight $\mu - \lambda(x) + \frac{\kappa(x,x)}{2}$. 
		\end{corollaire}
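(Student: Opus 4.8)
The plan is to read this off directly from Proposition~\ref{proposition modification of the segal sugawara conformal vector under spectral flow for reductive case} together with the explicit formula~\eqref{equation spectral flow on affine algebra modes for vector in the Cartan} describing how the zero modes of Cartan elements are twisted by spectral flow. No genuinely new computation is required: the substantive work is the computation of $\Delta(x,z)\omega$ carried out in the proposition, and the corollary is obtained simply by evaluating the resulting operator identities on the single vector $m$.

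First I would handle the weight. Let $h \in \Lh$ be arbitrary. Equation~\eqref{equation spectral flow on affine algebra modes for vector in the Cartan} with $n=0$ says that the zero mode of $h$ acts on $x\cdot M$ by $h_{(0)}\cdot_{new} = h_{(0)}\cdot_{old} - \kappa(x,h)\,\id$. Evaluating on $m$ and using that $m$ has weight $\lambda$, so that $h_{(0)}\cdot_{old} m = \lambda(h)m$, gives $h_{(0)}\cdot_{new} m = \big(\lambda(h)-\kappa(x,h)\big)m$. Since $h$ was arbitrary, this is exactly the assertion that $m$ is a weight vector of weight $\lambda - \kappa(x,\cdot) \in \Lh^{*}$ in $x\cdot M$.

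Next I would handle the conformal weight. I apply the operator identity~\eqref{equation new action of the Hamiltonian for a affine lie algebra spectral flow}, namely $\omega_{(1)}\cdot_{new} = \big(\omega_{(1)} - x_{(0)} + \tfrac{\kappa(x,x)}{2}\id\big)\cdot_{old}$, to $m$. By hypothesis $\omega_{(1)}\cdot_{old} m = \mu m$; and since $x\in\Lh$ and $m$ has weight $\lambda$, we have $x_{(0)}\cdot_{old} m = \lambda(x)m$. Therefore $\omega_{(1)}\cdot_{new} m = \big(\mu - \lambda(x) + \tfrac{\kappa(x,x)}{2}\big)m$, which is precisely the claimed conformal weight.

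The only point deserving a word of care --- and the sole place one could slip --- is the identity $x_{(0)}\cdot_{old} m = \lambda(x)m$: it holds because the zero mode of an element of the Cartan subalgebra acts on a weight vector through its weight, which is part of what it means for $m$ to have weight $\lambda$. Everything else is a direct substitution, so there is no real obstacle beyond having Proposition~\ref{proposition modification of the segal sugawara conformal vector under spectral flow for reductive case} available; I would therefore keep the proof to these two short paragraphs.
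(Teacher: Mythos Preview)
Your proof is correct and matches the paper's approach exactly: the paper presents this corollary as an immediate consequence of equation~\eqref{equation new action of the Hamiltonian for a affine lie algebra spectral flow} (together with the Cartan twist formula~\eqref{equation spectral flow on affine algebra modes for vector in the Cartan}), and you have simply spelled out that deduction.
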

		We now turn to the proof of the Proposition 
		\begin{proof}
			We start by reducing ourselves to the case where $\Lg$ is simple or abelian. Recall that $x \in \check{P} \subset \Lh$ and that we have the decomposition 
			$$
			\Lh = \Lzg \oplus \bigoplus_{s=1}^{r} \Lh_{s}.
			$$
			Write $x = x^{ab} + \sum_{s=1}^{r}x_{s}$ with respect to this direct sum decomposition. The following equality holds 
			$$
			\Delta(x,z)\omega = \Delta(x^{ab},z)\omega^{ab} + \sum_{s=1}^{r}\Delta(x_{s},z)\omega^{s}/
			$$
			It is easy to see that proving the result for the abelian and each simple factor yields the full result because, the level $\kappa$ being $\Lg$-invariant, the decomposition $\Lg = \Lzg \oplus \bigoplus_{s=1}^{r}\Lg^{s}$ is orthogonal with respect to $\kappa$. 
			
			The abelian case has already been proven before. We now assume $\Lg$ is simple. Then, as before, for conformal degree reasons we have 
			$$
			\Delta(x,z)\omega = \omega + x_{(1)}  \omega z^{-1} - \frac{ x_{(2)} \omega - x_{(1)}x_{(1)} \omega}{2}z^{-2}.
			$$
			The following equality holds in $\End(V^{\kappa}(\Lg))$ for all $n \in \mathbb{Z}$ and $a\in \Lg$
			\begin{equation}
				\label{equation commutation relation with omega(-1)}
				[\omega_{(-1)},a_{(n)}] = -na_{(n-2)},
			\end{equation}
			see for instance Theorem 6.2.16 in \cite{LepowskyLi1994}. The result is easy to deduce from equation \eqref{equation commutation relation with omega(-1)}. We have 
			\begin{align*}
				x_{(1)} \omega &= x_{(1)}\omega_{(-1)}\vac \\
				&= \omega_{(-1)}x_{(1)}\vac -[x_{(1)},\omega_{(-1)}]\vac \\
				&= 0-[\omega_{(-1)},x_{(1)}]\vac\\
				&= x_{(-1)}\vac \\
				&= x.
			\end{align*}
			Similarly 
			\begin{align*}
				x_{(2)} \omega &= -[\omega_{(-1)},x_{(2)}]\vac = -2x_{(0)}\vac = 0 ,\\
				x_{(1)}x_{(1)} \omega &= x_{(1)} (x_{(1)}\omega) = x_{(1)} x = \kappa(x,x).
			\end{align*}
			The announced formula for $\Delta(x,z)\omega$ follows. The second formula follows by picking the coefficient in front of $z^{-2}$ in $Y_{M}(\Delta(-x,z)\omega,z)$.
		\end{proof}
		
		\begin{note}
			This equation is the technical step to show that $\omega$ is a conformal vector of $V^{\kappa}(\Lg)$, see \aref Li Lepowsky page 211 theorem 6.2.16.. 
		\end{note}
		
		From now on, we think of the reductive Lie algebra $\Lg$ as the Lie algebra of some connected reductive algebraic group $G$ and we assume that $\kappa$ is generic. We are interested in the behaviour of the Kazhdan--Lusztig category $\Lgkappacategoryintegrable$ under spectral flow. 
		
		\begin{note}
			The following two examples are essential 
			
			\begin{example}
				\label{example KL category for the torus and spectral flow}
				Assume that $G = H = \G_{m}^{r}$ is an algebraic torus so that $\Lg = \Lh$ is an abelian Lie algebra and that $\kappa$ is nondegenerate. In that case, the modules in the Kazhdan--Lusztig category are (possibly infinite) direct sum of Fock spaces $\cF_{\lambda}$ whose highest weight $\lambda$ lies in the character lattice $X^{*}(T) \subset \Lh^{*}$. Let $x \in \cSF(V^{\kappa}(\Lh)) = \Lh$, \textcolor{red}{Finir l'example...}
			\end{example}
			
			\begin{example}
				\label{example KL category for simple Lie algebras}
				\textcolor{red}{Give a $\Lh$-weight argument that instantly shows there is noway we remain in the KL category for a simple Lie algebra}
			\end{example}
		\end{note}

		\begin{proposition}
			\label{proposition stability of Kazhdan--Lusztig categories}
			Let $x \in \cSF(V^{\kappa}(\Lg))=\check{P}$ and $M$ a nonzero module in the category $\Lgkappacategoryintegrable$. We write 
			$$
			x = x^{ab} + x^{ss} \in \Lzg \oplus \Lh^{ss}.
			$$ 
			Then $x \cdot M$ remains in the category $\Lgkappacategoryintegrable$ if and only if 
			$$
			\kappa(x^{ab},\cdot) \in X^{*}(T) \textrm{ and } x^{ss} = 0.  
			$$
			Moreover, in that case, for all $\lambda\in X^{*}(T)_{+}$ written in the form
			$$
			\lambda = \lambda^{ab} + \lambda^{ss} \in (\Lh^{ab})^{*} \oplus (\Lh^{ss})^{*},
			$$  
			then $x \cdot V_{\lambda}^{\kappa}$ is isomorphic to 
			$$
			V_{\lambda - \kappa(x,\cdot)}^{\kappa} = V^{\kappa}_{(\lambda^{ab}-\kappa(x^{ab},\cdot),\lambda^{ss})}
			$$ as $V^{\kappa}(\Lg)$-module.
		\end{proposition}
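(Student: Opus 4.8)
The plan is to reduce to a single simple Weyl module and then read everything off the mode formulas of §\ref{section spectral flow in the theory of vertex algebras}. First I would invoke Theorem~\ref{theorem Kazhdan--Lusztig categories for irrational levels} together with the semisimplicity of $G\mathrm{-Mod}$ (Theorem~\ref{theorem classification of simple G-modules and simple LieG modules}) to write any object of $\Lgkappacategoryintegrable$ as a direct sum of simple Weyl modules $V_{\mu}^{\kappa}$ with $\mu\in X^{*}(T)_{+}$. Since the Kazhdan--Lusztig category is closed under submodules (it is cut out by the local conditions of Definition~\ref{definition Kazhdan Lusztig category}), since spectral flow commutes with direct sums, and since a nonzero $M$ contains some $V_{\mu}^{\kappa}$ as a direct summand, it suffices to decide, for each dominant $\mu$, whether $x\cdot V_{\mu}^{\kappa}$ belongs to $\Lgkappacategoryintegrable$. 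Note that $\kappa$ restricts to a nondegenerate form on $\Lh$ because $\kappa$ is generic, so the formalism applies. I will use repeatedly that every root $\alpha$ vanishes on $\Lzg$, whence $\alpha(x)=\alpha(x^{ss})$, and that $\alpha(x)\in\Z$ because $x\in\check{P}$.

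Next I would treat the obstructed case $x^{ss}\neq 0$. Since the roots span $(\Lh^{ss})^{*}$, there is a root $\beta$ with $m:=\beta(x)\geq 1$. Formula~\eqref{equation spectral flow on affine algebra modes for root vector} shows that in $x\cdot V_{\mu}^{\kappa}$ the element $e^{\beta}\in\Lg$ acts through the old creation mode ${e^{\beta}}_{(-m)}$ of $V_{\mu}^{\kappa}$. Iterating this action on the highest weight vector produces the vectors $\big({e^{\beta}}_{(-m)}\big)^{N}|\mu\rangle$, which are linearly independent for all $N\geq 0$ by the PBW description $V_{\mu}^{\kappa}\cong\cU(t^{-1}\Lg[t^{-1}])\otimes V_{\mu}$. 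Hence $\cU(\Lg)|\mu\rangle$ is infinite-dimensional for the $\Lg$-action on $x\cdot V_{\mu}^{\kappa}$, so this module is not locally finite over $\Lg$ and therefore does not lie in $\Lgkappacategoryintegrable$. This gives the ``only if'' direction: $x\cdot M\in\Lgkappacategoryintegrable$ forces $x^{ss}=0$.

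Then, assuming $x=x^{ab}$, I would observe that $\alpha(x)=0$ for every root, so by \eqref{equation spectral flow on affine algebra modes for root vector}--\eqref{equation spectral flow on affine algebra modes for vector in the Cartan} the only change is that the Cartan zero-modes $h_{(0)}$ get translated by the scalar $\kappa(x,h)$: the positive modes of $\Lg((t))$ are untouched (so $t\Lg[[t]]$ still acts locally nilpotently) and the $\Lg$-action is still locally finite. By Corollary~\ref{corollary sufficient condition for integrability}, $x\cdot V_{\mu}^{\kappa}\in\Lgkappacategoryintegrable$ iff its $\Lh$-weights lie in $X^{*}(T)$; since $h_{(0)}$ is central in the action for $h\in\Lzg$ (by $\kappa$-orthogonality of the decomposition of $\Lg$) it acts by $\mu(h)$ on all of $V_{\mu}^{\kappa}$, so after the flow the weights form the coset $\mu-\kappa(x^{ab},\cdot)+Q$, which lies in $X^{*}(T)$ iff $\kappa(x^{ab},\cdot)\in X^{*}(T)$ (using $\mu\in X^{*}(T)$, $Q\subset X^{*}(T)$ and that $X^{*}(T)$ is a group). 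This completes the equivalence. For the explicit form, the vector $|\mu\rangle$ is still annihilated by $t\Lg[[t]]$ and by $\Ln$ and generates a finite-dimensional $\Lg$-module, now of weight $\mu-\kappa(x,\cdot)$, so Proposition~\ref{proposition Weyl modules are highest weight modules} yields a nonzero map $V^{\kappa}_{\mu-\kappa(x,\cdot)}\to x\cdot V_{\mu}^{\kappa}$, which is an isomorphism because both modules are simple (Theorems~\ref{theorem Kazhdan--Lusztig categories for irrational levels} and \ref{theorem properties of spectral flow}); writing $\mu=\lambda$ and splitting $\lambda-\kappa(x,\cdot)=(\lambda^{ab}-\kappa(x^{ab},\cdot))+\lambda^{ss}$ gives the stated formula.

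The one genuinely non-formal point, and the place I expect to need the most care, is the non-vanishing of $\big({e^{\beta}}_{(-m)}\big)^{N}|\mu\rangle$ that underlies the failure of local finiteness when $x^{ss}\neq 0$; this is exactly where the PBW structure of Weyl modules enters. One could alternatively try to argue via conformal weights becoming unbounded below using Corollary~\ref{corollary spectral flow and modification of conformal weight}, but that forces a case split according to whether $\beta(x)\geq 2$, so the local-finiteness argument seems cleaner. Everything else is bookkeeping with the mode formulas and the description of the Kazhdan--Lusztig category at generic level.
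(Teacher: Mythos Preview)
Your proposal is correct and follows essentially the same route as the paper: reduce to a single Weyl module via Theorem~\ref{theorem Kazhdan--Lusztig categories for irrational levels}, use the mode formulas \eqref{equation spectral flow on affine algebra modes for root vector}--\eqref{equation spectral flow on affine algebra modes for vector in the Cartan} to rule out $x^{ss}\neq 0$, and then identify the twisted Weyl module via Proposition~\ref{proposition Weyl modules are highest weight modules} and simplicity. The only cosmetic difference is in the obstruction step: the paper observes that the \emph{new} mode $e^{\alpha}_{(\alpha(x)-1)}$ (with $\alpha(x)-1\geq 0$) acts as the \emph{old} $e^{\alpha}_{(-1)}$, which is not locally nilpotent, whereas you observe that the new zero-mode $e^{\beta}_{(0)}$ acts as the old creation mode $e^{\beta}_{(-m)}$, breaking local finiteness of the $\Lg$-action; your version has the mild advantage of treating the boundary case $\alpha(x)=1$ uniformly without implicitly conflating local nilpotency and local finiteness.
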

		\begin{proof}
			Thanks to Corollary \ref{corollary description of the kazhdan lusztig category}, and because $M$ is nonzero, it is isomorphic to a nonempty, possibly infinite, direct sum of Weyl modules so we can restrict our attention to the case where $M$ itself is a Weyl module. Let us assume that $x^{ss} \neq 0$. Then there exists a root $\alpha \in \Phi$ and a nonzero root vector $e^{\alpha} \in \Lg^{\alpha}$ such that 
			$$
			\alpha(x) > 0.
			$$
			In view of equation \eqref{equation spectral flow on affine algebra modes for root vector}, the mode $e^{\alpha}_{(\alpha(x)-1)}$ acts after twist as $e^{\alpha}_{(-1)}$ did, but since $\alpha(x)-1 \geqslant0$ this shows that $x \cdot M$ cannot remain in the Kazhdan--Lusztig category since $e^{\alpha}_{(-1)}$ does not act locally nilpotently on Weyl modules. For the converse, let $x=x^{ab} \in \Lzg$. Then the spectral flow of a Weyl module of weight $\lambda$ is isomorphic to 
			$$
			V^{\kappa}_{\lambda - \kappa(x,\cdot)} = V^{\kappa}_{(\lambda^{ab}-\kappa(x^{ab},\cdot),\lambda^{ss})}
			$$
			by the same arguments that prove Proposition \ref{proposition spectral flow of Fock spaces}. Now, such a Weyl module belongs to the Kazhdan--Lusztig category if and only if 
			$$
			\lambda - \kappa(x,\cdot) \in X^{*}(T)_{+}
			$$
			in view of Proposition \ref{proposition integrable Weyl modules}. This is the case if and only if 
			$$
			\kappa(x^{ab},\cdot) \in X^{*}(T).
			$$
		\end{proof}

		\begin{proposition}
			\label{proposition conformal weights of spectrally flown Weyl modules}
			Let $\Lg$ be a reductive Lie algebra and $\kappa$ a generic level. Let $\gamma \in \check{P} = \cSF(V^{\kappa}(\Lg))$ which we write in the form 
			$$
			\gamma = \gamma^{ab} + \gamma^{1} + \cdots + \gamma^{r} \in \Lg = \Lzg \oplus \bigoplus_{s=1}^{r} \Lg^{s},
			$$
			and let $\lambda \in P$. Then $\gamma \cdot V_{\lambda}^{\kappa}$ is again graded by conformal degrees and the possible eigenvalues of the commuting operators $(\omega^{ab}_{(1)},\omega^{1}_{(1)},...,\omega^{r}_{(1)})$ are of the form 
			\begin{multline*}
				\bigg(\frac{\kappa_{|\Lzg}(\lambda^{ab}-\gamma^{ab},\lambda^{ab}-\gamma^{ab})}{2}+n_{ab}, \frac{C_{\Lg_{1}}(\lambda_{1})}{2(k_{1}+\check{h}_{1})}+n_{1} + \frac{\kappa_{|\Lg_{1}}(\gamma_{1},\gamma_{1})}{2} , \dots, \\
				\frac{C_{\Lg_{r}}(\lambda_{r})}{2(k_{r}+\check{h}_{r})}+n_{r}+ \frac{\kappa_{|\Lg_{r}}(\gamma_{r},\gamma_{r})}{2} \bigg)
			\end{multline*}
			in the notations of Lemma \ref{lemma degres conformes des modules de Weyl} with the exception that now $n_{ab} \in \Z_{\geqslant0}$ and $n_{1},...,n_{r} \in \Z$.
		\end{proposition}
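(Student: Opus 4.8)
The plan is to reduce everything to the case of an abelian or simple Lie algebra, since the conformal structure and the operators $\omega^{ab},\omega^{1},\dots,\omega^{r}$ are built factor by factor from the orthogonal decomposition $\Lg=\Lzg\oplus\bigoplus_{s=1}^{r}\Lg_{s}$. First I would record the tensor product decompositions $V^{\kappa}(\Lg)\simeq V^{\kappa_{|\Lzg}}(\Lzg)\otimes\bigotimes_{s}V^{\kappa_{|\Lg_{s}}}(\Lg_{s})$ and $V_{\lambda}^{\kappa}\simeq\cF^{\kappa_{|\Lzg}}_{\lambda^{ab}}\otimes\bigotimes_{s}V^{\kappa_{|\Lg_{s}}}_{\lambda_{s}}$, where $\lambda=\lambda^{ab}+\lambda_{1}+\cdots+\lambda_{r}$. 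Because $\kappa$ is orthogonal for the decomposition of $\Lg$, the modes of $\gamma^{ab}$ act only on the Heisenberg factor and the modes of $\gamma_{s}$ only on the $s$-th affine factor, so Li's delta operator factorizes as $\Delta(\gamma,z)=\Delta(\gamma^{ab},z)\otimes\bigotimes_{s}\Delta(\gamma_{s},z)$, whence $\gamma\cdot V_{\lambda}^{\kappa}\simeq(\gamma^{ab}\cdot\cF^{\kappa_{|\Lzg}}_{\lambda^{ab}})\otimes\bigotimes_{s}(\gamma_{s}\cdot V^{\kappa_{|\Lg_{s}}}_{\lambda_{s}})$. It then suffices to treat the abelian and simple summands one at a time and to check that each twisted summand is still graded by conformal degrees.

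For the abelian summand, I would invoke Proposition \ref{proposition spectral flow of Fock spaces} to identify $\gamma^{ab}\cdot\cF^{\kappa_{|\Lzg}}_{\lambda^{ab}}$ with the Fock space $\cF^{\kappa_{|\Lzg}}_{\lambda^{ab}-\kappa_{|\Lzg}(\gamma^{ab},\cdot)}$; this isomorphism of Heisenberg modules intertwines the action of $\omega^{ab}$, hence transports the conformal grading, which remains bounded below. Identifying $\Lzg^{*}$ with $\Lzg$ via $\kappa_{|\Lzg}$ as in the statement, the $\omega^{ab}_{(1)}$-eigenvalues on a Fock space $\cF^{\kappa_{|\Lzg}}_{\nu}$ are $\tfrac{1}{2}\kappa_{|\Lzg}(\nu,\nu)+n_{ab}$ with $n_{ab}\in\Z_{\geqslant 0}$, and plugging in $\nu=\lambda^{ab}-\gamma^{ab}$ gives the first coordinate of the claimed tuple.

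For a simple summand $\Lg_{s}$, I would use the computation already carried out in the proof of Proposition \ref{proposition modification of the segal sugawara conformal vector under spectral flow for reductive case}, namely $\Delta(\gamma_{s},z)\omega^{s}=\omega^{s}+\gamma_{s}z^{-1}+\tfrac{1}{2}\kappa_{|\Lg_{s}}(\gamma_{s},\gamma_{s})\vac z^{-2}$, so that on $\gamma_{s}\cdot V^{\kappa_{|\Lg_{s}}}_{\lambda_{s}}$ the operator $\omega^{s}_{(1)}$ equals, with respect to the original action, $\omega^{s}_{(1)}-(\gamma_{s})_{(0)}+\tfrac{1}{2}\kappa_{|\Lg_{s}}(\gamma_{s},\gamma_{s})\,\id$. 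On the Weyl module $V^{\kappa_{|\Lg_{s}}}_{\lambda_{s}}$ these two operators commute: $\omega^{s}_{(1)}$ acts diagonalizably with eigenvalues $\tfrac{C_{\Lg_{s}}(\lambda_{s})}{2(k_{s}+\check h_{s})}+m_{s}$, $m_{s}\in\Z_{\geqslant 0}$ (Lemma \ref{lemma degres conformes des modules de Weyl}), while $(\gamma_{s})_{(0)}$ acts on the $\mu_{s}$-weight space by $\mu_{s}(\gamma_{s})$, which is an integer since $\gamma_{s}$ is a coweight and the weights $\mu_{s}$ of the Weyl module lie in $\lambda_{s}+Q$. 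As each conformal-degree subspace of a Weyl module is finite dimensional, $V^{\kappa_{|\Lg_{s}}}_{\lambda_{s}}$ is the direct sum of the joint eigenspaces of $\omega^{s}_{(1)}$ and $(\gamma_{s})_{(0)}$, so $\gamma_{s}\cdot V^{\kappa_{|\Lg_{s}}}_{\lambda_{s}}$ is again graded by conformal degrees with $\omega^{s}_{(1)}$-eigenvalues $\tfrac{C_{\Lg_{s}}(\lambda_{s})}{2(k_{s}+\check h_{s})}+n_{s}+\tfrac{1}{2}\kappa_{|\Lg_{s}}(\gamma_{s},\gamma_{s})$, $n_{s}:=m_{s}-\mu_{s}(\gamma_{s})\in\Z$. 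Reassembling the summands then yields the proposition.

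I do not expect a serious obstacle; once Propositions \ref{proposition modification of the segal sugawara conformal vector under spectral flow for reductive case} and \ref{proposition spectral flow of Fock spaces} and Lemma \ref{lemma degres conformes des modules de Weyl} are available, the argument is the bookkeeping above. The two points that will require a little care are the compatibility of the twist with the tensor factorization—this is precisely where one uses that $\kappa$ is orthogonal for $\Lg=\Lzg\oplus\bigoplus_{s}\Lg_{s}$—and the identification of $\Lzg^{*}$ with $\Lzg$ needed to make sense of $\kappa_{|\Lzg}(\lambda^{ab}-\gamma^{ab},\lambda^{ab}-\gamma^{ab})$. The contrast between $n_{ab}\in\Z_{\geqslant 0}$ and $n_{s}\in\Z$ simply records, as in Remark \ref{remarque Kazhdan--Lusztig categories are unstable with respect to spectral flow}, that $\gamma_{s}\cdot V^{\kappa_{|\Lg_{s}}}_{\lambda_{s}}$ leaves the Kazhdan--Lusztig category whenever $\gamma_{s}\neq 0$ and so has conformal grading unbounded below, whereas a spectrally flown Fock space is still bounded below.
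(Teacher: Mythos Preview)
Your proposal is correct and follows essentially the same approach as the paper: the paper's proof is a one-line invocation of Lemma \ref{lemma degres conformes des modules de Weyl} together with Corollary \ref{corollary spectral flow and modification of conformal weight} (itself an immediate consequence of Proposition \ref{proposition modification of the segal sugawara conformal vector under spectral flow for reductive case}), and you have simply unpacked what ``straightforward consequence'' means, including the tensor factorization along $\Lg=\Lzg\oplus\bigoplus_{s}\Lg_{s}$ and the separate treatment of the abelian and simple summands.
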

		\begin{proof}
			It is a straightforward consequence of Lemma \ref{lemma degres conformes des modules de Weyl} and Corollary \ref{corollary spectral flow and modification of conformal weight}. 
		\end{proof}
		
	\end{note}

	\subsection{The spectral flow group of $\cdo$}
	\label{section spectral flow group of cdo}	
	Denote again by
	$$
	\pi_{L} \otimes \pi_{R} : V^{\kappa \oplus \kappa^{*}}(\Lh \oplus \Lh) \lra \cdo
	$$
	the restriction to the Cartan subalgebra $\Lh \oplus \Lh$ of $\Lg \oplus \Lg$ of $\pi_{L} \otimes \pi_{R}$ (recall \eqref{equation definition pil otimes pir}).
	
	This morphism satisfies condition (\ref{condition direct sum of fock spaces}) in view of Theorem \ref{theorem chiral Peter-Weyl}, and condition~(\ref{condition primary fields}) is clearly satisfied. That allows us to define the spectral flow group:
	\begin{align*}
		\cSF(\cdo) = &\{ (h,h') \in V^{\kappa \oplus \kappa^{*}}(\Lh \oplus \Lh)_{1} \simeq \Lh \oplus \Lh \hspace{0.5mm} | \\
		\hspace{0.5mm} &\pi_{L}(h)_{(0)}+\pi_{R}(h')_{(0)} \textrm{ acts semisimply with integral eigenvalues on $\cdo$}\}.
	\end{align*}
	
	\begin{proposition}
		\label{proposition computation of the spectral flow group for the cdo}
		The group $\cSF(\cdo)$ is isomorphic to $X_{*}(T) \times \check{P}$, an isomorphism is given by 
		$$
		(\gamma,x) \in X_{*}(T) \times \check{P} \mapsto (\gamma + x, \omega_{0}^{T}(x)) \in \cSF(\cdo).
		$$
	\end{proposition}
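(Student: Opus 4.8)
The plan is to read the action of the zero modes $\pi_{L}(h)_{(0)}$ and $\pi_{R}(h')_{(0)}$ directly off the chiral Peter--Weyl decomposition \eqref{equation chiral Peter-Weyl decomposition} and to translate the condition defining $\cSF(\cdo)$ into lattice conditions on the pair $(h,h')$. Since, by Theorem~\ref{theorem chiral Peter-Weyl},
$$
\cdo=\bigoplus_{\lambda\in X^{*}(T)_{+}}V_{\lambda}^{\kappa}\otimes V_{-\omega_{0}\lambda}^{\kappa^{*}}
$$
is an isomorphism of $V^{\kappa\oplus\kappa^{*}}(\Lg\oplus\Lg)$-modules through $\pi_{L}\otimes\pi_{R}$, the operator $\pi_{L}(h)_{(0)}$ preserves each summand and acts there as $h_{(0)}\otimes\id$, while $\pi_{R}(h')_{(0)}$ acts as $\id\otimes h'_{(0)}$; because $\Lh$ acts semisimply on every object of the Kazhdan--Lusztig category, $\pi_{L}(h)_{(0)}+\pi_{R}(h')_{(0)}$ is automatically semisimple, so membership in $\cSF(\cdo)$ is exactly a matter of integrality of its eigenvalues.

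The key computation is that of the $\Lh$-weights of a Weyl module: from $V_{\lambda}^{\kappa}\cong\cU(t^{-1}\Lg[t^{-1}])\otimes V_{\lambda}$ (cf.\ \eqref{equation definition weyl modules}) the $h_{(0)}$-eigenvalue of a monomial $x^{1}_{(-m_{1})}\cdots x^{k}_{(-m_{k})}|\lambda\rangle$ with $m_{i}\geqslant 1$ equals $(\nu+\sum_{i}\beta_{i})(h)$, where $\nu$ is a weight of $V_{\lambda}$ and $\beta_{i}\in\Phi\cup\{0\}$ is the root (or $0$) of $x^{i}$; as the weights of $V_{\lambda}$ lie in $\lambda+Q$, all such eigenvalues come from $\lambda+Q$, and conversely, for any $q\in Q$ a suitable PBW monomial in the $e^{\pm\alpha_{i}}_{(-1)}$ applied to $|\lambda\rangle$ is nonzero and has $h_{(0)}$-weight $\lambda+q$; hence the $\Lh$-weight set of $V_{\lambda}^{\kappa}$ is exactly $\lambda+Q$, and likewise that of $V_{-\omega_{0}\lambda}^{\kappa^{*}}$ is $-\omega_{0}\lambda+Q=-\lambda+Q$ (using $\omega_{0}\lambda-\lambda\in Q$). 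Therefore the eigenvalues of $\pi_{L}(h)_{(0)}+\pi_{R}(h')_{(0)}$ on $\cdo$ are precisely the numbers $\lambda(h-h')+q_{1}(h)+q_{2}(h')$ with $\lambda\in X^{*}(T)_{+}$ and $q_{1},q_{2}\in Q$. Taking $\lambda=0$ forces $\alpha(h),\alpha(h')\in\Z$ for every $\alpha\in\Phi$, i.e.\ $h,h'\in\check{P}$; granting this, the terms $q_{1}(h),q_{2}(h')$ are automatically integral, leaving the condition $\lambda(h-h')\in\Z$ for all $\lambda\in X^{*}(T)_{+}$. Since $X^{*}(T)_{+}$ generates $X^{*}(T)$ (any character becomes dominant after adding a large multiple of $\sum_{\alpha\in\Phi_{+}}\alpha\in Q\subseteq X^{*}(T)$), this is equivalent, by the perfect pairing \eqref{equation perfect pairing between characters and cocharacters}, to $h-h'\in X_{*}(T)$. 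Conversely these conditions visibly make every eigenvalue integral, so $\cSF(\cdo)=\{(h,h')\in\check{P}\times\check{P}\mid h-h'\in X_{*}(T)\}$.

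It then remains to check that $(\gamma,x)\in X_{*}(T)\times\check{P}\mapsto(\gamma+x,\,{}^{t}\omega_{0}(x))$ is a group isomorphism onto this set. It is additive; it lands in $\cSF(\cdo)$ because $\gamma+x,\ {}^{t}\omega_{0}(x)\in\check{P}$ and $(\gamma+x)-{}^{t}\omega_{0}(x)=\gamma+(x-{}^{t}\omega_{0}(x))\in X_{*}(T)$, using the standard fact that $x-{}^{t}\omega_{0}(x)\in Q^{\vee}\subseteq X_{*}(T)$ for $x\in\check{P}$ (check it for a simple reflection $s_{i}$, where $x-s_{i}(x)=\alpha_{i}(x)\alpha_{i}^{\vee}$, and telescope); it is injective since ${}^{t}\omega_{0}(x)=0$ gives $x=0$ and then $\gamma=0$; and it is surjective, for given $(h,h')\in\cSF(\cdo)$ one puts $x={}^{t}\omega_{0}(h')\in\check{P}$ and $\gamma=h-{}^{t}\omega_{0}(h')=(h-h')+(h'-{}^{t}\omega_{0}(h'))\in X_{*}(T)$, which maps to $(h,h')$.

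The main obstacle I expect is the precise identification of the $\Lh$-weight set of $V_{\lambda}^{\kappa}$ as the full coset $\lambda+Q$: the surjectivity half, produced by the loop creation operators, is what erases all the top-space weight information and is exactly responsible for the coweight lattice $\check{P}$ (rather than the smaller lattice $X_{*}(T)$) occurring as a direct factor of $\cSF(\cdo)$. The two auxiliary facts — that $X^{*}(T)_{+}$ generates $X^{*}(T)$ and that $x-{}^{t}\omega_{0}(x)\in Q^{\vee}$ — are standard but genuinely used.
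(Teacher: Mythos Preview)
Your proof is correct and follows essentially the same strategy as the paper's: read off the eigenvalues of $\pi_{L}(h)_{(0)}+\pi_{R}(h')_{(0)}$ from the $\Lh\oplus\Lh$-weight structure of $\cdo$ and convert integrality into the lattice conditions $h,h'\in\check P$ and $h-h'\in X_{*}(T)$ (the paper phrases the latter as $h-{}^{t}\omega_{0}(h')\in X_{*}(T)$, which is equivalent once $h,h'\in\check P$, by the very identity $x-{}^{t}\omega_{0}(x)\in Q^{\vee}$ that you invoke). The isomorphism check at the end is the same.

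One packaging difference is worth noting. You run the whole argument through the chiral Peter--Weyl decomposition (Theorem~\ref{theorem chiral Peter-Weyl}), computing the full $\Lh$-weight set $\lambda+Q$ of each Weyl module; this is clean but uses the standing hypothesis that $\kappa$ is generic. The paper instead argues in two steps that avoid chiral Peter--Weyl: it first uses the inclusion $V^{\kappa\oplus\kappa^{*}}(\Lg\oplus\Lg)\subset\cdo$ to get $\cSF(\cdo)\subset\check P\times\check P$ via Proposition~\ref{proposition spectral flow group associated to affine vertex algebras}, then tests integrality on the degree-zero piece $\cO_{G}\subset\cdo$ (where weight vectors $(\lambda,-\omega_{0}\lambda)$ are visible from the classical Peter--Weyl theorem), and checks the converse on the explicit generators \eqref{equation generators of the cdo as vector space}. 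The payoff is that the paper's argument works at \emph{any} level, as pointed out in the remark following the proposition, whereas yours is tied to the generic regime. Within that regime, your argument is a perfectly good --- and arguably more uniform --- alternative.
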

	\begin{proof}
		As explained before, we have the inclusions of vertex algebras 
		$$
		V^{\kappa \oplus \kappa^{*}}(\Lh \oplus \Lh) \subset V^{\kappa \oplus \kappa^{*}}(\Lg \oplus \Lg) \subset \cdo.
		$$
		This readily implies the inclusion
		$$
		\cSF(\cdo) \subset \cSF(V^{\kappa\oplus \kappa}(\Lg \oplus \Lg)).
		$$
		Using Proposition \ref{proposition spectral flow group associated to affine vertex algebras} to compute the right-hand side, we see that 
		$$
		\cSF(\cdo) \subset \check{P} \times \check{P}.
		$$
		Let $(h,h') \in \cSF(\cdo)$. In particular $\pi_{L}(h)_{(0)} + \pi_{R}(h')_{(0)}$ should act on $\cO_{G} \subset \cdo$ with integral eigenvalues. For all $\lambda \in X^{*}(T)$ there exists a nonzero function in $\cO_{G}$ of weight~$(\lambda, -\omega_{0}(\lambda))$ with respect to $\Lh \oplus \Lh$. So, by assumption, we must have
		$$
		\lambda(h)-\omega_{0}\lambda(h') = \lambda(h-\omega_{0}^{T}(h')) \in \mathbb{Z}. 
		$$
		Now recall that $\eqref{equation perfect pairing between characters and cocharacters}$ defines a perfect pairing between the free $\Z$-modules $X^{*}(T)$ and~$X_{*}(T)$ inside of $\Lh^{*}$ and $\Lh$. So there exists $\gamma \in X_{*}(T)$ such that 
		$$
		h -\omega_{0}^{T}(h') = \gamma.
		$$ 
		Now, if we write $h' = \omega_{0}^{T}(x) \in \check{P}$ we get using $ (\omega_{0}^{T})^{2} = \id$ the equality
		$$
		(h,h') = (x+\gamma,\omega_{0}^{T}x).
		$$
		Conversely if we pick $(\gamma,x) \in X_{*}(T) \times \check{P}$, one can check that $\pi_{L}(x+\gamma)_{(0)} + \pi_{R}(\omega_{0}^{T}x)_{(0)}$ acts semisimply (this is clear) with integral eigenvalues on $\cdo$ using for instance the generators \eqref{equation generators of the cdo as vector space}. 
		\begin{note}
			More precisely, we can apply Lemma \ref{lemma computation of spectral flow} to $A = \pi_{L}(x+\gamma)_{(0)} + \pi_{R}(\omega_{0}^{T}x)$, $W = \{\vac\}$ and to the set of endomorphisms $\cG$ consisting of the elements
			$$ 
			\{\pi_{L}(h)_{(m)}, \pi_{L}(e^{\alpha})_{(n)}, f_{(k)} \}
			$$
			for $h \in \Lh$, $e^{\alpha} \in \Lg^{\alpha}$ for $\alpha \in \Phi$ and $f \in \cO_{G}$, with $m,n,k<0$. The fact that $\cG$ spans $\cdo$ from the vacuum follows from the explicit generators given in \eqref{equation generators of the cdo as vector space}. 
		\end{note}
	\end{proof}
	
	\begin{remarque}
		The isomorphism given in the previous proposition is non-canonical. We could have for instance chosen the following one
		$$
		(\gamma,x) \in X_{*}(T) \times \check{P} \mapsto (x, \omega_{0}^{T}(x)+\gamma) \in \cSF(\cdo).
		$$
		The reason why we make this specific choice should become clear later. From now on, we identify $\cSF(\cdo)$ with $X_{*}(T) \times \check{P}$ using the isomorphism of Proposition~\ref{proposition computation of the spectral flow group for the cdo}. Note that this result holds for any level.
	\end{remarque}	
	
	\section{Quantum Hamiltonian reduction and spectral flow}
	\label{section quantum Hamiltonian reduction and spectral flow}
	
	\subsection{Reminders on quantum Hamiltonian reduction}
	\label{section reminders on quantum Hamiltonian reduction}
	A good reference is Chapter 15 of \cite{FrenkelBenZvi2004}. Let $\Lg$ be a simple Lie algebra with a given Cartan subalgebra $\Lh$ and a choice of a Borel $\Lb$ containing it. Let
	$
	r = \dim(\Lh)
	$ 
	be the rank of $\Lg$. Let 
	$
	\Lg = \Ln_{-} \oplus \Lh \oplus \Ln_{+}
	$
	be the associated triangular decomposition. That is  
	~$\Ln_{\pm} = \sum_{\alpha \in \Phi^{\pm}} \Lg^{\alpha}$
	in terms of the notations introduced in §\ref{section reductive groups}. For each $\alpha \in \Phi_{+}$, denote by $e^{\alpha}$ a nonzero root vector in $\Lg^{\alpha}$ and by~$f^{\alpha}$ a nonzero root vector in $\Lg^{-\alpha}$ such that 
	$$
	\widetilde{\kappa_{\Lg}}(e^{\alpha},f^{\alpha}) = 1.
	$$
	Introduce the structure constants $(c^{\alpha,\beta}_{\gamma})_{\alpha,\beta,\gamma \in \Phi_{+}} \in \C$ to be such that 
	$$
	[e^{\alpha},e^{\beta}] = \sum_{\gamma \in \Phi_{+}} c^{\alpha,\beta}_{\gamma} e^{\gamma}
	$$ 
	for all $\alpha,\beta \in \Phi_{+}$.
	Let $\alpha_{1},...,\alpha_{r} \in \Phi_{+}$ be a basis of simple roots, and let 
	$$
	f = \sum_{i=1}^{r} f^{\alpha_{i}}.
	$$
	The following formula 
	$$
	\chi : x \in \Ln_{+} \longmapsto \widetilde{\kappa_{\Lg}}(f,x) \in \C
	$$
	defines a character of the Lie algebra $\Ln_{+}$ that takes on $e^{\alpha}$ the value $1$ if $\alpha$ is simple and~$0$ otherwise. Let us denote by $\bigwedge^{\frac{\infty}{2}+\bullet}\Ln_{+}$ the fermionic ghost vertex algebra associated to $\Ln_{+}$. As a vector space, it is equal to 
	$$
	\left(\bigwedge (\varphi_{\alpha,n})_{\alpha \in \Phi_{+}, n<0} \otimes \bigwedge (\varphi^{*}_{\alpha,n})_{\alpha \in \Phi_{+},n \leqslant0}\right)\vac,
	$$
	where $\bigwedge(a_{i})_{i \in I}$ denotes the exterior algebra on the elements $(a_{i})_{i \in I}$. The following (super)commutation relations, 
	\begin{align*}
		[\varphi_{\alpha,m},\varphi_{\beta,n}]_{+} &=
		[\varphi^{*}_{\alpha,m},\varphi^{*}_{\beta,n}]_{+} = 0, \\
		[\varphi_{\alpha,m},\varphi^{*}_{\beta,n}]_{+}&= \delta_{\alpha,\beta}\delta_{m,-n},
	\end{align*} 
	for all $\alpha,\beta \in \Phi_{+}$ and $m,n \in \Z$, together with the conditions
	\begin{align*}
		&\varphi_{\alpha,m} \vac = 0 \textrm{ for $m \geqslant0$,}\\
		&\varphi^{*}_{\alpha,n} \vac = 0 \textrm{ for $n >0$,}
	\end{align*}
	determine the vertex superalgebra structure. Namely if we set 
	\begin{align*}
		&\varphi_{\alpha} = \varphi_{\alpha,-1}\vac, \\
		&\varphi^{*}_{\alpha} = \varphi^{*}_{\alpha,0}\vac,
	\end{align*}
	and define 
	$$
	Y(\varphi_{\alpha},z) = \sum_{n \in \Z} \varphi_{\alpha,n}z^{-n-1}, \hspace{0.5mm}
	Y(\varphi^{*}_{\alpha},z) = \sum_{n \in \Z} \varphi^{*}_{\alpha,n}z^{-n}
	$$
	so that we have for all $n \in \Z$ the equalities 
	$$
	(\varphi_{\alpha})_{(n)} = \varphi_{\alpha,n}, \hspace{0.5mm} (\varphi^{*}_{\alpha})_{(n)} = \varphi^{*}_{\alpha,n+1}.
	$$
	It comes equipped with two gradations, to define them fix an element of the form 
	$$
	\varphi_{\alpha_{1},m_{1}}...\varphi_{\alpha_{s},m_{s}}\varphi^{*}_{\beta_{1},n_{1}}...\varphi^{*}_{\beta_{t},n_{t}}\vac
	$$ 
	where $s,t \geqslant0$, $\alpha_{1},...,\alpha_{s},\beta_{1},...,\beta_{t} \in \Phi_{+}$, $m_{1},...,m_{s} < 0$ and $n_{1},...,n_{t} \leqslant0$. Then define the degree of this element to be the integer
	$$
	-(m_{1}+...+m_{s}+n_{1}+...+n_{t})
	$$
	and its charge to be the integer
	$$
	t - s.
	$$
	Finally, define the parity of such an element to be its charge modulo $2$ to obtain a structure of a super vector space on $\bigwedge^{\frac{\infty}{2}+\bullet}\Ln_{+}$. For a given $c \in \Z$, denote the subvector space of charge $c$ of $\bigwedge^{\frac{\infty}{2}+\bullet}\Ln_{+}$ to be $\bigwedge^{\frac{\infty}{2}+c}(\Ln_{+})$. 
	
	We now introduce the functor of quantum Drinfeld--Sokolov reduction. Let $V$ be a vertex algebra equipped with a morphism 
	$$
	V^{\kappa}(\Lg) \lra V. 
	$$
	Let $M$ be a $V$-module, define 
	$$
	C(M) = M \otimes \bigwedge^{\frac{\infty}{2}+\bullet}\Ln_{+}.
	$$
	This is naturally a $C(V)=V \otimes \bigwedge^{\frac{\infty}{2}+\bullet}\Ln_{+}$-module which is $\Z$-graded by the charge gradation inherited from the second factor. We define two odd elements of degree $1$ of $C(V)$ as follows 
	\begin{align*}
		Q^{st} &= \left( \sum_{\alpha \in \Phi_{+}} e^{\alpha}_{(-1)}(\varphi^{*}_{\alpha})_{(-1)} - \frac{1}{2} \sum_{\alpha,\beta,\gamma} c_{\gamma}^{\alpha,\beta}(\varphi^{*}_{\alpha})_{(-1)}(\varphi^{*}_{\beta})_{(-1)}(\varphi_{\gamma})_{(-1)}\right)\vac,\\
		Q^{\chi} &= \sum_{\alpha \in \Phi_{+}} \chi(e^{\alpha})\varphi^{*}_{\alpha} = \sum_{i=1}^{r} (\varphi^{*}_{\alpha_{i}})_{(-1)}\vac,
	\end{align*}
	and set 
	$$
	Q = Q^{st} + Q^{\chi}.
	$$
	Let 
	\begin{align*}
		\d^{st} = Q^{st}_{(0)},\quad \d^{\chi} = Q^{\chi}_{(0)}, \quad \d = \d^{st} + \d^{\chi}.
	\end{align*}
	Because $\d$ is a mode of the vertex algebra $C(V)$ it acts on $C(M)$, and it is clear from its definition that it is of degree $1$, \textit{i.e.}, for all $c \in \Z$ we have 
	$$
	\d : M \otimes \bigwedge^{\frac{\infty}{2}+c}(\Ln_{+}) \lra M \otimes \bigwedge^{\frac{\infty}{2}+c+1}(\Ln_{+}). 
	$$
	Moreover 
	$
	\d \circ \d = 0
	$	
	so we can define 
	$
	H^{*}_{DS}(M)
	$
	to be the total cohomology of the complex $(C(M),\d)$. 
	\begin{definition}
		\label{definition w algebra}
		The vertex algebra
		$$
		W^{\kappa}(\Lg) = H^{*}_{DS}(V^{\kappa}(\Lg))
		$$
		is called the universal affine $\mathcal{W}$-algebra of level $\kappa$ for the Lie algebra $\Lg$.
	\end{definition}
	Assume we are given a vertex algebra $V$ together with a morphism 
	$$
	V^{\kappa}(\Lg) \lra V
	$$
	and fix a $V$-module $M$. Then $H^{*}_{DS}(V)$ is naturally a vertex algebra such that we have a morphism of vertex algebras 
	$$
	W^{\kappa}(\Lg) \lra H^{*}_{DS}(V)
	$$
	and moreover, $H^{*}_{DS}(M)$ is naturally a $H^{*}_{DS}(V)$-module. So in fact this procedure defines a functor 
	$$
	H^{*}_{DS} : V\mathrm{-Mod} \lra H^{*}_{DS}(V)\mathrm{-Mod}.
	$$
	
	\begin{theoreme}[Theorem~15.1.9 of \cite{FrenkelBenZvi2004}, Proposition~2 of \cite{frenkelgaitsgoryweylmodules} and Theorem~4.15 of \cite{Arakawa2012}]
		\label{theorem vanishing of cohomology on kazhdan lusztig category}
		Let $\Lg$ be a simple Lie algebra, $\kappa$ be any level and $V^{\kappa}(\Lg)$ the universal affine vertex algebra, then 
		\begin{align*}
			H^{i}_{DS}(V^{\kappa}(\Lg)) &= 0 \textrm{ if $i \neq 0$,}\\
			H^{0}_{DS}(V^{\kappa}(\Lg)) &= W^{\kappa}(\Lg).
		\end{align*}		
		Moreover, if $G$ is a connected reductive algebraic group with Lie algebra $\Lg$ and $M \in \Lgkappacategoryintegrable$ then for all $i \neq 0$, we have 
		$$
		H^{i}_{DS}(M) = 0.
		$$
		In particular, the functor 
		$$
		H^{0}_{DS} : \Lgkappacategoryintegrable \lra W^{\kappa}(\Lg)\mathrm{-Mod}
		$$
		is exact.
	\end{theoreme}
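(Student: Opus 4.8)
The plan is to establish the single assertion ``$H^{i}_{DS}(M)=0$ for all $i\neq0$ and all $M\in\Lgkappacategoryintegrable$''; everything else is then formal. Since $V^{\kappa}(\Lg)=V^{\kappa}_{0}$ lies in $\Lgkappacategoryintegrable$, this vanishing forces $H^{*}_{DS}(V^{\kappa}(\Lg))$ to be concentrated in degree $0$, which is exactly the equality $H^{0}_{DS}(V^{\kappa}(\Lg))=W^{\kappa}(\Lg)$ of Definition \ref{definition w algebra}; and for a short exact sequence $0\to A\to B\to C\to0$ in $\Lgkappacategoryintegrable$ the long exact cohomology sequence, with $H^{\pm1}_{DS}(A)=H^{\pm1}_{DS}(C)=0$, collapses to the short exact sequence $0\to H^{0}_{DS}(A)\to H^{0}_{DS}(B)\to H^{0}_{DS}(C)\to0$, so $H^{0}_{DS}$ is exact.

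First I would reduce to Weyl modules. The assignment $M\mapsto C(M)=M\otimes\bigwedge^{\frac{\infty}{2}+\bullet}\Ln_{+}$, the differential $\d$, and the passage to cohomology all commute with filtered colimits, so one may assume $M$ finitely generated, hence (by smoothness and $\jetinf G$-integrability) with conformal weights bounded below and finite-dimensional conformal-weight spaces --- this is precisely the finiteness that makes the spectral sequence below converge on each conformal weight. At a generic level, Theorem \ref{theorem Kazhdan--Lusztig categories for irrational levels} makes every such $M$ a finite direct sum of Weyl modules $V_{\lambda}^{\kappa}$, so it suffices to handle those; at an arbitrary level one resolves $M$ by modules admitting finite Weyl filtrations and uses the (easy, separate) vanishing $H^{<0}_{DS}=0$, which follows from the annihilation behaviour of the fermionic modes $\varphi_{\alpha,n}$, to propagate the conclusion up the resolution.

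Finally, for $M=V_{\lambda}^{\kappa}$ I would run the Feigin--Frenkel spectral sequence. Grade $\Ln_{+}=\bigoplus_{\alpha\in\Phi_{+}}\Lg^{\alpha}$ by root height; this induces gradings on the ghosts $\varphi_{\alpha},\varphi^{*}_{\alpha}$ and, through the principal coweight, on $M$, and the explicit formulas for $Q^{st}$ and $Q^{\chi}$ show that $\d^{st}$ is homogeneous of principal degree $0$ while $\d^{\chi}$ has principal degree $-1$. Filtering $C(V_{\lambda}^{\kappa})$ increasingly by this grading gives a filtered complex with $\mathrm{gr}_{F}\d=\d^{st}$, hence a convergent spectral sequence whose first page is the semi-infinite cohomology $H^{\frac{\infty}{2}+\bullet}(\Ln_{+}((t)),V_{\lambda}^{\kappa})$ equipped with the differential induced by $\d^{\chi}$, a Koszul-type differential encoding the simple-root currents $e^{\alpha_{i}}_{(-1)}$. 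The crux, and the main obstacle, is to show this page is concentrated in cohomological degree $0$: by a PBW analysis of $V_{\lambda}^{\kappa}$ as an $\Ln_{+}((t))$-module one identifies the semi-infinite cohomology with an exterior algebra on which the $\d^{\chi}$-Koszul differential is acyclic away from the bottom, the surviving cohomology being the Miura/free-field part; this is the loop-algebra avatar of Kostant's theorem on $\Ln_{+}$-cohomology and is exactly where the principality (nondegeneracy) of the character $\chi$, not merely its being a character, is used. Degeneration of the spectral sequence then yields $H^{i}_{DS}(V_{\lambda}^{\kappa})=0$ for $i\neq0$, which completes the argument. A variant I would keep in reserve, following Arakawa, is to compare $H^{\bullet}_{DS}$ with the untwisted reduction $H^{\bullet}_{DS,\chi=0}$ through an $\exp$-twist and deduce the vanishing from Kostant's theorem applied to finite-dimensional $\Lg$-modules.
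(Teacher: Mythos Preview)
The paper does not give a proof of this theorem; it is quoted with attribution to \cite{FrenkelBenZvi2004}, \cite{frenkelgaitsgoryweylmodules}, and \cite{Arakawa2012}, so there is no in-paper argument to compare against. Your sketch is broadly in the spirit of those references, but two points deserve care.

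First, your filtration makes $\mathrm{gr}\,\d=\d^{st}$, so the $E_1$-page is the semi-infinite cohomology $H^{\frac{\infty}{2}+\bullet}(\Ln_+((t)),V_\lambda^\kappa)$, which you then describe as ``an exterior algebra on which the $\d^{\chi}$-Koszul differential is acyclic away from the bottom''. That description is not accurate: the $\d^{st}$-cohomology of the BRST complex is not an exterior algebra, and computing it directly is the hard step, not the easy one. The argument in \cite{FrenkelBenZvi2004} runs the spectral sequence the other way --- one isolates $\d^{\chi}$ first (equivalently, exhibits a tensor factorisation $C(V^\kappa(\Lg))\simeq C_0\otimes C'$ with $C'$ contractible), precisely because it is $\d^{\chi}$ that pairs the ghosts $\varphi^*_{\alpha_i}$ against the simple-root modes in a genuinely Koszul way. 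With your choice of filtration the burden shifts to a nontrivial computation of loop-$\Ln_+$ semi-infinite cohomology, which you have not carried out.

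Second, at a non-generic level your reduction to Weyl modules via ``resolutions by Weyl-filtered objects'' together with a half-plane vanishing $H^{<0}_{DS}=0$ is not free: one must first know that $\Lgkappacategoryintegrable$ has enough such resolutions and control convergence of the resulting hypercohomology spectral sequence. Arakawa's argument --- essentially the ``variant kept in reserve'' in your last sentence --- sidesteps this by proving the vanishing uniformly for any object of a category-$\mathcal O$ type, via a filtration whose associated graded reduces to finite-dimensional $\Ln_+$-cohomology and Kostant's theorem. That is the cleanest route and is what the paper is citing for the statement about general $M$.
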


	\subsection{Twisted quantum Hamiltonian reduction}

	The quantum Hamiltonian reduction functor $H^{*}_{DS}$ introduced in the previous paragraph can be twisted by a coweight $\check{\mu} \in \check{P}$ to obtain a new functor
	$$
	H^{*}_{DS,\check{\mu}} : V^{\kappa}(\Lg)\mathrm{-Mod} \lra W^{\kappa}(\Lg)\mathrm{-Mod}.
	$$
	It was studied for instance in \cite{ArakawaFrenkel2019} and \cite{Arakawa2022}. We now recall the construction while collecting some facts that will be useful later.
	
	\begin{note}
		To understand the definition of the functor, it is useful to study the structure of the vertex superalgebra $\bigwedge^{\frac{\infty}{2}+\bullet}\Ln_{+}$ under the Boson-Fermion correspondence. 
	\end{note}
	
	Let 
	$$
	\La = \bigoplus_{\alpha \in \Phi_{+}} \C b_{\alpha}
	$$
	be an abelian Lie algebra with basis $(b_{\alpha})_{\alpha \in \Phi_{+}}$ and $\kappa_{\La}$ be the unique nondegenerate bilinear form on $\La$ such that 
	$$
	\kappa_{\La}(b_{\alpha},b_{\beta}) = \delta_{\alpha,\beta}
	$$
	for all positive root $\alpha,\beta$. Let $(\lambda_{\alpha})_{\alpha \in \Phi_{+}}$ be the basis of $\La^{*}$ dual to the basis $(b_{\alpha})_{\alpha \in \Phi_{+}}$. Let~$V^{\kappa_{\La}}(\La)$ be the associated Heisenberg vertex algebra. The following is easy to prove.
	\begin{proposition}
		\label{proposition morphism heisenberg to fermion}
		There exists an injective morphism of vertex superalgebras
		$$
		V^{\kappa_{\La}}(\La) \lra \bigwedge^{\frac{\infty}{2}+\bullet}\Ln_{+}
		$$ 
		uniquely determined by the condition that it maps $b_{\alpha}$ to 
		$$
		{\varphi^{*}_{\alpha}}_{(-1)}{\varphi_{\alpha}}_{(-1)}\vac
		$$
		for all $\alpha \in \Phi_{+}$.
		Moreover the charge gradation on $\bigwedge^{\frac{\infty}{2}+\bullet}\Ln_{+}$ coincides with the eigenvalues of the endomorphism
		$$
		\sum_{\alpha \in \Phi_{+}} {b_{\alpha}}_{(0)}.
		$$
	\end{proposition}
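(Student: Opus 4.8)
I would like to prove this by exhibiting the candidate images of the generators $b_\alpha$ as free‑fermion ("ghost") currents and then invoking the universal property of the Heisenberg vertex algebra. The plan is as follows. For each $\alpha \in \Phi_+$ set
$J_\alpha := {\varphi^*_\alpha}_{(-1)}{\varphi_\alpha}_{(-1)}\vac \in \bigwedge^{\frac{\infty}{2}+\bullet}\Ln_+$, so that by the definition of the normally ordered product the associated field is $Y(J_\alpha,z) = {:}\varphi^*_\alpha(z)\varphi_\alpha(z){:}$. Each $J_\alpha$ has charge $0$, hence is an even element, so the map we want is really a morphism of (trivially $\Z/2$-graded) vertex algebras into the even subalgebra of the ghost system. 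Since $V^{\kappa_\La}(\La)$ is the universal vertex algebra strongly generated by mutually local fields $b_\alpha$ subject to $[b_{\alpha,\lambda}b_\beta] = \kappa_\La(b_\alpha,b_\beta)\lambda = \delta_{\alpha,\beta}\lambda$, it suffices to check that the $J_\alpha$ are mutually local (they are, being modes of a single vertex superalgebra) and satisfy exactly these $\lambda$-bracket relations; existence of the morphism then follows, and uniqueness follows because the $b_\alpha$ strongly generate $V^{\kappa_\La}(\La)$.

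The heart of the matter is thus the computation of $[J_{\alpha,\lambda}J_\beta]$ via Wick's theorem for free fermions. For $\alpha\neq\beta$ the two pairs of fermions anticommute and there are no contractions between ${:}\varphi^*_\alpha\varphi_\alpha{:}$ and ${:}\varphi^*_\beta\varphi_\beta{:}$, giving $[J_{\alpha,\lambda}J_\beta]=0$. For $\alpha=\beta$ one applies the contraction rule coming from $[\varphi_{\alpha,m},\varphi^*_{\alpha,n}]_+ = \delta_{m,-n}$; the outcome is the standard ghost‑current OPE, a \emph{pure} double pole, so $[J_{\alpha,\lambda}J_\alpha] = c\,\lambda\vac$ for a constant $c$, and tracking the fermionic sign together with the ordering "$\varphi^*_\alpha$ before $\varphi_\alpha$" fixed in the statement yields $c=1=\kappa_\La(b_\alpha,b_\alpha)$. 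This sign bookkeeping — complicated slightly by the asymmetric mode conventions $Y(\varphi_\alpha,z)=\sum_n\varphi_{\alpha,n}z^{-n-1}$ versus $Y(\varphi^*_\alpha,z)=\sum_n\varphi^*_{\alpha,n}z^{-n}$ — is the only genuinely delicate step; it is exactly what forces the ordering chosen in the proposition (reversing $\varphi^*_\alpha$ and $\varphi_\alpha$ would flip the sign and one would instead send $b_\alpha\mapsto -J_\alpha$).

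Injectivity is then immediate: $\kappa_\La$ is nondegenerate, so $V^{\kappa_\La}(\La)$ is a simple vertex algebra (the same fact already used for $V^{\kappa}(\Lh)$ in the excerpt, cf. Theorem~\ref{theorem Kazhdan--Lusztig categories for irrational levels}), and the morphism is nonzero since $b_\alpha\mapsto J_\alpha\neq 0$; a nonzero morphism out of a simple vertex algebra is injective. Finally, for the statement about gradations, the endomorphism $\sum_{\alpha\in\Phi_+}{b_\alpha}_{(0)}$ is carried to $\sum_{\alpha}J_{\alpha,(0)}$, and $J_{\alpha,(0)}$ is the fermion‑number operator of the $\alpha$-pair: a direct mode computation using the contraction relations gives
$[J_{\alpha,(0)},\varphi_{\beta,m}] = -\delta_{\alpha,\beta}\varphi_{\beta,m}$, $[J_{\alpha,(0)},\varphi^*_{\beta,n}] = \delta_{\alpha,\beta}\varphi^*_{\beta,n}$, and $J_{\alpha,(0)}\vac = 0$. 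Hence on a monomial $\varphi_{\alpha_1,m_1}\cdots\varphi_{\alpha_r,m_r}\varphi^*_{\beta_1,n_1}\cdots\varphi^*_{\beta_s,n_s}\vac$ the operator $\sum_\alpha J_{\alpha,(0)}$ acts by the scalar $s-r$, which is precisely the charge of that element; so the eigenspace decomposition of $\sum_\alpha {b_\alpha}_{(0)}$ is exactly the charge decomposition $\bigwedge^{\frac{\infty}{2}+\bullet}\Ln_+ = \bigoplus_{c\in\Z}\bigwedge^{\frac{\infty}{2}+c}(\Ln_+)$. I expect the OPE sign/normalization computation in the second paragraph to be the main obstacle; everything else is formal.
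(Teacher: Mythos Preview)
Your proposal is correct and is exactly the standard argument one has in mind for this statement; the paper itself does not spell out a proof (it only says ``easy to prove''), so there is nothing to compare beyond noting that your outline is the natural way to fill in the details. The one point worth double-checking when you write it up is, as you already flagged, the sign in the double-pole of $J_\alpha(z)J_\alpha(w)$ given the asymmetric mode conventions for $\varphi_\alpha$ and $\varphi^*_\alpha$; once that yields $[J_{\alpha,\lambda}J_\alpha]=\lambda$, everything else (existence via the universal property, uniqueness from strong generation, injectivity from simplicity of the Heisenberg vertex algebra, and the charge computation via $[J_{\alpha,(0)},\varphi_\beta]=-\delta_{\alpha,\beta}\varphi_\beta$, $[J_{\alpha,(0)},\varphi^*_\beta]=\delta_{\alpha,\beta}\varphi^*_\beta$) is indeed formal.
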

	
	This endows $\bigwedge^{\frac{\infty}{2}+\bullet}\Ln_{+}$ with the structure of a $V^{\kappa_{\La}}(\La)$-module. The following statement is an instance of a boson-fermion correspondence.
	\begin{theoreme}[Theorem 5.3.2 of \cite{FrenkelBenZvi2004}]
		\label{theorem boson fermion correspondence}
		The vertex superalgebra $\bigwedge^{\frac{\infty}{2}+\bullet}\Ln_{+}$ decomposes as a $V^{\kappa_{\La}}(\La)$-module as 
		\begin{equation}
			\label{equation boson fermion correspondence}
			\bigwedge^{\frac{\infty}{2}+\bullet}\Ln_{+} = \bigoplus_{(n_{\alpha}) \in \Z^{ \Phi_{+} }} \cF^{\kappa_{\La}}_{\sum_{\alpha \in \Phi_{+}}n_{\alpha}\lambda_{\alpha}}.
		\end{equation}
	\end{theoreme}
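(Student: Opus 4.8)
The plan is to deduce the statement from the classical boson--fermion correspondence for a single charged ghost pair, applied one factor at a time. First I would observe that, as a vertex superalgebra, $\bigwedge^{\frac{\infty}{2}+\bullet}\Ln_{+}$ is the super tensor product over $\alpha \in \Phi_{+}$ of the rank-one ghost systems $F_{\alpha}$ generated by $\varphi_{\alpha}$ and $\varphi^{*}_{\alpha}$, that $V^{\kappa_{\La}}(\La) = \bigotimes_{\alpha \in \Phi_{+}} V^{\kappa_{b_{\alpha}}}(\C b_{\alpha})$, and that the morphism of Proposition~\ref{proposition morphism heisenberg to fermion} is the tensor product of the rank-one morphisms $b_{\alpha} \mapsto {\varphi^{*}_{\alpha}}_{(-1)}{\varphi_{\alpha}}_{(-1)}\vac$. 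Since $\kappa_{\La}$ is orthogonal for the basis $(b_{\alpha})_{\alpha \in \Phi_{+}}$, one has $\cF^{\kappa_{\La}}_{\sum_{\alpha} n_{\alpha}\lambda_{\alpha}} \cong \bigotimes_{\alpha} \cF^{\kappa_{b_{\alpha}}}_{n_{\alpha}\lambda_{\alpha}}$, so distributing a tensor product over direct sums reduces \eqref{equation boson fermion correspondence} to the rank-one assertion: the ghost system on one pair $(\varphi,\varphi^{*})$ decomposes, under the Heisenberg subalgebra generated by $b = {\varphi^{*}}_{(-1)}{\varphi}_{(-1)}\vac$, as $\bigoplus_{n \in \Z} \cF^{\kappa_{b}}_{n\lambda}$.

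For the rank-one case I would first record the (routine, sign-sensitive) relations coming from $Y(b,z) = {:}Y(\varphi^{*},z)Y(\varphi,z){:}$: the modes satisfy $[b_{m},b_{n}] = m\,\delta_{m+n,0}$ (the Heisenberg algebra at the level normalized by $\kappa_{b}(b,b) = 1$), together with $[b_{m},\varphi_{n}] = -\varphi_{m+n}$ and $[b_{m},\varphi^{*}_{n}] = \varphi^{*}_{m+n}$, and $b_{m}\vac = 0$ for $m \geqslant 0$; by Proposition~\ref{proposition morphism heisenberg to fermion} the charge grading is the $b_{0}$-eigenspace grading. Next, for each $n \in \Z$ I would write down an explicit singular vector of charge $n$: for $n \geqslant 0$ take $v_{n} = \varphi^{*}_{0}\varphi^{*}_{-1}\cdots\varphi^{*}_{-(n-1)}\vac$ and for $n < 0$ take $v_{n} = \varphi_{-1}\varphi_{-2}\cdots\varphi_{n}\vac$; the relations above give $b_{0}v_{n} = n v_{n}$ and $b_{m}v_{n} = 0$ for $m \geqslant 1$, so $v_{n}$ is a Heisenberg highest weight vector of weight $n\lambda$. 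As $\kappa_{b}$ is nondegenerate, the Fock space $\cF^{\kappa_{b}}_{n\lambda}$ is irreducible, hence the cyclic Heisenberg submodule generated by $v_{n}$ is a nonzero quotient of $\cF^{\kappa_{b}}_{n\lambda}$ and so isomorphic to it. Because these submodules sit in pairwise distinct charges, their sum is direct, giving an injection $\bigoplus_{n \in \Z} \cF^{\kappa_{b}}_{n\lambda} \hookrightarrow F$.

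It then remains to see that this injection is onto, which I would do by a graded-dimension count using the degree and charge gradations defined in the excerpt. Weighting a monomial by $q^{(\mathrm{degree})}w^{(\mathrm{charge})}$, the ghost space $F$ has character $\prod_{k \geqslant 1}(1 + w^{-1}q^{k})(1 + w\,q^{k-1})$, the factors coming respectively from the creation modes $\varphi_{-k}$ ($k \geqslant 1$) and $\varphi^{*}_{-k}$ ($k \geqslant 0$); on the other hand $v_{n}$ has degree $n(n-1)/2$ (directly for $n \geqslant 0$, and via $1+2+\cdots+|n| = |n|(|n|+1)/2$ for $n < 0$), so the image of $\bigoplus_{n}\cF^{\kappa_{b}}_{n\lambda}$ has character $\big(\prod_{k \geqslant 1}(1-q^{k})\big)^{-1}\sum_{n \in \Z} w^{n} q^{n(n-1)/2}$. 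The equality of the two expressions is exactly the Jacobi triple product identity $\sum_{n \in \Z} w^{n} q^{n(n-1)/2} = \prod_{k \geqslant 1}(1-q^{k})(1 + w\,q^{k-1})(1 + w^{-1}q^{k})$, so the injection is an isomorphism and, by the reduction above, the theorem follows. The things to be careful about are fixing the signs in the ghost--Heisenberg commutators and observing that no comparison of conformal vectors is needed, since the claim is only an isomorphism of $V^{\kappa_{\La}}(\La)$-modules; the rest is bookkeeping with the triple product identity.
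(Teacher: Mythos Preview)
Your argument is correct and is essentially the classical proof of the boson--fermion correspondence: factor into rank-one $bc$-systems, exhibit the singular vectors $v_{n}$ in each charge, and finish by the Jacobi triple product character identity. In the paper this theorem is not proved at all---it is simply quoted from \cite{FrenkelBenZvi2004} (Theorem~5.3.2 there)---so there is no ``paper's own proof'' to compare against; what you have written is exactly the argument one finds in that reference. One minor remark: the verification that $b_{m}v_{n}=0$ for $m\geqslant 1$ deserves a sentence, since it relies on the observation that each commutator term either produces a $\varphi^{*}_{>0}$ that annihilates the vacuum or a repeated fermionic mode that squares to zero, but this is straightforward and your sketch is accurate.
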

	
	\begin{remarque}
		The right-hand side of equation \eqref{equation boson fermion correspondence} is naturally equipped with the structure of a vertex superalgebra and is called a lattice vertex superalgebra. In fact, equation \eqref{equation boson fermion correspondence} is an equality of vertex superalgebras. We will not need it in what follows. 
	\end{remarque}
	
	We can define with respect to the morphism defined by Proposition \ref{proposition morphism heisenberg to fermion} a spectral flow group $\cSF\left(\bigwedge^{\frac{\infty}{2}+\bullet}\Ln_{+}\right)$. 
	
	\begin{lemme}
		\label{lemma computation of spectral flow group of charged fermions}
		We have an isomorphism of abelian groups
		$$
		\cSF\left(\bigwedge^{\frac{\infty}{2}+\bullet}\Ln_{+}\right) \simeq \Z^{\Phi_{+}}
		$$
		obtained by mapping each $(n_{\alpha}) \in \Z^{\Phi_{+}}$ to 
		$$
		\sum_{\alpha \in \Phi_{+}} n_{\alpha} b_{\alpha} \in \La \subset V^{\kappa_{\La}}(\La) \subset \bigwedge^{\frac{\infty}{2}+\bullet}\Ln_{+}.
		$$
	\end{lemme}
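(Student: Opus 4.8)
The plan is to verify that the morphism $\varphi : V^{\kappa_{\La}}(\La) \to \bigwedge^{\frac{\infty}{2}+\bullet}\Ln_{+}$ of Proposition~\ref{proposition morphism heisenberg to fermion} meets Conditions~(\ref{condition direct sum of fock spaces}) and~(\ref{condition primary fields}), so that the spectral flow group $\cSF(\bigwedge^{\frac{\infty}{2}+\bullet}\Ln_{+})$ is defined, and then to read off the group from the boson-fermion decomposition. Condition~(\ref{condition direct sum of fock spaces}) is precisely Theorem~\ref{theorem boson fermion correspondence}. For Condition~(\ref{condition primary fields}) one uses the standard conformal vector on $\bigwedge^{\frac{\infty}{2}+\bullet}\Ln_{+}$ (the one for which $\varphi_{\alpha}$ has weight $1$ and $\varphi^{*}_{\alpha}$ has weight $0$, matching the degree defined above) and checks that each $\varphi(b_{\alpha}) = (\varphi^{*}_{\alpha})_{(-1)}(\varphi_{\alpha})_{(-1)}\vac$ is primary of weight one; this is a short $\lambda$-bracket computation, or simply the statement that $\varphi$ identifies $V^{\kappa_{\La}}(\La)$ with the Heisenberg vertex subalgebra of the lattice vertex superalgebra appearing in~\eqref{equation boson fermion correspondence}.

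Next I would take an arbitrary $h = \sum_{\alpha \in \Phi_{+}} c_{\alpha} b_{\alpha} \in \La = V^{\kappa_{\La}}(\La)_{1}$ with $c_{\alpha} \in \C$, so that $\varphi(h)_{(0)} = \sum_{\alpha} c_{\alpha}\,\varphi(b_{\alpha})_{(0)}$, and compute its action on each summand of~\eqref{equation boson fermion correspondence}. The key point is that every $\varphi(b_{\beta})_{(0)}$ is a zero mode of a Heisenberg field, hence commutes with all the creation operators $\varphi(b_{\gamma})_{(-m)}$ for $m>0$ (the cocycle term $m\,\delta_{m,0}\kappa_{\La}(b_{\gamma},b_{\beta})$ vanishing when one mode is zero), and it acts on the highest-weight vector of $\cF^{\kappa_{\La}}_{\sum_{\alpha} n_{\alpha}\lambda_{\alpha}}$ by $\big(\sum_{\alpha} n_{\alpha}\lambda_{\alpha}\big)(b_{\beta}) = n_{\beta}$ since $\lambda_{\alpha}(b_{\beta}) = \delta_{\alpha,\beta}$. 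Therefore $\varphi(b_{\beta})_{(0)}$ acts as the scalar $n_{\beta}$ on all of $\cF^{\kappa_{\La}}_{\sum_{\alpha} n_{\alpha}\lambda_{\alpha}}$ (consistently with the statement in Proposition~\ref{proposition morphism heisenberg to fermion} that $\sum_{\beta}\varphi(b_{\beta})_{(0)}$ is the charge operator), and so $\varphi(h)_{(0)}$ acts on that summand by the scalar $\sum_{\alpha} c_{\alpha} n_{\alpha}$; in particular $\varphi(h)_{(0)}$ is automatically semisimple on $\bigwedge^{\frac{\infty}{2}+\bullet}\Ln_{+}$.

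Finally I would impose integrality of the eigenvalues: as $(n_{\alpha})$ runs over $\Z^{\Phi_{+}}$ the scalars $\sum_{\alpha} c_{\alpha} n_{\alpha}$ run over $\sum_{\alpha} c_{\alpha}\Z$, so demanding all of them to lie in $\Z$ forces $c_{\alpha} \in \Z$ for every $\alpha$ (test against the standard basis vectors of $\Z^{\Phi_{+}}$), and conversely integral $c_{\alpha}$ clearly give integral eigenvalues. Hence $\cSF(\bigwedge^{\frac{\infty}{2}+\bullet}\Ln_{+}) = \bigoplus_{\alpha \in \Phi_{+}} \Z b_{\alpha}$, and $(n_{\alpha}) \mapsto \sum_{\alpha} n_{\alpha} b_{\alpha}$ is the claimed isomorphism with $\Z^{\Phi_{+}}$. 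The proof rests only on two genuine inputs — the boson-fermion decomposition and the Heisenberg relations for the modes $\varphi(b_{\alpha})_{(n)}$ — so I anticipate no serious obstacle; the only point deserving a little care is fixing the conformal vector on $\bigwedge^{\frac{\infty}{2}+\bullet}\Ln_{+}$ so that the spectral flow group is even well defined.
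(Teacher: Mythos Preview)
Your proof is correct and follows essentially the same approach as the paper, which simply states that the result is a straightforward consequence of the boson--fermion correspondence (Theorem~\ref{theorem boson fermion correspondence}). You have spelled out explicitly what this means: the decomposition~\eqref{equation boson fermion correspondence} shows $\varphi(h)_{(0)}$ acts on each Fock space $\cF^{\kappa_{\La}}_{\sum n_\alpha \lambda_\alpha}$ by the scalar $\sum c_\alpha n_\alpha$, and integrality of all such scalars as $(n_\alpha)$ ranges over $\Z^{\Phi_+}$ is equivalent to $c_\alpha \in \Z$ for all $\alpha$.
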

	\begin{proof}
		This is a straightforward consequence of Theorem \ref{theorem boson fermion correspondence}.
	\end{proof}
	The morphism of vertex algebras 
	$$
	V^{\kappa}(\Lh) \otimes V^{\kappa_{\La}}(\La) \lra C(V^{\kappa}(\Lg))
	$$
	allows us to consider the spectral flow group $\cSF(C(V^{\kappa}(\Lg)))$.
	
	\begin{lemme}
		\label{lemma computation of spectral flow group of c v kappa g}
		The map
		$$
		(\check{\mu},(n_{\alpha})) \in \check{P} \times \Z^{\Phi_{+}} \longmapsto \check{\mu} \otimes 1 + 1 \otimes \sum_{\alpha \in \Phi_{+}} n_{\alpha} b_{\alpha} \in V^{\kappa}(\Lh) \otimes V^{\kappa_{\La}}(\La) \subset C(V^{\kappa}(\Lg))
		$$
		induces an isomorphism of abelian groups 
		$$
		\cSF(C(V^{\kappa}(\Lg))) \simeq \check{P} \times \Z^{\Phi_{+}}.
		$$
	\end{lemme}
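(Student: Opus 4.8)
The plan is to reduce the computation to the two tensor factors of $C(V^{\kappa}(\Lg)) = V^{\kappa}(\Lg) \otimes \bigwedge^{\frac{\infty}{2}+\bullet}\Ln_{+}$, exploiting that the Heisenberg morphism $V^{\kappa}(\Lh) \otimes V^{\kappa_{\La}}(\La) \lra C(V^{\kappa}(\Lg))$ is, by construction, the tensor product of the Cartan inclusion $V^{\kappa}(\Lh) \hookrightarrow V^{\kappa}(\Lg)$ with the boson--fermion embedding $V^{\kappa_{\La}}(\La) \hookrightarrow \bigwedge^{\frac{\infty}{2}+\bullet}\Ln_{+}$ of Proposition \ref{proposition morphism heisenberg to fermion}. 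Concretely, first I would note that for $(h,a) \in \Lh \oplus \La = \big(V^{\kappa}(\Lh) \otimes V^{\kappa_{\La}}(\La)\big)_{1}$, the zero mode of the image of $(h,a)$ in $C(V^{\kappa}(\Lg))$ is $\pi_{L}(h)_{(0)} \otimes \id + \id \otimes a_{(0)}$, where $a_{(0)}$ acts through the Heisenberg module structure on $\bigwedge^{\frac{\infty}{2}+\bullet}\Ln_{+}$.

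The key point is then that both summands act semisimply: $\pi_{L}(h)_{(0)}$ by the $\Lh$-weight grading of $V^{\kappa}(\Lg)$, and $a_{(0)}$ by the decomposition of $\bigwedge^{\frac{\infty}{2}+\bullet}\Ln_{+}$ into Fock spaces provided by Theorem \ref{theorem boson fermion correspondence} (on an irreducible Fock space the Heisenberg zero mode is a scalar). Hence $\pi_{L}(h)_{(0)} \otimes \id + \id \otimes a_{(0)}$ is semisimple with spectrum $\operatorname{Spec}\big(\pi_{L}(h)_{(0)}\big) + \operatorname{Spec}\big(a_{(0)}\big)$. Since $\vac \in V^{\kappa}(\Lg)$ and $\vac \in \bigwedge^{\frac{\infty}{2}+\bullet}\Ln_{+}$ both have weight zero, the value $0$ lies in each of the two spectra; therefore the total spectrum lies in $\Z$ if and only if each of the two spectra does. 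In view of Definition \ref{definition spectral flow group}, this says exactly that $(h,a) \in \cSF(C(V^{\kappa}(\Lg)))$ if and only if $h \in \cSF(V^{\kappa}(\Lg))$ and $a \in \cSF(\bigwedge^{\frac{\infty}{2}+\bullet}\Ln_{+})$, i.e.\ $\cSF(C(V^{\kappa}(\Lg)))$ is the internal direct sum $\cSF(V^{\kappa}(\Lg)) \oplus \cSF(\bigwedge^{\frac{\infty}{2}+\bullet}\Ln_{+})$ inside $\Lh \oplus \La$.

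To finish, I would substitute the two factors: $\cSF(V^{\kappa}(\Lg)) = \check{P}$ by Proposition \ref{proposition spectral flow group associated to affine vertex algebras}, and $\cSF(\bigwedge^{\frac{\infty}{2}+\bullet}\Ln_{+}) \simeq \Z^{\Phi_{+}}$ via $(n_{\alpha}) \mapsto \sum_{\alpha \in \Phi_{+}} n_{\alpha} b_{\alpha}$ by Lemma \ref{lemma computation of spectral flow group of charged fermions}. This yields precisely the asserted parametrization $(\check{\mu},(n_{\alpha})) \mapsto \check{\mu} \otimes 1 + 1 \otimes \sum_{\alpha \in \Phi_{+}} n_{\alpha} b_{\alpha}$.

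I do not expect any serious difficulty here; the computation is routine once the tensor decomposition is in place. The only point deserving care is that the direct sum is genuine rather than a priori something larger — a conceivable phenomenon being that a non-integral part of $\operatorname{Spec}(\pi_{L}(h)_{(0)})$ is cancelled by a non-integral part of $\operatorname{Spec}(a_{(0)})$ — and this is exactly what the observation "$0$ lies in both spectra" rules out.
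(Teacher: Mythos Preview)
Your proof is correct and follows the same approach as the paper, which simply invokes Proposition~\ref{proposition spectral flow group associated to affine vertex algebras}, Lemma~\ref{lemma computation of spectral flow group of charged fermions}, and the factorization $\cSF(V^{\kappa}(\Lg) \otimes \bigwedge^{\frac{\infty}{2}+\bullet}\Ln_{+}) \simeq \cSF(V^{\kappa}(\Lg)) \times \cSF(\bigwedge^{\frac{\infty}{2}+\bullet}\Ln_{+})$ without further justification. Your argument using ``$0$ lies in both spectra'' supplies exactly the detail the paper omits in asserting this tensor factorization.
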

	\begin{proof}
		This follows from Proposition \ref{proposition spectral flow group associated to affine vertex algebras}, Lemma \ref{lemma computation of spectral flow group of charged fermions} and the fact that 
		$$
		\cSF(V^{\kappa}(\Lg) \otimes \bigwedge^{\frac{\infty}{2}+\bullet}\Ln_{+}) \simeq \cSF(V^{\kappa}(\Lg)) \times \cSF (\bigwedge^{\frac{\infty}{2}+\bullet}\Ln_{+}).
		$$
	\end{proof}
	
	The canonical pairing between $\Lh$ and its dual $\Lh^{*}$ and the very definition of the group of coweights implies that the following map
	$$
	\iota : \check{\mu} \in \check{P} \longmapsto (\alpha(\check{\mu}))_{\alpha \in \Phi_{+}} \in \Z^{\Phi_{+}}
	$$
	is a well-defined group morphism. In particular, to each coweight $\check{\mu} \in \check{P}$ we can associate the element 
	$$
	(\check{\mu},-\iota(\check{\mu})) \in \check{P} \times \Z^{\Phi_{+}}
	$$
	which, under the isomorphism of Lemma \ref{lemma computation of spectral flow group of c v kappa g}, corresponds to the element 
	\begin{equation}
		\label{equation definition of the spectral flow twist for twisting the drinfeld sokolov functor}
		C(\check{\mu}) := \check{\mu} - \sum_{\alpha \in \Phi_{+}} \alpha(\check{\mu})b_{\alpha} \in \cSF(C(V^{\kappa}(\Lg))).
	\end{equation}
	
	Let $\check{\mu} \in \check{P}$ be a coweight and $M$ a $V^{\kappa}(\Lg)$-module, then
	$$
	C(M) = M \otimes \bigwedge^{\frac{\infty}{2}+\bullet}\Ln_{+}
	$$
	is naturally a $C(V^{\kappa}(\Lg))$-module. Applying a spectral flow twist of parameter $C(\check{\mu})$, we see that 
	$$
	C(\check{\mu}) \cdot C(M) = (\check{\mu} \cdot M) \otimes ( - \iota(\check{\mu}) \cdot \bigwedge^{\frac{\infty}{2}+\bullet}\Ln_{+}) 
	$$
	is again a $C(V^{\kappa}(\Lg))$-module. Recall that, as vector spaces, 
	$$
	C(\check{\mu}) \cdot C(M) = C(M)
	$$
	so we can equip it with the charge gradation and with respect to this gradation, the operator $\d$ acts with degree $1$ and squares to zero. Define 
	$
	H^{*}_{DS,\check{\mu}}(M)
	$ 
	to be the cohomology of the complex 
	$
	(C(\check{\mu}) \cdot C(M),\d).
	$
	It is naturally a $W^{\kappa}(\Lg)$-module and we thus get a well-defined functor 
	$$
	H^{*}_{DS,\check{\mu}} : V^{\kappa}(\Lg)\mathrm{-Mod} \lra W^{\kappa}(\Lg)\mathrm{-Mod}.
	$$

	\begin{note}
		Recall that $\check{P} = \cSF(V^{\kappa}(\Lg))$ and so each $\check{\mu} \in \check{P}$ defines an invertible endofunctor
		$$
		\check{\mu} : M \in V^{\kappa}(\Lg)-\textrm{Mod} \lra \check{\mu}\cdot M \in V^{\kappa}(\Lg)-\textrm{Mod}. 
		$$
		So the first way of twisting the construction is to consider the composition of the functors 
		$$
		V^{\kappa}(\Lg)-\textrm{Mod} \overset{\check{\mu}}{\lra} V^{\kappa}(\Lg)-\textrm{Mod} \overset{H^{*}_{DS}}{\lra} W^{\kappa}(\Lg)-\textrm{Mod}.
		$$
		We now turn to the other construction. Start again with a $V^{\kappa}(\Lg)$-module $M$ and consider the $C(V^{\kappa}(\Lg))$-module $C(M)$. The vertex algebra $C(V^{\kappa}(\Lg))=V^{\kappa}(\Lg) \otimes \bigwedge^{\frac{\infty}{2}+\bullet}\Ln_{+}$ has spectral flow group 
		$$
		\cSF(C(V^{\kappa}(\Lg))) = \cSF(V^{\kappa}(\Lg)) \times \cSF\left(\bigwedge^{\frac{\infty}{2}+\bullet}\Ln_{+}\right) = \check{P} \times \Z^{\dim(\Ln_{+})}.
		$$
		More explicitly if for each $\alpha \in \Delta_{>0}$ we set
		$$
		b_{\alpha}(z) = :\varphi_{\alpha}^{*}(z)\varphi_{\alpha}(z):.
		$$
		Then for all $(\check{\mu},n_{\alpha}) \in \check{P} \times \Z^{\Delta_{>0}}$ the corresponding element of $\cSF(C(V^{\kappa}(\Lg)))$ is given by 
		$$
		\check{\mu} \otimes \vac + \vac \otimes \sum_{\alpha \in \Delta_{>0}}n_{\alpha}b_{\alpha} = \check{\mu} + \sum_{\alpha \in \Delta_{>0}}n_{\alpha}b_{\alpha}.
		$$ 
		So to each $\check{\mu}\in \check{P}$ we can associated the following spectral flow parameter $C(\check{\mu})$ of $\cSF(C(V^{\kappa}(\Lg)))$ :
		$$
		C(\check{\mu}) = \check{\mu} - \sum_{\alpha \in \Delta_{>0}}\alpha(\check{\mu})b_{\alpha}.
		$$
		Consider the module $C(\check{\mu}) \cdot C(M) \in C(V^{\kappa}(\Lg))-\textrm{Mod}$, it looks like
		$$
		C(\check{\mu}) \cdot C(M) = (\check{\mu} \cdot M) \otimes \left(-\sum_{\alpha \in \Delta_{>0}}\alpha(\check{\mu})b_{\alpha}\right) \cdot \bigwedge^{\frac{\infty}{2}+\bullet}\Ln_{+} 
		$$
		Clearly by definition of spectral flow twists, as a vector space $C(\check{\mu}) \cdot C(M)$ is nothing but $C(M)$, so it is again a $\Z$-graded vector space graded by the total charge. To understand how the action of the BRST operator is modified we need to compute a little bit.
		
		Recall that we have for each $\alpha,\beta \in \Delta_{>0}$ 
		\begin{align*}
			[{b_{\alpha}}_{\lambda}\varphi_{\beta}] &= -\delta_{\alpha,\beta}\varphi_{\beta}, \\
			[{b_{\alpha}}_{\lambda}\varphi_{\beta}^{*}] &= \delta_{\alpha,\beta}\varphi_{\beta}^{*}, \\
			[{\check{\mu}}_\lambda e^{\alpha}] &= \alpha(\check{\mu})e^{\alpha}. 
		\end{align*}
		So the following equalities hold in $C(V^{\kappa}(\Lg))[z,z^{-1}]$ (\textcolor{red}{careful the signs here are opposite to the paper on Urod algebras but I think I am right})
		\begin{align*}
			\Delta(C(\check{\mu})) \cdot \varphi_{\beta} &= z^{\beta(\check{\mu})}\varphi_{\beta}, \\
			\Delta(C(\check{\mu})) \cdot \varphi^{*}_{\beta} &= z^{-\beta(\check{\mu})}\varphi^{*}_{\beta}, \\
			\Delta(C(\check{\mu})) \cdot e^{\alpha} &= z^{\alpha(\check{\mu})}e^{\alpha}.
		\end{align*}
		\textcolor{blue}{I am indeed correct but to match the litterature we modify the spectral flow twist by adding a minus (see the subsection I added about spectral flow twist in the litterature, it is not deeper than $ \sigma \cdot M = (\sigma^{-1})^{*}M)$ to get the correct functorialities, so in fact :}
		
		The following equalities hold in $C(V^{\kappa}(\Lg))[z,z^{-1}]$ :
		\begin{align*}
			\Delta(C(\check{\mu})) \cdot \varphi_{\beta} &= z^{-\beta(\check{\mu})}\varphi_{\beta}, \\
			\Delta(C(\check{\mu})) \cdot \varphi^{*}_{\beta} &= z^{\beta(\check{\mu})}\varphi^{*}_{\beta}, \\
			\Delta(C(\check{\mu})) \cdot e^{\alpha} &= z^{-\alpha(\check{\mu})}e^{\alpha}.
		\end{align*}
		So it is easy to see that the standard term $d^{st}$ remains unchanged but that $d^{\chi} = \sum_{\alpha \in \Delta_{>0}}\chi(e^{\alpha})\varphi_{\alpha}^{*}(1)$\footnote{Careful with the normalisation of $\varphi^{*}$ that explains the $1$ and not the zero here.} acts on $C(M)$ as :
		$$
		\sum_{\alpha \in \Delta_{>0}}\chi(e^{\alpha})\varphi_{\alpha}^{*}(\alpha(\check{\mu}) + 1).
		$$
		The operator on the complex $C(M)$ we obtain after spectral flow is denoted by $d_{\check{\mu}}$ in Arakawa--Frenkel. \aref
		
		As $\C$-vector spaces, $C(\check{\mu}) \cdot C(M)$ is $C(M)$ and as such it inherits its charge grading. The operator $d$ (that now acts as $d_{\check{\mu}}$) is still of charge $1$. That makes $(C(\check{\mu}) \cdot C(M),d_{\check{\mu}})$ a $\Z$-graded differential module over $C(V^{\kappa}(\Lg))$ (\textcolor{red}{Spell out what this means someday...}) and so we see that $W^{\kappa}(\Lg)$ acts on the cohomology of the complex which we denote by $H^{*}_{DS,\check{\mu}}(M)$. Again for each $\check{\mu} \in \check{P}$, that defines a functor :
		$$
		H^{*}_{DS,\check{\mu}} : V^{\kappa}(\Lg)-\textrm{Mod} \lra W^{\kappa}(\Lg)-\textrm{Mod}.
		$$
		\textcolor{blue}{Should be written differentily = as an isomorphism of functors. Also add $f$ principal in the definition}
	\end{note}

	\subsection{Compatibility with spectral flow}	
	The twisted quantum Hamiltonian reduction functors and the untwisted one are related by the following result:
	
	\begin{theoreme}
		\label{theorem spectral flow drinfeld sokolov is the same as twisted up to cohomological shift}
		Let $M$ be a $V^{\kappa}(\Lg)$-module and $\check{\mu} \in \check{P}$. Then there exists an isomorphism of $W^{\kappa}(\Lg)$-modules
		$$
		H^{*}_{DS}(\check{\mu}\cdot M) \simeq H^{*-2\rho(\check{\mu})}_{DS,\check{\mu}}(M)
		$$
		where $\rho = \frac{1}{2}\sum_{\alpha \in \Phi_{+}} \alpha$.
	\end{theoreme}
	
	The rest of this section is devoted to the proof of this theorem. The only difference between the two functors 
	$$
	H^{*}_{DS}(\check{\mu}\cdot -) \textrm{ and } H^{*}_{DS,\check{\mu}}(-)
	$$
	is a spectral flow twist on the fermionic part but, in view of Theorem \ref{theorem boson fermion correspondence}, this only amounts to shifting the charge gradation and hence the cohomological degree (see~§9.1.1 of \cite{CreutzigGaiotto2020}). We now give the details.
	\begin{lemme}
		\label{lemma spectral flow twist of charged fermion}
		Let $(m_{\alpha}) \in \cSF\left(\bigwedge^{\frac{\infty}{2}+\bullet}\Ln_{+}\right)$. Then we have an isomorphism 
		\begin{equation}
			\label{equation shift for charged fermions}
			S_{(m_{\alpha})} : 
			\bigwedge^{\frac{\infty}{2}+\bullet}\Ln_{+} \lra (m_{\alpha}) \cdot \bigwedge^{\frac{\infty}{2}+\bullet}\Ln_{+} 
		\end{equation}
		of $\bigwedge^{\frac{\infty}{2}+\bullet}\Ln_{+}$-modules. As vector spaces we have the equality 
		$$
		(m_{\alpha}) \cdot \bigwedge^{\frac{\infty}{2}+\bullet}\Ln_{+} = \bigwedge^{\frac{\infty}{2}+\bullet}\Ln_{+} 
		$$
		and, endowing $(m_{\alpha}) \cdot \bigwedge^{\frac{\infty}{2}+\bullet}\Ln_{+}$ with the charge gradation of the right-hand side, the isomorphism $S_{(m_{\alpha})}$ is of charge $\sum_{\alpha \in \Phi_{+}}m_{\alpha}$.
		
	\end{lemme}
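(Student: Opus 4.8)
The plan is to pass to the lattice vertex superalgebra description of $\bigwedge^{\frac{\infty}{2}+\bullet}\Ln_{+}$ supplied by the boson--fermion correspondence and then exploit that spectral flow by a lattice vector is inner. By Theorem~\ref{theorem boson fermion correspondence} together with the Remark following it, the embedding of Proposition~\ref{proposition morphism heisenberg to fermion} realises $\bigwedge^{\frac{\infty}{2}+\bullet}\Ln_{+}$ as the lattice vertex superalgebra attached to $L := \bigoplus_{\alpha \in \Phi_{+}} \Z b_{\alpha} \subset \La$ with the form $\kappa_{\La}$; since $\kappa_{\La}(b_{\alpha},b_{\beta}) = \delta_{\alpha,\beta}$, the lattice $L$ is unimodular, so $\bigwedge^{\frac{\infty}{2}+\bullet}\Ln_{+}$ is a simple vertex superalgebra having (up to isomorphism) no simple module besides itself. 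Alternatively, for the identification used below one may invoke Proposition~3.4 of \cite{Li1997} directly, which asserts that spectral flow by a lattice vector acts as the identity on the module category of a lattice vertex (super)algebra.

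First I would compute the twist over the Heisenberg subalgebra $V^{\kappa_{\La}}(\La)$. Under the isomorphism of Lemma~\ref{lemma computation of spectral flow group of charged fermions} the element $(m_{\alpha})$ corresponds to $x := \sum_{\alpha \in \Phi_{+}} m_{\alpha} b_{\alpha} \in L$, for which $\kappa_{\La}(x,\cdot) = \sum_{\alpha} m_{\alpha}\lambda_{\alpha}$. Applying Proposition~\ref{proposition spectral flow of Fock spaces} to each summand of the decomposition of Theorem~\ref{theorem boson fermion correspondence} and then reindexing $n_{\alpha}\mapsto n_{\alpha}+m_{\alpha}$ yields an isomorphism of $V^{\kappa_{\La}}(\La)$-modules
\[
(m_{\alpha})\cdot\bigwedge^{\frac{\infty}{2}+\bullet}\Ln_{+} \;\simeq\; \bigoplus_{(n_{\alpha})\in\Z^{\Phi_{+}}}\cF^{\kappa_{\La}}_{\sum_{\alpha}(n_{\alpha}-m_{\alpha})\lambda_{\alpha}} \;\simeq\; \bigoplus_{(n_{\alpha})\in\Z^{\Phi_{+}}}\cF^{\kappa_{\La}}_{\sum_{\alpha}n_{\alpha}\lambda_{\alpha}}.
\]
By Theorem~\ref{theorem properties of spectral flow} the module $(m_{\alpha})\cdot\bigwedge^{\frac{\infty}{2}+\bullet}\Ln_{+}$ is a simple $\bigwedge^{\frac{\infty}{2}+\bullet}\Ln_{+}$-module, hence by the first paragraph it is isomorphic to $\bigwedge^{\frac{\infty}{2}+\bullet}\Ln_{+}$; fix such an isomorphism $S_{(m_{\alpha})}$, unique up to scalar by Schur's lemma.

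Next I would pin down the charge of $S_{(m_{\alpha})}$. One checks first that the twisted module is still $\Z$-graded for the ambient charge grading of $\bigwedge^{\frac{\infty}{2}+\bullet}\Ln_{+}$: the Li delta operator $\Delta(-x,z)$ is built only from the modes $x_{(n)}$, each of which preserves every Fock summand and hence has charge $0$, so the twisted vertex operator attached to a charge-homogeneous element $a$ still shifts the ambient charge by the charge of $a$, just as the untwisted one does. Decomposing $S_{(m_{\alpha})}=\sum_{k}(S_{(m_{\alpha})})_{k}$ into components of definite charge shift, each $(S_{(m_{\alpha})})_{k}$ is again a map of $\bigwedge^{\frac{\infty}{2}+\bullet}\Ln_{+}$-modules between simple modules, hence $0$ or an isomorphism, and a comparison of any two nonzero components would produce a charge-shifting automorphism of the simple module $\bigwedge^{\frac{\infty}{2}+\bullet}\Ln_{+}$, which must be scalar; so exactly one $(S_{(m_{\alpha})})_{k}$ is nonzero and $S_{(m_{\alpha})}$ is homogeneous of some charge $k$. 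Evaluating on $\vac$: the vector $S_{(m_{\alpha})}(\vac)$ is vacuum-like in the twisted module, and the vacuum-like vectors there form the highest-weight line of the Fock summand sitting, as a vector subspace of $\bigwedge^{\frac{\infty}{2}+\bullet}\Ln_{+}$, at $\cF^{\kappa_{\La}}_{\sum_{\alpha}m_{\alpha}\lambda_{\alpha}}$, on which $\sum_{\alpha}(b_{\alpha})_{(0)}$ — hence, by Proposition~\ref{proposition morphism heisenberg to fermion}, the charge — equals $\sum_{\alpha}m_{\alpha}$. Therefore $k = \sum_{\alpha\in\Phi_{+}}m_{\alpha}$, as claimed.

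I expect the only genuinely delicate point to be the step upgrading "same Fock decomposition over $V^{\kappa_{\La}}(\La)$" to "isomorphic as $\bigwedge^{\frac{\infty}{2}+\bullet}\Ln_{+}$-modules"; this is handled either by the unimodularity of $L$ (which makes the adjoint module the unique simple one) or by quoting \cite{Li1997}. The charge bookkeeping, while it needs some care with signs, is routine once one observes that $\Delta(-x,z)$ is charge-neutral.
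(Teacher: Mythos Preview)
Your proof is correct and follows essentially the same skeleton as the paper's: decompose $\bigwedge^{\frac{\infty}{2}+\bullet}\Ln_{+}$ into Fock spaces via Theorem~\ref{theorem boson fermion correspondence}, apply Proposition~\ref{proposition spectral flow of Fock spaces} to see that the twisted module has the same Fock decomposition up to a shift, and then read off the charge shift from where the vacuum lands. The one genuine difference lies in how the isomorphism $S_{(m_\alpha)}$ is produced. You argue abstractly: the twisted module is simple by Theorem~\ref{theorem properties of spectral flow}, and either unimodularity of the lattice or Li's Proposition~3.4 forces it to be the adjoint module. The paper instead stays internal: it observes that the highest-weight vector $|\sum_{\alpha}m_\alpha\lambda_\alpha\rangle$ becomes vacuum-like in the twisted module and invokes Lemma~\ref{lemma vacuum-like vectors implies there exists a morphism from the vertex algebra} to get a nonzero morphism $\bigwedge^{\frac{\infty}{2}+\bullet}\Ln_{+}\to (m_\alpha)\cdot\bigwedge^{\frac{\infty}{2}+\bullet}\Ln_{+}$, which is then an isomorphism by simplicity of the source. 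The paper's route is more self-contained (it uses only lemmas already established in the text, and does not need the Remark after Theorem~\ref{theorem boson fermion correspondence}, which the paper explicitly says it will not use), while yours is more structural. For the charge, the paper's argument is also a bit shorter than your Schur-decomposition step: since $S_{(m_\alpha)}$ is in particular a $V^{\kappa_{\La}}(\La)$-module map, it carries each Fock summand to the Fock summand of the same Heisenberg weight, and one just compares the ambient charges of these summands on either side.
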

	\begin{proof}
		Theorem \ref{theorem boson fermion correspondence} shows that as $V^{\kappa_{\La}}(\La)$-modules we have the equality 
		$$
		\bigwedge^{\frac{\infty}{2}+\bullet}\Ln_{+} = \bigoplus_{(n_{\alpha}) \in \Z^{ \Phi_{+} }} \cF^{\kappa_{\La}}_{\sum_{\alpha \in \Phi_{+}}n_{\alpha}\lambda_{\alpha}}.
		$$
		By the computation of the spectral flow group in Lemma \ref{lemma computation of spectral flow group of charged fermions}, the element $(n_{\alpha}) \in \Z^{\Phi_{+}}$ is associated to the element 
		$$
		\sum_{\alpha \in \Phi_{+}} n_{\alpha}b_{\alpha} \in V^{\kappa_{\La}}(\La)
		$$
		and so, by Proposition \ref{proposition spectral flow of Fock spaces}, the $\bigwedge^{\frac{\infty}{2}+\bullet}\Ln_{+}$-module 
		$$
		(m_{\alpha}) \cdot \bigwedge^{\frac{\infty}{2}+\bullet}\Ln_{+}
		$$
		decomposes as a $V^{\kappa_{\La}}(\La)$-module as 
		\begin{align*}
			(m_\alpha) \cdot \bigoplus_{(n_{\alpha}) \in \Z^{ \Phi_{+} }} \cF^{\kappa_{\La}}_{\sum_{\alpha \in \Phi_{+}}n_{\alpha}\lambda_{\alpha}}.
			&=
			\bigoplus_{(n_{\alpha}) \in \Z^{ \Phi_{+} }} (m_\alpha)\cdot \cF^{\kappa_{\La}}_{\sum_{\alpha \in \Phi_{+}}n_{\alpha}\lambda_{\alpha}} \\
			&=
			\bigoplus_{(n_{\alpha}) \in \Z^{ \Phi_{+} }} \cF^{\kappa_{\La}}_{\sum_{\alpha \in \Phi_{+}}(n_{\alpha}-m_{\alpha})\lambda_{\alpha}}.
		\end{align*}
		In particular $(m_{\alpha}) \cdot \bigwedge^{\frac{\infty}{2}+\bullet}\Ln_{+}$ contains the vacuum Fock space $\cF^{\kappa_{\La}}_{0} = V^{\kappa_{\La}}(\La)$. It is generated by the vector $|(m_{\alpha})\rangle$. By Lemma \ref{lemma vacuum-like vectors implies there exists a morphism from the vertex algebra} we see that there exists a unique morphism of $\bigwedge^{\frac{\infty}{2}+\bullet}\Ln_{+}$-modules 
		$$
		S_{(m_{\alpha})} : \bigwedge^{\frac{\infty}{2}+\bullet}\Ln_{+} \lra (m_{\alpha}) \cdot \bigwedge^{\frac{\infty}{2}+\bullet}\Ln_{+}
		$$
		mapping $\vac$ to $|\sum_{\alpha\in \Phi_{+}}m_{\alpha}\lambda_{\alpha}\rangle$. In particular this morphism is nonzero. So by the simplicity of $\bigwedge^{\frac{\infty}{2}+\bullet}\Ln_{+}$ it is an isomorphism. The assertion on the charge of $S_{(m_{\alpha})}$ follows from the observation that the charge of the Fock space 
		$$
		\cF^{\kappa_{\La}}_{\sum_{\alpha \in \Phi_{+}}n_{\alpha}\lambda_{\alpha}} \subset \bigwedge^{\frac{\infty}{2}+\bullet}\Ln_{+} 
		$$
		is equal to $\sum_{\alpha \in \Phi_{+}}n_{\alpha}$ and the fact that $S_{(m_{\alpha})}$ is in particular a morphism of $V^{\kappa_{\La}}(\La)$-modules.
	\end{proof}
	
	Let $M$ be a $V^{\kappa}(\Lg)$-module and $\check{\mu} \in \check{P}$, we have an isomorphism of $C(V^{\kappa}(\Lg))$-modules
	$$
	(\id \otimes S_{(-\alpha(\check{\mu}))}): (\check{\mu},0) \cdot C(M) \lra (\check{\mu},-\iota(\check{\mu})) \cdot C(M).
	$$
	This isomorphism is an isomorphism of complexes that shifts the gradation by 
	$$
	\sum_{\alpha \in \Phi_{+}} -\alpha(\check{\mu}) = -(\sum_{\alpha \in \Phi_{+}} \alpha)(\check{\mu}) = -2 \rho(\check{\mu})
	$$
	as explained in Lemma \ref{lemma spectral flow twist of charged fermion}. It induces in cohomology the isomorphism 
	$$
	H^{*}_{DS}(\check{\mu}\cdot M) \simeq H^{*-2\rho(\check{\mu})}_{DS,\check{\mu}}(M)
	$$
	of $W^{\kappa}(\Lg)$-modules, and that finishes the proof of Theorem \ref{theorem spectral flow drinfeld sokolov is the same as twisted up to cohomological shift}.

	\section{Building modules on $\ewalg$}
	\label{section building modules on ewalg}
	
	\subsection{Construction and basic properties of $\ewalg$}
	
	We now introduce the equivariant affine $\mathcal{W}$-algebras following \cite{Arakawa2018}. Assume that $G$ is a simple algebraic group. Recall that we have morphisms of vertex algebras 
	$$
	\xymatrix{
		& V^{\kappa}(\Lg) \otimes V^{\kappa^{*}}(\Lg) \ar^{\pi_{L} \otimes \pi_{R}}[d] &\\
		V^{\kappa}(\Lg) \ar^{\pi_{L}}[r] \ar[ru]& \cdo & V^{\kappa^{*}}(\Lg) \ar^{\pi_{R}}[l] \ar[lu]
	}
	$$
	so we can perform the Hamiltonian reduction with respect to $V^{\kappa}(\Lg)$. We obtain a vertex algebra 
	$$
	\ewalg = H^{*}_{DS}(\cdo),
	$$
	called the equivariant affine $\mathcal{W}$-algebra at level $\kappa$ of the group $G$. It comes equipped with morphisms of vertex algebras 
	$$
	\xymatrix{
		& W^{\kappa}(\Lg) \otimes V^{\kappa^{*}}(\Lg) \ar^{\pi_{L} \otimes \pi_{R}}[d] &\\
		W^{\kappa}(\Lg) \ar^{\pi_{L}}[r] \ar[ru]& \ewalg & V^{\kappa^{*}}(\Lg). \ar^{\pi_{R}}[l] \ar[lu]}
	$$
	In particular, any $\ewalg$-module is naturally a $W^{\kappa}(\Lg) \otimes V^{\kappa^{*}}(\Lg)$-module by restriction \textit{i.e.}, it carries two commuting actions of $W^{\kappa}(\Lg)$ and $V^{\kappa^{*}}(\Lg)$. For all $i \neq 0$ we have  
	$$
	H^{i}_{DS}(\cdo) = 0 
	$$
	and hence 
	$$
	H^{0}_{DS}(\cdo) = \ewalg. 
	$$
	as a direct consequence of Theorem \ref{theorem vanishing of cohomology on kazhdan lusztig category} because $\cdo \in \Lgkappacategoryintegrable$.
	
	It was proven by Arakawa that the associated scheme of $\ewalg$ is the principal equivariant Slodowy slice $\mathcal{S}_{G}$ which is smooth, reduced and symplectic. Using the criterion of \cite{ArakawaMoreau2021} we have the following result:
	\begin{lemme}[Discussion following Corollary 6.2 of \cite{Arakawa2018}]
		\label{lemma simplicity of the equivariant walgebra}
		The vertex algebra $\ewalg$ is simple.
	\end{lemme}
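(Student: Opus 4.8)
The plan is to run, one Drinfeld--Sokolov step further, the argument behind Proposition \ref{proposition cdo is a simple vertex algebra}: identify the associated scheme of $\ewalg$ and then invoke the geometric simplicity criterion of Arakawa and Moreau (\cite{ArakawaMoreau2021}), which detects simplicity of a vertex algebra from the Poisson geometry of its $C_{2}$-scheme. Concretely, that criterion says that if $\ewalg$ is finitely strongly generated and carries a suitable Hamiltonian grading (so that the criterion applies), and if $X_{\ewalg} = \Spec(R_{\ewalg})$ is reduced and is a symplectic variety, then $\ewalg$ is simple.

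First I would verify the standing hypotheses for $\ewalg = H^{*}_{DS}(\cdo)$. Since $\cdo \in \Lgkappacategoryintegrable$, Theorem \ref{theorem vanishing of cohomology on kazhdan lusztig category} gives $H^{i}_{DS}(\cdo) = 0$ for $i \neq 0$, so the Drinfeld--Sokolov complex of $\cdo$ is concentrated in cohomological degree zero and $\ewalg = H^{0}_{DS}(\cdo)$. Finite strong generation then passes from $\cdo$ to $\ewalg$, and $\ewalg$ inherits a Hamiltonian grading through the reduction, placing it within the scope of \cite{ArakawaMoreau2021}.

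The geometric core is the computation of $X_{\ewalg}$. Here the key input is that the $C_{2}$-Poisson algebra of a quantum Drinfeld--Sokolov reduction is the classical Hamiltonian (Whittaker) reduction of the $C_{2}$-Poisson scheme one started with. Since $X_{\cdo} = T^{*}G$ and $\pi_{L}$ induces the left moment map $\mu_{L} : T^{*}G \to \Lg^{*}$, the scheme $X_{\ewalg}$ is the reduction of $T^{*}G$ by the left $N$-action at the principal character, i.e. the principal equivariant Slodowy slice $\mathcal{S}_{G} := \mu_{L}^{-1}(\mathcal{S}_{f})$, where $\mathcal{S}_{f} = f + \Lg^{e}$ is the principal Slodowy slice in $\Lg^{*} \cong \Lg$ (with $e$ a principal nilpotent opposite to $f$). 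In the left trivialization $T^{*}G \cong G \times \Lg^{*}$ the map $\mu_{L}$ is the second projection, so $\mathcal{S}_{G} \cong G \times \mathcal{S}_{f}$ is smooth, affine and irreducible, the induced Poisson structure is symplectic, and in particular $R_{\ewalg}$ is reduced. This identification --- including reducedness --- is exactly the content of the discussion following Corollary 6.2 of \cite{Arakawa2018}, which I would quote rather than reprove.

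Finally, with $X_{\ewalg} = \mathcal{S}_{G}$ a symplectic variety, the criterion of \cite{ArakawaMoreau2021} applies: a nonzero vertex-algebra ideal $I \subsetneq \ewalg$ would cut out a nonempty, proper, graded Poisson-closed (hence coisotropic) subscheme of the smooth connected symplectic variety $\mathcal{S}_{G}$, which is impossible, forcing $I = 0$ and hence $\ewalg$ simple. I expect the only genuinely substantive step to be the associated-scheme identification $X_{\ewalg} \cong \mathcal{S}_{G}$ \emph{on the nose} --- that the classical reduction really computes $R_{\ewalg}$ with no spurious nilpotents, together with the verification that the hypotheses of the criterion are met --- and this is precisely where the exactness of $H^{0}_{DS}$ on $\Lgkappacategoryintegrable$ (Theorem \ref{theorem vanishing of cohomology on kazhdan lusztig category}) and Arakawa's analysis in \cite{Arakawa2018} do the work; granting that, simplicity is immediate.
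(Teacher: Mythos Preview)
Your proposal is correct and follows exactly the same route as the paper: the paper's argument is precisely that Arakawa identified the associated scheme of $\ewalg$ with the principal equivariant Slodowy slice $\mathcal{S}_{G}$, which is symplectic, and then invokes the Arakawa--Moreau criterion from \cite{ArakawaMoreau2021}. You have supplied more of the surrounding detail (the classical Hamiltonian reduction picture, the verification of hypotheses), but the skeleton is identical.
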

	
	To understand the structure of $\ewalg$ as a $W^{\kappa}(\Lg)\otimes V^{\kappa^{*}}(\Lg)$-module one needs to understand the behaviour of Hamiltonian reduction on the Kazhdan--Lusztig category:
	\begin{proposition}[Corollary 9.1.6. of \cite{Arakawa2007}]
		\label{proposition computation of the Hamiltonian reduction of weyl modules}
		Assume that $k \in \C \setminus \Q$ and let $\lambda \in P_{+}$ then
		$$
		H^{0}_{DS}(V_{\lambda}^{\kappa}):= T_{\lambda,0}^{\kappa}		
		$$
		is a simple $W^{\kappa}(\Lg)$-module.
	\end{proposition}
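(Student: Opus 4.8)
The statement is due to Arakawa, and here is the shape of the argument I would give. Since $k$ is irrational, Theorem~\ref{theorem Kazhdan--Lusztig categories for irrational levels} already tells us that the Weyl module $V_{\lambda}^{k}$ is a \emph{simple} object of $\Lgkappacategoryintegrable$, so the task is to show that applying the principal reduction to a simple object of the Kazhdan--Lusztig category yields a simple $W^{\kappa}(\Lg)$-module. By Theorem~\ref{theorem vanishing of cohomology on kazhdan lusztig category} the higher cohomology $H^{i}_{DS}(V_{\lambda}^{k})$ vanishes for $i\neq 0$, so $T^{k}_{\lambda,0}:=H^{0}_{DS}(V_{\lambda}^{k})$ is the whole cohomology, it is concentrated in a single homological degree, and its graded character is computed from that of $V_{\lambda}^{k}$ by the Euler--Poincaré principle.

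Next I would reduce simplicity to a statement about Verma-type modules over $W^{\kappa}(\Lg)$. The finite-dimensional $\Lg$-module $V_{\lambda}$ admits a BGG resolution by $\Lg$-Verma modules $M_{w\cdot\lambda}$; inducing to $\Lgkappa$ gives a resolution of $V_{\lambda}^{k}$ by the $\Lgkappa$-Verma modules $M_{w\cdot\lambda}^{k}$, which live in the relevant category $\mathcal{O}$ of $\Lgkappa$. These Verma modules are not themselves in the Kazhdan--Lusztig category, but $H^{i}_{DS}(M_{\mu}^{k})=0$ for $i\neq 0$ is a separate known vanishing (cf.\ Chapter~15 of \cite{FrenkelBenZvi2004}), and $H^{0}_{DS}(M_{\mu}^{k})$ is the Verma module $\mathbb{M}^{W^{\kappa}}_{\mu}$ over $W^{\kappa}(\Lg)$; since reduction is exact on this situation, applying $H^{0}_{DS}$ to the resolution produces a finite complex of $W^{\kappa}(\Lg)$-Verma modules whose cohomology is $T^{k}_{\lambda,0}$ in degree $0$ and $0$ elsewhere.

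The crucial input is genericity. For $k$ irrational the highest weights $w\cdot\lambda$ are non-resonant: the Kac--Kazhdan-type determinant controlling reducibility of the modules $\mathbb{M}^{W^{\kappa}}_{w\cdot\lambda}$ is nonzero, so each is irreducible, and there is no nonzero $W^{\kappa}(\Lg)$-module map between the distinct ones occurring in the complex. A highest-weight and Euler-characteristic bookkeeping argument, exactly as in the proof of the BGG theorem, then forces the cohomology to be concentrated in the term indexed by $w=e$ and to coincide with that irreducible Verma module, so $T^{k}_{\lambda,0}$ is simple. Nonvanishing, $T^{k}_{\lambda,0}\neq 0$, follows because the image of the highest-weight vector $|\lambda\rangle$ of $V_{\lambda}^{k}$ is a degree-$0$ cocycle of minimal conformal weight in $C(V_{\lambda}^{k})$ which cannot be a coboundary.

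The main obstacle is precisely this genericity step: one must control the reducibility of, and homomorphisms between, $W$-algebra Verma modules at irrational level, and it is here that the irrationality of $k$ (rather than mere dominance of $\lambda$) is essential, since it rules out spurious singular vectors and linkage among the $w\cdot\lambda$. Concretely this is handled either through the $W$-algebra analogue of the Kac--Kazhdan determinant formula or via the Miura/free-field realization of $W^{\kappa}(\Lg)$. Alternatively, one may simply invoke Arakawa's general theorem that $H^{0}_{DS}$ carries a simple object of category $\mathcal{O}$ for $\Lgkappa$ to a simple or zero $W^{\kappa}(\Lg)$-module, and then separately verify nonvanishing as above.
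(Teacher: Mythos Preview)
The paper does not give a proof of this proposition; it simply cites Arakawa's result (Corollary~9.1.6 of \cite{Arakawa2007}). So the relevant comparison is between your sketch and Arakawa's actual argument, and here your main line of reasoning has a genuine gap.

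Your BGG-resolution approach breaks down at the ``genericity step''. First, inducing the finite $\Lg$-Verma modules $M_{w\cdot\lambda}$ to $\Lgkappa$ does not produce $\Lgkappa$-Verma modules but rather generalized (parabolic) Verma modules, so the identification $H^{0}_{DS}(M^{k}_{w\cdot\lambda})=\mathbb{M}^{W^{\kappa}}_{w\cdot\lambda}$ needs justification you do not supply. More seriously, even for genuine $\Lgkappa$-Verma modules, the principal $W$-algebra Verma module $\mathbb{M}^{W^{\kappa}}_{\mu}$ depends only on the central character $\gamma_{\mu}$, and $\gamma_{w\cdot\mu}=\gamma_{\mu}$ for every $w$ in the finite Weyl group. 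Hence the modules $\mathbb{M}^{W^{\kappa}}_{w\cdot\lambda}$ occurring in your complex are \emph{all isomorphic}, directly contradicting your assertion that ``there is no nonzero $W^{\kappa}(\Lg)$-module map between the distinct ones''. Once that claim fails, the ``highest-weight and Euler-characteristic bookkeeping'' no longer forces the conclusion: a complex whose terms are copies of a single irreducible has cohomology which is again copies of that irreducible, and the alternating sum of multiplicities $\sum_{w}(-1)^{\ell(w)}$ vanishes for nontrivial $W$, so one cannot read off simplicity this way.

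The alternative you mention at the very end --- invoking Arakawa's theorem that $H^{0}_{DS}$ sends simple objects of category $\mathcal{O}$ to simple or zero $W^{\kappa}(\Lg)$-modules, and then checking nonvanishing --- is the correct route, and is exactly what the paper does by citation. That theorem, however, is not proved by the BGG mechanism you outline; its proof goes through a delicate analysis of the two filtrations on the BRST complex and the structure of singular vectors, which is why the paper defers to \cite{Arakawa2007} rather than reproducing an argument.
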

	
	\begin{remarque}
		In fact, $T_{\lambda,0}^{\kappa}$ is a highest weight module of $W^{\kappa}(\Lg)$. See §5 of~\cite{Arakawa2007} for what it means to be of highest weight in this context.
	\end{remarque}

	\begin{lemme}
		\label{lemma chiral peter weyl decomposition for equivariant walgebra}
		We have a decomposition 
		$$
		\ewalg = \bigoplus_{\lambda \in X^{*}(T)_{+}} T^{\kappa}_{\lambda,0} \otimes V^{\kappa^{*}}_{-\omega_{0}\lambda}
		$$
		as $W^{\kappa}(\Lg) \otimes V^{\kappa^{*}}(\Lg)$-modules.
	\end{lemme}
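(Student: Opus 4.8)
The plan is to reduce the computation of $\ewalg = H^{*}_{DS}(\cdo)$ to that of the Drinfeld--Sokolov cohomology of each Weyl module appearing in the chiral Peter--Weyl decomposition, exploiting that the reduction is performed only through the left embedding $\pi_{L}$ and so leaves the right $V^{\kappa^{*}}(\Lg)$-action untouched.

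First I would unwind the definition: $\ewalg$ is the cohomology of the BRST complex $C(\cdo) = \cdo \otimes \bigwedge^{\frac{\infty}{2}+\bullet}\Ln_{+}$ with differential $\d = (Q^{st}+Q^{\chi})_{(0)}$, where $Q^{st}$ and $Q^{\chi}$ are built from $\pi_{L}(\Lg) \subset \cdo$ and the fermionic ghosts. Since the images of $\pi_{L}$ and $\pi_{R}$ commute in $\cdo$ (equation \eqref{equation commutation pi l and pi r}), the operator $\d$ commutes with the $\pi_{R}(V^{\kappa^{*}}(\Lg))$-action, so $(C(\cdo),\d)$ is a complex of $V^{\kappa^{*}}(\Lg)$-modules and the cohomology $\ewalg$ acquires the $V^{\kappa^{*}}(\Lg)$-module structure referred to in the statement (the $W^{\kappa}(\Lg)$-action being the tautological one coming from $W^{\kappa}(\Lg) = H^{0}_{DS}(V^{\kappa}(\Lg))$).

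Next I would substitute the chiral Peter--Weyl decomposition of Theorem \ref{theorem chiral Peter-Weyl}, $\cdo = \bigoplus_{\lambda \in X^{*}(T)_{+}} V_{\lambda}^{\kappa} \otimes V_{-\omega_{0}\lambda}^{\kappa^{*}}$ as $V^{\kappa}(\Lg) \otimes V^{\kappa^{*}}(\Lg)$-modules. Tensoring with the ghosts gives $C(\cdo) = \bigoplus_{\lambda \in X^{*}(T)_{+}} C(V_{\lambda}^{\kappa}) \otimes V_{-\omega_{0}\lambda}^{\kappa^{*}}$, and the differential restricts on the $\lambda$-summand to $\d_{V_{\lambda}^{\kappa}} \otimes \id$; in particular it preserves the $X^{*}(T)_{+}$-grading, so the complex is the direct sum of the subcomplexes $(C(V_{\lambda}^{\kappa}),\d) \otimes V_{-\omega_{0}\lambda}^{\kappa^{*}}$. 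Since passing to cohomology commutes with direct sums of cochain complexes, $H^{*}_{DS}(\cdo) = \bigoplus_{\lambda \in X^{*}(T)_{+}} H^{*}_{DS}(V_{\lambda}^{\kappa}) \otimes V_{-\omega_{0}\lambda}^{\kappa^{*}}$.

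Finally I would invoke the vanishing and identification results: each $V_{\lambda}^{\kappa}$ with $\lambda \in X^{*}(T)_{+} \subset P_{+}$ lies in the Kazhdan--Lusztig category $\Lgkappacategoryintegrable$ by Theorem \ref{theorem Kazhdan--Lusztig categories for irrational levels}, so Theorem \ref{theorem vanishing of cohomology on kazhdan lusztig category} gives $H^{i}_{DS}(V_{\lambda}^{\kappa}) = 0$ for $i \neq 0$, and since $\kappa$ is generic (so the corresponding number $k$ lies in $\C \setminus \Q$) Proposition \ref{proposition computation of the Hamiltonian reduction of weyl modules} identifies $H^{0}_{DS}(V_{\lambda}^{\kappa}) = T^{\kappa}_{\lambda,0}$. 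Plugging these in produces the asserted decomposition $\ewalg = \bigoplus_{\lambda \in X^{*}(T)_{+}} T^{\kappa}_{\lambda,0} \otimes V^{\kappa^{*}}_{-\omega_{0}\lambda}$ of $W^{\kappa}(\Lg) \otimes V^{\kappa^{*}}(\Lg)$-modules. I do not anticipate a serious obstacle; the only delicate points are checking that the BRST differential is transparent to the $\pi_{R}$-factor and compatible with the Peter--Weyl grading, and the routine fact that cohomology commutes with the a priori infinite direct sum.
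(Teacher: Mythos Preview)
Your proof is correct and follows the same route as the paper: apply the left Drinfeld--Sokolov reduction to the chiral Peter--Weyl decomposition of Theorem \ref{theorem chiral Peter-Weyl}, then invoke Proposition \ref{proposition computation of the Hamiltonian reduction of weyl modules} to identify each $H^{0}_{DS}(V^{\kappa}_{\lambda})$ with $T^{\kappa}_{\lambda,0}$. The paper's proof is a terse two-liner; you have simply spelled out the underlying justifications (commutation of $\d$ with $\pi_{R}$, compatibility with the direct sum, vanishing via Theorem \ref{theorem vanishing of cohomology on kazhdan lusztig category}) more explicitly.
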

	\begin{proof}
		Applying the quantum Hamiltonian reduction functor with respect to the left action to the decomposition of Theorem \ref{theorem chiral Peter-Weyl} we obtain that, as $W^{\kappa}(\Lg) \otimes V^{\kappa^{*}}(\Lg)$-modules, 
		$$
		\ewalg = \bigoplus_{\lambda \in X^{*}(T)_{+}} H^{0}_{DS}(V^{\kappa}_{\lambda}) \otimes V^{\kappa^{*}}_{-\omega_{0}\lambda}
		$$ 
		and the result now follows from Proposition \ref{proposition computation of the Hamiltonian reduction of weyl modules}.
	\end{proof}
	As explained before we have a morphism of vertex algebras 
	$$
	V^{\kappa^{*}}(\Lg) \lra \ewalg.
	$$
	It allows us to define the spectral flow group $\cSF(\ewalg)$ of $\ewalg$. The following is an easy consequence of Lemma \ref{lemma chiral peter weyl decomposition for equivariant walgebra}: 
	\begin{corollaire}
		\label{corollaire computation of spectral flow group of eqwalg}
		We have the equality of abelian groups 
		$$
		\cSF(\ewalg) = X_{*}(T).
		$$
	\end{corollaire}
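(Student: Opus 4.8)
The plan is to read off $\cSF(\ewalg)$ directly from the $W^{\kappa}(\Lg)\otimes V^{\kappa^{*}}(\Lg)$-module decomposition of Lemma~\ref{lemma chiral peter weyl decomposition for equivariant walgebra}, after restricting the second Heisenberg factor to the Cartan. The morphism entering Definition~\ref{definition spectral flow group} is the restriction $\pi_{R}\colon V^{\kappa^{*}}(\Lh)\to\ewalg$ of $\pi_{R}\colon V^{\kappa^{*}}(\Lg)\to\ewalg$ to $\Lh$. Condition~(\ref{condition primary fields}) is clear exactly as for $\cdo$ in §\ref{section spectral flow group of cdo}, since it is checked on the generators $h\in\Lh$ and the conformal vector of $\ewalg$ is the Drinfeld--Sokolov reduction of that of $\cdo$. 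For condition~(\ref{condition direct sum of fock spaces}) I would invoke Lemma~\ref{lemma chiral peter weyl decomposition for equivariant walgebra}: $\pi_{R}(\Lh)$ acts only on the second tensor factor of each summand $T^{\kappa}_{\lambda,0}\otimes V^{\kappa^{*}}_{-\omega_{0}\lambda}$, and each Weyl module $V^{\kappa^{*}}_{-\omega_{0}\lambda}$ restricts to $V^{\kappa^{*}}(\Lh)$ as a direct sum of Fock spaces, just as was used for $\cdo$ in view of Theorem~\ref{theorem chiral Peter-Weyl}; the spaces $T^{\kappa}_{\lambda,0}$ merely contribute multiplicities.

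Next I would compute the spectrum of $\pi_{R}(h)_{(0)}$ for $h\in\Lh$. Since a Heisenberg zero mode commutes with all Heisenberg modes, on a Fock space $\cF^{\kappa^{*}}_{\nu}$ the operator $h_{(0)}$ acts by the scalar $\nu(h)$; hence it acts semisimply on $\ewalg$ (so that part of the condition is automatic) with set of eigenvalues $\{\nu(h)\}$, where $\nu$ runs over the $\Lh$-weights occurring in the Weyl modules $V^{\kappa^{*}}_{-\omega_{0}\lambda}$, $\lambda\in X^{*}(T)_{+}$. A PBW computation shows that the set of $\Lh$-weights of $V^{\kappa^{*}}_{-\omega_{0}\lambda}$ is exactly $-\omega_{0}\lambda+Q$: applying monomials $x^{\beta_{1}}_{(-1)}\cdots x^{\beta_{k}}_{(-1)}$ to the highest-weight vector produces nonzero vectors of weight $-\omega_{0}\lambda+\sum_{j}\beta_{j}$, and $\sum_{j}\beta_{j}$ ranges over all of the root lattice $Q$ because $\pm\alpha_{i}\in\Phi$. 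Therefore $h\in\cSF(\ewalg)$ if and only if $\nu(h)\in\Z$ for every $\nu\in\bigcup_{\lambda\in X^{*}(T)_{+}}(-\omega_{0}\lambda+Q)$.

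Finally I would run the lattice arithmetic. The subgroup of $X^{*}(T)$ generated by $\bigcup_{\lambda\in X^{*}(T)_{+}}(-\omega_{0}\lambda+Q)$ is all of $X^{*}(T)$: it contains $-\omega_{0}\lambda$ for every $\lambda\in X^{*}(T)_{+}$, $-\omega_{0}$ permutes $X^{*}(T)_{+}$, and $X^{*}(T)_{+}$ generates $X^{*}(T)$ as a group (for instance $2\rho=\sum_{\alpha\in\Phi_{+}}\alpha\in Q\subseteq X^{*}(T)$ is a regular dominant character, so adding a large multiple of it exhibits any character as a difference of two dominant ones). Hence the integrality condition above is equivalent to $\mu(h)\in\Z$ for all $\mu\in X^{*}(T)$, which by the perfect pairing \eqref{equation perfect pairing between characters and cocharacters} means precisely $h\in X_{*}(T)$. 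This gives $\cSF(\ewalg)=X_{*}(T)$.

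The main obstacle, such as it is, is the identification of the $\Lh$-weight set of $V^{\kappa^{*}}_{-\omega_{0}\lambda}$ as the \emph{full} coset $-\omega_{0}\lambda+Q$ rather than a proper subset, since this is exactly what prevents the integrality condition from shrinking; everything else is bookkeeping with the lattices recalled in §\ref{section reductive groups}. One could alternatively first observe $\cSF(\ewalg)\subseteq\cSF(V^{\kappa^{*}}(\Lg))=\check{P}$ from $V^{\kappa^{*}}(\Lh)\subset V^{\kappa^{*}}(\Lg)$ and then only impose the extra $\lambda$-dependent conditions, but this is subsumed by the computation above because $\Phi\subseteq Q\subseteq X^{*}(T)$.
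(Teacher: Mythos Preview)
Your proof is correct and follows the same route the paper has in mind: the paper declares the corollary an ``easy consequence'' of Lemma~\ref{lemma chiral peter weyl decomposition for equivariant walgebra}, and your argument is precisely the unpacking of that consequence---read off the $\Lh$-weights of the right-hand factor $V^{\kappa^{*}}_{-\omega_{0}\lambda}$ in the decomposition, observe they fill out $X^{*}(T)$, and dualize via the pairing~\eqref{equation perfect pairing between characters and cocharacters}. The only cosmetic point is that your PBW monomials need not be ordered, but for the nonvanishing you need it suffices to take an ordered monomial in the $e^{\pm\alpha_{i}}_{(-1)}$ realizing a given $\gamma\in Q$, which is indeed a PBW basis vector.
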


	The following property is a very useful trick to show the simplicity of certain modules. It should be compared with Lemma \ref{lemma weights that can appear in fock spaces in cdo for the torus}.
	\begin{proposition}
		\label{proposition stability of highest weight starting from the vacuum}
		Let $M$ be a $\ewalg$-module and assume that the module $M$ contains $V^{\kappa^{*}}(\Lg)$ as a  $V^{\kappa^{*}}(\Lg)$-submodule. Then for all $\lambda \in X^{*}(T)_{+}$, $M$ contains $V^{\kappa^{*}}_{-\omega_{0}\lambda}$ as a $V^{\kappa^{*}}(\Lg)$-submodule.
	\end{proposition}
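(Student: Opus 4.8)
The plan is to manufacture inside $M$ a highest weight vector for $V^{\kappa^{*}}(\Lg)$ of weight $-\omega_{0}\lambda$, and then invoke the universal property of Weyl modules together with their irreducibility at generic level. Write $m\in M$ for the image of the vacuum $\vac$ under the given embedding $V^{\kappa^{*}}(\Lg)\hookrightarrow M$; as the image of a vacuum-like vector it is a nonzero vacuum-like vector for the $\pi_{R}$-action, so $\pi_{R}(y)_{(k)}m=0$ for every $y\in\Lg$ and every $k\geqslant 0$. By Lemma \ref{lemma chiral peter weyl decomposition for equivariant walgebra}, for any fixed nonzero $t_{\lambda}\in T^{\kappa}_{\lambda,0}$ the subspace $t_{\lambda}\otimes V^{\kappa^{*}}_{-\omega_{0}\lambda}\subset\ewalg$ is a $V^{\kappa^{*}}(\Lg)$-submodule isomorphic to the Weyl module $V^{\kappa^{*}}_{-\omega_{0}\lambda}$, since $\pi_{R}$ acts only on the second tensor factor. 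Let $V_{-\omega_{0}\lambda}\subset V^{\kappa^{*}}_{-\omega_{0}\lambda}$ be its top (lowest conformal weight) space: on $t_{\lambda}\otimes V_{-\omega_{0}\lambda}$ the zero modes $\pi_{R}(y)_{(0)}$, $y\in\Lg$, realize the irreducible $\Lg$-action on $V_{-\omega_{0}\lambda}$, while $\pi_{R}(y)_{(j)}$ vanishes there for all $j\geqslant 1$.

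Next I would look at the vectors $(t_{\lambda}\otimes v)_{(n)}m\in M$ for $v\in V_{-\omega_{0}\lambda}$. Since $V_{-\omega_{0}\lambda}$ is finite dimensional there is an $N$ with $(t_{\lambda}\otimes v)_{(n)}m=0$ for all $v$ and all $n>N$; on the other hand $\ewalg$ is simple (Lemma \ref{lemma simplicity of the equivariant walgebra}), so by Proposition \ref{proposition annihilator is an ideal} the field $Y_{M}(t_{\lambda}\otimes u,z)m$ is nonzero for a highest weight vector $u$ of $V_{-\omega_{0}\lambda}$, hence $(t_{\lambda}\otimes u)_{(n)}m\neq 0$ for some $n$. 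Let $n_{0}$ be the largest integer for which $(t_{\lambda}\otimes v)_{(n_{0})}m\neq 0$ for some $v\in V_{-\omega_{0}\lambda}$, and consider the linear map $\Psi\colon V_{-\omega_{0}\lambda}\to M$, $v\mapsto (t_{\lambda}\otimes v)_{(n_{0})}m$. Using Borcherds' commutator formula and $\pi_{R}(y)_{(0)}m=0$ one gets $\pi_{R}(y)_{(0)}\Psi(v)=(\pi_{R}(y)_{(0)}(t_{\lambda}\otimes v))_{(n_{0})}m=\Psi(y\cdot v)$, so $\Psi$ is $\Lg$-equivariant; being nonzero with irreducible source it is injective, and $\Psi(V_{-\omega_{0}\lambda})$ is an $\Lg$-submodule of $M$ isomorphic to $V_{-\omega_{0}\lambda}$.

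Set $m'=\Psi(u)$. Then $m'\neq 0$, it is an $\Lh$-weight vector of weight $-\omega_{0}\lambda$, one has $\pi_{R}(e)_{(0)}m'=\Psi(e\cdot u)=0$ for $e$ a positive root vector, and $\cU(\Lg)m'=\Psi(V_{-\omega_{0}\lambda})$ is finite dimensional. The only remaining point, and the only one that uses the maximality of $n_{0}$, is that $t\Lg[[t]]$ annihilates $m'$ under $\pi_{R}$: expanding $\pi_{R}(y)_{(k)}m'$ for $k\geqslant 1$ via the commutator formula and using that $\pi_{R}(y)_{(j)}(t_{\lambda}\otimes u)=t_{\lambda}\otimes(y_{(j)}u)$ vanishes for $j\geqslant 1$ (as $u$ lies in the top of the Weyl module), one obtains $\pi_{R}(y)_{(k)}m'=(t_{\lambda}\otimes(y\cdot u))_{(k+n_{0})}m$, which vanishes because $y\cdot u\in V_{-\omega_{0}\lambda}$ and $k+n_{0}>n_{0}$. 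Thus $m'$ satisfies the hypotheses of Proposition \ref{proposition Weyl modules are highest weight modules} for $V^{\kappa^{*}}(\Lg)$ (note $-\omega_{0}\lambda\in P_{+}$), giving a nonzero morphism of $V^{\kappa^{*}}(\Lg)$-modules $V^{\kappa^{*}}_{-\omega_{0}\lambda}\to M$ sending the highest weight vector to $m'$; since $\kappa^{*}$ is again generic (Remark \ref{remark irrational level are stable by dual and direct sums}), $V^{\kappa^{*}}_{-\omega_{0}\lambda}$ is irreducible by Theorem \ref{theorem Kazhdan--Lusztig categories for irrational levels}, so the morphism is injective and $M$ contains $V^{\kappa^{*}}_{-\omega_{0}\lambda}$.

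The main obstacle is exactly making the singular-vector construction close up in the lowering and $t\Lg[[t]]$ directions; working with the full top space $t_{\lambda}\otimes V_{-\omega_{0}\lambda}$ rather than with the single vector $t_{\lambda}\otimes u$, and exploiting the $\Lg$-equivariance of $\Psi$ together with the maximality of $n_{0}$, is what resolves this. Everything else is routine bookkeeping with the commutator formula.
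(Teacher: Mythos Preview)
Your proof is correct and follows essentially the same strategy as the paper's: act by modes of an element of $T^{\kappa}_{\lambda,0}\otimes V^{\kappa^{*}}_{-\omega_{0}\lambda}\subset\ewalg$ on the vacuum-like vector in $M$, take the maximal nonvanishing mode, verify that the result is a highest weight vector for $\Lgkappastar$, and invoke Proposition~\ref{proposition Weyl modules are highest weight modules}. Your device of working with the entire top space $t_{\lambda}\otimes V_{-\omega_{0}\lambda}$ and the $\Lg$-equivariant map $\Psi$ makes the verifications (finite-dimensionality of $\cU(\Lg)m'$ and annihilation by $t\Lg[[t]]$) especially transparent; the paper works directly with a single highest weight vector $|-,-\omega_{0}\lambda\rangle$ and leaves these checks as ``straightforward''.
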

	\begin{proof}
		Let $|-,0\rangle \in M$ denote a highest weight vector of weight $0$ that spans the given copy of $V^{\kappa^{*}}(\Lg)$. 
		In view of the decomposition of Lemma \ref{lemma chiral peter weyl decomposition for equivariant walgebra}, we can choose for each~$\lambda \in X^{*}(T)_{+}$ an element $|-,-\omega_{0}\lambda\rangle \in T^{\kappa}_{\lambda,0} \otimes V^{\kappa^{*}}_{-\omega_{0}\lambda}$ that is a highest weight vector of weight $-\omega_{0}\lambda$. Because $\ewalg$ is simple by Lemma \ref{lemma simplicity of the equivariant walgebra} it follows from Proposition \ref{proposition annihilator is an ideal} that there exists a unique $m \in \Z$ such that 
		\begin{align*}
			&|-,-\omega_{0}\lambda\rangle_{(m)}|-,0\rangle \neq 0, \\
			&|-,-\omega_{0}\lambda\rangle_{(k)}|-,0\rangle = 0 \textrm{ for all $k>m$.}
		\end{align*}
		It is straightforward to check that $|-,-\omega_{0}\lambda\rangle_{(m)}|-,0\rangle$ is a highest weight vector for $\Lgkappastar$ of weight $ -\omega_{0}\lambda$. One then checks that Proposition \ref{proposition Weyl modules are highest weight modules} applies.		
	\end{proof}

	\subsection{Building simple modules}
	Recall from §\ref{section reminders on quantum Hamiltonian reduction} that we have a well-defined functor
	$$
	H^{*}_{DS} : \cdo\mathrm{-Mod} \lra \ewalg\mathrm{-Mod}.
	$$ 
	The techniques developed in §\ref{section spectral flow group of cdo} show that for each $(\gamma,x) \in \cSF(\cdo)$ we have a simple $\cdo$-module $((\gamma,x) \cdot \cdo)_{\gamma \in X_{*}(T),x\in \check{P}}$ as a direct application of Theorem~\ref{theorem properties of spectral flow} together with Proposition \ref{proposition cdo is a simple vertex algebra}.

	The goal now is, given $\gamma \in X_{*}(T)$ and $x \in \check{P}$, to understand the module
	$$
	H^{*}_{DS}((\gamma,x) \cdot \cdo) \in \ewalg\mathrm{-Mod}.
	$$
	As $V^{\kappa}(\Lg) \otimes V^{\kappa^{*}}(\Lg)$-modules we have the equality  
	$$
	(\gamma,x) \cdot \cdo =  \bigoplus_{\lambda \in X^{*}(T)_{+}} (\gamma+x) \cdot V^{\kappa}_{\lambda} \otimes (\omega_{0}^{T}x)\cdot V^{\kappa^{*}}_{-\omega_{0}\lambda}
	$$
	so that, as $W^{\kappa}(\Lg) \otimes V^{\kappa^{*}}(\Lg)$-modules, we have the equality 
	\begin{equation}
		\label{equation decomposition of reduction of spectral flow of cdo}
		H^{*}_{DS}((\gamma,x) \cdot \cdo) = \bigoplus_{\lambda \in X^{*}(T)_{+}} H^{*}_{DS}((\gamma+x) \cdot V_{\lambda}^{\kappa}) \otimes (\omega_{0}^{T}x)\cdot V^{\kappa^{*}}_{-\omega_{0}\lambda}.
	\end{equation}
	
	To understand the structure of these modules as $W^{\kappa}(\Lg)\otimes V^{\kappa^{*}}(\Lg)$-module one needs to understand the behaviour of twisted Hamiltonian reduction on the Kazhdan--Lusztig category. The following was conjectured by Creutzig and Gaiotto (see Conjecture 1.5 of \cite{CreutzigGaiotto2020}) and is now a theorem of Arakawa and Frenkel (\cite{ArakawaFrenkel2019}).
	\begin{lemme}
		\label{lemma twisted Hamiltonian reduction of weyl modules}
		Let $\lambda \in P_{+}$ and $\check{\mu} \in \check{P} = \cSF(V^{\kappa}(\Lg))$. 
		Then if $\check{\mu} \notin \check{P}_{+}$ we have 
		$$
		H^{*}_{DS}(\check{\mu}\cdot V_{\lambda}^{\kappa}) = 0.
		$$
		If $\check{\mu} \in \check{P}_{+}$, then 
		\begin{align*}
			&H^{i}_{DS}(\check{\mu} \cdot V^{\kappa}_{\lambda}) = 0 \textrm{ if $i \neq 2\rho(\check{\mu}) $},\\
			\intertext{and we have that}
			&H^{2\rho(\check{\mu})}_{DS}(\check{\mu} \cdot V^{\kappa}_{\lambda}) := T^{\kappa}_{\lambda,\check{\mu}}
		\end{align*}
		is a simple $W^{\kappa}(\Lg)$-module.
	\end{lemme}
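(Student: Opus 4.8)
The plan is to deduce this from Theorem \ref{theorem spectral flow drinfeld sokolov is the same as twisted up to cohomological shift}, which translates the spectral-flow twist on the $V^{\kappa}(\Lg)$-side into a cohomological shift of the twisted Drinfeld--Sokolov functor, and then to quote the resolution of Conjecture~1.5 of \cite{CreutzigGaiotto2020} by Arakawa and Frenkel \cite{ArakawaFrenkel2019}. Applying Theorem \ref{theorem spectral flow drinfeld sokolov is the same as twisted up to cohomological shift} to the module $M = V_{\lambda}^{\kappa}$ gives, for every $i \in \Z$, an isomorphism of $W^{\kappa}(\Lg)$-modules
$$
H^{i}_{DS}(\check{\mu}\cdot V_{\lambda}^{\kappa}) \simeq H^{i-2\rho(\check{\mu})}_{DS,f,\check{\mu}}(V_{\lambda}^{\kappa}).
$$
So the entire statement becomes a statement about the cohomology of the complex $(C(\check{\mu})\cdot C(V_{\lambda}^{\kappa}),\d)$, i.e. about the twisted reduction of a single Weyl module.

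By \cite{ArakawaFrenkel2019} (proving Conjecture~1.5 of \cite{CreutzigGaiotto2020}), for $\lambda\in P_{+}$ and $\check{\mu}\in\check{P}$ the twisted reduction $H^{*}_{DS,f,\check{\mu}}(V_{\lambda}^{\kappa})$ vanishes in all degrees when $\check{\mu}\notin\check{P}_{+}$, and when $\check{\mu}\in\check{P}_{+}$ it is concentrated in cohomological degree $0$, where it is a $W^{\kappa}(\Lg)$-module that is simple because $\kappa$ is generic (the case $\check{\mu}=0$ being exactly Proposition~\ref{proposition computation of the Hamiltonian reduction of weyl modules}). Substituting this into the displayed isomorphism: if $\check{\mu}\notin\check{P}_{+}$, then $H^{i}_{DS}(\check{\mu}\cdot V_{\lambda}^{\kappa})=0$ for all $i$; if $\check{\mu}\in\check{P}_{+}$, then $H^{i}_{DS}(\check{\mu}\cdot V_{\lambda}^{\kappa})$ vanishes unless $i=2\rho(\check{\mu})$, in which case it is isomorphic to $H^{0}_{DS,f,\check{\mu}}(V_{\lambda}^{\kappa})$ and hence simple. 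Setting $T^{\kappa}_{\lambda,\check{\mu}}:=H^{2\rho(\check{\mu})}_{DS}(\check{\mu}\cdot V_{\lambda}^{\kappa})$ then yields the lemma.

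The genuine content here is imported wholesale from \cite{ArakawaFrenkel2019}, which I treat as a black box; the work on our side is purely the translation via spectral flow, and the only subtlety I would flag is the role of the genericity hypothesis. The concentration of the cohomology in the single degree $2\rho(\check{\mu})$ is formal, coming from the charged-fermion shift of Lemma \ref{lemma spectral flow twist of charged fermion} together with Theorem \ref{theorem spectral flow drinfeld sokolov is the same as twisted up to cohomological shift}, so it would hold at an arbitrary level; it is only the simplicity of $T^{\kappa}_{\lambda,\check{\mu}}$ that truly needs $k\notin\Q$, and this must be carried through from the standing assumption that $\kappa$ is generic. Thus the main obstacle is not in the present argument at all but in the Arakawa--Frenkel theorem it relies on; our contribution is merely to make the bridge between their twisted functor and the spectral-flow picture of $\cdo$ precise, with the cohomological shift pinned down to be exactly $2\rho(\check{\mu})$.
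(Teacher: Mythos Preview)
Your proposal is correct and follows exactly the paper's approach: apply Theorem \ref{theorem spectral flow drinfeld sokolov is the same as twisted up to cohomological shift} to reduce to the twisted functor $H^{*}_{DS,f,\check{\mu}}(V_{\lambda}^{\kappa})$, then invoke the Arakawa--Frenkel results (the paper cites Theorems 2.1 and 4.3 of \cite{ArakawaFrenkel2019}) for vanishing, concentration in degree $0$, and simplicity. Your added commentary on which parts need the genericity of $\kappa$ is accurate and in the spirit of the paper's standing hypothesis.
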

	\begin{proof}
		In view of Theorem \ref{theorem spectral flow drinfeld sokolov is the same as twisted up to cohomological shift} we have 
		$$
		H^{*}_{DS}(\check{\mu}\cdot V_{\lambda}^{\kappa}) = H_{DS,\check{\mu}}^{*-2\rho(\check{\mu})}(V_{\lambda}^{\kappa})
		$$
		and the result then follows from Theorems 2.1 and 4.3 of \cite{ArakawaFrenkel2019}.
	\end{proof}

	\begin{remarque}
		Again, $T_{\lambda,\check{\mu}}^{\kappa}$ is a highest weight module of $W^{\kappa}(\Lg)$. This family of $W^{\kappa}(\Lg)$-modules enjoys remarkable duality properties with respect to the Feigin--Frenkel duality of $\mathcal{W}$-algebras (see Theorem 2.2 of \cite{ArakawaFrenkel2019}) that were independently conjectured by Creutzig--Gaiotto and Gaitsgory.  
	\end{remarque}
	
	\begin{proposition}
		\label{proposition decomposition of reduction of specrtal flow for cdo}
		Let $\gamma \in X_{*}(T)$. We have the equalities as $W^{\kappa}(\Lg) \otimes V^{\kappa^{*}}(\Lg)$-modules
		\begin{align*}
			&H^{*}_{DS}(\gamma \cdot \cdo) = 0, &&\textrm{ if $\gamma \notin X_{*}(T)_{+}$,}\\
			&H^{*}_{DS}(\gamma \cdot \cdo) = H^{2\rho(\gamma)}_{DS}(\gamma \cdot \cdo) = \bigoplus_{\lambda \in X^{*}(T)_{+}} T^{\kappa}_{\lambda,\gamma} \otimes V^{\kappa^{*}}_{-\omega_{0}\lambda}, &&\textrm{ if $\gamma \in X_{*}(T)_{+}$}.
		\end{align*}
		Let $x \in \check{P}$, more generally we have the equalities as $W^{\kappa}(\Lg) \otimes V^{\kappa^{*}}(\Lg)$-modules 
		\begin{align*}
			&H^{*}_{DS}((\gamma,x) \cdot \cdo) = 0 &&\textrm{ if $\gamma+x \notin \check{P}_{+}$,}\\
			&H^{n}_{DS}((\gamma,x) \cdot \cdo) = 0 &&\textrm{ if $n \neq 2\rho(\gamma+x)$}.\\
			&H^{2\rho(\gamma+x)}_{DS}((\gamma,x) \cdot \cdo) = \bigoplus_{\lambda \in X^{*}(T)_{+}} T^{\kappa}_{\lambda, \gamma+x} \otimes (\omega_{0}^{T}x)\cdot V^{\kappa^{*}}_{-\omega_{0}\lambda} &&\textrm{ if $\gamma+x \in \check{P}_{+}$}.
		\end{align*}
	\end{proposition}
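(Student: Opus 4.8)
The plan is to deduce the statement from the termwise result of Lemma~\ref{lemma twisted Hamiltonian reduction of weyl modules}, packaged through the chiral Peter--Weyl decomposition. First I would unwind the notation: under the identification $\cSF(\cdo)\simeq X_{*}(T)\times\check{P}$ of Proposition~\ref{proposition computation of the spectral flow group for the cdo}, the element $\gamma\in X_{*}(T)$ appearing in the first block of equalities is the pair $(\gamma,0)$, so that $\gamma\cdot\cdo$ has left spectral-flow parameter $\gamma$ and trivial right parameter, while $(\gamma,x)$ has left parameter $\gamma+x$ and right parameter ${}^{t}\omega_{0}(x)$. Applying spectral flow to the decomposition of Theorem~\ref{theorem chiral Peter-Weyl} gives, as $V^{\kappa}(\Lg)\otimes V^{\kappa^{*}}(\Lg)$-modules, $(\gamma,x)\cdot\cdo=\bigoplus_{\lambda\in X^{*}(T)_{+}}(\gamma+x)\cdot V^{\kappa}_{\lambda}\otimes({}^{t}\omega_{0}x)\cdot V^{\kappa^{*}}_{-\omega_{0}\lambda}$. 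Since the Drinfeld--Sokolov differential $\d$ only involves the left copy of $V^{\kappa}(\Lg)$ (through $\pi_{L}$) and the fermionic ghosts, and since cohomology of a complex commutes with arbitrary direct sums, this yields the identity~\eqref{equation decomposition of reduction of spectral flow of cdo} of $W^{\kappa}(\Lg)\otimes V^{\kappa^{*}}(\Lg)$-modules, the $V^{\kappa^{*}}(\Lg)$-action and the twist ${}^{t}\omega_{0}x$ on the right tensor factor being untouched by the reduction.

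Next I would apply Lemma~\ref{lemma twisted Hamiltonian reduction of weyl modules} with $\check{\mu}=\gamma+x$, summand by summand in~\eqref{equation decomposition of reduction of spectral flow of cdo}, noting that each $\lambda\in X^{*}(T)_{+}$ lies in $P_{+}$ and is therefore an admissible input. If $\gamma+x\notin\check{P}_{+}$ then $H^{*}_{DS}((\gamma+x)\cdot V^{\kappa}_{\lambda})=0$ for every $\lambda$, hence $H^{*}_{DS}((\gamma,x)\cdot\cdo)=0$. If $\gamma+x\in\check{P}_{+}$, then for every $\lambda$ the complex is acyclic outside cohomological degree $2\rho(\gamma+x)$ and $H^{2\rho(\gamma+x)}_{DS}((\gamma+x)\cdot V^{\kappa}_{\lambda})=T^{\kappa}_{\lambda,\gamma+x}$; summing over $\lambda$ gives $H^{n}_{DS}((\gamma,x)\cdot\cdo)=0$ for $n\neq 2\rho(\gamma+x)$ and $H^{2\rho(\gamma+x)}_{DS}((\gamma,x)\cdot\cdo)=\bigoplus_{\lambda\in X^{*}(T)_{+}}T^{\kappa}_{\lambda,\gamma+x}\otimes({}^{t}\omega_{0}x)\cdot V^{\kappa^{*}}_{-\omega_{0}\lambda}$. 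The first block of equalities is then the special case $x=0$: there $\gamma+x=\gamma\in X_{*}(T)$, the right twist is ${}^{t}\omega_{0}(0)=0$ so the right factors are the untwisted Weyl modules $V^{\kappa^{*}}_{-\omega_{0}\lambda}$, and since $X_{*}(T)_{+}=P^{\vee}_{+}\cap X_{*}(T)$ (see §\ref{section reductive groups}) one has, for $\gamma\in X_{*}(T)$, that $\gamma\in\check{P}_{+}$ if and only if $\gamma\in X_{*}(T)_{+}$, which matches the case distinction in the statement.

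There is no genuinely hard step; the proposition is essentially the bookkeeping that extracts the consequence of Lemma~\ref{lemma twisted Hamiltonian reduction of weyl modules} for $\cdo$. If I had to single out a point requiring care, it is the compatibility of $H^{*}_{DS}$ with (possibly infinite) direct sums and with the residual $V^{\kappa^{*}}(\Lg)$-module structure, but both are immediate from the explicit construction of the BRST complex $C(-)=(-)\otimes\bigwedge^{\frac{\infty}{2}+\bullet}\Ln_{+}$ together with the exactness recorded in Theorem~\ref{theorem vanishing of cohomology on kazhdan lusztig category}. All the real content — the vanishing outside one cohomological degree, the appearance of $2\rho(\gamma+x)$, and the simplicity of $T^{\kappa}_{\lambda,\gamma+x}$ — has already been imported via Theorem~\ref{theorem spectral flow drinfeld sokolov is the same as twisted up to cohomological shift} and \cite{ArakawaFrenkel2019}.
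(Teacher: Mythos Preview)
Your proposal is correct and matches the paper's own argument exactly: the paper derives equation~\eqref{equation decomposition of reduction of spectral flow of cdo} from the chiral Peter--Weyl decomposition just before stating the proposition, and its proof is the one-line ``This follows directly from equation~\eqref{equation decomposition of reduction of spectral flow of cdo} and Lemma~\ref{lemma twisted Hamiltonian reduction of weyl modules}.'' You have spelled out the same steps (the identification $(\gamma,0)$, the untouched right factor, the termwise application, and the specialization $x=0$ via $X_{*}(T)_{+}=\check{P}_{+}\cap X_{*}(T)$) in more detail than the paper does, but the route is identical.
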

	\begin{proof}
		This follows directly from equation \eqref{equation decomposition of reduction of spectral flow of cdo} and Lemma \ref{lemma twisted Hamiltonian reduction of weyl modules}.
	\end{proof}
	
	Define 
	$$
	\ewalg\mathrm{-Mod}^{\jetinf G}
	$$
	to be the full subcategory of $\ewalg$-modules such that they belong to the Kazhdan--Lusztig category $V^{\kappa^{*}}(\Lg)\mathrm{-Mod}^{\jetinf G}$ when seen as $V^{\kappa^{*}}(\Lg)$-modules by restriction. 
	
	We can now state a conjecture which is a version of the fundamental local equivalence of the quantum geometric Langlands program in our language:
	
	\begin{conjecture}
		\label{conjecture FLE}
		For generic $\kappa$, the category $\ewalg \mathrm{-Mod}^{\jetinf G}$ is semisimple with simple objects given by the $H^{2\rho(\gamma)}_{DS}(\gamma \cdot \cdo)$ for $\gamma \in X_{*}(T)_{+}$. That is, we have an equivalence of abelian categories 
		$$
		\ewalg \mathrm{-Mod}^{\jetinf G} \simeq \check{G}\mathrm{-Mod}
		$$
		that sends for all $\gamma \in X_{*}(T)_{+}$ the simple $\ewalg$-module $H_{DS}^{2\rho(\gamma)}(\gamma \cdot \mathcal{D}_{G}^{\kappa})$ to the simple $\check{G}$-module $V_{\gamma}$.
	\end{conjecture}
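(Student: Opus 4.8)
The plan to establish Conjecture~\ref{conjecture FLE} has three parts: first one checks that the proposed simple objects genuinely lie in $\ewalg\mathrm{-Mod}^{\jetinf G}$ and are pairwise inequivalent; second one reduces the conjecture to the semisimplicity and exhaustion of this category; and third one proves the latter, which is where the real difficulty lies.

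For the first part, fix $\gamma\in X_*(T)_+$. Proposition~\ref{proposition decomposition of reduction of specrtal flow for cdo} identifies $H^{2\rho(\gamma)}_{DS}(\gamma\cdot\cdo)$ with $\bigoplus_{\lambda\in X^*(T)_+}T^{\kappa}_{\lambda,\gamma}\otimes V^{\kappa^{*}}_{-\omega_{0}\lambda}$ as a $W^{\kappa}(\Lg)\otimes V^{\kappa^{*}}(\Lg)$-module; restricted to $V^{\kappa^{*}}(\Lg)$ this is a direct sum of Weyl modules, hence lies in the Kazhdan--Lusztig category by Theorem~\ref{theorem Kazhdan--Lusztig categories for irrational levels}, so the module belongs to $\ewalg\mathrm{-Mod}^{\jetinf G}$, and it is simple by Theorem~\ref{theorem simplicity of modules for equivariant w algebra}. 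For $\gamma\neq\gamma'$ the two modules are non-isomorphic: their $V^{\kappa^{*}}(\Lg)$-weight-zero isotypic pieces are $T^{\kappa}_{0,\gamma}\otimes V^{\kappa^{*}}(\Lg)$ and $T^{\kappa}_{0,\gamma'}\otimes V^{\kappa^{*}}(\Lg)$, and the highest-weight $W^{\kappa}(\Lg)$-modules $T^{\kappa}_{0,\gamma}$, $T^{\kappa}_{0,\gamma'}$ have distinct highest weights (alternatively one compares conformal gradings via Theorem~\ref{theorem spectral flow drinfeld sokolov is the same as twisted up to cohomological shift}). Since $\gamma\mapsto V_\gamma$ is a bijection between $X_*(T)_+$ and $\mathrm{Irr}(\check{G})$, the dominant cocharacters of $G$ being precisely the dominant weights of $\check{G}$, and since $\check{G}\mathrm{-Mod}$ is semisimple by Theorem~\ref{theorem classification of simple G-modules and simple LieG modules}, the asserted equivalence of abelian categories follows once one shows that $\ewalg\mathrm{-Mod}^{\jetinf G}$ is semisimple with exactly these simple objects.

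For the second part, I would relate $\ewalg\mathrm{-Mod}$ to $\cdo\mathrm{-Mod}$ through an inverse quantum Drinfeld--Sokolov (affine Skryabin) functor in the spirit of \cite{Raskin2021}: such a functor should identify $\ewalg\mathrm{-Mod}$ with a category of Whittaker $\cdo$-modules, under which $\ewalg\mathrm{-Mod}^{\jetinf G}$ becomes the vertex-algebraic counterpart of Gaitsgory's twisted Whittaker category on $\mathrm{Gr}_G$ (\cite{Gaitsgory2008}). Theorem~\ref{theorem degeneration of the geometric Satake correspondence} shows that the naive Satake-type category $\cdo\mathrm{-Mod}^{\jetinf G\times\jetinf G}$ collapses, and the Whittaker condition is exactly the correction expected to restore a Langlands-dual answer. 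The spectral-flow group $\cSF(\cdo)=X_*(T)\times\check{P}$ of Proposition~\ref{proposition computation of the spectral flow group for the cdo} then produces, via twisted reduction (Theorem~\ref{theorem spectral flow drinfeld sokolov is the same as twisted up to cohomological shift} and Lemma~\ref{lemma twisted Hamiltonian reduction of weyl modules}), exactly the modules appearing in the conjecture after discarding the non-dominant twists; conversely one must show that every simple object of the Whittaker category arises in this way. The key lever here is the rigidity statement Theorem~\ref{theorem rigidity of chiral Peter-Weyl}, which pins down $\cdo$ and its spectral-flow shifts (the quantum geometric Langlands kernel VOAs of \cite{CreutzigGaiotto2020}) uniquely among vertex algebras with the chiral Peter--Weyl decomposition, thereby forcing the list of simples to be complete and the category to be semisimple.

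The hard part will be carrying out this last reduction in general. For $G=T$ a torus, $\cdo$ is a half-lattice vertex algebra whose representation theory is governed directly by the dual lattice, and everything can be computed by hand; this is the base case, and it already yields the more precise statement about the level-dependence of Satake. For general $G$, the plan is to propagate the torus case through the quantized Moore--Tachikawa convolution of \cite{Arakawa2018} (equivalently, Goddard--Kent--Olive-type coset realizations), using Theorem~\ref{theorem rigidity of chiral Peter-Weyl} to recognize the convolved kernel VOAs and their module categories; but this requires the existence of certain coset vertex algebras, which is presently known only when $G$ is simple adjoint of classical simply-laced type (\cite{ArakawaCreuztigLinshaw2019}, \cite{creutziglinshawtrialities}, \cite{creutziglinshawtrialitiesortho}). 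I therefore expect the genuine obstacle to be the construction of these coset/kernel vertex algebras and the proof of their compatibility with quantum Hamiltonian reduction and spectral flow in full generality --- essentially a vertex-algebraic incarnation of the quantum geometric Langlands fundamental local equivalence of \cite{CampbellDhillonRaskin2019} --- so the conjecture should be provable unconditionally only in the cases where those inputs are currently available.
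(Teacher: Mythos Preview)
This is a conjecture, and the paper does not prove it in general; it establishes it only for tori (Theorem~\ref{theorem FLE pour le tore}) and for simple adjoint groups of classical simply laced type (Theorem~\ref{theorem FLE for adjoint type ADE}). Your first part is fine and matches the paper's preliminary work (Theorem~\ref{theorem simplicity of modules for equivariant w algebra}, Proposition~\ref{proposition decomposition of reduction of specrtal flow for cdo}).

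Your second part, however, is not the paper's route. The affine Skryabin equivalence of \cite{Raskin2021} is cited in the introduction as \emph{motivation} for considering the Whittaker category, but it plays no role in the actual arguments; the paper never constructs or uses an inverse Drinfeld--Sokolov functor, and the rigidity Theorem~\ref{theorem rigidity of chiral Peter-Weyl} is not used to classify simples in the Whittaker picture. Your description of ``propagating the torus case through convolution'' is also inaccurate: the torus proof and the simple-group proof are entirely independent, and the former is not an input to the latter.

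What the paper actually does in the simple adjoint simply laced case is this. Using the existence of the shifted kernel VOAs $\cD_G^k[n]$ (Theorem~\ref{conjecture existence and unicity of shifted cdo's}) and their uniqueness (Proposition~\ref{proposition unicity of shifted cdos}, which is where Theorem~\ref{theorem rigidity of chiral Peter-Weyl} enters), one shows that convolution by $\cD_G^{k}[1]$ and $\cD_G^{k[1][0]}[-1]$ gives an equivalence $\mathcal{W}_G^k\mathrm{-Mod}^{\jetinf G}\simeq \mathcal{W}_G^k[1]\mathrm{-Mod}^{\jetinf G}$ (Proposition~\ref{proposition convolution by 1 and -1 are inverses}). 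The point is then that $\mathcal{W}_G^k[1]$ can be \emph{identified explicitly}: by the GKO coset realization (Theorem~\ref{theorem computation of shifted cdo's}, relying on \cite{CreutzigNakatsuka2023}) one has $\mathcal{W}_G^k[1]\simeq V^{k[1]-1}(\Lg)\otimes L_1(\Lg)$. Since $L_1(\Lg)$ is regular with simples indexed by $P^1_+$, and since $P^1_+$ is a set of representatives for $P/Q$ (minuscule weights plus zero), the $\jetinf G$-integrability condition on the diagonal $V^{k[1]}(\Lg)$-action forces each simple to be of the form $V_\mu^{k[1]-1}\otimes L_1(\lambda(\mu))$ with a uniquely determined $\lambda(\mu)$. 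This gives semisimplicity and the bijection with $P_+=X_*(T)_+$ directly. The matching of simples with $H^*_{DS}(\gamma\cdot\cD_G^k)$ then comes from the branching rules of \cite{ArakawaCreuztigLinshaw2019} together with the Arakawa--Frenkel duality.

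So the genuine obstacle you correctly sense is the availability of Theorem~\ref{conjecture existence and unicity of shifted cdo's} and Theorem~\ref{theorem computation of shifted cdo's} beyond type $A$ and $D$; but the mechanism is the explicit identification $\mathcal{W}_G^k[1]\simeq V^{k[1]-1}(\Lg)\otimes L_1(\Lg)$, not an affine Skryabin equivalence or a reduction to the torus case.
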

	
	\begin{theoreme}
		\label{theorem simplicity of modules for equivariant w algebra}
		The family of $\ewalg$-modules $(H_{DS}^{2\rho(\gamma)}(\gamma \cdot \cdo))_{\gamma \in X_{*}(T)_{+}}$ consists of simple, pairwise non-isomorphic objects of $\ewalg\mathrm{-Mod}^{\jetinf G}$.
	\end{theoreme}
	\begin{proof}	
		It is clear from Proposition \ref{proposition decomposition of reduction of specrtal flow for cdo} that these modules belong to $\ewalg\mathrm{-Mod}^{\jetinf G}$ and are pairwise non-isomorphic. Let $M$ be a $\ewalg$-submodule of $H^{2\rho(\gamma)}_{DS}(\gamma \cdot \cdo)$ and define
		$$
		M' = H^{2\rho(\gamma)}_{DS}(\gamma \cdot \cdo)/M.
		$$	
		We want to show that one of $M$ or $M'$ is the zero module. For all $\lambda \in X^{*}(T)_{+}$ the $W^{\kappa}(\Lg) \otimes V^{\kappa^{*}}(\Lg)$-module 
		$
		T^{\kappa}_{\lambda,\gamma} \otimes V^{\kappa^{*}}_{-\omega_{0}\lambda}
		$
		is simple by Theorem \ref{theorem Kazhdan--Lusztig categories for irrational levels} and Lemma~\ref{lemma twisted Hamiltonian reduction of weyl modules}. 
		
		Proposition \ref{proposition decomposition of reduction of specrtal flow for cdo} expresses $H^{2\rho(\gamma)}_{DS}(\gamma \cdot \cdo)$ as a direct sum of simple $W^{\kappa}(\Lg) \otimes V^{\kappa^{*}}(\Lg)$-modules each appearing with multiplicity one. It follows that there exists a subset $S~\subset X^{*}(T)_{+}$ such that 
		\begin{equation}
			\label{equation decomposition M}
			M = \bigoplus_{\lambda \in S} T^{\kappa}_{\lambda,\gamma} \otimes V^{\kappa^{*}}_{-\omega_{0}\lambda}
		\end{equation}
		and 
		\begin{equation}
			\label{equation decomposition cokernel M}
			M' = \bigoplus_{\lambda \in X^{*}(T)_{+} \setminus S} T^{\kappa}_{\lambda,\gamma} \otimes V^{\kappa^{*}}_{-\omega_{0}\lambda}
		\end{equation}
		as $W^{\kappa}(\Lg) \otimes V^{\kappa^{*}}(\Lg)$-modules. 
		It is obvious that $0 \in S$ or $0 \in X^{*}(T)_{+}\setminus S$. Let us focus on the second case, this means that 
		$$
		T^{\kappa}_{0,\gamma}\otimes V^{\kappa^{*}}_{0} = T^{\kappa}_{0,\gamma}\otimes V^{\kappa^{*}}(\Lg) \subset M'.
		$$
		Proposition \ref{proposition stability of highest weight starting from the vacuum} applies and shows that for all $\lambda \in X^{*}(T)_{+}$ there exists a highest weight vector for $\Lgkappastar$ of weight $-\omega_{0}\lambda$. Using equation \eqref{equation decomposition cokernel M} it means that $-\omega_{0}\lambda \in X^{*}(T)_{+}\setminus S$ for all $\lambda \in X^{*}(T)_{+}$. This proves that $S = \emptyset$ and hence $M$ is the zero module. The other case is treated similarly.
	\end{proof}
	
	In fact, we can say a bit more, the situation is especially nice when we assume the group $G$ to be simple and of adjoint type, that is, the center of $G$ is trivial, or equivalently, $X_{*}(T) = \check{P}$. A typical example is $G = \mathrm{PSL}_{2}$.

	\begin{proposition}
		Let $G$ be simple of adjoint type. For all~$(\gamma,x) \in X_{*}(T) \times \check{P} \simeq \cSF(\cdo)$ such that $\gamma + x \in X^{*}(T)_{+}$, the $\ewalg$-module $H^{2\rho(\gamma + x)}_{DS}((\gamma,x) \cdot \cdo)$ is simple.
	\end{proposition} 
	\begin{proof}
		By Corollary \ref{corollaire computation of spectral flow group of eqwalg} we have that $\cSF(\ewalg) = X_{*}(T)$. But since $G$ is of adjoint type we have $X_{*}(T) = \check{P}$ so that we can consider the $\ewalg$-module 
		$$
		(- \omega_{0}^{T}x) \cdot H^{2\rho(\gamma+x)}_{DS}((\gamma,x) \cdot \cdo).
		$$
		Clearly it decomposes as a $W^{\kappa}(\Lg) \otimes V^{\kappa^{*}}(\Lg)$-module as 
		$$
		\bigoplus_{\lambda \in X^{*}(T)_{+}} T^{\kappa}_{\lambda, \gamma+x} \otimes V^{\kappa^{*}}_{-\omega_{0}\lambda}.
		$$
		Now the proof of Theorem \ref{theorem simplicity of modules for equivariant w algebra} applies to prove that $(- \omega_{0}^{T}x) \cdot H^{2\rho(\gamma+x)}_{DS}((\gamma,x) \cdot \cdo)$ is simple and hence so is $H^{2\rho(\gamma+x)}_{DS}((\gamma,x) \cdot \cdo)$.
	\end{proof}

	\begin{remarque}
		\label{remark restriction to the simple case and generalization to reductive case}
		We made the choice when considering quantum Hamiltonian reduction to restrict our attention to the case where $\Lg$ (and hence the underlying group $G$) is simple. The generalization of those concepts and of the proof of Theorem \ref{theorem simplicity of modules for equivariant w algebra} to the full reductive case is straightforward. Namely on the abelian part this reduction procedure is trivial and on the semisimple part of $\Lg$ it works separately simple factor by simple factor. Conjecture \ref{conjecture FLE} then makes sense for any reductive group $G$. In the same spirit, the further perspectives discussed in Section \ref{section further perspectives} can be generalized to the reductive case.
	\end{remarque}

	\section{The case of an algebraic torus}
	\label{section the case of an algebraic torus}
	
	Let $T = \G_{m}^{r}$ be an algebraic torus. Its Lie algebra $\Lh$ is a finite dimensional abelian Lie algebra of dimension $r$. Let $\kappa$ be a nondegenerate level. Recall that in this case $\kappa^{*} = -\kappa$. For all $\alpha \in X^{*}(T)=X^{*}(T)_{+}$, we let $e^{\alpha} \in \cO_{T}$ be a function of highest weight $(\alpha,-\alpha)$. In this setting, Theorem \ref{theorem chiral Peter-Weyl} says that we have a decomposition
	$$
	\cD_{T}^{\kappa} = \bigoplus_{\alpha \in X^{*}(T)} \cF^{\kappa}_{\alpha} \otimes \cF^{-\kappa}_{-\alpha}
	$$
	as $V^{\kappa}(\Lh) \otimes V^{\kappa^{*}}(\Lh)$-modules.
		
	It can be deduced for instance from Theorem \ref{theorem rigidity of chiral Peter-Weyl} that $\mathcal{D}_{T}^{\kappa}$ is a half-lattice vertex algebra (in the sense of \cite{BermanDongTan2002}). We learned the theory of lattice vertex algebras from \cite{DongLiMason1997}, \cite{BermanDongTan2002} and §6.4 and §6.5 of \cite{LepowskyLi1994}. It is more convenient for our purposes to reproduce fully the results of the previously cited references. Our main input is a reformulation of certain results using the spectral flow group introduced in Section \ref{section spectral flow in the theory of vertex algebras}, while systematically keeping track of the level $\kappa$.

	\begin{lemme}
		Let $M$ be a $V^{\kappa}(\Lh)$-module. Then for all $h \in \Lh$ the formula
		$$
		E^{-}(h,z) = \exp\left(\sum_{n<0}\frac{h_{(n)}}{-n}z^{-n}\right) = \sum_{r \leqslant0} \left( \sum_{k \geqslant0} \frac{1}{k!} \sum_{\stackrel{i_{1}+\dots+i_{k}=r}{i_{1},\dots,i_{k} < 0}} \frac{h_{(i_{1})}}{-i_{1}} \dots \frac{h_{(i_{k})}}{-i_{k}} \right) z^{-r}
		$$
		defines an element of $\End(M)[[z]]$. Moreover, for all $x \in \Lh$ and $m\in \mathbb{Z}$, the following commutation relations hold 
		\begin{align}
			[x_{(m)},E^{-}(h,z)] &= 0, &&\textrm{ if $m \leqslant0$}\\
			[x_{(m)},E^{-}(h,z)] &= \kappa(x,h)z^{m}E^{-}(h,z) &&\textrm{ if $m \geqslant1$}.
		\end{align}
		
	\end{lemme}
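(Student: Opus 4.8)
The plan is to reduce the whole statement to the Heisenberg commutation relations $[x_{(m)},h_{(n)}] = m\,\delta_{m+n,0}\,\kappa(x,h)$, valid in $\End(M)$ because $M$, being a $V^{\kappa}(\Lh)$-module, is a smooth $\Lhkappa$-module.

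First I would check that $E^{-}(h,z)$ is well defined. The key observation is that the modes $h_{(n)}$ with $n<0$ occurring in the exponent pairwise commute: for $m,n<0$ one has $m+n<0$, so $[h_{(m)},h_{(n)}]=0$. Hence $\exp$ of $B:=\sum_{n<0}\frac{h_{(n)}}{-n}z^{-n}$ makes sense as a formal expression, and expanding it monomial by monomial yields exactly the displayed formula. For a fixed exponent $-r$ with $r\leqslant 0$, only finitely many monomials contribute --- one needs $k\leqslant -r$ since each $i_{j}\leqslant -1$ --- so the coefficient of $z^{-r}$ is a finite sum of products of creation operators, hence a genuine endomorphism of $M$; since only nonnegative powers of $z$ occur, $E^{-}(h,z)\in\End(M)[[z]]$. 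Note that no smoothness hypothesis is actually needed for this part: creation operators never raise convergence issues.

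Next, for the commutation relations, I would compute $[x_{(m)},B]$ directly from the Heisenberg relations. If $m\leqslant 0$: when $m<0$ the bracket $[x_{(m)},h_{(n)}]$ vanishes for every $n<0$ because $-m>0\neq n$, and when $m=0$ it vanishes identically; so $[x_{(m)},B]=0$. If $m\geqslant 1$: the only surviving term is $n=-m$, which does lie in the summation range, and it contributes $\frac{z^{m}}{m}\cdot m\,\kappa(x,h)=\kappa(x,h)\,z^{m}$, a scalar multiple of the identity. Finally I would invoke the elementary identity that $[A,e^{B}]=[A,B]\,e^{B}$ whenever $[A,B]$ commutes with $B$ (which one proves from $[A,B^{n}]=n[A,B]B^{n-1}$ under that hypothesis). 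Applied with $A=x_{(m)}$ this gives $[x_{(m)},E^{-}(h,z)]=0$ for $m\leqslant 0$ and $[x_{(m)},E^{-}(h,z)]=\kappa(x,h)\,z^{m}\,E^{-}(h,z)$ for $m\geqslant 1$, since in the latter case $[x_{(m)},B]$ is central.

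There is no serious obstacle here; the only point that deserves a little care is the bookkeeping in the first step --- checking that the rearrangement of the exponential into a power series in $z$ is legitimate, which is guaranteed by the finiteness noted above --- together with keeping track of the two coexisting conventions, namely $Y(h,z)=\sum_{n}h_{(n)}z^{-n-1}$ on the one hand and the shifted exponent $z^{-n}$ appearing in $E^{-}(h,z)$ on the other.
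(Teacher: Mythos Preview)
Your proof is correct and follows essentially the same route as the paper: compute the bracket of $x_{(m)}$ with the exponent $H^{-}(z)=\sum_{n<0}\frac{h_{(n)}}{-n}z^{-n}$ using the Heisenberg relations, then pass to the exponential via the identity $[A,B^{k}]=k[A,B]B^{k-1}$ (valid since the bracket is central). Your treatment of well-definedness is in fact a bit more explicit than the paper's, which simply declares that step ``easy''.
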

	\begin{proof}
		The fact that this is well-defined is easy. Let 
		$$
		H^{-}(z) = \sum_{n<0}\frac{h_{(n)}}{-n}z^{-n} \in \End(M)[[z]].
		$$
		By definition, we have 
		$$
		E^{-}(h,z) = \sum_{k \geqslant0} \frac{1}{k!}H^{-}(z)^{k}.
		$$
		The case $m \leqslant0$ is straightforward, so assume that $m \geqslant1$. Then 
		$$
		[x_{(m)},H^{-}(z)] = \sum_{n<0}[x_{(m)},h_{(n)}]\frac{z^{-n}}{-n}
		= \sum_{n<0} m\delta_{m,-n}\kappa(x,h)\frac{z^{-n}}{-n}
		= \kappa(x,h)z^{m},
		$$
		hence
		\begin{align*}
			[x_{(m)},E^{-}(h,z)] = \sum_{ k \geqslant0} \frac{1}{k!} [x_{(m)},H^{-}(z)^{k}] &= \sum_{k \geqslant0} \frac{1}{k!}kH^{-}(z)^{k-1}[x_{(m)},H^{-}(z)] \\
			&= \kappa(x,h)z^{m}E^{-}(h,z).
		\end{align*}
		
	\end{proof}
	
	\begin{lemme}
		Let $M$ be a $\cD_{T}^{\kappa}$-module. In particular this is a $V^{\kappa \oplus \kappa^{*}}(\Lh \oplus \Lh)$-module and for all $\alpha \in X^{*}(T)$, set 
		$$
		A(\alpha)(z) = E^{-}((-\alpha,-\alpha),z)e^{\alpha}(z)
		$$
		and write 
		$$
		A(\alpha)(z) = \sum_{n \in \Z}A(\alpha)_{n}z^{-n-1}.
		$$
		Then $A(\alpha)(z)$ is a well-defined element of $\End(M)[[z,z^{-1}]]$ and for all $x \in \Lh$, and $m,n \in \Z$, we have:
		\begin{align}
			&[\pi_{L}(x)_{(m)},A(\alpha)_{n}] = \kappa(x,\alpha)A(\alpha)_{m+n}, &&\textrm{ if $m \leqslant0$}  \label{equation commutation action a gauche et a alpha}\\
			&[\pi_{R}(x)_{(m)},A(\alpha)_{n}] = -\kappa(x,\alpha)A(\alpha)_{m+n}, &&\textrm{ if $m \leqslant0$} \label{equation commutation action a droite et a alpha}\\
			&[\pi_{L}(x)_{(m)},A(\alpha)_{n}] = 0, &&\textrm{ if $m \geqslant1$} \label{equation commutation action a gauche et a alpha modes strictement positifs} \\
			&[\pi_{R}(x)_{(m)},A(\alpha)_{n}] = 0, &&\textrm{ if $m \geqslant1$}\label{equation commutation action a droite et a alpha modes strictement positifs}  \\
			&(-n-1)A(\alpha)_{n} = \sum_{i \geqslant0}A(\alpha)_{n-i}(\pi_{L}(\alpha)_{(i)} + \pi_{R}(\alpha)_{(i)}). \label{equation equa diff a alpha}
		\end{align}
	\end{lemme}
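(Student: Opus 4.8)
The plan is to verify each of the five stated identities by direct computation, starting from the explicit definition $A(\alpha)(z) = E^{-}((-\alpha,-\alpha),z)e^{\alpha}(z)$ and exploiting the structure of $\cD_{T}^{\kappa}$ as a module over $V^{\kappa \oplus \kappa^{*}}(\Lh \oplus \Lh)$. First I would observe that $A(\alpha)(z)$ is well-defined: the operator $E^{-}((-\alpha,-\alpha),z)$ lies in $\End(M)[[z]]$ by the previous lemma, while $e^{\alpha}(z) \in \End(M)[[z,z^{-1}]]$ is a field (since $M$ is a smooth module and $e^{\alpha} \in \cO(T)$); the product of a formal power series in $z$ with a field is again a well-defined field, because for each $m \in M$ only finitely many negative powers of $z$ appear and the $E^{-}$ factor does not introduce any negative powers. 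So $A(\alpha)(z) \in \End(M)[[z,z^{-1}]]$ and the mode expansion $A(\alpha)(z) = \sum_{n}A(\alpha)_{n}z^{-n-1}$ makes sense.

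For the commutation relations \eqref{equation commutation action a gauche et a alpha}--\eqref{equation commutation action a droite et a alpha modes strictement positifs}, the key input is that $e^{\alpha}$ has weight $(\alpha,-\alpha)$ with respect to $\Lh \oplus \Lh$, so by the Leibniz rule \eqref{equation Leibniz rule in the cdo lambda bracket} one has $[\pi_{L}(x)_{(m)},e^{\alpha}(z)] = \kappa(x,\alpha)z^{m}e^{\alpha}(z)$ for $m \geq 0$ and $0$ for $m < 0$ (since $\cO(T)$ commutes with itself, there is no contribution to negative modes, and for the zero mode the weight gives the scalar $\alpha(x)$ — which here, using the nondegenerate $\kappa$ to identify $\Lh^{*}$ with $\Lh$, is written $\kappa(x,\alpha)$); symmetrically $[\pi_{R}(x)_{(m)},e^{\alpha}(z)] = -\kappa(x,\alpha)z^{m}e^{\alpha}(z)$ for $m \geq 0$. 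Combining this with the commutators $[\pi_{L}(x)_{(m)},E^{-}((-\alpha,-\alpha),z)]$ computed in the previous lemma (which are $0$ for $m \leq 0$ and $-\kappa(x,\alpha)z^{m}E^{-}(\cdots,z)$ for $m \geq 1$, the sign coming from the argument $-\alpha$ in the left slot), the $m \leq 0$ and $m \geq 1$ cases separate cleanly: for $m \leq 0$ only $e^{\alpha}(z)$ contributes, giving $\kappa(x,\alpha)A(\alpha)(z)$ shifted by $z^{m}$, hence \eqref{equation commutation action a gauche et a alpha}; for $m \geq 1$ the contributions from $E^{-}$ and from $e^{\alpha}(z)$ are $-\kappa(x,\alpha)z^{m}$ and $+\kappa(x,\alpha)z^{m}$ respectively and cancel, giving \eqref{equation commutation action a gauche et a alpha modes strictement positifs}; the right-handed versions \eqref{equation commutation action a droite et a alpha}, \eqref{equation commutation action a droite et a alpha modes strictement positifs} are identical up to the global sign coming from the weight $-\alpha$ in the right slot. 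Extracting the coefficient of the appropriate power of $z$ turns each field identity into the stated mode identity.

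For the differential equation \eqref{equation equa diff a alpha}, I would compute $\partial_{z}A(\alpha)(z)$ by the product rule. The point is that $\partial_{z}E^{-}((-\alpha,-\alpha),z) = \left(\sum_{i \geq 0}(-\pi_{L}(\alpha) - \pi_{R}(\alpha))_{(i)}z^{i-1}\cdot(\text{shift})\right)$; more precisely, differentiating $E^{-}(h,z) = \exp(\sum_{n<0}\tfrac{h_{(n)}}{-n}z^{-n})$ gives $\partial_{z}E^{-}(h,z) = \left(\sum_{n<0}h_{(n)}z^{-n-1}\right)E^{-}(h,z) = h^{-}(z)E^{-}(h,z)$ where $h^{-}(z)$ collects the strictly negative modes, whereas $\partial_{z}e^{\alpha}(z)$ — using that $\partial e^{\alpha} = \alpha(z)e^{\alpha}$-type relation, i.e. that the translation operator acts on $e^{\alpha}$ via $(\pi_{L}(\alpha)+\pi_{R}(\alpha))_{(-1)}e^{\alpha}$ (a degree-1 relation in $\cO_{\jetinf T}$) — contributes the positive and zero modes. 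Assembling, $\partial_{z}A(\alpha)(z) = (\pi_{L}(\alpha)+\pi_{R}(\alpha))^{\leq -1}(z)A(\alpha)(z) + A(\alpha)(z)(\pi_{L}(\alpha)+\pi_{R}(\alpha))^{\geq 0}(z)$ in normal-ordered form, and then reading off the coefficient of $z^{-n-2}$ and using that the negative-mode part is already accounted for in the definition of $A(\alpha)$ leaves exactly $(-n-1)A(\alpha)_{n} = \sum_{i \geq 0}A(\alpha)_{n-i}(\pi_{L}(\alpha)_{(i)}+\pi_{R}(\alpha)_{(i)})$.

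I expect the main obstacle to be purely bookkeeping: getting the normal-ordering conventions, the placement of the $E^{-}$ factor relative to $e^{\alpha}(z)$, and the signs in the left-versus-right slots consistent, and in particular being careful that in \eqref{equation equa diff a alpha} the negative modes $\pi_{L}(\alpha)_{(i)}$ with $i<0$ genuinely do not appear — this is where the specific choice to put $E^{-}$ (rather than a full vertex-operator exponential with both $E^{+}$ and $E^{-}$) is used, and one must check that the ``missing'' $E^{+}$ factor is harmless because $\alpha$ has integral pairing with itself or, more honestly, because we are only asserting this particular first-order relation and not the full OPE. None of the steps is deep; the lemma is a standard half-lattice vertex algebra computation, and the proof is a matter of carrying the $\kappa$-dependence and the left/right bookkeeping through faithfully.
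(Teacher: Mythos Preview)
Your approach mirrors the paper's exactly, but there is a concrete error in your commutator computation. You assert that $[\pi_L(x)_{(m)}, e^\alpha(z)] = 0$ for $m < 0$; this is false. Since the $\lambda$-bracket $[\pi_L(x)_\lambda e^\alpha] = x_L(e^\alpha)$ is constant in $\lambda$, the Borcherds commutator formula gives
\[
[\pi_L(x)_{(m)}, e^\alpha(z)] = \kappa(x,\alpha)\, z^m\, e^\alpha(z)
\]
for \emph{every} $m \in \Z$, including $m < 0$. With your claim, the case $m < 0$ of \eqref{equation commutation action a gauche et a alpha} would yield $0$, not $\kappa(x,\alpha)A(\alpha)_{m+n}$. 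The correct mechanism (which the paper uses, and which your later sentence in fact describes) is: for $m \leq 0$ the $E^-$ factor commutes (previous lemma) and $e^\alpha(z)$ supplies the nonzero contribution; for $m \geq 1$ the contributions $-\kappa(x,\alpha)z^m$ from $E^-((-\alpha,-\alpha),z)$ and $+\kappa(x,\alpha)z^m$ from $e^\alpha(z)$ cancel. So the inconsistency sits in your initial claim, and it is precisely the input needed for \eqref{equation commutation action a gauche et a alpha} and \eqref{equation commutation action a droite et a alpha}.

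Your derivation of \eqref{equation equa diff a alpha} is also garbled. The identity $\partial e^\alpha = (\pi_L(\alpha)+\pi_R(\alpha))_{(-1)}e^\alpha$ is correct, but then $\partial_z e^\alpha(z) = (\pi_L(\alpha)+\pi_R(\alpha))(z)\,e^\alpha(z)$ involves \emph{all} modes, not just the nonnegative ones. The mechanism is a cancellation: the strictly-negative-mode piece of $(\pi_L(\alpha)+\pi_R(\alpha))(z)$ coming from $\partial_z e^\alpha$ cancels against $\partial_z E^-((-\alpha,-\alpha),z)\cdot e^\alpha(z)$, because the argument of $E^-$ carries the sign $(-\alpha,-\alpha)$. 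What survives is $A(\alpha)(z)\sum_{i\geq 0}(\pi_L(\alpha)_{(i)}+\pi_R(\alpha)_{(i)})z^{-i-1}$, where one uses that $(\pi_L(\alpha)+\pi_R(\alpha))(z)$ commutes with both $E^-$ and $e^\alpha(z)$ (the left and right weights being opposite) to place the modes on the right. Your ``normal-ordered form'' with a surviving $\leq -1$ part on the left is not what one obtains, and ``already accounted for in the definition of $A(\alpha)$'' is not a mechanism --- the negative modes disappear by cancellation, not by redefinition.
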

	\begin{proof}
		It is easy to see that $A(\alpha)(z)$ is well-defined because $E^{-}((-\alpha,-\alpha),z) \in \End(M)[[z]]$ and $e^{\alpha}(z)$ is a field. Recall that for all $m \in \Z$ the following equalities hold 
		\begin{align}
			[\pi_{L}(x)_{(m)},e^{\alpha}(z)] &= \kappa(x,\alpha)z^{m}e^{\alpha}(z), \label{equation commutation piL and ealpha}\\
			[\pi_{R}(x)_{(m)},e^{\alpha}(z)] &= -\kappa(x,\alpha)z^{m}e^{\alpha}(z). \label{equation commutation piR and ealpha}
		\end{align}
		Assume for instance that $m \leqslant0$. Then 
		\begin{align*}
			[\pi_{R}(x)_{(m)},A(\alpha)(z)] &= [\pi_{R}(x)_{(m)},E^{-}((-\alpha,-\alpha),z)]e^{\alpha}(z) + E^{-}((-\alpha,-\alpha),z)[\pi_{R}(x)_{(m)},e^{\alpha}(z)]  \\
			&= 0 - \kappa(x,\alpha)z^{m}E^{-}((-\alpha,-\alpha),z)e^{\alpha}(z) \\
			&= -\kappa(x,\alpha)z^{m}A(\alpha)(z),
		\end{align*}
		which proves the claimed equality by looking at the coefficients. The one for $\pi_{L}(x)$ is proven similarly. Now assume that $m \geqslant1$. Then 
		\begin{align*}	
			[\pi_{R}(x)_{(m)},A(\alpha)(z)] &= -\kappa(x,-\alpha)z^{m}E^{-}((-\alpha,-\alpha),z)e^{\alpha}(z) - \kappa(x,\alpha)z^{m}E^{-}((-\alpha,-\alpha),z)e^{\alpha}(z) \\
			&= 0,
		\end{align*}
		which proves the equality. Again, the one for $\pi_{L}(x)$ is proven similarly. We now compute 
		\begin{align*}
			\frac{\partial}{\partial z} A(\alpha)(z) &= \frac{\partial}{\partial z} (E^{-}((-\alpha,-\alpha),z)e^{\alpha}(z)) \\
			&= \frac{\partial}{\partial z}(E^{-}((-\alpha,-\alpha),z))e^{\alpha}(z) + E^{-}((-\alpha,-\alpha),z)\frac{\partial}{\partial z}(e^{\alpha}(z)).
		\end{align*}
		But we have 
		\begin{align*}
			\frac{\partial}{\partial z}E^{-}((-\alpha,-\alpha),z) &= (-\sum_{n<0}(\pi_{L}(\alpha)_{(n)} + \pi_{R}(\alpha)_{(n)})z^{-n-1})E^{-}((-\alpha,-\alpha),z) \\
			&= E^{-}((-\alpha,-\alpha),z)(-\sum_{n<0}(\pi_{L}(\alpha)_{(n)} + \pi_{R}(\alpha)_{(n)})z^{-n-1}),
		\end{align*}
		and 
		$$
		\frac{\partial}{\partial z}e^{\alpha}(z) = (\pi_{L}(\alpha)(z) + \pi_{R}(\alpha)(z))e^{\alpha}(z).
		$$
		Hence we have 
		\begin{align*}
			\frac{\partial}{\partial z} A(\alpha)(z) 
			&= E^{-}((-\alpha,-\alpha),z)e^{\alpha}(z)(-\sum_{n<0}(\pi_{L}(\alpha)_{(n)} + \pi_{R}(\alpha)_{(n)})z^{-n-1})\\
			&\quad+E^{-}((-\alpha,-\alpha),z)e^{\alpha}(z)(\pi_{L}(\alpha)(z) + \pi_{R}(\alpha)(z))\\
			&= A(\alpha)(z)(\sum_{n\geqslant0}(\pi_{L}(\alpha)_{(n)}+\pi_{R}(\alpha)_{(n)})z^{-n-1}),
		\end{align*}
		where the first equality follows from the fact that for all $n \in \Z$, $h \in \Lh$ and $\alpha \in X^{*}(T)$ we have 
		$$
		[\pi_{L}(h)_{(n)}+\pi_{R}(h)_{(n)},e^{\alpha}(z)] = 0
		$$
		by summing equations \eqref{equation commutation piL and ealpha} and \eqref{equation commutation piR and ealpha}.
		Hence we have proven that  
		$$
		\frac{\partial}{\partial z}A(\alpha)(z) = A(\alpha)(z)(\sum_{n\geqslant0}(\pi_{L}(\alpha)_{(n)} + \pi_{R}(\alpha)_{(n)})z^{-n-1})
		$$
		which, by looking at the coefficients, is the announced equality.
	\end{proof}

	\begin{lemme}
		\label{lemma weights that can appear in fock spaces in cdo for the torus}
		Let $M$ be a $\cD_{T}^{\kappa}$-module. Let $\lambda,\mu \in \Lh^{*}$ and assume that $M$ contains a Fock space $\cF_{\lambda}^{\kappa} \otimes \cF_{\mu}^{\kappa^{*}}$ with respect to its $\Lhkappakappastar$-module structure. Then for all $\alpha \in X^{*}(T)$, $M$ contains a Fock space of weight $(\lambda + \alpha, \mu-\alpha)$
		and
		$$
		\lambda + \mu \in \kappa(X_{*}(T)),
		$$ 
		where we think of $\kappa$ as an isomorphism 
		$
		\kappa : \Lh \lra \Lh^{*}.
		$
	\end{lemme}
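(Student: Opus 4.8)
The plan is to use the operators $A(\alpha)(z)$ constructed in the previous lemma as intertwining-type operators that, applied to a vacuum-like vector of a Fock space, produce a new vacuum-like vector in a shifted Fock space. First I would take the assumed copy of $\cF_{\lambda}^{\kappa} \otimes \cF_{\mu}^{\kappa^{*}}$ inside $M$ and let $|\lambda,\mu\rangle \in M$ be its highest weight (lowest conformal degree) vector, so that $\pi_{L}(h)_{(n)}$ and $\pi_{R}(h)_{(n)}$ annihilate it for $n > 0$, while $\pi_{L}(h)_{(0)}$ acts by $\lambda(h)$ and $\pi_{R}(h)_{(0)}$ acts by $\mu(h)$. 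Then I would consider the vector $A(\alpha)_{k}|\lambda,\mu\rangle$ for an appropriate mode index $k$: by the simplicity of $\cdo$ and Proposition~\ref{proposition annihilator is an ideal} applied to the field $e^{\alpha}(z)$ (equivalently $A(\alpha)(z)$, since the two differ by the invertible operator $E^{-}(-\alpha,-\alpha)(z)$), there is a largest $k$ with $A(\alpha)_{k}|\lambda,\mu\rangle \neq 0$, and I would show this vector is again annihilated by the strictly positive modes and is an eigenvector for the zero modes, reading off the eigenvalues from equations \eqref{equation commutation action a gauche et a alpha}, \eqref{equation commutation action a droite et a alpha}, \eqref{equation commutation action a gauche et a alpha modes strictement positifs}, \eqref{equation commutation action a droite et a alpha modes strictement positifs}.

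The weight computation goes as follows. Using \eqref{equation commutation action a gauche et a alpha modes strictement positifs} and \eqref{equation commutation action a droite et a alpha modes strictement positifs}, the positive modes $\pi_{L}(h)_{(m)}$ and $\pi_{R}(h)_{(m)}$ for $m \geqslant 1$ commute with all $A(\alpha)_{n}$, so they still annihilate the new vector once we check the differential relation \eqref{equation equa diff a alpha} forces the lower modes to vanish on it as well — alternatively one argues directly that the new vector lies in the lowest conformal degree of the Fock space it generates. Then \eqref{equation commutation action a gauche et a alpha} with $m = 0$ gives $\pi_{L}(h)_{(0)} A(\alpha)_{k}|\lambda,\mu\rangle = A(\alpha)_{k}\pi_{L}(h)_{(0)}|\lambda,\mu\rangle + \kappa(h,\alpha) A(\alpha)_{k}|\lambda,\mu\rangle = (\lambda(h) + \kappa(h,\alpha)) A(\alpha)_{k}|\lambda,\mu\rangle$, and similarly \eqref{equation commutation action a droite et a alpha} gives eigenvalue $\mu(h) - \kappa(h,\alpha)$ for $\pi_{R}(h)_{(0)}$. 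Since $\kappa(h,\alpha) = (\kappa\alpha)(h)$ when we view $\alpha \in X^{*}(T) \subset \Lh^{*}$ via the identification $\kappa : \Lh \xrightarrow{\sim} \Lh^{*}$ — here one uses that $\kappa$ is nondegenerate and that $\alpha$ pulls back to an element of $\Lh$ — the new highest weight is $(\lambda + \kappa\alpha, \mu - \kappa\alpha)$. Wait: I need to be careful about whether the shift is by $\alpha$ or by $\kappa(\alpha)$; examining the definition $A(\alpha)(z) = E^{-}(-\alpha,-\alpha)(z)e^{\alpha}(z)$ where $e^{\alpha}$ has weight $(\alpha, -\alpha)$ with respect to $\Lh \oplus \Lh$, the relevant zero-mode shift from $e^{\alpha}(z)$ itself is genuinely by $\alpha$, so the statement "$M$ contains a Fock space of weight $(\lambda+\alpha,\mu-\alpha)$" follows by running the argument for $A(\alpha)$ and taking the Fock space generated by the new highest weight vector inside the submodule it generates, invoking the classification of simple Heisenberg modules (Theorem~\ref{theorem Kazhdan--Lusztig categories for irrational levels}) to identify it. Proposition~\ref{proposition Weyl modules are highest weight modules} then packages the existence of the corresponding Heisenberg submodule.

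For the second assertion, I would apply the first part with $\lambda$ and $\mu$ as given: the orbit of $(\lambda,\mu)$ under the $X^{*}(T)$-action described above has $\lambda + \mu$ constant, so whatever it is, it is an invariant. To pin it down I would use the fact that $M$ is a genuine $\cdo$-module, hence a module over $V^{\kappa\oplus\kappa^{*}}(\Lh\oplus\Lh)$ compatibly with the vertex algebra structure of $\cdo$, and in particular the relation $\pi_{R}(h) = -\pi_{L}(h) + \omega^{h}$ from equation~\eqref{equation formula for the dual right embedding} (abelian case). Pairing the zero modes: on any vector of weight $(\lambda,\mu)$ the operator $\pi_{L}(h)_{(0)} + \pi_{R}(h)_{(0)}$ acts by $(\lambda+\mu)(h)$, but this combination is (up to the $\omega^{h}$ term, which is a mode of a $1$-form and acts by a specific integer on Fock vectors coming from the lattice decomposition \eqref{equation chiral Peter-Weyl decomposition}) constrained to take integer values, forcing $(\lambda+\mu)(h) \in \mathbb{Z}$ for all $h$ in the cocharacter lattice $X_{*}(T)$, i.e. $\lambda + \mu \in \kappa(X_{*}(T))$ after translating through the identification $\kappa : \Lh \xrightarrow{\sim} \Lh^{*}$ (which sends $X_{*}(T)$ to a full-rank sublattice of $\Lh^{*}$). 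The main obstacle I anticipate is the bookkeeping around the identification $\kappa$ and making the integrality claim precise: one must check that the image of $X_{*}(T)$ under $\kappa$ is exactly the set of functionals on which the combined zero mode is forced to be integral, which comes down to the chiral Peter--Weyl decomposition of $\cdo$ together with the fact that the Fock spaces appearing there are indexed precisely by $X^{*}(T)$, plus the observation that $A(\alpha)$-type operators generate all of these shifts so the submodule generated by any single Fock vector already sees the full $X^{*}(T)$-orbit.
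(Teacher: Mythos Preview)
Your treatment of the first assertion is essentially the paper's: produce a new highest weight vector by applying a suitable mode to $|\lambda,\mu\rangle$. The paper uses $e^{\alpha}_{(m)}$ directly (the largest $m$ with $e^{\alpha}_{(m)}|\lambda,\mu\rangle\neq 0$, which exists by simplicity of $\cD_T^{\kappa}$ and Proposition~\ref{proposition annihilator is an ideal}), while you go through $A(\alpha)_{k}$; both work. Note that once you have \eqref{equation commutation action a gauche et a alpha modes strictement positifs} and \eqref{equation commutation action a droite et a alpha modes strictement positifs}, the annihilation by positive modes is immediate---no appeal to \eqref{equation equa diff a alpha} is needed there.

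The second assertion, however, has a genuine gap. Your claim is that $\pi_{L}(h)_{(0)}+\pi_{R}(h)_{(0)}$, being the zero mode of the $1$-form $\omega^{h}$, must act by an integer on $|\lambda,\mu\rangle$. But the integrality of $(\omega^{h})_{(0)}$-eigenvalues is a feature of the lattice decomposition of $\cD_T^{\kappa}$ \emph{itself}; it says nothing about eigenvalues on an arbitrary module $M$. There is no reason a priori why the zero mode of an element of $\cD_T^{\kappa}$ should act integrally on $M$. Moreover, even if your conclusion ``$(\lambda+\mu)(h)\in\mathbb{Z}$ for all $h\in X_{*}(T)$'' were established, it would yield $\lambda+\mu\in X^{*}(T)$, which is \emph{not} the lattice $\kappa(X_{*}(T))$ claimed in the lemma; the two coincide only for very special $\kappa$.

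The paper's argument is different and short: having produced $|\lambda+\alpha,\mu-\alpha\rangle$ inside the $\cD_T^{\kappa}$-submodule generated by $|\lambda,\mu\rangle$, it invokes Lemma~\ref{lemma conformal weights are in the same Z coset when you are monogeneous} to conclude that the difference of conformal weights lies in $\mathbb{Z}$. Computing this difference with the Segal--Sugawara conformal vector gives $\alpha(\kappa^{-1}(\lambda+\mu))$, and since this must be integral for every $\alpha\in X^{*}(T)$, the perfect pairing \eqref{equation perfect pairing between characters and cocharacters} forces $\kappa^{-1}(\lambda+\mu)\in X_{*}(T)$. An alternative, closer to your set-up, is to apply \eqref{equation equa diff a alpha} at the maximal index $n=k$: on $|\lambda,\mu\rangle$ only the $i=0$ term survives, giving $-k-1=(\lambda+\mu)(\kappa^{-1}\alpha)\in\mathbb{Z}$ for all $\alpha$, with the same conclusion.
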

	
	\begin{proof}
		Let $|\lambda,\mu\rangle$ be a highest weight vector that spans the given Fock space in $M$. Because $\cD_{T}^{\kappa}$ is a simple vertex algebra by Proposition \ref{proposition cdo is a simple vertex algebra}, Proposition \ref{proposition annihilator is an ideal} applies and shows that there exists a unique $m \in \Z$ such that
		\begin{align*}
			e^{\alpha}_{(m)}|\lambda,\mu\rangle &\neq 0, \\
			e^{\alpha}_{(k)}|\lambda,\mu\rangle &= 0 \textrm{ for all $k>m$.}
		\end{align*}
		It is easy to check that $e^{\alpha}_{(m)}|\lambda,\mu\rangle$ is a highest weight vector of weight $(\lambda+\alpha,\mu-\alpha)$, we denote it by 
		$$
		|\lambda+\alpha,\mu-\alpha\rangle.
		$$
		It is clear by its definition that $|\lambda+\alpha,\mu-\alpha\rangle$ lies in the $\cD_{T}^{\kappa}$-submodule generated by~$|\lambda,\mu\rangle$. Applying Lemma \ref{lemma conformal weights are in the same Z coset when you are monogeneous}, we see that for all $\alpha \in X^{*}(T)$, the difference of the conformal weight of $|\lambda,\mu\rangle$ and that of $|\lambda+\alpha,\mu-\alpha \rangle$ should be an integer. That is 
		$$
		\frac{\kappa(\lambda+\alpha,\lambda+\alpha)}{2} - \frac{\kappa(\mu-\alpha,\mu-\alpha)}{2} - \left( \frac{\kappa(\lambda,\lambda)}{2} - \frac{\kappa(\mu,\mu)}{2} \right) = \kappa(\lambda+\mu,\alpha) \in \Z
		$$
		\textit{i.e.},
		$$
		\alpha(\kappa^{-1}(\lambda+\mu)) \in \Z.
		$$
		As this holds for an arbitrary $\alpha \in X^{*}(T)$, we get the result.
	\end{proof}
	\begin{note}
		\begin{proof}[Proof as in Lepowsky-Li]
			Let $|\lambda,\mu\rangle$ be a highest weight vector  that generates the given Fock space in $M$. Let $\alpha \in X^{*}(T)$, because $\cD_{T}^{\kappa}$ is simple and $|\lambda,\mu\rangle \neq 0$ by assumption, $e^{\alpha}(z)|\lambda,\mu\rangle \neq 0$ and it implies that $A(\alpha)(z) |\lambda,\mu\rangle \neq 0$. Moreover since $A(\alpha)(z) |\lambda,\mu\rangle \in M((z))$ there exists an integer $k \in \Z$ maximal such that $A(\alpha)_{k}|\lambda,\mu\rangle \neq 0$. Now applying \eqref{equation equa diff a alpha} to $n=k$ we obtain :
			$$
			(-k-1)A(\alpha)_{k}|\lambda,\mu\rangle = A(\alpha)_{k}(\pi_{L}(\alpha)_{(0)}+ \pi_{R}(\alpha)_{(0)})|\lambda,\mu\rangle = (\lambda + \mu)(\alpha)A(\alpha)_{k}|\lambda,\mu\rangle.
			$$
			Since $A(\alpha)_{k}|\lambda,\mu\rangle \neq 0$ that implies $\alpha(\kappa^{-1}(\lambda+\mu)) \in \Z$ for all $\alpha \in X^{*}(T)$, that is $\kappa^{-1}(\lambda + \mu) \in X_{*}(T)$.
		\end{proof}
	\end{note}
	\begin{proposition}
		\label{proposition there are weight vectors in any module for the torus cdo}
		Let $M$ be an object of $\cD_{T}^{\kappa}\mathrm{-Mod}^{1 \times \jetinf T}$. If $M \neq 0$ then 
		$$M^{t\Lh[[t]] \oplus t\Lh[[t]]} \neq 0.
		$$
		Moreover $M^{t\Lh[[t]]\oplus t\Lh[[t]]}$ contains a weight vector for $\Lh \oplus \Lh$ of weight $(\lambda + \kappa(\gamma,\cdot),-\lambda)$, where $\lambda \in X^{*}(T)$ and $\gamma \in X_{*}(T)$. In particular $M$ contains the Fock space $\cF^{\kappa}_{\lambda + \kappa(\gamma,\cdot)}\otimes \cF^{\kappa^{*}}_{-\lambda}$ as a $V^{\kappa}(\Lh)\otimes V^{\kappa^{*}}(\Lh)$-submodule.
	\end{proposition}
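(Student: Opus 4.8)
The plan is to locate inside $M$ a vector that is a highest-weight vector simultaneously for $\Lhkappa$ (acting through $\pi_{L}$) and for $\Lhkappastar$ (acting through $\pi_{R}$), and then to read off its weight using Lemma~\ref{lemma weights that can appear in fock spaces in cdo for the torus}.

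First I would descend on the right. Restricting along $\pi_{R}$, the hypothesis $M\in\cD_{T}^{\kappa}\mathrm{-Mod}^{\{1\}\times\jetinf T}$ says that $M$ lies in the Kazhdan--Lusztig category of the torus: $t\Lh[[t]]$ acts locally nilpotently on the right and $\Lh$ acts semisimply with weights in $X^{*}(T)$. Given $0\neq u\in M$, smoothness provides $N$ with $t^{N}\Lh[[t]]u=0$, so the abelian Lie algebra $t\Lh[[t]]$ acts on the finite-dimensional space $\cU(t\Lh[[t]])u$ by commuting locally nilpotent operators, and these have a common kernel vector; since the semisimple right $\Lh$-action commutes with $t\Lh[[t]]$ and preserves the annihilator, we may moreover take this vector, call it $w$, to be a right $\Lh$-weight vector, of weight $\mu\in X^{*}(T)$. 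Thus $0\neq w\in M^{0\oplus t\Lh[[t]]}$.

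Next I would descend on the left. By \eqref{equation commutation pi l and pi r} the modes $\pi_{L}(h)_{(n)}$ commute with all right modes, so they preserve $M^{0\oplus t\Lh[[t]]}$, making it a smooth $\Lhkappa$-module. The crux is to show that $t\Lh[[t]]$ acts locally nilpotently on $M^{0\oplus t\Lh[[t]]}$ as well, indeed that $M^{0\oplus t\Lh[[t]]}$ is a direct sum of Fock spaces for this left action. Granting this, it has highest-weight vectors for $\Lhkappa$; such a vector lies in $M^{0\oplus t\Lh[[t]]}$, hence is also killed by the right positive modes, and after further decomposing along the (semisimple) right $\Lh$-action it becomes an $\Lh\oplus\Lh$-weight vector annihilated by the positive modes of both Heisenberg algebras, so in particular $M^{t\Lh[[t]]\oplus t\Lh[[t]]}\neq 0$. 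To prove the crux I would exploit the half-lattice vertex algebra structure of $\cD_{T}^{\kappa}$: the operators $A(\alpha)(z)$ introduced above have modes that commute with \emph{all} positive Heisenberg modes on both sides and intertwine the $\Lh\oplus\Lh$-weights by $(\alpha,-\alpha)$; feeding the nonvanishing of $A(\alpha)(z)w$ (from the simplicity of $\cD_{T}^{\kappa}$, Proposition~\ref{proposition cdo is a simple vertex algebra}), the differential equation \eqref{equation equa diff a alpha}, and the congruence constraint on conformal weights of Lemma~\ref{lemma conformal weights are in the same Z coset when you are monogeneous} into the analysis of modules over half-lattice vertex algebras (as in \cite{BermanDongTan2002}; see also \S6.4--\S6.5 of \cite{LepowskyLi1994}) yields the Fock space decomposition of $M^{0\oplus t\Lh[[t]]}$.

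For the last step I would take a nonzero $\Lh\oplus\Lh$-weight vector $v\in M^{t\Lh[[t]]\oplus t\Lh[[t]]}$, say of weight $(\widetilde\lambda,-\lambda)$; the right Kazhdan--Lusztig condition forces $-\lambda\in X^{*}(T)$, i.e.\ $\lambda\in X^{*}(T)$. By Proposition~\ref{proposition Weyl modules are highest weight modules} (applied to the Heisenberg algebras) there is a nonzero map $\cF^{\kappa}_{\widetilde\lambda}\otimes\cF^{\kappa^{*}}_{-\lambda}\ra M$, and it is injective because the source is simple ($\kappa$ and $\kappa^{*}$ being nondegenerate), so $M$ contains $\cF^{\kappa}_{\widetilde\lambda}\otimes\cF^{\kappa^{*}}_{-\lambda}$ as a $\Lhkappakappastar$-submodule. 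Lemma~\ref{lemma weights that can appear in fock spaces in cdo for the torus} then forces $\widetilde\lambda-\lambda\in\kappa(X_{*}(T))$, say $\widetilde\lambda=\lambda+\kappa(\gamma,\cdot)$ with $\gamma\in X_{*}(T)$, which gives exactly the asserted weight and Fock space. The main obstacle is the middle step: transporting the finiteness condition from the right factor to the left one genuinely requires the rigidity of $\cD_{T}^{\kappa}$ as a half-lattice vertex algebra rather than a soft argument, since without the right Kazhdan--Lusztig condition a $\cD_{T}^{\kappa}$-module need not even be bounded below for the conformal grading.
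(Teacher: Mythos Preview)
Your first and last paragraphs are fine and match the paper: you correctly pass to $M^{0\oplus t\Lh[[t]]}$ using the right Kazhdan--Lusztig condition, and once you have a joint highest-weight vector for $\Lhkappa\oplus\Lhkappastar$, the identification of its weight via Lemma~\ref{lemma weights that can appear in fock spaces in cdo for the torus} is exactly the paper's closing step.

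The gap is the middle step. You assert that $M^{0\oplus t\Lh[[t]]}$ is a direct sum of Fock spaces for the \emph{left} $\Lhkappa$-action, but this is stronger than what the proposition claims and you do not actually prove it: the cited half-lattice references treat modules over the full (half-)lattice vertex algebra, whereas $M^{0\oplus t\Lh[[t]]}$ is only a $V^{\kappa}(\Lh)$-module, and nothing in the hypotheses guarantees that the left $\Lh$-action is semisimple or that the left positive modes act locally nilpotently. Invoking ``the analysis of modules over half-lattice vertex algebras'' is precisely where the work lies, and your proposal does not carry it out.

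The paper avoids this by a direct descent. For $m\in M^{0\oplus t\Lh[[t]]}$ set $d(m)=\dim\mathrm{span}\{(\pi_L(x)_{(n)}+\pi_R(x)_{(n)})m:x\in\Lh,\ n>0\}$, which is finite by smoothness. Pick $m$ with $d(m)$ minimal; if $d(m)>0$, choose $\alpha\in X^{*}(T)$ and $k\geqslant1$ with $(\pi_L(\alpha)_{(k)}+\pi_R(\alpha)_{(k)})m\neq0$ maximal in $k$, then take the largest $k'$ with $A(\alpha)_{k'}m\neq0$ and apply \eqref{equation equa diff a alpha} at $n=k+k'$ to show $(\pi_L(\alpha)_{(k)}+\pi_R(\alpha)_{(k)})A(\alpha)_{k'}m=0$, so $d(A(\alpha)_{k'}m)<d(m)$, a contradiction. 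Hence $d(m)=0$, and combined with $m\in M^{0\oplus t\Lh[[t]]}$ this gives $m\in M^{t\Lh[[t]]\oplus t\Lh[[t]]}$. The paper then obtains the \emph{left} $\Lh$-weight property by a second pass: starting from a right-weight $m$ in this space, for each $\alpha$ in a $\Z$-basis of $X^{*}(T)$ one applies \eqref{equation equa diff a alpha} at the top mode $k$ to see that $A(\alpha)_{k}m$ becomes an eigenvector of $\pi_L(\alpha)_{(0)}$ (while staying in $M^{t\Lh[[t]]\oplus t\Lh[[t]]}$ by \eqref{equation commutation action a gauche et a alpha modes strictement positifs}--\eqref{equation commutation action a droite et a alpha modes strictement positifs} and preserving previously acquired eigenvectors by \eqref{equation commutation action a gauche et a alpha}--\eqref{equation commutation action a droite et a alpha}). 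Your proposal bypasses both of these concrete arguments; that is where it needs to be filled in.
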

	\begin{proof}
		Because $M$ is nonzero and belongs to the Kazhdan--Lusztig category with respect to the right action, it follows from Theorem \ref{theorem Kazhdan--Lusztig categories for irrational levels} that $M^{0 \oplus t\Lh[[t]]} \neq 0$. For all $m \in M$, since $M$ is a smooth $\Lhkappakappastar$-module and $\Lh$ is finite dimensional, it is clear that the vector space, say $S(m)$, spanned by $\{(\pi_{L}(x)_{(n)}+ \pi_{R}(x)_{(n)}) m \hspace{0.5mm}|\hspace{0.5mm} x\in \Lh, n > 0\}$ is finite dimensional. We denote by $d(m)$ its dimension. Let $m \in M^{0 \oplus t\Lh[[t]]}$ be a nonzero vector such that $d(m)$ is minimal among nonzero elements of $M^{0 \oplus t\Lh[[t]]}$. If $d(m)$ is zero we are done. Otherwise there exists $k \geqslant1$ such that there exists $\alpha \in X^{*}(T)$ satisfying $(\pi_{L}(\alpha)_{(k)} + \pi_{R}(\alpha)_{(k)}) m \neq 0$ and $(\pi_{L}(h')_{(n)} + \pi_{R}(h')_{(n)})m = 0$ for all $h' \in \Lh$ and $n >k$ (recall that $X^{*}(T)$ spans $\Lh^{*}$ as a $\C$-vector space).
		
		Now because $\cD_{T}^{\kappa}$ is a simple vertex algebra, it follows from Proposition \ref{proposition annihilator is an ideal} that~$e^{\alpha}(z)m \neq 0$, and hence 
		$$
		A(\alpha)(z)m = E^{-}((-\alpha,-\alpha),z)e^{\alpha}(z)m \neq 0.
		$$
		But since $e^{\alpha}(z)m \in M((z))$ and $E^{-}((-\alpha,-\alpha),z) \in \End(M)[[z]]$ we see that~$A(\alpha)(z)m \in M((z))$. So there exists an integer $k' \in \Z$ such that $A(\alpha)_{k'}m \neq 0$ and for all $n>k'$, $A(\alpha)_{n}m = 0$. Let us now apply equation \eqref{equation equa diff a alpha} with $n = k + k'$, we have, using equations \eqref{equation commutation action a gauche et a alpha}-\eqref{equation commutation action a droite et a alpha modes strictement positifs} the following equalities
		\begin{align*}
			0 &= (-k-k'-1)A(\alpha)_{k+k'}m \\
			&= \sum_{i=0}^{k} A(\alpha)_{k+k'-i}(\pi_{L}(\alpha)_{(i)} + \pi_{R}(\alpha)_{(i)}) m\\
			&= \sum_{i=0}^{k} (\pi_{L}(\alpha)_{(i)} + \pi_{R}(\alpha)_{(i)})A(\alpha)_{k+k'-i} m\\
			&= (\pi_{L}(\alpha)_{(k)} + \pi_{R}(\alpha)_{(k)})A(\alpha)_{k'}m\\
			&= A(\alpha)_{k'}(\pi_{L}(\alpha)_{(k)}+\pi_{R}(\alpha)_{(k)})m.
		\end{align*}
		Set $m' = A(\alpha)_{k'}m$, it is nonzero by the choice of $k'$. Clearly it is again in $M^{0 \oplus t\Lh[[t]]}$ but since 
		$
		A(\alpha)_{k'} : S(m) \ra S(m')
		$ 
		is a surjection with non-trivial kernel by the previous computation, we see that $d(m')<d(m)$. By contradiction, this proves that there exists a nonzero $m \in M^{0 \oplus t\Lh[[t]]}$ such that $d(m) =0$ \textit{i.e.}, such that for all $x\in \Lh$ and $n>0$
		$$
		(\pi_{L}(x)_{(n)} + \pi_{R}(x)_{(n)})m = 0,
		$$
		but by assumption $\pi_{R}(x)_{(n)}m = 0$ so that implies $\pi_{L}(x)_{(n)}m=0$. This proves the claim. 
		
		Because $M^{t\Lh[[t]] \oplus t\Lh[[t]]}$ is a $\Lh \oplus \Lh$-submodule and $M$ belongs to the Kazhdan--Lusztig~category with respect to the right action, we can choose a nonzero vector $m\in M^{t\Lh[[t]] \oplus t\Lh[[t]]}$ of weight $\lambda \in X^{*}(T)$ with respect to the right action. Let $\alpha \in X^{*}(T)$, as before we can find~$k \in \Z$ such that $A(\alpha)_{k}(m) \neq 0$, now applying equation $\eqref{equation equa diff a alpha}$ with $n=k$ we find that 
		\begin{align*}
			(-k-1)A(\alpha)_{k}m &= A(\alpha)_{k}(\pi_{L}(\alpha)_{(0)}+ \pi_{R}(\alpha)_{(0)})m \\
			&= A(\alpha)_{k} \pi_{L}(\alpha)_{(0)}m + A(\alpha)_{k}\pi_{R}(\alpha)_{(0)}m \\
			&= A(\alpha)_{k}\pi_{L}(\alpha)_{(0)}m + \lambda(\alpha)A(\alpha)_{k}m.
		\end{align*}
		Recall from equation \eqref{equation commutation action a gauche et a alpha} that $[\pi_{L}(\alpha)_{(0)},A(\alpha)_{k}] = \kappa(\alpha,\alpha)A(\alpha)_{k}$ so that the previous equality becomes 
		$$
		\pi_{L}(\alpha)_{(0)}A(\alpha)_{k}m = (-k-1-\lambda(\alpha)+\kappa(\alpha,\alpha))A(\alpha)_{k}m.
		$$
		Notice that $A(\alpha)_{k}m$ is still a nonzero element of $M^{t\Lh[[t]]\oplus t\Lh[[t]]}$ (recall equations~\eqref{equation commutation action a gauche et a alpha modes strictement positifs} and~\eqref{equation commutation action a droite et a alpha modes strictement positifs}) but is now an eigenvector of $\pi_{L}(\alpha)_{(0)}$. It is also clear from equation \eqref{equation commutation action a droite et a alpha} that~$A(\alpha)_{k}m$ is still a weight vector for the right action and that if $m$ is an eigenvector for a certain operator $x_{(0)}$ where $x\in \Lh$, so is $A(\alpha)_{k}m$ by equation \eqref{equation commutation action a gauche et a alpha}. Pick a $\Z$-basis~$\alpha_{1},...,\alpha_{r}$ of $X^{*}(T)$. It is a basis of $\Lh^{*}$ as $\C$-vector space. Iterating the previous construction starting from $m$ and having $\alpha$ range through $\{\alpha_{1},\dots,\alpha_{r}\}$ yields a common eigenvector for the action of $\Lh \oplus \Lh$ \textit{i.e.}, the desired weight vector. The fact that its weight is of the form $(\lambda + \kappa(\gamma,\cdot),-\lambda)$ where $\lambda \in X^{*}(T)$ and $\gamma \in X_{*}(T)$ follows from Lemma~\ref{lemma weights that can appear in fock spaces in cdo for the torus}.
	\end{proof}
	
	\begin{corollaire}
		\label{corollary simple objects of the cdo category for the Torus}
		The only simple objects of the category $\cD_{T}^{\kappa}\mathrm{-Mod}^{1\times \jetinf T}$, up to isomorphism, are the $\gamma \cdot \cD_{T}^{\kappa}$ where $\gamma$ ranges through $X_{*}(T) \subset \cSF(\cD_{T}^{\kappa})$.
	\end{corollaire}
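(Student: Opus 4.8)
The plan is to prove the two inclusions separately, pushing essentially all of the analytic content onto Proposition~\ref{proposition there are weight vectors in any module for the torus cdo} and Lemma~\ref{lemma weights that can appear in fock spaces in cdo for the torus}, so that what remains is bookkeeping with spectral flow together with the vacuum-like vector criterion of Lemma~\ref{lemma criterion for beinn vacuum-like in the case of the cdo}.

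\textbf{Each $\gamma\cdot\cD_{T}^{\kappa}$ is a simple object of the category.} Since $\cD_{T}^{\kappa}$ is simple (Proposition~\ref{proposition cdo is a simple vertex algebra}) and spectral flow is an invertible exact endofunctor that preserves simplicity (Theorem~\ref{theorem properties of spectral flow}), each $\gamma\cdot\cD_{T}^{\kappa}$ is a simple $\cD_{T}^{\kappa}$-module. For membership in $\cD_{T}^{\kappa}\mathrm{-Mod}^{\{1\}\times\jetinf T}$ I would use the identification of Proposition~\ref{proposition computation of the spectral flow group for the cdo} (with $\omega_{0}=\mathrm{id}$ for a torus): the cocharacter $\gamma\in X_{*}(T)\subset\cSF(\cD_{T}^{\kappa})$ corresponds to the pair $(\gamma,0)\in\Lh\oplus\Lh$, so Li's delta operator $\Delta(-\gamma,z)$ involves only the modes $\pi_{L}(\gamma)_{(n)}$, which commute with the image of $\pi_{R}$. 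Hence $\gamma\cdot\cD_{T}^{\kappa}$ and $\cD_{T}^{\kappa}$ carry the same $V^{\kappa^{*}}(\Lh)$-module structure, and the latter lies in the Kazhdan--Lusztig category by Theorem~\ref{theorem chiral Peter-Weyl}. The same observation shows that $\gamma\cdot-$ preserves the category $\cD_{T}^{\kappa}\mathrm{-Mod}^{\{1\}\times\jetinf T}$ for every $\gamma\in X_{*}(T)$.

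\textbf{A nonzero simple object contains the vacuum module after a twist.} Let $M$ be a nonzero simple object of $\cD_{T}^{\kappa}\mathrm{-Mod}^{\{1\}\times\jetinf T}$. Proposition~\ref{proposition there are weight vectors in any module for the torus cdo} produces $\lambda\in X^{*}(T)$ and $\gamma\in X_{*}(T)$ with $\cF^{\kappa}_{\lambda+\kappa(\gamma,\cdot)}\otimes\cF^{\kappa^{*}}_{-\lambda}\subseteq M$ as an $\Lhkappakappastar$-submodule. I would then replace $M$ by $\gamma\cdot M$, which by the previous step is again simple and still in the category; tracking this Fock space under the left twist by $\gamma$ via Proposition~\ref{proposition spectral flow of Fock spaces}, the twisted module contains $\cF^{\kappa}_{\lambda}\otimes\cF^{\kappa^{*}}_{-\lambda}$. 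Now apply Lemma~\ref{lemma weights that can appear in fock spaces in cdo for the torus} with the pair $(\lambda,-\lambda)$ (note $\lambda+(-\lambda)=0\in\kappa(X_{*}(T))$): $\gamma\cdot M$ contains a Fock space of weight $(\lambda+\alpha,-\lambda-\alpha)$ for every $\alpha\in X^{*}(T)$, hence $\cF^{\kappa}_{\beta}\otimes\cF^{\kappa^{*}}_{-\beta}$ for every $\beta\in X^{*}(T)$, and in particular the vacuum module $V^{\kappa}_{0}\otimes V^{\kappa^{*}}_{0}$ as a $V^{\kappa\oplus\kappa^{*}}(\Lh\oplus\Lh)$-submodule.

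\textbf{Conclusion.} Lemma~\ref{lemma criterion for beinn vacuum-like in the case of the cdo} then forces $\gamma\cdot M\simeq\cD_{T}^{\kappa}$, so that $M\simeq(-\gamma)\cdot\cD_{T}^{\kappa}$ by Theorem~\ref{theorem properties of spectral flow}; since $-\gamma\in X_{*}(T)$, this exhibits $M$ as one of the listed modules and finishes the argument. I do not expect a genuine obstacle here once the groundwork is in place: the only point requiring care is the identification in the first step of the element of $\cSF(\cD_{T}^{\kappa})$ attached to a cocharacter and the check that twisting by it leaves the right $V^{\kappa^{*}}(\Lh)$-action integrable; everything else follows formally from simplicity of $\cD_{T}^{\kappa}$, the chiral Peter--Weyl decomposition, and the two structural results cited above.
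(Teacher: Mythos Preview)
Your proof is correct and follows essentially the same route as the paper: apply Proposition~\ref{proposition there are weight vectors in any module for the torus cdo} to locate a Fock space inside $M$, twist by the resulting $\gamma$ via Proposition~\ref{proposition spectral flow of Fock spaces} to land on a Fock space of the form $\cF^{\kappa}_{\lambda}\otimes\cF^{\kappa^{*}}_{-\lambda}$, use Lemma~\ref{lemma weights that can appear in fock spaces in cdo for the torus} to reach the vacuum Fock space, and conclude with Lemma~\ref{lemma criterion for beinn vacuum-like in the case of the cdo}. Your first step, verifying that the $\gamma\cdot\cD_{T}^{\kappa}$ actually lie in the category, is more explicit than the paper's treatment, which simply takes this for granted.
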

	\begin{proof}
		Let $M$ be a simple object of $\cD_{T}^{\kappa}\mathrm{-Mod}^{1 \times \jetinf T}$. Thanks to Proposition~\ref{proposition there are weight vectors in any module for the torus cdo} we know that there exists $\lambda \in X^{*}(T)$ and $\gamma \in X_{*}(T)$ such that the Fock space~$\cF^{\kappa}_{\lambda + \kappa(\gamma,\cdot)}\otimes \cF^{\kappa^{*}}_{-\lambda}$ is contained in $M$. It follows from Proposition \ref{proposition spectral flow of Fock spaces} that the module $\gamma\cdot M$ contains the Fock space~$\cF^{\kappa}_{\lambda} \otimes \cF^{\kappa^{*}}_{-\lambda}$. Lemma \ref{lemma weights that can appear in fock spaces in cdo for the torus} shows that $\gamma \cdot M$ contains the vacuum Fock space $\cF^{\kappa}_{0}\otimes \cF^{\kappa^{*}}_{0}$. Since $\gamma \cdot M$ is again a simple $\cD_{T}^{\kappa}$-module, Lemma~\ref{lemma criterion for beinn vacuum-like in the case of the cdo} applies to show that $\gamma \cdot M$ is isomorphic to $\cD_{T}^{\kappa}$ as $\cD_{T}^{\kappa}$-modules so that $M$ is isomorphic to $-\gamma \cdot \cD_{T}^{\kappa}$.
	\end{proof}
	
	\begin{theoreme}
		\label{theorem FLE pour le tore}
		The category $\cD_{T}^{\kappa}\mathrm{-Mod}^{1 \times \jetinf T}$ is semisimple with simple objects given by the $\gamma \cdot \cD_{T}^{\kappa}$ where $\gamma \in X_{*}(T) \subset \cSF(\cD_{T}^{\kappa})$.
	\end{theoreme}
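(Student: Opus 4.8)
The plan is to deduce semisimplicity from the classification of simple objects already obtained in Corollary~\ref{corollary simple objects of the cdo category for the Torus}, combined with a projectivity statement for those simples. I would establish two facts and then combine them with a Zorn's lemma argument: \textbf{(a)} every nonzero object of $\cD_{T}^{\kappa}\mathrm{-Mod}^{\{1\}\times\jetinf T}$ contains some $\gamma\cdot\cD_{T}^{\kappa}$ (with $\gamma\in X_{*}(T)$) as a submodule; and \textbf{(b)} each $\gamma\cdot\cD_{T}^{\kappa}$ is a projective object of the category. Granting these: given an object $M$, choose (by Zorn, a sum of such submodules being again such) a submodule $N\subseteq M$ maximal among those isomorphic to a direct sum of copies of the $\gamma\cdot\cD_{T}^{\kappa}$. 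If $N\ne M$ then $M/N$ is a nonzero object of the category, so by (a) it contains a copy of some $\gamma_{0}\cdot\cD_{T}^{\kappa}$, whose preimage $S'\subseteq M$ fits in an exact sequence $0\to N\to S'\to\gamma_{0}\cdot\cD_{T}^{\kappa}\to 0$; by (b) this sequence splits, producing a copy of $\gamma_{0}\cdot\cD_{T}^{\kappa}$ inside $M$ meeting $N$ trivially, contradicting maximality of $N$. Hence $N=M$, which is the claim, and Corollary~\ref{corollary simple objects of the cdo category for the Torus} identifies the simple summands.

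Fact (a) is precisely the argument already used in the proof of Corollary~\ref{corollary simple objects of the cdo category for the Torus}. By Proposition~\ref{proposition there are weight vectors in any module for the torus cdo}, a nonzero $M$ in the category contains a Fock space $\cF^{\kappa}_{\lambda+\kappa(\gamma,\cdot)}\otimes\cF^{\kappa^{*}}_{-\lambda}$ as a $\Lhkappakappastar$-submodule; applying the spectral flow by $\gamma\in X_{*}(T)\subseteq\cSF(\cD_{T}^{\kappa})$ (Proposition~\ref{proposition spectral flow of Fock spaces}) and then Lemma~\ref{lemma weights that can appear in fock spaces in cdo for the torus} shows that $\gamma\cdot M$ contains the vacuum Fock space $\cF^{\kappa}_{0}\otimes\cF^{\kappa^{*}}_{0}$, so by Lemma~\ref{lemma criterion for beinn vacuum-like in the case of the cdo} it contains a copy of $\cD_{T}^{\kappa}$, and undoing the flow yields a copy of $(-\gamma)\cdot\cD_{T}^{\kappa}$ in $M$. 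The same reasoning shows that the spectral flows by elements of $X_{*}(T)$ twist only the left action, hence preserve the defining condition (which concerns only the right action) and act as auto-equivalences of the category by Theorem~\ref{theorem properties of spectral flow}. Therefore in (b) it suffices to prove that $\cD_{T}^{\kappa}$ itself is projective.

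For (b) I would argue exactly as in the projectivity argument at the end of the proof of Theorem~\ref{theorem degeneration of the geometric Satake correspondence}. Let $0\to A\to B\xrightarrow{\pi}\cD_{T}^{\kappa}\to 0$ be exact in the category; a section is the same as a vacuum-like vector of $B$ over $\vac$ by Lemma~\ref{lemma vacuum-like vectors implies there exists a morphism from the vertex algebra}, and by Lemma~\ref{lemma criterion for beinn vacuum-like in the case of the cdo} it suffices to produce $b\in B$ with $\pi(b)=\vac$ that is vacuum-like for $V^{\kappa\oplus\kappa^{*}}(\Lh\oplus\Lh)$. Restricting to the right action, the sequence lies in the semisimple category $V^{\kappa^{*}}(\Lh)\mathrm{-Mod}^{\jetinf T}$ (Theorem~\ref{theorem Kazhdan--Lusztig categories for irrational levels}), and $\vac$ generates there a copy of the projective module $\cF^{\kappa^{*}}_{0}=V^{\kappa^{*}}(\Lh)$; lifting this along $\pi$ produces $b\in B$ with $\pi(b)=\vac$ that is vacuum-like for the right Heisenberg. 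It then remains to correct $b$ by an element of $A$ so that it is also annihilated by the nonnegative modes of the left Heisenberg. Here one exploits that $\pi_{L}(h)_{(n)}b\in A$ for $n\ge 0$, that these vectors are again right-vacuum-like, hence lie — by the structure of $A$ as a direct sum of spectrally flown Fock spaces (Theorem~\ref{theorem chiral Peter-Weyl}, Proposition~\ref{proposition spectral flow of Fock spaces}) — in subspaces on which $L_{0}$ acts semisimply with eigenvalues controlled by the conformal-weight bookkeeping behind Lemma~\ref{lemma conformal weights are in the same Z coset when you are monogeneous}; together with the simplicity of $\cD_{T}^{\kappa}$ (Proposition~\ref{proposition cdo is a simple vertex algebra}) this forces the obstruction to vanish. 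Once $b$ is vacuum-like for $V^{\kappa\oplus\kappa^{*}}(\Lh\oplus\Lh)$, Lemma~\ref{lemma criterion for beinn vacuum-like in the case of the cdo} upgrades it to a vacuum-like vector for $\cD_{T}^{\kappa}$, giving the section.

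The main obstacle is this last correction step in (b): since the category constrains only the right $V^{\kappa^{*}}(\Lh)$-action, one cannot simply invoke semisimplicity on both factors, and the left-hand correction must be built by hand. Making the descent terminate is exactly an instance of the infinite-descent mechanism of Proposition~\ref{proposition there are weight vectors in any module for the torus cdo}, and it is here that the whole package — simplicity of $\cD_{T}^{\kappa}$, the chiral Peter--Weyl decomposition, the conformal weights of spectrally flown Fock spaces, and Lemma~\ref{lemma criterion for beinn vacuum-like in the case of the cdo} — is used in combination. Everything else is bookkeeping.
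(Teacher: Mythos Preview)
Your strategy of combining (a) with projectivity of the simples and a Zorn argument is sound in principle, but the proof you sketch for (b) has a genuine gap at the correction step, and the gap is not merely cosmetic.

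First, you invoke ``the structure of $A$ as a direct sum of spectrally flown Fock spaces (Theorem~\ref{theorem chiral Peter-Weyl}, Proposition~\ref{proposition spectral flow of Fock spaces})''. But Theorem~\ref{theorem chiral Peter-Weyl} describes $\cD_T^\kappa$, not an arbitrary kernel $A$; nothing in your hypotheses forces $A$ to decompose as a sum of Fock spaces for the \emph{left} Heisenberg action, since the category constrains only the right side. This is precisely the difficulty you are trying to circumvent. Second, the infinite-descent mechanism of Proposition~\ref{proposition there are weight vectors in any module for the torus cdo} does not correct a given vector $b$: it applies operators $A(\alpha)_{k}$, which shift the right weight by $-\alpha$, so the image $\pi(A(\alpha)_{k}b)$ is no longer $\vac$. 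The descent produces \emph{some} weight vector, not a preimage of $\vac$, so it does not manufacture the section you need.

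The paper sidesteps projectivity entirely. It lets $M'$ be the socle of $M$ and argues by contradiction that $M/M'=0$. The key point is that once $M'$ is known (by Corollary~\ref{corollary simple objects of the cdo category for the Torus}) to be a direct sum of Fock spaces, the functor $(-)^{t\Lh[[t]]\oplus t\Lh[[t]]}$ is exact on the sequence $0\to M'\to M\to M/M'\to 0$. One then takes a weight vector $x$ in $(M/M')^{t\Lh[[t]]\oplus t\Lh[[t]]}$, lifts it to $\tilde x\in M^{t\Lh[[t]]\oplus t\Lh[[t]]}$ that is a right weight vector, and applies the $A(\alpha)_{k}$ operators to $\tilde x$ in $M$. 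The differential equation~\eqref{equation equa diff a alpha} forces $A(\alpha)_{k}\tilde x$ to be an eigenvector for $\pi_L(\alpha)_{(0)}$, and crucially its image in $M/M'$ is $A(\alpha)_{k}x\neq 0$, so $A(\alpha)_{k}\tilde x\notin M'$. Iterating over a basis of $X^*(T)$ yields a full $\Lh\oplus\Lh$-weight vector in $M\setminus M'$, hence a simple submodule not contained in $M'$, contradicting the definition of the socle. The descent is applied in $M$ while tracking the image in $M/M'$, not as a correction of a fixed lift; that is the maneuver your argument is missing.
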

	\begin{proof}
		The statement concerning the simple objects is Corollary \ref{corollary simple objects of the cdo category for the Torus}. Let $M$ be an object of~$\cD_{T}^{\kappa}\mathrm{-Mod}^{1 \times \jetinf T}$ and denote by $M'$ the sum of all simple modules contained in~$M$. By definition we have an exact sequence of $\cD_{T}^{\kappa}$-modules so, in particular, of $\Lhkappakappastar$-modules
		$$
		0 \lra M' \lra M \lra M/M' \overset{\pi}{\lra} 0.
		$$
		We want to show that $M/M'$ is zero. Let us assume for a contradiction it is not. It follows from Corollary \ref{corollary simple objects of the cdo category for the Torus} that $M'$ is a direct sum of Fock spaces, so using for example (the proof of) Proposition 3.6 of \cite{LepowskyWilson1982} we have an exact sequence of $\Lh \oplus \Lh$-modules 
		$$
		0 \lra (M')^{t\Lh[[t]] \oplus t\Lh[[t]]} \lra M^{t\Lh[[t]] \oplus t\Lh[[t]]} \lra (M/M')^{t\Lh[[t]] \oplus t\Lh[[t]]} \lra 0.
		$$ 
		Let $x \in (M/M')^{t\Lh[[t]]\oplus t\Lh[[t]]}$ be a weight vector with respect to $\Lh \oplus \Lh$ (it exists by Proposition \ref{proposition there are weight vectors in any module for the torus cdo}). By assumption, the right factor $0 \oplus \Lh \subset \Lh \oplus \Lh$ acts semisimply on every object so we can find a lift $\widetilde{x} \in M^{t\Lh[[t]]\oplus t\Lh[[t]]}$ which is of weight $\lambda \in \Lh^{*}$ with respect to the right factor $0 \oplus \Lh \subset \Lh \oplus \Lh$. 
		
		Let $\alpha \in X^{*}(T)$, as in the proof of Proposition \ref{proposition there are weight vectors in any module for the torus cdo}. There exists  an integer $k$ such that~$A(\alpha)_{k}x \neq 0$. Applying equation \eqref{equation equa diff a alpha} with $n = k$ we have (recall that~$\widetilde{x} \in M^{t\Lh[[t]] \oplus t\Lh[[t]]})$
		\begin{align*}
			(-k-1)A(\alpha)_{k}\widetilde{x} = A(\alpha)_{k}(\pi_{L}(\alpha)_{(0)}+ \pi_{R}(\alpha)_{(0)})\widetilde{x}  
			= A(\alpha)_{k}\pi_{L}(\alpha)_{(0)}\widetilde{x} + \lambda(\alpha)A(\alpha)_{k}\widetilde{x}.
		\end{align*}
		So by equation \eqref{equation commutation action a gauche et a alpha} we have 
		$$
		\pi_{L}(\alpha)_{(0)}A(\alpha)_{k}\widetilde{x} = (-k-1-\lambda(\alpha)+\kappa(\alpha,\alpha))A(\alpha)_{k}\widetilde{x}.
		$$
		Now, $A(\alpha)_{k}\widetilde{x}$ does not belong to $M'$ because $\pi(A(\alpha)_{k}\widetilde{x}) = A(\alpha)_{k}x \neq 0$ by our choice of $k$. It is still in $M^{t\Lh[[t]] \oplus t\Lh[[t]]}$ by equations \eqref{equation commutation action a gauche et a alpha modes strictement positifs} and \eqref{equation commutation action a droite et a alpha modes strictement positifs}. Thanks to equations \eqref{equation commutation action a gauche et a alpha} and \eqref{equation commutation action a droite et a alpha}, $A(\alpha)_{k}\widetilde{x}$ is again a common eigenvector for the right action of $\Lh$ and of the left action of $\alpha \in \Lh$. Iterating the previous construction starting from $\widetilde{x}$ as $\alpha$ runs through a $\Z$-basis of $X^{*}(T)$ yields a vector $m \in M^{t\Lh[[t]] \oplus t\Lh[[t]]}$ such that $m \notin M'$ and that is a weight vector for the action of $\Lh \oplus \Lh$. As such the $\Lhkappakappastar$-module generated by $m$ in $M$ is a Fock space of the form $\cF^{\kappa}_{\lambda + \kappa(\gamma,\cdot)}\otimes \cF^{\kappa^{*}}_{-\lambda}$ where $\lambda \in X^{*}(T)$ and $\gamma \in X_{*}(T)$ by Lemma~\ref{lemma weights that can appear in fock spaces in cdo for the torus}. As in the proof of Corollary \ref{corollary simple objects of the cdo category for the Torus}, we see that there exists a morphism of $\cD_{T}^{\kappa}$-module from $-\gamma \cdot \cD_{T}^{\kappa}$ to $M$ such that $m \neq 0$ belongs to the image. But because $-\gamma \cdot \cD_{T}^{\kappa}$ is simple, it means in particular that the $\cD_{T}^{\kappa}$-submodule generated by $m$ inside of $M$ is simple and is not contained in $M'$, because $m$ is not. This contradicts the definition of $M'$ and finishes the proof.
	\end{proof}
	
	\begin{remarque}
		\label{remarque right hypothesis implies left decomposition tore}
		A striking feature of this result is that our hypothesis enforces that the module we start with is a direct sum of Fock spaces with respect to the right action but requires nothing with respect to the left action. But \textit{a posteriori} the module is indeed a direct sum of Fock spaces with respect to the left action. This is even more remarkable in the case of a simple Lie algebra (see Theorem~\ref{theorem FLE for adjoint type ADE}).
	\end{remarque}
	
	Recall that, inside $\Lh$, lies the lattice $X_{*}(T)$ of cocharacters, and inside $\Lh^{*}$, lies the lattice $X^{*}(T)$ of characters. Let us identify $\Lh$ with $\Lh^{*}$ using $\kappa$. Denote by $\kappa : \Lh \lra \Lh^{*}$ the isomorphism and by $\kappa^{-1}$ its inverse. It is clear that $\kappa^{-1}(X^{*}(T)) \subset \Lh$ is a full rank lattice of $\Lh$. We define 
	$$
	Y = X_{*}(T)\cap \kappa^{-1}(X^{*}(T)).
	$$
	It is a free abelian group of rank possibly lower than $r$. 
	\begin{corollaire}
		\label{corollaire of theorem fle tore}
		The simple objects of the category $\cD_{T}^{\kappa}\mathrm{-Mod}^{\jetinf T \times \jetinf T}$ are in bijection with the subgroup $Y$ of $X_{*}(T)$ where for each $\gamma \in Y \subset X_{*}(T) \subset \cSF(\cD_{T}^{\kappa})$, the corresponding simple object is given by $\gamma \cdot \cD_{T}^{\kappa}$. Moreover the category $\cD_{T}^{\kappa}\mathrm{-Mod}^{\jetinf T \times \jetinf T}$ is semisimple, that is, every object of $\cD_{T}^{\kappa}\mathrm{-Mod}^{\jetinf T \times \jetinf T}$ is a direct sum of simple objects. 
	\end{corollaire}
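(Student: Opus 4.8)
The plan is to cut the statement of Theorem~\ref{theorem FLE pour le tore} down to the relevant subcategory. By definition $\cD_{T}^{\kappa}\mathrm{-Mod}^{\jetinf T\times\jetinf T}$ is the full subcategory of $\cD_{T}^{\kappa}\mathrm{-Mod}^{\{1\}\times\jetinf T}$ consisting of those modules which, when viewed via $\pi_{L}$ as modules over the \emph{left} copy $V^{\kappa}(\Lh)$, moreover lie in the Kazhdan--Lusztig category $V^{\kappa}(\Lh)\mathrm{-Mod}^{\jetinf T}$. So I would first record that this extra requirement is a \emph{property}, not a structure: it amounts to asking that the left $t\Lh[[t]]$ act locally nilpotently and that the left $\Lh$-weights all lie in $X^{*}(T)$ (Corollary~\ref{corollary sufficient condition for integrability}), and both conditions are inherited by submodules and preserved by arbitrary direct sums. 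Hence $\cD_{T}^{\kappa}\mathrm{-Mod}^{\jetinf T\times\jetinf T}$ is a full subcategory of $\cD_{T}^{\kappa}\mathrm{-Mod}^{\{1\}\times\jetinf T}$ closed under subobjects and arbitrary direct sums.

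A full subcategory of a semisimple category closed under subobjects and direct sums is again semisimple, its simple objects being precisely those simples of the ambient category that lie in it: given an object $M$ of the subcategory, Theorem~\ref{theorem FLE pour le tore} writes $M\simeq\bigoplus_{i}\gamma_{i}\cdot\cD_{T}^{\kappa}$ for suitable $\gamma_{i}\in X_{*}(T)$, and each summand, being a subobject of $M$, again lies in the subcategory and is simple there. So I would reduce to the purely combinatorial question: for which $\gamma\in X_{*}(T)\subset\cSF(\cD_{T}^{\kappa})$ is $\gamma\cdot\cD_{T}^{\kappa}$ left $\jetinf T$-integrable? Their pairwise non-equivalence is already contained in Theorem~\ref{theorem FLE pour le tore}.

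To answer this I would start from the chiral Peter--Weyl decomposition $\cD_{T}^{\kappa}=\bigoplus_{\alpha\in X^{*}(T)}\cF^{\kappa}_{\alpha}\otimes\cF^{-\kappa}_{-\alpha}$ and apply Proposition~\ref{proposition spectral flow of Fock spaces}. With the identification $\cSF(\cD_{T}^{\kappa})\simeq X_{*}(T)\times\check{P}$ of Proposition~\ref{proposition computation of the spectral flow group for the cdo} (here $\check{P}=\Lh$, as $T$ has no roots), the parameter $\gamma\in X_{*}(T)$ corresponds to a spectral flow twisting only the left Heisenberg and, by \eqref{equation spectral flow on heisenberg modes}, only its zero mode; thus $\gamma\cdot\cD_{T}^{\kappa}\simeq\bigoplus_{\alpha\in X^{*}(T)}\cF^{\kappa}_{\alpha-\kappa(\gamma,\cdot)}\otimes\cF^{-\kappa}_{-\alpha}$ as a module over the two Heisenberg algebras. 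The right factors are untouched, so $\gamma\cdot\cD_{T}^{\kappa}$ is automatically in $\cD_{T}^{\kappa}\mathrm{-Mod}^{\{1\}\times\jetinf T}$, and the left positive modes of $\Lh$, being unchanged by the twist, still act locally nilpotently. Hence the only constraint for left $\jetinf T$-integrability is that every left $\Lh$-weight $\alpha-\kappa(\gamma,\cdot)$, $\alpha\in X^{*}(T)$, lie in $X^{*}(T)$; equivalently $\kappa(\gamma,\cdot)\in X^{*}(T)$, i.e.\ $\gamma\in\kappa^{-1}(X^{*}(T))$. Together with $\gamma\in X_{*}(T)$ this is precisely $\gamma\in Y=X_{*}(T)\cap\kappa^{-1}(X^{*}(T))$, which identifies the simple objects; combined with the previous paragraph it also yields the semisimplicity of $\cD_{T}^{\kappa}\mathrm{-Mod}^{\jetinf T\times\jetinf T}$.

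I do not expect a real obstacle here: the argument is bookkeeping layered on top of Theorem~\ref{theorem FLE pour le tore}. The one point that deserves care is the final bijection with $Y$ --- getting the direction of the left Heisenberg twist correct and checking that left $\jetinf T$-integrability genuinely reduces to the weight condition, the local-nilpotency part being automatic --- which is exactly where \eqref{equation spectral flow on heisenberg modes} and Corollary~\ref{corollary sufficient condition for integrability} come in.
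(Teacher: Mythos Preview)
Your proposal is correct and follows exactly the route the paper takes: the paper's proof is the one-liner ``This is a direct consequence of Theorem~\ref{theorem FLE pour le tore} and Corollary~\ref{corollary sufficient condition for integrability}'', and you have simply unpacked what that sentence means, carrying out the Peter--Weyl/spectral-flow bookkeeping to identify which $\gamma\cdot\cD_{T}^{\kappa}$ are left $\jetinf T$-integrable.
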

	\begin{proof}	
		This is a direct consequence of Theorem \ref{theorem FLE pour le tore} and Corollary \ref{corollary sufficient condition for integrability}.
	\end{proof}
	
	\begin{remarque}
		\label{remark degeneration geometric Satake torus}
		For almost every nondegenerate $\kappa$, we have $Y=0$. In this case, the category $\cD_{T}^{\kappa}\mathrm{-Mod}^{\jetinf T \times \jetinf T}$ is equivalent to the category of $\C$-vector spaces (compare with Theorem \ref{theorem degeneration of the geometric Satake correspondence}). For general values of $\kappa$, we believe this result should be compared with the twisted geometric Satake equivalence of \cite{FinkelbergLysenko2007} and with the metaplectic geometric Langlands theory (\cite{Reich_Paper},\cite{GaitsgoryLysenkoMetaplectic}).

	\end{remarque}

	\section{The case of a simple adjoint group of classical simply laced type}
	\label{section the case of a simple simply laced group of adjoint type}	
	We now assume that $G$ is a simple adjoint group of type A or D. That is, $G$ equals $\mathrm{PSL}_{n}$ or the quotient of $\mathrm{SO}_{2n}$ by its center. In particular, we have the equalities of abelian groups $X^{*}(T) = Q$ and $X_{*}(T)=\check{P}$.

	\subsection{Shifted chiral differential operators on a simple group}
	\label{section shifted chiral differential operators on a simple group}
	The chiral Peter--Weyl decomposition is at the technical heart of our arguments so far. We can even go further, in a sense, all that mattered was the existence of a simple vertex algebra structure on $$
	\bigoplus_{\lambda \in X^{*}(T)_{+}} V_{\lambda}^{k} \otimes V_{-\omega_{0}\lambda}^{k^{*}}
	$$ 
	which is a conformal extension of $V^{k}(\Lg) \otimes V^{k^{*}}(\Lg)$. Even the relation between $k$ and $k^{*}$ is irrelevant to some extent. This motivates the following question, given $k,l \in \C\setminus \Q$, does there exist a simple vertex algebra structure on 
	$$
	\bigoplus_{\lambda \in X^{*}(T)_{+}} V_{\lambda}^{k} \otimes V_{-\omega_{0}\lambda}^{l}
	$$
	which is a conformal extension of $V^{k}(\Lg) \otimes V^{l}(\Lg)$? 
	
	It is easily seen that $k$ and $l$ may not be arbitrary if we require our vertex algebras to be $\Z$-graded. We can rewrite the relation between $k$ and $k^{*}$ as 
	\begin{equation}
		\label{equation k and kstar}
		\frac{1}{k+\check{h}} + \frac{1}{k^{*}+\check{h}} = 0.
	\end{equation}
	Given $n \in \Z$ and $k \in \C\setminus \Q$ it is natural to allow $l$ to be defined by the relation 
	\begin{equation}
		\label{equation k and l}
		\frac{1}{k+\check{h}} + \frac{1}{l+\check{h}} = n.
	\end{equation}
	We call $l$ the $n$-shifted level of $k$ and denote it by $k[n]$. It is easy to see that if $k$ is irrational then so is $k[n]$.

	The following was conjectured by Creutzig and Gaiotto (see Conjecture 1.1 of~\cite{CreutzigGaiotto2020}): 	
	\begin{theoreme}[Main Theorem B of \cite{Moriwaki2022}, see also Theorem 2.2 of \cite{CreutzigLinshawNakatsukaSato2025}]
		\label{conjecture existence and unicity of shifted cdo's}
		Let $k \in \C\setminus\Q$ and $n\in \Z$ then there exists a simple vertex algebra structure on 
		\begin{equation}
			\label{equation shifted peter weyl}
			\bigoplus_{\lambda \in X^{*}(T)_{+}} V_{\lambda}^{k} \otimes V_{-\omega_{0}\lambda}^{k[n]}
		\end{equation}
		that is a conformal extension of $V^{k}(\Lg) \otimes V^{k[n]}(\Lg)$. 
	\end{theoreme}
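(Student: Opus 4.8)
The plan is to build the vertex algebra \eqref{equation shifted peter weyl} by induction on $n$, using the quantized Moore--Tachikawa convolution of \cite{Arakawa2018} and taking $\cdo$ together with a pair of ``elementary kernels'' as base cases. For $n=0$ the sought vertex algebra is $\cdo$ itself, which has exactly the required form by Theorem \ref{theorem chiral Peter-Weyl} and is simple by Proposition \ref{proposition cdo is a simple vertex algebra}. For $n=\pm1$ one first produces a simple vertex algebra $\mathcal{A}_{\pm1}$ that is a conformal extension of $V^{k}(\Lg)\otimes V^{k[\pm1]}(\Lg)$ by a free-field/coset construction: in type $A$ (resp. type $D$) this is supplied by Goddard--Kent--Olive-type realizations of the affine $\mathcal{W}$-algebras together with the triality theorems of \cite{creutziglinshawtrialities} (resp. \cite{creutziglinshawtrialitiesortho}), and a character/branching computation identifies the $V^{k}(\Lg)\otimes V^{k[\pm1]}(\Lg)$-module decomposition of $\mathcal{A}_{\pm1}$ with \eqref{equation shifted peter weyl}. (For $\mathfrak{sl}_{2}$ the kernel $\mathcal{A}_{1}$ is the one obtained from the free-field realization of \cite{CreutzigGaiotto2020}, and one bootstraps to the general simply-laced case through the coset hierarchy.)

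For the inductive step, suppose $\mathcal{A}_{n}$ has been constructed as a simple conformal extension of $V^{k}(\Lg)\otimes V^{k[n]}(\Lg)$ with decomposition \eqref{equation shifted peter weyl}, and let $\mathcal{A}'$ be a copy of one of the elementary kernels whose left $\Lg$-factor carries the level $k[n]^{*}$, so that the gauging condition is met. One forms
$$
\mathcal{A}_{n}\star\mathcal{A}' \;=\; H^{0}_{\mathrm{BRST}}\big(\mathcal{A}_{n}\otimes\mathcal{A}'\otimes F_{\Lg}\big),
$$
the BRST reduction gluing the right $\Lg$ of $\mathcal{A}_{n}$ to the left $\Lg$ of $\mathcal{A}'$ along the $\Lg$-valued fermionic ghost vertex algebra $F_{\Lg}$. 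Three points must then be checked. (i) \emph{Levels and conformal structure}: the result carries $V^{k}(\Lg)\otimes V^{(k[n]^{*})[\pm1]}(\Lg)$, and a short computation from the defining relation \eqref{equation k and l} together with the identity $k[n]^{*}=(k^{*})[-n]$ gives $(k[n]^{*})[\pm1]=k[n\pm1]$ --- equivalently, the shift parameters add, $\mathcal{A}_{n}\star\mathcal{A}_{\pm1}=\mathcal{A}_{n\pm1}$; conformality is inherited from the factors as in Theorem \ref{theorem conformal structure of cdo reductive}. (ii) \emph{Peter--Weyl decomposition}: feeding the decompositions \eqref{equation shifted peter weyl} of $\mathcal{A}_{n}$ and $\mathcal{A}'$ into the complex reduces the computation to the BRST cohomology of the tensor product of a (spectrally twisted) Weyl module, a Weyl module, and the ghosts over a single $\Lg$; one shows it is concentrated in degree $0$ and returns a single Weyl module, using a vanishing statement in the spirit of Theorem \ref{theorem vanishing of cohomology on kazhdan lusztig category} and the genericity $k\notin\Q$ to keep every intermediate Weyl module simple. (iii) \emph{Simplicity}: either invoke the Arakawa--Moreau associated-scheme criterion (the associated scheme of the convolution being a symplectic reduction, so the vertex algebra is simple, exactly as for $\cdo$ in Proposition \ref{proposition cdo is a simple vertex algebra}), or argue directly from the Peter--Weyl form by the method of §\ref{section building modules on ewalg}.

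The main obstacle is point (ii): controlling the convolution's BRST cohomology finely enough to see that each isotypic component $V_{\lambda}^{k}\otimes V_{-\omega_{0}\lambda}^{k[n\pm1]}$ occurs with multiplicity exactly one, with no surviving higher cohomology and no collapse of the sum over $\lambda$. This is the homological heart of the Creutzig--Gaiotto conjecture, and it is here that one must appeal to the full strength of \cite{Moriwaki2022} in general and of \cite{CreutzigLinshawNakatsukaSato2025} in types $A$ and $D$, which is all that the present paper requires. Finally, although it is not part of the stated theorem, uniqueness of this structure follows by repeating the argument of Theorem \ref{theorem rigidity of chiral Peter-Weyl}: the weight-zero subspace of any such vertex algebra is forced to be $\cO_{G}$ as a $G\times G$-commutative algebra by the spherical-variety input (Losev's theorem), which is insensitive to the levels, and the remaining data --- the two affine subalgebras at levels $k$ and $k[n]$, the commutation $[\pi_{L}(x)_{\lambda}\pi_{R}(y)]=0$, and the Leibniz rule (determined up to the normalization of the affine subalgebras by $G\times G$-equivariance) --- is then pinned down.
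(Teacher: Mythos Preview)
The paper does not give its own proof of this theorem: it is quoted from \cite{Moriwaki2022} and \cite{CreutzigLinshawNakatsukaSato2025} and then used as input. So there is no argument in the paper to compare your sketch against. Your inductive convolution strategy is in the spirit of \S\ref{section the strategy}, and you are honest that step (ii) --- the multiplicity-one Peter--Weyl decomposition of the BRST convolution --- is precisely where the external references are doing the work; the paper itself never attempts this step independently. (Note also that the paper uses convolution \emph{after} existence is granted, to derive Proposition~\ref{proposition convolution of shifted cdo} and Proposition~\ref{proposition unicity of shifted cdos}, rather than to build the kernels in the first place.)

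Your uniqueness paragraph, however, contains a genuine error. You claim that Theorem~\ref{theorem rigidity of chiral Peter-Weyl} can be ``repeated'' because the weight-zero subspace is forced to be $\cO_{G}$. This is false for $n\neq 0$: the bottom conformal weight of $V_{\lambda}^{k}\otimes V_{-\omega_{0}\lambda}^{k[n]}$ is
\[
\frac{C_{\Lg}(\lambda)}{2(k+\check{h})}+\frac{C_{\Lg}(\lambda)}{2(k[n]+\check{h})}
\;=\;\frac{n\,C_{\Lg}(\lambda)}{2},
\]
using the defining relation \eqref{equation k and l}. For $n>0$ the degree-zero piece is just $\C\vac$ (only $\lambda=0$ contributes), so there is no $G\times G$-commutative algebra to feed into the spherical-variety argument; for $n<0$ the algebra is not bounded below and the argument does not even get started. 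The paper's actual uniqueness proof (Proposition~\ref{proposition unicity of shifted cdos}) avoids this entirely: given two candidate families $\mathcal{A}^{k}[n]$ and $\mathcal{B}^{k}[n]$, it convolves $\mathcal{A}^{k}[n]$ with $\mathcal{A}^{k[n][0]}[-n]$ to land back at $n=0$, invokes Theorem~\ref{theorem rigidity of chiral Peter-Weyl} \emph{only there} to identify the result with $\cD_{G}^{k}$, and then uses that $\cD_{G}^{k}$ is the unit for convolution. So uniqueness is reduced to the unshifted case rather than re-proved at each shift.
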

	\begin{remarque}
		If the previous theorem can be generalized to the exceptional type E, our techniques apply as well to prove Conjecture \ref{conjecture FLE} in this case. 
	\end{remarque}
	We will prove there is a unique such vertex algebra structure (see Theorem~\ref{proposition unicity of shifted cdos}). So it makes sense to denote this vertex algebra $\cD_{G}^{k}[n]$ and call it the $n$-shifted vertex algebra of chiral differential operators on the group $G$ at level~$k$. 
	
	We have morphisms of vertex algebras 
	$$
	\xymatrix{
		& V^{k}(\Lg) \otimes V^{k[n]}(\Lg) \ar[d] &\\
		V^{k}(\Lg) \ar[r] \ar[ru]& \cD_{G}^{k}[n] & V^{k[n]}(\Lg) \ar[l] \ar[lu]}
	$$
	so we can perform the Hamiltonian reduction with respect to $V^{k}(\Lg)$. We obtain a vertex algebra
	$$
	\mathcal{W}_{G}^{k}[n] = H^{*}_{DS}(\cD_{G}^{k}[n]),
	$$
	that we call the $n$-shifted equivariant affine $\mathcal{W}$-algebra on the group $G$ at level $k$. 
	It comes equipped with morphisms of vertex algebras 
	
	$$
	\xymatrix{
		& W^{k}(\Lg) \otimes V^{k[n]}(\Lg) \ar[d] &\\
		W^{k}(\Lg) \ar[r] \ar[ru]& \mathcal{W}_{G}^{k}[n] & V^{k[n]}(\Lg).\ar[l]\ar[lu]}
	$$
	
	In particular any $\mathcal{W}_{G}^{k}[n]$-module is naturally a $W^{k}(\Lg) \otimes V^{k[n]}(\Lg)$-module by restriction \textit{i.e.}, it carries two commuting actions of $W^{k}(\Lg)$ and $V^{k[n]}(\Lg)$. We define, as in the non-shifted case (see Definition \ref{definition module categories for the cdo}), the categories 
	$$
	\cD_{G}^{k}[n]\mathrm{-Mod}^{\jetinf G \times \jetinf G},\hspace{0.5mm} \cD_{G}^{k}[n]\mathrm{-Mod}^{1 \times \jetinf G}, \hspace{0.5mm} \mathcal{W}_{G}^{k}[n]\mathrm{-Mod}^{\jetinf G}.
	$$
	\begin{remarque}
		\label{remark open problem deformable family}
		It is an open problem whether the vertex algebras $\cD_{G}^{k}[n]$ and $\mathcal{W}_{G}^{k}[n]$ come from a deformable family of vertex algebras where the structure constants are continuous functions of $k$. This question is settled in the case of $\mathrm{PSL}_{2}$ for $n=1$ in~\cite{CreutzigGaiotto2020} and for $n=2$ in \cite{creutziggaiottolinshaw}.
	\end{remarque}

	\subsection{Convolution operation}
	\label{section convolution operation}		
	Let $V$ and $W$ be two vertex algebras such that there exist two morphisms of vertex algebras 
	$$
	V \longleftarrow V^{\kappa^{*}}(\Lg),\hspace{0.5mm}V^{\kappa}(\Lg) \lra W.
	$$
	By definition, we have the equality 
	$$
	\kappa + \kappa^{*} = -\kappa_{\Lg}
	$$ 
	so that, with respect to the diagonal action,
	$
	V \otimes W 
	$
	is a vertex algebra that comes equipped with a morphism from $V^{-\kappa_{\Lg}}(\Lg)$. This is exactly the level for which the relative semi-infinite cohomology 
	$$
	V \circ W = H^{\frac{\infty}{2}+\bullet}(\Lg^{-\kappa_{\Lg}},\Lg,V \otimes W)
	$$
	is defined, and by construction, $V \circ W$ is a vertex algebra. If we are given $M \in V\mathrm{-Mod}$ and $N \in W\mathrm{-Mod}$ then by viewing $M \otimes N$ as a $V^{-\kappa_{\Lg}}(\Lg)$-module, we can consider the relative semi-infinite cohomology 
	$$
	M \circ N = H^{\frac{\infty}{2}+\bullet}(\Lg^{-\kappa_{\Lg}},\Lg,M \otimes N),
	$$
	and it is naturally a $V \circ W$-module. This convolution operation, known as the quantized Moore--Tachikawa operation, is especially well-behaved on the Kazhdan--Lusztig category.
	
	\begin{theoreme}[Theorem 3.3 of \cite{CreutzigLinshawNakatsukaSato2025} and  \cite{FrenkelGarlandZuckerman1986}]
		Assume that $V$ (resp. $W$) belongs to $V^{\kappa^{*}}(\Lg)\mathrm{-Mod}^{\jetinf G}$ (resp. $V^{\kappa}(\Lg)\mathrm{-Mod}^{\jetinf G}$) then 
		\begin{equation}
			\label{equation vanishing of convolution on category KL}
			H^{\frac{\infty}{2}+k}(\Lg^{-\kappa_{\Lg}},\Lg,V \otimes W) = 0 \textrm{ if $k \neq 0$.}
		\end{equation}
		More precisely, we have the equality of vertex algebras 
		$$
		V^{\kappa^{*}}(\Lg) \circ V^{\kappa}(\Lg) = \C 
		$$
		and if $\lambda,\mu \in P_{+}$, then we have the equality as $\C$-modules
		\begin{equation}
			\label{equation convolution of weyl modules}
			V^{\kappa^{*}}_{\lambda} \circ V^{\kappa}_{\mu} = \delta_{\lambda, -\omega_{0}\mu} \C.
		\end{equation} 
	\end{theoreme}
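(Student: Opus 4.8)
The plan is to unwind the relative semi-infinite (BRST) complex and reduce everything to finite-dimensional Lie algebra cohomology, performing the computation first on Weyl modules and then propagating it by d\'evissage. Recall that, by definition, $C^{\frac{\infty}{2}+\bullet}(\Lg^{-\kappa_{\Lg}},\Lg,M\otimes N)$ is the complex $M\otimes N\otimes\Lambda^{\frac{\infty}{2}+\bullet}(\Lg((t))/\Lg)$ of charged ghosts for $\Lg((t))$ relative to the finite-dimensional subalgebra $\Lg\subset\Lg((t))$, endowed with the standard differential $d=Q^{st}_{(0)}$ (there is no Whittaker term here). The point of the hypothesis $\kappa+\kappa^{*}=-\kappa_{\Lg}$ is precisely that the diagonal action of $\Lg((t))$ on $M\otimes N$ sits at the level $-\kappa_{\Lg}$ for which the anomaly cancels against the ghost system, so that $d^{2}=0$; when $M=V$ and $N=W$ are vertex algebras, $C(V\otimes W)$ is a differential graded vertex algebra and its cohomology $V\circ W$ inherits a vertex algebra structure in the usual way. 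Granting the two explicit claims for Weyl modules, the vertex-algebra identity $V^{\kappa^{*}}(\Lg)\circ V^{\kappa}(\Lg)=\C$ is just the case $\lambda=\mu=0$ of the formula $V_{\lambda}^{\kappa^{*}}\circ V_{\mu}^{\kappa}=\delta_{\lambda,-\omega_{0}\mu}\,\C$.

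So the heart of the matter is to show, for Weyl modules $M=V_{\lambda}^{\kappa^{*}}$ and $N=V_{\mu}^{\kappa}$, that $H^{\frac{\infty}{2}+k}=0$ for $k\neq 0$ and to identify the remaining cohomology. I would filter the complex by the number of ``loop'' ghosts (those attached to $t^{n}\Lg$ with $n\neq0$). Because $V_{\mu}^{\kappa}=\Ind_{\Lg[[t]]\oplus\C\mathbb{1}}^{\widehat{\Lg}^{\kappa}}V_{\mu}$ is freely induced from the trivial action of $t\Lg[[t]]$, the positive loop directions contribute an acyclic Koszul-type subcomplex; combined with the dual negative-mode bookkeeping on $V_{\lambda}^{\kappa^{*}}$, whose $t\Lg[[t]]$-invariants are exactly $V_{\lambda}$, the associated spectral sequence should degenerate onto the finite-dimensional relative Chevalley--Eilenberg complex, yielding
$$H^{\frac{\infty}{2}+\bullet}(\Lg^{-\kappa_{\Lg}},\Lg,V_{\lambda}^{\kappa^{*}}\otimes V_{\mu}^{\kappa})\;\cong\;H^{\bullet}(\Lg,\Lg;V_{\lambda}\otimes V_{\mu}).$$
The right-hand side is the relative Lie algebra cohomology of $\Lg$ with respect to itself, hence concentrated in degree $0$, where it equals $(V_{\lambda}\otimes V_{\mu})^{\Lg}=\Hom_{\Lg}(V_{\lambda}^{*},V_{\mu})$. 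Since $V_{\lambda}^{*}\simeq V_{-\omega_{0}\lambda}$ and $\omega_{0}^{2}=\mathrm{id}$, this space is $\C$ when $\mu=-\omega_{0}\lambda$ (equivalently $\lambda=-\omega_{0}\mu$) and $0$ otherwise, which is the asserted equality. In the write-up this step can be quoted wholesale as the affine Frenkel--Garland--Zuckerman theorem (\cite{FrenkelGarlandZuckerman1986}, and its packaging in \cite{CreutzigLinshawNakatsukaSato2025}).

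To pass to arbitrary $V\in V^{\kappa^{*}}(\Lg)\mathrm{-Mod}^{\jetinf G}$ and $W\in V^{\kappa}(\Lg)\mathrm{-Mod}^{\jetinf G}$ I would use a d\'evissage. Every object of the Kazhdan--Lusztig category admits a (possibly infinite) resolution by direct sums of Weyl modules, and it can be arranged to lie inside the category, with conformal weights bounded below in each homological degree. Since $M\otimes N\otimes\Lambda^{\frac{\infty}{2}+\bullet}$ is exact and additive in $M$ and in $N$ separately, the hypercohomology spectral sequence of such a resolution, fed by the Weyl-module vanishing just established, forces $H^{\frac{\infty}{2}+k}(\Lg^{-\kappa_{\Lg}},\Lg,V\otimes W)=0$ for all $k\neq0$. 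The integrability hypotheses on $V$ and $W$ are exactly what make these resolutions and the completions occurring in the semi-infinite complex well behaved.

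The step I expect to be the real obstacle is the Weyl-module vanishing and identification, i.e.\ making the ``loop ghosts are acyclic'' collapse rigorous in the genuinely infinite-dimensional setting: one must control the completed exterior algebra $\Lambda^{\frac{\infty}{2}+\bullet}(\Lg((t))/\Lg)$, check that the filtration by loop-ghost number is exhaustive and Hausdorff with a convergent spectral sequence, and keep track of the interplay between the positive modes of $V_{\mu}^{\kappa}$ and the negative modes of $V_{\lambda}^{\kappa^{*}}$. Everything else --- the vertex-algebra structure on the cohomology, the reduction to $(V_{\lambda}\otimes V_{\mu})^{\Lg}$, and the d\'evissage to general modules --- is then routine homological bookkeeping, and in practice I would cite \cite{FrenkelGarlandZuckerman1986} and \cite{CreutzigLinshawNakatsukaSato2025} for the delicate point and only spell out the elementary pieces.
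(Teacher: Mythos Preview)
The paper does not supply its own proof of this theorem: it is stated with attribution to \cite{CreutzigLinshawNakatsukaSato2025} and \cite{FrenkelGarlandZuckerman1986} and then used as a black box. So there is nothing to compare against beyond noting that your outline is a faithful sketch of the Frenkel--Garland--Zuckerman argument those references contain: reduce the relative semi-infinite complex for $V_{\lambda}^{\kappa^{*}}\otimes V_{\mu}^{\kappa}$ to finite-dimensional $(V_{\lambda}\otimes V_{\mu})^{\Lg}$ via a loop-ghost filtration, then extend to general objects.

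One remark on the d\'evissage step: you set it up with (possibly infinite) resolutions by Weyl modules and a hypercohomology spectral sequence. In the paper's context this is more machinery than needed. The standing hypothesis is that $\kappa$ is generic (Definition~\ref{definition generic level}), and under that assumption Theorem~\ref{theorem Kazhdan--Lusztig categories for irrational levels} says the Kazhdan--Lusztig category is semisimple with the Weyl modules as simples. So every $V$ and $W$ is already a direct sum of Weyl modules, and the passage from the Weyl-module computation to the general case is just additivity of the functor, with no resolution or spectral-sequence bookkeeping required. Your argument is not wrong, merely more general than the paper needs.
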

	
	It makes sense to consider the functors 
	\begin{align*}
		&\cdo \circ - : V^{\kappa}(\Lg)\mathrm{-Mod} \lra (\cdo \circ V^{\kappa}(\Lg))\mathrm{-Mod},\\
		&- \circ \cdo : V^{\kappa^{*}}(\Lg)\mathrm{-Mod} \lra (V^{\kappa^{*}}(\Lg) \circ \cdo)\mathrm{-Mod}.
	\end{align*}
	The following property is fundamental and expresses that $\cdo$ is the unit for the convolution operation:
	\begin{proposition}[Theorem 6.2 of \cite{arkhipov_gaitsgory_differential_operators} and discussion following Proposition~5.6 of~\cite{Arakawa2018}]
		\label{proposition cdo is the identity for convolution}
		We have the equalities of vertex algebras 
		\begin{equation}
			\cdo \circ V^{\kappa}(\Lg) = V^{\kappa}(\Lg),\quad 
			V^{\kappa^{*}}(\Lg) \circ \cdo = V^{\kappa^{*}}(\Lg).
		\end{equation}
		More generally, if $V$ (resp. $W$) is a vertex algebra that belongs to $V^{\kappa^{*}}(\Lg)\mathrm{-Mod}^{\jetinf G}$ (resp. $V^{\kappa}(\Lg)\mathrm{-Mod}^{\jetinf G}$) then, as vertex algebras,
		\begin{equation}
			V \circ \cdo  = V,\quad \cdo \circ W = W. 
		\end{equation}
		Moreover, if $M$ (resp. $N$) is a $V$-module (resp. $W$-module) in $V^{\kappa^{*}}(\Lg)\mathrm{-Mod}^{\jetinf G}$ (resp.~$V^{\kappa}(\Lg)\mathrm{-Mod}^{\jetinf G}$) then, as a $V$-module (resp. $W$-module),
		\begin{equation}
			\label{equation cdo is the identity of convolutions on modules}
			M \circ \cdo = M,\quad \cdo \circ N =  N.
		\end{equation}
	\end{proposition}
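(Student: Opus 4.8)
The plan is to reduce the entire proposition to two inputs already at hand: the chiral Peter--Weyl decomposition \eqref{equation chiral Peter-Weyl decomposition} of $\cdo$, and the computation of the convolution of Weyl modules in \eqref{equation convolution of weyl modules} together with its concentration in cohomological degree zero from \eqref{equation vanishing of convolution on category KL}. First note that the two displayed equalities $\cdo \circ V^{\kappa}(\Lg) = V^{\kappa}(\Lg)$ and $V^{\kappa^{*}}(\Lg) \circ \cdo = V^{\kappa^{*}}(\Lg)$ are the special cases $W = V^{\kappa}(\Lg)$ and $V = V^{\kappa^{*}}(\Lg)$ of the more general assertions, since the vacuum Weyl modules lie in the relevant Kazhdan--Lusztig categories. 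Moreover the four general statements come in two pairs exchanged by swapping the roles of $\pi_{L}$ and $\pi_{R}$ on $\cdo$ (equivalently $\kappa \leftrightarrow \kappa^{*}$), so I would only write out $\cdo \circ W = W$ and $\cdo \circ N = N$ in detail.

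Next I would fix a vertex algebra $W \in V^{\kappa}(\Lg)\mathrm{-Mod}^{\jetinf G}$. Since $\kappa$ is generic, Theorem \ref{theorem Kazhdan--Lusztig categories for irrational levels} identifies $W$ with $\IndKacMoody(W^{t\Lg[[t]]})$, and $W^{t\Lg[[t]]}$ is a locally finite $G$-module, hence decomposes by Theorem \ref{theorem classification of simple G-modules and simple LieG modules} as $\bigoplus_{\mu \in X^{*}(T)_{+}} V_{\mu} \otimes M_{\mu}$ with multiplicity spaces $M_{\mu} := \Hom_{G}(V_{\mu}, W^{t\Lg[[t]]})$; applying $\IndKacMoody$ gives $W \cong \bigoplus_{\mu} V_{\mu}^{\kappa} \otimes M_{\mu}$ as a $V^{\kappa}(\Lg)$-module. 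In $\cdo \circ W = H^{\frac{\infty}{2}+\bullet}(\Lg^{-\kappa_{\Lg}},\Lg,\cdo \otimes W)$ the relative semi-infinite complex is built from the diagonal $\Lg^{-\kappa_{\Lg}}$-action coming from $\pi_{R}$ on $\cdo$ and from the given action on $W$, so the copy $\pi_{L}(V^{\kappa}(\Lg)) \subset \cdo$ and the multiplicity spaces $M_{\mu}$ are inert. Tensoring \eqref{equation chiral Peter-Weyl decomposition} with the above decomposition, and using that relative semi-infinite cohomology commutes with arbitrary direct sums and with tensoring by a fixed vector space, the complex splits as a direct sum over $(\lambda,\mu)$ of $V_{\lambda}^{\kappa} \otimes M_{\mu}$ tensored with the complex computing $V_{-\omega_{0}\lambda}^{\kappa^{*}} \circ V_{\mu}^{\kappa}$. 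By \eqref{equation vanishing of convolution on category KL} and \eqref{equation convolution of weyl modules} this last factor is $\delta_{-\omega_{0}\lambda,\,-\omega_{0}\mu}\,\C = \delta_{\lambda,\mu}\,\C$ in degree zero, whence $\cdo \circ W = \bigoplus_{\lambda} V_{\lambda}^{\kappa} \otimes M_{\lambda} = W$ as $V^{\kappa}(\Lg)$-modules and the convolution is concentrated in cohomological degree $0$. The identical computation, with a $W$-module $N$ (decomposed as a $V^{\kappa}(\Lg)$-module via Theorem \ref{theorem Kazhdan--Lusztig categories for irrational levels}) in place of $W$, gives $\cdo \circ N = N$ as $V^{\kappa}(\Lg)$-modules.

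The delicate point, which I expect to be the main obstacle, is to promote these identifications of underlying modules to equalities of vertex algebras (respectively of modules over them): one must check that the vertex algebra structure transported through the isomorphism agrees with the original one on $W$, and likewise on $N$. For this I would exhibit the canonical morphism realizing the isomorphism, induced by the inclusion of the $\lambda = 0$ summand of $\cdo$ — which is the image of $\pi_{L} \otimes \pi_{R}$, i.e. $V^{\kappa}(\Lg) \otimes V^{\kappa^{*}}(\Lg)$ — and by the naturality of relative semi-infinite cohomology in both arguments; functoriality of the Chevalley-type complex makes this map compatible with the state--field correspondence, the computation above shows it is bijective, and the module statement then follows by applying the same construction to $M \otimes N$. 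Equivalently, one is checking that $\cdo$ is the unit for the (lax monoidal) convolution product on the Kazhdan--Lusztig category, which is exactly the content of Theorem 6.2 of \cite{arkhipov_gaitsgory_differential_operators} and the discussion following Proposition 5.6 of \cite{Arakawa2018}; the Peter--Weyl bookkeeping in the previous paragraph, by contrast, is routine.
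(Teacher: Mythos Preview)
The paper does not supply its own proof of this proposition: it is stated with attribution to Theorem~6.2 of \cite{arkhipov_gaitsgory_differential_operators} and the discussion following Proposition~5.6 of \cite{Arakawa2018}, and is immediately followed by a remark rather than a proof environment. So there is no in-paper argument to compare against directly.

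Your module-level computation is correct and is, in fact, the mirror image of the remark the paper places right after the proposition: the paper observes that \eqref{equation convolution of weyl modules} together with \eqref{equation cdo is the identity of convolutions on modules} gives an alternative proof of the chiral Peter--Weyl theorem, whereas you run the implication the other way, using the independently proven Peter--Weyl decomposition (Theorem~\ref{theorem chiral Peter-Weyl}) together with \eqref{equation convolution of weyl modules} to recover the identity property of $\cdo$. There is no circularity, since the paper's proof of Theorem~\ref{theorem chiral Peter-Weyl} is direct and does not invoke this proposition. One small caveat: your argument is only available for generic $\kappa$ (since it relies on Peter--Weyl and on the semisimplicity of the Kazhdan--Lusztig category from Theorem~\ref{theorem Kazhdan--Lusztig categories for irrational levels}), whereas the cited references establish the result without that restriction; this is harmless here because the paper has a standing genericity assumption.

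You are also right to flag the upgrade from an isomorphism of $V^{\kappa}(\Lg)$-modules to one of vertex algebras (respectively of $W$-modules) as the genuinely nontrivial step. Your sketch---producing the comparison map from the $\lambda=0$ summand $V^{\kappa}(\Lg)\otimes V^{\kappa^{*}}(\Lg)\subset\cdo$ and invoking naturality of the relative semi-infinite complex---is the right shape, but as written it is only a sketch: one still has to verify that this map intertwines the full state--field correspondence on $W$ (not just the $V^{\kappa}(\Lg)$-action), and likewise that the $W$-module structure on $N$ is preserved. This is precisely what the cited references supply, and you are honest about deferring to them at that point.
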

	\begin{remarque}
		\label{remark results on convolution extend to reductive groups}
		The definitions and results given so far in this paragraph extend to the case of a reductive algebraic group for generic levels (this is the setting of \cite{CreutzigLinshawNakatsukaSato2025}). In particular when $G$ is an algebraic torus, equation \eqref{equation convolution of weyl modules} holds for Fock spaces (see~§6.1 of \cite{Correspondencesofcategoriesforsubregular}). We also note that the semisimplicity of the Kazhdan--Lusztig category $V^{\kappa}(\Lg)\otimes V^{\kappa^{*}}(\Lg)\mathrm{-Mod}^{\jetinf G \times \jetinf G}$ combined with equalities \eqref{equation convolution of weyl modules} and \eqref{equation cdo is the identity of convolutions on modules} gives another proof of the chiral Peter--Weyl Theorem \ref{theorem chiral Peter-Weyl}.
	\end{remarque}
	
	The vertex algebra $\ewalg$ comes equipped with a morphism
	$$
	\ewalg \longleftarrow V^{\kappa^{*}}(\Lg).
	$$
	As such, it defines a functor 
	$$
	\ewalg \circ - : V^{\kappa}(\Lg)\mathrm{-Mod} \lra (\ewalg \circ V^{\kappa}(\Lg) )\mathrm{-Mod}
	$$
	The following result expresses the fact that taking the convolution with $\ewalg$ represents the quantum Hamiltonian reduction functor on the Kazhdan--Lusztig category.
	\begin{proposition}[Theorem 6.7 of \cite{Arakawa2018}]
		\label{proposition equivariant W algebra represents quantum Hamiltonian reduction functor}
		We have the equality of vertex algebras
		\begin{equation}
			\ewalg \circ V^{\kappa}(\Lg)   = W^{\kappa}(\Lg).
		\end{equation}
		More generally, if $V$ is a vertex algebra that belongs to $V^{\kappa}(\Lg)\mathrm{-Mod}^{\jetinf G}$ then we have the equality of vertex algebras 
		\begin{equation}
			\ewalg \circ	V   = H^{0}_{DS}(V).
		\end{equation}
		Moreover, if $M$ is a $V$-module in $V^{\kappa}(\Lg)\mathrm{-Mod}^{\jetinf G}$ then we have the equality as $H^{*}_{DS}(V)$-modules
		\begin{equation}
			\ewalg \circ M   = H^{0}_{DS}(M).
		\end{equation} 
	\end{proposition}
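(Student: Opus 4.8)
The plan is to realize the convolution $\ewalg\circ V$ as the total cohomology of a double complex and to evaluate that cohomology through its two natural spectral sequences. Fix a vertex algebra $V$ belonging to $V^{\kappa}(\Lg)\mathrm{-Mod}^{\jetinf G}$; the case $V=V^{\kappa}(\Lg)$ is a special case, and the case of a module $M$ is literally the same argument. Recall that $\ewalg=H^{\ast}_{DS}(\cdo)$ is by definition the cohomology of the Drinfeld--Sokolov complex $C(\cdo)=\cdo\otimes\bigwedge^{\frac{\infty}{2}+\bullet}\Ln_{+}$ with differential $\d$, which is built out of the \emph{left} current $\pi_{L}$ and the $\Ln_{+}$-ghosts; and that $\ewalg\circ V=H^{\frac{\infty}{2}+\bullet}(\Lg^{-\kappa_{\Lg}},\Lg,\ewalg\otimes V)$ is the cohomology of a complex of the shape $\ewalg\otimes V\otimes\Lambda$, where $\Lambda$ is the Clifford module of $\Lg$-ghosts entering the definition of semi-infinite cohomology and the differential $d_{\mathrm{si}}$ is built out of the \emph{right} current $\pi_{R}$ on $\cdo$ (at level $\kappa^{*}$), the $\Lg$-current on $V$ (at level $\kappa$), and the $\Lg$-ghosts. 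Since $\pi_{L}$ and $\pi_{R}$ commute by \eqref{equation commutation pi l and pi r} and the two ghost systems are disjoint, $\d$ and $d_{\mathrm{si}}$ commute, so
$$
D=\cdo\otimes\bigwedge^{\frac{\infty}{2}+\bullet}\Ln_{+}\otimes V\otimes\Lambda,\qquad d_{\mathrm{tot}}=\d+d_{\mathrm{si}},
$$
is a double complex, and I would compute $H(D,d_{\mathrm{tot}})$ in two ways.

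First I would filter by the Drinfeld--Sokolov charge. Its $E_{1}$-page is $H_{\d}(C(\cdo))\otimes V\otimes\Lambda$, which by Theorem \ref{theorem vanishing of cohomology on kazhdan lusztig category} — applicable because $\cdo\in\Lgkappacategoryintegrable$ via $\pi_{L}$ by Theorem \ref{theorem chiral Peter-Weyl} — equals $\ewalg\otimes V\otimes\Lambda$ concentrated in a single Drinfeld--Sokolov degree, with induced differential $d_{\mathrm{si}}$ acting through the surviving right current on $\ewalg$. Hence the spectral sequence collapses and $H(D)\simeq\ewalg\circ V$. Next I would filter by the semi-infinite charge. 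Its $E_{1}$-page is $H^{\frac{\infty}{2}+\bullet}(\Lg^{-\kappa_{\Lg}},\Lg,\cdo\otimes V)\otimes\bigwedge^{\frac{\infty}{2}+\bullet}\Ln_{+}=(\cdo\circ V)\otimes\bigwedge^{\frac{\infty}{2}+\bullet}\Ln_{+}$; here $\cdo\in V^{\kappa^{*}}(\Lg)\mathrm{-Mod}^{\jetinf G}$ via $\pi_{R}$ (again Theorem \ref{theorem chiral Peter-Weyl}) and $V\in V^{\kappa}(\Lg)\mathrm{-Mod}^{\jetinf G}$, so \eqref{equation vanishing of convolution on category KL} forces concentration in a single semi-infinite degree, and Proposition \ref{proposition cdo is the identity for convolution} identifies $\cdo\circ V=V$ as a $V^{\kappa}(\Lg)$-module (the left current surviving). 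The induced differential is then precisely the Drinfeld--Sokolov differential of $V$, so this second spectral sequence computes $H(D)\simeq H^{\ast}_{DS}(V)$, which by Theorem \ref{theorem vanishing of cohomology on kazhdan lusztig category} again (since $V\in\Lgkappacategoryintegrable$) equals $H^{0}_{DS}(V)$, concentrated in degree $0$. Comparing the two computations gives $\ewalg\circ V=H^{0}_{DS}(V)$, and since each identification above is natural with respect to the vertex superalgebra operations, it upgrades to an isomorphism of vertex algebras, respectively of modules in the case of $M$.

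Specializing to $V=V^{\kappa}(\Lg)$ yields $\ewalg\circ V^{\kappa}(\Lg)=H^{0}_{DS}(V^{\kappa}(\Lg))=W^{\kappa}(\Lg)$, the first displayed equality of the proposition; running the identical double-complex argument with a $V$-module $M$ in $V^{\kappa}(\Lg)\mathrm{-Mod}^{\jetinf G}$ in place of $V$, and using $\cdo\circ M=M$ from Proposition \ref{proposition cdo is the identity for convolution}, gives the last equality $\ewalg\circ M=H^{0}_{DS}(M)$. The hard part will be the homological bookkeeping: one must verify that $\d$ and $d_{\mathrm{si}}$ genuinely commute (this rests on $[\pi_{L}(x)_{\lambda}\pi_{R}(y)]=0$ together with the disjointness of the $\Ln_{+}$- and $\Lg$-ghost systems), that the double complex $D$ is sufficiently bounded — using the conformal grading, which is bounded below, and the two charge gradings — for both filtrations to converge to the same $H(D)$, and that all the intermediate maps respect the vertex algebra structures. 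It is exactly here that Theorem \ref{theorem chiral Peter-Weyl} is indispensable: it is what places $\cdo$ simultaneously in the Kazhdan--Lusztig category for both $\pi_{L}$ and $\pi_{R}$, so that the two vanishing statements (Theorem \ref{theorem vanishing of cohomology on kazhdan lusztig category} and \eqref{equation vanishing of convolution on category KL}) can be applied to the same object.
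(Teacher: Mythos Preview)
The paper does not supply its own proof of this proposition: it simply cites Theorem~6.7 of \cite{Arakawa2018}. Your double-complex argument is exactly the method used in that reference, and it is correct. The essential points --- that $\d$ and $d_{\mathrm{si}}$ commute because $\pi_{L}$ and $\pi_{R}$ do and the ghost systems are independent, that the vanishing results (Theorem~\ref{theorem vanishing of cohomology on kazhdan lusztig category} and \eqref{equation vanishing of convolution on category KL}) force each spectral sequence to degenerate at $E_{2}$, and that Proposition~\ref{proposition cdo is the identity for convolution} identifies $\cdo\circ V$ with $V$ compatibly with the surviving left $V^{\kappa}(\Lg)$-action --- are all in order. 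Your closing remarks about convergence (bounded conformal grading plus charge gradings) and compatibility with vertex-algebra structures correctly flag the only places where care is needed, and none of them hides a genuine obstruction.
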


	\subsection{The strategy}
	\label{section the strategy}
	Conjecture \ref{conjecture FLE} gives a description of the category 
	$$
	\mathcal{W}^{k}_{G}\mathrm{-Mod}^{\jetinf G}
	$$
	in terms of the Langlands dual group of $G$.
	The study of this category feels a bit beyond the scope of usual vertex algebraic techniques. For instance $\mathcal{W}^{k}_{G}$ is a $\Z$-graded vertex algebra that is not lower bounded. So it is unclear at face value how the Zhu algebra may be useful. In the case of a torus (cf §\ref{section the case of an algebraic torus}), some explicit computations in Fock spaces solve the problem. Here, the lack of knowledge of the relations between strong generators of the principal $\mathcal{W}$-algebra makes this approach seemingly impossible: one needs a new idea to proceed. 
	
	Let us forget momentarily about Hamiltonian reduction and consider $\cD^{k}_{G}$ again. This is a somewhat complicated vertex algebra, for instance its $0$-degree space is $\cO_{G}$ which is infinite dimensional. In view of Theorem \ref{conjecture existence and unicity of shifted cdo's} we see that $\cD^{k}_{G} = \cD^{k}_{G}[0]$ is a member of a $\Z$-family of simple vertex algebras. Members of this family have very different behaviours as vertex algebras, for instance $\mathcal{D}^{k}_{G}[1]$ is positively graded and $(\mathcal{D}^{k}_{G}[1])_{0} = \C$ while $\mathcal{D}^{k}_{G}[-1]$ is a $\Z$-graded, not lower bounded vertex algebra with every conformal degree of infinite dimension. Yet, as we will see shortly, the categories $\mathcal{D}^{k}_{G}[0]\mathrm{-Mod}^{\jetinf G}$ and $\mathcal{D}^{k}_{G}[1]\mathrm{-Mod}^{\jetinf G}$ are naturally equivalent under the convolution operation. The equivalence is well behaved enough to induce an equivalence between $\mathcal{W}^{k}_{G}[0]\mathrm{-Mod}^{\jetinf G}$ and $\mathcal{W}^{k}_{G}[1]\mathrm{-Mod}^{\jetinf G}$. The hope is that the latter category is easier to relate to representations of the Langlands dual group.
	
	Let us be more precise, let $m,n \in \Z$ and $k,l \in \C\setminus \Q$, the vertex algebras $\cD_{G}^{k}[m]$ and~$\cD_{G}^{l}[n]$ each come equipped with two morphisms
	$$
	\xymatrix{
		& \cD_{G}^{k}[m] &  &   & \cD_{G}^{l}[n] &   \\
		V^{k}(\Lg) \ar[ur] &   & V^{k[m]}(\Lg) \ar[ul]& V^{l}(\Lg) \ar[ur] &   & V^{l[n]}(\Lg) \ar[ul]
	}.
	$$
	
	In particular the convolution $\cD_{G}^{k}[m] \circ \cD_{G}^{l}[n]$ makes sense if and only if $l = k[m][0]$. An easy computation using relation \eqref{equation k and l} shows that 
	$$
	k[m][0][n] = k[m+n].
	$$
	Hence we have two morphisms of vertex algebras 
	\begin{equation*}
		\xymatrix{
			&  & \cD_{G}^{k}[m] \circ  \cD_{G}^{k[m][0]}[n] & &  \\
			V^{k}(\Lg) \ar[urr] & &  & & V^{k[m+n]}(\Lg). \ar[llu] 
		}
	\end{equation*}
	More precisely, in view of equations \eqref{equation shifted peter weyl} and \eqref{equation convolution of weyl modules}, as a $V^{k}(\Lg) \otimes   V^{k[m+n]}(\Lg)$-module, we have the decomposition
	\begin{equation}
		\label{equation convolution of two shifted cdo has peter weyl decomposition}
		\cD^{k}_{G}[m] \circ \cD_{G}^{k[m][0]}[n] = \bigoplus_{\lambda\in X^{*}(T)_{+}} V_{\lambda}^{k} \otimes V_{-\omega_{0}\lambda}^{k[m+n]}.
	\end{equation}
	This strongly suggests the following:
	\begin{proposition}
		\label{proposition convolution of shifted cdo}
		We have the following equality of vertex algebras
		$$
		\cD^{k}_{G}[m] \circ \cD_{G}^{k[m][0]}[n] = \cD^{k}_{G}[m+n].
		$$
	\end{proposition}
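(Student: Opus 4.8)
The plan is to identify $\mathcal{A} := \cD^{k}_{G}[m] \circ \cD_{G}^{k[m][0]}[n]$ with $\cD^{k}_{G}[m+n]$ by invoking the uniqueness of the shifted chiral differential operators (Proposition~\ref{proposition unicity of shifted cdos}). By \eqref{equation convolution of two shifted cdo has peter weyl decomposition}, $\mathcal{A}$ carries commuting actions of $V^{k}(\Lg)$ and $V^{k[m+n]}(\Lg)$ under which it decomposes as $\bigoplus_{\lambda \in X^{*}(T)_{+}} V_{\lambda}^{k} \otimes V_{-\omega_{0}\lambda}^{k[m+n]}$, which is exactly the underlying $V^{k}(\Lg) \otimes V^{k[m+n]}(\Lg)$-module of $\cD^{k}_{G}[m+n]$ by \eqref{equation shifted peter weyl}. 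It therefore remains to verify two things: that $\mathcal{A}$, with this structure, is a conformal extension of $V^{k}(\Lg) \otimes V^{k[m+n]}(\Lg)$, and that $\mathcal{A}$ is a simple vertex algebra. The uniqueness statement then yields an isomorphism $\mathcal{A} \cong \cD^{k}_{G}[m+n]$ of vertex algebras, and tracking the two affine subalgebras through the construction shows it intertwines the maps $V^{k}(\Lg) \to -$ and $V^{k[m+n]}(\Lg) \to -$ on both sides, which is the asserted equality.

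First I would establish the conformal extension property. The relative semi-infinite cohomology $H^{\frac{\infty}{2}+\bullet}(\Lg^{-\kappa_{\Lg}},\Lg,-)$ of conformal vertex algebras carries a canonical conformal vector, assembled from the conformal vectors of the two factors, the ghost contribution, and the diagonal Segal--Sugawara vector at level $-\kappa_{\Lg}$ (see \cite{FrenkelGarlandZuckerman1986}); this is the conformal structure making the convolution of §\ref{section convolution operation} compatible with grading. Since $\cD^{k}_{G}[m]$ (resp.\ $\cD_{G}^{k[m][0]}[n]$) is by construction a conformal extension of $V^{k}(\Lg) \otimes V^{k[m]}(\Lg)$ (resp.\ of $V^{k[m][0]}(\Lg) \otimes V^{k[m+n]}(\Lg)$) whose conformal vector restricts to the Segal--Sugawara vectors on the affine factors, the diagonal $V^{-\kappa_{\Lg}}(\Lg)$ that is cancelled in the cohomology carries the diagonal Sugawara vector, so that what survives restricts to the Segal--Sugawara vectors of $V^{k}(\Lg)$ and $V^{k[m+n]}(\Lg)$. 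Hence $\mathcal{A}$ is conformal and the embedding $V^{k}(\Lg) \otimes V^{k[m+n]}(\Lg) \hookrightarrow \mathcal{A}$ is conformal.

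The main obstacle is the simplicity of $\mathcal{A}$. Here I would argue as in the proofs of Theorem~\ref{theorem rigidity of chiral Peter-Weyl} and Theorem~\ref{theorem simplicity of modules for equivariant w algebra}: a nonzero proper vertex ideal $I \subsetneq \mathcal{A}$ is in particular a $V^{k}(\Lg) \otimes V^{k[m+n]}(\Lg)$-submodule, so by semisimplicity of the Kazhdan--Lusztig category at irrational level (Theorem~\ref{theorem Kazhdan--Lusztig categories for irrational levels}) it has the form $I = \bigoplus_{\lambda \in S} V_{\lambda}^{k} \otimes V_{-\omega_{0}\lambda}^{k[m+n]}$ for some $S \subsetneq X^{*}(T)_{+}$ with $0 \notin S$; the quotient $\mathcal{A}/I$ is then a nonzero vertex algebra containing the $\lambda=0$ component $V^{k}(\Lg) \otimes V^{k[m+n]}(\Lg)$. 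To reach a contradiction one needs the leading operator-product coefficient of two highest weight vectors of weights $\lambda,\mu$ to be a nonzero highest weight vector of weight $\lambda+\mu$, whence $X^{*}(T)_{+} \setminus S$ is stable under addition and contains $0$, but also is a complement, forcing $S = \emptyset$. This nonvanishing is the delicate point, because $\mathcal{A}$ is not positively graded when $m+n<0$ and the $_{(-1)}$-product cannot be used as in the unshifted case; I expect it to follow either from a general simplicity statement for convolutions of simple vertex algebras lying in the Kazhdan--Lusztig category at irrational level (the same mechanism that makes the quantum Hamiltonian reductions $H^{0}_{DS}(V^{k}_{\lambda})$ simple, Proposition~\ref{proposition computation of the Hamiltonian reduction of weyl modules}), or from the Arakawa--Moreau criterion \cite{ArakawaMoreau2021} applied to the associated scheme of $\mathcal{A}$, which is the Hamiltonian reduction along the diagonal $\Lg$-action of the product of the associated schemes of the two factors and hence a reduced symplectic scheme. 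Carrying out one of these two routes is the real content of the proof; once simplicity is in hand, Proposition~\ref{proposition unicity of shifted cdos} finishes the argument as described above.
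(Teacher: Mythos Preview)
Your approach is essentially the same as the paper's: compute the decomposition \eqref{equation convolution of two shifted cdo has peter weyl decomposition}, verify that the convolution is a simple conformal extension of $V^{k}(\Lg)\otimes V^{k[m+n]}(\Lg)$, and then invoke uniqueness (Proposition~\ref{proposition unicity of shifted cdos}). The paper does not prove simplicity or the conformal extension property by hand either; both are packaged into Lemma~\ref{lemma simplicity of convolution}, which is simply cited from \cite{CreutzigLinshawNakatsukaSato2025} (Corollary~3.6 there). So the ``general simplicity statement for convolutions of simple vertex algebras lying in the Kazhdan--Lusztig category at irrational level'' that you anticipate as one of your two routes is exactly what the paper invokes as a black box, and your speculative ideal-decomposition argument or Arakawa--Moreau route would amount to reproving that lemma rather than to a different strategy for the proposition itself.

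One point of caution: be careful about the logical dependence between Proposition~\ref{proposition convolution of shifted cdo} and Proposition~\ref{proposition unicity of shifted cdos}. In the paper these are established essentially together, and the proof of uniqueness does not rely on the convolution formula you are proving but only on the unshifted rigidity Theorem~\ref{theorem rigidity of chiral Peter-Weyl}, Lemma~\ref{lemma simplicity of convolution}, associativity (Lemma~\ref{lemma associativity in KL}), and the identity property of $\cdo$ (Proposition~\ref{proposition cdo is the identity for convolution}); so there is no circularity, but you should make clear that you are invoking uniqueness as an independent input rather than as a consequence of what you are proving.
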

	We begin by collecting some consequences. The following technical lemma is essential:
	\begin{lemme}[Lemma 4.8 of \cite{BeemNair2022} and Theorem 10.11 of \cite{Arakawa2018}]
		\label{lemma associativity in KL}
		Let $A$,$B$ and $C$ be vertex algebras. Assume that there exist morphisms of vertex algebras
		\begin{align*}
			&A \longleftarrow V^{k^{*}}(\Lg) \\
			V^{k}(\Lg) \lra &B \longleftarrow V^{k^{*}}(\Lg) \\
			V^{k}(\Lg) \lra &C
		\end{align*}
		such that $A$,$B$ and $C$ belong to the Kazhdan--Lusztig category. Then, as vertex algebras, 
		\begin{equation}
			\label{equation associativity in KL}
			(A \circ B) \circ C \simeq A \circ (B \circ C).
		\end{equation} 
		Moreover if we are given, with obvious notations,
		\begin{align*}
			U \in A\mathrm{-Mod}^{\jetinf G},\quad
			V \in B\mathrm{-Mod}^{\jetinf G \times \jetinf G},\quad W \in C\mathrm{-Mod}^{\jetinf G},
		\end{align*}
		then, as modules over $A \circ B \circ C$ (which is unambiguous by \eqref{equation associativity in KL}), we have 
		$$
		(U \circ V) \circ W \simeq U \circ (V \circ W).
		$$
	\end{lemme}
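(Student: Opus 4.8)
The plan is to compute both iterated convolutions as the cohomology of a single \emph{double} BRST complex, and to deduce the asserted isomorphism from a spectral-sequence collapse that is forced by the vanishing \eqref{equation vanishing of convolution on category KL}. First I would set up the complex. Writing $F$ for the fermionic ghost system attached to the relative semi-infinite cohomology $H^{\frac{\infty}{2}+\bullet}(\Lg^{-\kappa_{\Lg}},\Lg,-)$, recall that $A\circ B$ is the $Q_{AB,(0)}$-cohomology of the vertex superalgebra $A\otimes B\otimes F$, where the BRST charge $Q_{AB}$ is built from the right $V^{\kappa^{*}}(\Lg)$-current on $A$, the left $V^{\kappa}(\Lg)$-current on $B$ and the ghosts. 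For the triple $A,B,C$ I would form
$$
\mathcal{C}\;=\;A\otimes B\otimes C\otimes F_{AB}\otimes F_{BC},
$$
a vertex superalgebra bigraded by the charges of the two independent ghost systems, carrying two odd BRST charges: $Q_{AB}$, coupling the right $\Lg$-action on $A$ to the left $\Lg$-action on $B$ through $F_{AB}$, and $Q_{BC}$, coupling the right $\Lg$-action on $B$ to the left $\Lg$-action on $C$ through $F_{BC}$. Since $A,B,C$ belong to the Kazhdan--Lusztig category the two $\Lg$-actions on $B$ commute, and since $F_{AB}$ and $F_{BC}$ are independent, the operators $d_{AB}=Q_{AB,(0)}$ and $d_{BC}=Q_{BC,(0)}$ anticommute; hence $Q=Q_{AB}+Q_{BC}$ satisfies $Q_{(0)}^{2}=0$ and $H^{\bullet}(\mathcal{C},Q_{(0)})$ is a vertex algebra.

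Then I would run the two spectral sequences of the bicomplex. Taking $d_{AB}$-cohomology first, note that $d_{AB}$ involves only the factors $A$, $B$, $F_{AB}$; by \eqref{equation vanishing of convolution on category KL} applied to $A$ and $B$, the cohomology $H^{\frac{\infty}{2}+\bullet}_{d_{AB}}(A\otimes B\otimes F_{AB})=A\circ B$ is concentrated in degree $0$. So the first page is $(A\circ B)\otimes C\otimes F_{BC}$ with the induced differential $d_{BC}$ — which, since the residual right $\Lg$-action on $A\circ B$ is the one coming from $B$, is exactly the BRST complex computing $(A\circ B)\circ C$. Being concentrated in a single $F_{AB}$-degree, the spectral sequence degenerates there and yields an isomorphism of vertex algebras $H^{\bullet}(\mathcal{C},Q_{(0)})\simeq(A\circ B)\circ C$. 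Here one uses that $A\circ B$ is again an object of the Kazhdan--Lusztig category, so that the outer convolution is defined and \eqref{equation vanishing of convolution on category KL} applies to it together with $C$; this closure property is part of the theory of this convolution (\cite{Arakawa2018}, and, for the semisimple structure, Theorem \ref{theorem Kazhdan--Lusztig categories for irrational levels}). Running the other spectral sequence, i.e. taking $d_{BC}$-cohomology first, gives in the same way a vertex algebra isomorphism $H^{\bullet}(\mathcal{C},Q_{(0)})\simeq A\circ(B\circ C)$, and composing the two proves \eqref{equation associativity in KL}. The module statement is obtained by the identical argument with $A\otimes B\otimes C$ replaced by $U\otimes V\otimes W$, using the module version of \eqref{equation vanishing of convolution on category KL} and the fact that $U\circ V$ and $V\circ W$ again lie in the Kazhdan--Lusztig category.

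The routine points — which I would quote from \cite{Arakawa2018} and \cite{FrenkelGarlandZuckerman1986} — are the explicit form of the two BRST charges and the anticommutation $\{d_{AB},d_{BC}\}=0$. The step I expect to be the main obstacle is upgrading each spectral-sequence collapse to an isomorphism of vertex algebras (and of modules), rather than of graded vector spaces only: one must realise the degeneration onto a single column by a genuine quasi-isomorphism of vertex superalgebras, $(A\circ B)\otimes C\otimes F_{BC}\hookrightarrow\mathcal{C}$ in one order and its analogue in the other, so that the comparison isomorphism is canonical and multiplicative. Once this bookkeeping is done nothing further is needed; as a sanity check, the underlying vector-space identity can be read off from \eqref{equation convolution of weyl modules} and Proposition \ref{proposition cdo is the identity for convolution} — for instance $A=B=C=\cdo$ returns $\cdo$ on both sides.
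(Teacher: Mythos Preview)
The paper does not give its own proof of this lemma: it simply cites Lemma~4.8 of \cite{BeemNair2022} and Theorem~10.11 of \cite{Arakawa2018}. Your argument is precisely the one underlying those references --- forming the double BRST complex $A\otimes B\otimes C\otimes F_{AB}\otimes F_{BC}$ with anticommuting differentials, and using the degree-zero concentration \eqref{equation vanishing of convolution on category KL} to collapse both spectral sequences --- so there is nothing to compare.

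Your identification of the one delicate point is accurate: the collapse of each spectral sequence onto a single row or column gives an isomorphism of graded vector spaces automatically, but one must check that the resulting identification $H^{\bullet}(\mathcal{C},Q_{(0)})\simeq (A\circ B)\circ C$ is an isomorphism of vertex algebras. This is handled in the cited references by observing that the filtration defining the spectral sequence is by vertex subalgebras (it is the charge filtration on one ghost factor), so the associated graded and the limit carry compatible vertex algebra structures; the degeneration at $E_2$ then forces the edge map to be multiplicative. You could also argue, as you suggest, via an explicit quasi-isomorphism of differential graded vertex superalgebras. Either way the argument is routine once the vanishing is in hand, and your outline is correct.
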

	
	\begin{remarque}
		The proof of the associativity property given in \cite{BeemNair2022} is a spectral sequence argument that relies on the fact that in the Kazhdan--Lusztig category, the cohomology is concentrated in degree $0$. Because the vanishing also holds in the case of an algebraic torus (recall equation \eqref{equation convolution of weyl modules} and Remark \ref{remark results on convolution extend to reductive groups}), their proof also works for an abelian Lie algebra. 
	\end{remarque}

	Let $n \in \Z$ and $V$ be any vertex algebra that comes equipped with a vertex algebra morphism
	$$
	V \longleftarrow V^{k^{*}}(\Lg)
	$$
	such that $V$ is in the Kazhdan--Lusztig category $V^{k^{*}}(\Lg)\mathrm{-Mod}^{\jetinf G}$. Set 
	$$
	V[n] = V \circ \mathcal{D}_{G}^{k}[n].
	$$
	
	\begin{remarque}
		Note that this is indeed compatible with the previously introduced notations as we have for all $n \in \Z$ the equality of vertex algebras
		$$
		\mathcal{W}_{G}^{k}[n] = H^{*}_{DS}(\mathcal{D}_{G}^{k}[n])
		$$
		thanks to Proposition \ref{proposition equivariant W algebra represents quantum Hamiltonian reduction functor}.
	\end{remarque}
	Convolution by $\mathcal{D}_{G}^{k[n][0]}[1]$ defines a functor 
	$$
	-\circ \mathcal{D}_{G}^{k[n][0]}[1] : V[n]\mathrm{-Mod}^{\jetinf G} \lra V[n+1]\mathrm{-Mod}^{\jetinf G}.
	$$
	Convolution by $\mathcal{D}_{G}^{k[n+1][0]}[-1]$ defines a functor in the opposite direction in view of Proposition \ref{proposition convolution of shifted cdo} and Lemma \ref{lemma associativity in KL}. As a direct consequence of Proposition~\ref{proposition cdo is the identity for convolution} together with Lemma ~\ref{lemma associativity in KL} we have:
	\begin{proposition}
		\label{proposition convolution by 1 and -1 are inverses}
		These two functors are quasi-inverse to one another.
	\end{proposition}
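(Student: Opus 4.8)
The plan is to compute the two composite functors explicitly, using the associativity of convolution (Lemma \ref{lemma associativity in KL}), the convolution formula for shifted vertex algebras of chiral differential operators (Proposition \ref{proposition convolution of shifted cdo}), and the fact that the unshifted such vertex algebra is the unit for convolution (Proposition \ref{proposition cdo is the identity for convolution}). I first record the two elementary consequences of the defining relation \eqref{equation k and l} together with $k^{*}=k[0]$, namely
$$
k[m][0][p] = k[m+p] \qquad \text{and} \qquad \bigl(k[m][0]\bigr)^{*} = k[m],
$$
valid for all $m,p\in\Z$. These identities are exactly what is needed to check, at every step below, that each convolution is defined and that all vertex algebras and modules involved lie in the relevant Kazhdan--Lusztig categories, so that Lemma \ref{lemma associativity in KL} applies; the objects $\mathcal{D}_{G}^{l}[n]$ being in those categories is the shifted chiral Peter--Weyl decomposition \eqref{equation shifted peter weyl} (with Weyl modules integrable).

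Write $F = -\circ \mathcal{D}_{G}^{k[n][0]}[1]$ and $G = -\circ \mathcal{D}_{G}^{k[n+1][0]}[-1]$. For $M \in V[n]\mathrm{-Mod}^{\jetinf G}$ the module part of Lemma \ref{lemma associativity in KL} gives
$$
G\bigl(F(M)\bigr) = \bigl(M \circ \mathcal{D}_{G}^{k[n][0]}[1]\bigr)\circ \mathcal{D}_{G}^{k[n+1][0]}[-1] \simeq M \circ \Bigl(\mathcal{D}_{G}^{k[n][0]}[1] \circ \mathcal{D}_{G}^{k[n+1][0]}[-1]\Bigr),
$$
where $\mathcal{D}_{G}^{k[n][0]}[1]$ plays the role of the object in $\mathrm{-Mod}^{\jetinf G\times\jetinf G}$ and $M$, $\mathcal{D}_{G}^{k[n+1][0]}[-1]$ the outer objects in $\mathrm{-Mod}^{\jetinf G}$. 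By Proposition \ref{proposition convolution of shifted cdo} with base level $k[n][0]$ and shifts $1$ and $-1$ — the hypothesis on the second factor being matched by $(k[n][0])[1][0] = k[n+1][0]$ — the inner convolution equals $\mathcal{D}_{G}^{k[n][0]}[0] = \mathcal{D}_{G}^{k[n][0]}$. Since $M\in V[n]\mathrm{-Mod}^{\jetinf G}$ means precisely that $M$, viewed over $V^{k[n]}(\Lg) = V^{(k[n][0])^{*}}(\Lg)$, lies in $V^{k[n]}(\Lg)\mathrm{-Mod}^{\jetinf G}$, equation \eqref{equation cdo is the identity of convolutions on modules} of Proposition \ref{proposition cdo is the identity for convolution} yields $M \circ \mathcal{D}_{G}^{k[n][0]} \simeq M$, so $G\circ F \simeq \mathrm{id}$. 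The composite $F\circ G$ is handled symmetrically: for $N \in V[n+1]\mathrm{-Mod}^{\jetinf G}$ one gets $F(G(N)) \simeq N \circ \bigl(\mathcal{D}_{G}^{k[n+1][0]}[-1] \circ \mathcal{D}_{G}^{k[n][0]}[1]\bigr)$, the inner convolution equals $\mathcal{D}_{G}^{k[n+1][0]}$ by Proposition \ref{proposition convolution of shifted cdo} (base level $k[n+1][0]$, shifts $-1$ and $1$, using $(k[n+1][0])[-1][0] = k[n][0]$), and then $N \circ \mathcal{D}_{G}^{k[n+1][0]} \simeq N$ again by Proposition \ref{proposition cdo is the identity for convolution}. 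All the isomorphisms invoked are natural in the module argument, so they assemble into natural isomorphisms $G\circ F \simeq \mathrm{id}$ and $F\circ G \simeq \mathrm{id}$.

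The only real work is the bookkeeping of the level-shift labels, making sure the second factor in each application of Proposition \ref{proposition convolution of shifted cdo} has the precise shifted level $k'[m][0]$ required by its hypothesis, and verifying at each stage that the objects being convolved lie in the Kazhdan--Lusztig categories so that associativity is licensed; there is no conceptual obstacle, the substance having been packaged into Propositions \ref{proposition convolution of shifted cdo} and \ref{proposition cdo is the identity for convolution} and Lemma \ref{lemma associativity in KL}.
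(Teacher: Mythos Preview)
Your proof is correct and follows exactly the approach indicated in the paper, which simply records the statement as ``a direct consequence of Proposition~\ref{proposition cdo is the identity for convolution} together with Lemma~\ref{lemma associativity in KL}'' (after having invoked Proposition~\ref{proposition convolution of shifted cdo} to see the second functor lands in the right category). You have merely spelled out the level bookkeeping that the paper leaves implicit; a minor cosmetic point is that reusing the symbol $G$ for a functor when it already denotes the group is potentially confusing.
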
	
	This readily implies that the following diagrams consist of equivalences of categories 
	\[
	\begin{tikzcd}[column sep=6em, row sep=4em]
		\cD_{G}^{k}[-1]\mathrm{-Mod}^{1 \times \jetinf G}
		\arrow[r, bend left=30, "{-\!\circ \mathcal{D}_{G}^{k[-1][0]}[1]}"{above}]&
		\cD_{G}^{k}\mathrm{-Mod}^{1 \times \jetinf G}
		\arrow[r, bend left=30, "{-\!\circ \mathcal{D}_{G}^{k}[1]}"{above}]
		\arrow[l, bend left=30, "{-\!\circ \mathcal{D}_{G}^{k}[-1]}"{below}]&
		\cD_{G}^{k}[1]\mathrm{-Mod}^{1 \times \jetinf G}
		\arrow[l, bend left=30, "{-\!\circ \mathcal{D}_{G}^{k[1][0]}[-1]}"{below}],
	\end{tikzcd}
	\]
	\[
	\begin{tikzcd}[column sep=6em, row sep=4em]
		\mathcal{W}_{G}^{k}[-1]\mathrm{-Mod}^{\jetinf G}
		\arrow[r, bend left=30, "{-\!\circ \mathcal{D}_{G}^{k[-1][0]}[1]}"{above}]&
		\mathcal{W}_{G}^{k}\mathrm{-Mod}^{\jetinf G}
		\arrow[r, bend left=30, "{-\!\circ \mathcal{D}_{G}^{k}[1]}"{above}]
		\arrow[l, bend left=30, "{-\!\circ \mathcal{D}_{G}^{k}[-1]}"{below}]&
		\mathcal{W}_{G}^{k}[1]\mathrm{-Mod}^{\jetinf G}
		\arrow[l, bend left=30, "{-\!\circ \mathcal{D}_{G}^{k[1][0]}[-1]}"{below}].
	\end{tikzcd}
	\]	
	
	\begin{remarque}
		Clearly, the same technique shows that for all $n\in \mathbb{Z}$ the categories $\cD_{G}^{k}[n]\mathrm{-Mod}^{\jetinf G \times \jetinf G}$  are equivalent. So Theorem \ref{theorem degeneration of the geometric Satake correspondence} describes them all. 
	\end{remarque}
	
	We now turn to the proof of Proposition \ref{proposition convolution of shifted cdo}.
	
	\begin{lemme}[Corollary 3.6 of \cite{CreutzigLinshawNakatsukaSato2025}]
		\label{lemma simplicity of convolution}
		Let 
		$$
		V = \bigoplus_{\lambda\in X^{*}(T)_{+}} V_{\lambda}^{k} \otimes V_{-\omega_{0}\lambda}^{k[m]}
		$$
		and 
		$$
		W = \bigoplus_{\lambda\in X^{*}(T)_{+}} V_{\lambda}^{k[m][0]} \otimes V_{-\omega_{0}\lambda}^{k[m+n]}
		$$
		be simple vertex algebras. Assume that $V$ (resp. $W$) is a conformal extension of $V^{k}(\Lg) \otimes V^{k[m]}(\Lg)$ (resp. $V^{k[m][0]}(\Lg) \otimes V^{k[m+n]}(\Lg)$). Then $V \circ W$ is a simple vertex algebra that is a conformal extension of $V^{k}(\Lg) \otimes V^{k[m+n]}(\Lg)$. 
	\end{lemme}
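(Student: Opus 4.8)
The plan is to assemble $V\circ W$ from the convolution of its Weyl-module summands, deduce the degree concentration and the Peter--Weyl decomposition from the facts already recorded about $\circ$, upgrade this to a conformal extension by the standard conformal structure on semi-infinite cohomology, and finally attack simplicity, which is the only non-formal point.

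First I would observe that, viewed via its right copy, $V$ is a (possibly infinite) direct sum of Weyl modules $V^{k[m]}_{-\omega_0\lambda}$, and that $k\notin\Q$ forces $k[m]$, $k[m][0]$ and $k[m+n]$ to be irrational as well by relation \eqref{equation k and l}; hence each such Weyl module lies in the Kazhdan--Lusztig category by Theorem \ref{theorem Kazhdan--Lusztig categories for irrational levels}, so $V\in V^{k[m]}(\Lg)\mathrm{-Mod}^{\jetinf G}$ and likewise $W\in V^{k[m][0]}(\Lg)\mathrm{-Mod}^{\jetinf G}$ via its left copy. The BRST differential defining $V\circ W$ acts only on the two inner tensor legs $V^{k[m]}_{-\omega_0\lambda}\otimes V^{k[m][0]}_{\nu}$ (together with the ghosts) and commutes with the outer $V^{k}(\Lg)\times V^{k[m+n]}(\Lg)$-action, so the complex splits as a direct sum over $\lambda,\nu\in X^{*}(T)_{+}$ of $V^{k}_{\lambda}\otimes C^{\bullet}\!\left(V^{k[m]}_{-\omega_0\lambda}\otimes V^{k[m][0]}_{\nu}\right)\otimes V^{k[m+n]}_{-\omega_0\nu}$, and cohomology is computed summandwise. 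By \eqref{equation vanishing of convolution on category KL} each inner cohomology is concentrated in degree $0$, and by \eqref{equation convolution of weyl modules} it equals $\delta_{\lambda,\nu}\C$. Therefore $H^{\frac{\infty}{2}+j}(\Lg^{-\kappa_\Lg},\Lg,V\otimes W)=0$ for $j\neq 0$ and, as a $V^{k}(\Lg)\otimes V^{k[m+n]}(\Lg)$-module, $V\circ W=\bigoplus_{\lambda\in X^{*}(T)_{+}}V^{k}_{\lambda}\otimes V^{k[m+n]}_{-\omega_0\lambda}$, which is exactly \eqref{equation convolution of two shifted cdo has peter weyl decomposition}.

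Next, being the degree-zero relative semi-infinite cohomology of the vertex algebra $V\otimes W$, the space $V\circ W$ is itself a vertex algebra, and the two surviving embeddings $V^{k}(\Lg)\hookrightarrow V$ and $V^{k[m+n]}(\Lg)\hookrightarrow W$ induce a morphism $V^{k\oplus k[m+n]}(\Lg\oplus\Lg)\to V\circ W$ whose image is the $\lambda=0$ summand $V^{k}(\Lg)\otimes V^{k[m+n]}(\Lg)$ of the decomposition above, in particular it is injective. Since $V$ and $W$ are conformal extensions, the currents of the diagonal $\Lg\subset\Lg\oplus\Lg$ act on $V\otimes W$ as weight-one primaries, so the usual recipe ($\omega_{V\otimes W}$ minus the diagonal Sugawara vector plus the ghost conformal vector) descends to a conformal vector on $V\circ W$; a routine central-charge count shows it has central charge $\dim G\,(2-(m+n)\check h)$, equal to that of the Sugawara vector of $V^{k}(\Lg)\otimes V^{k[m+n]}(\Lg)$, and that the latter is carried to the former by the embedding. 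Thus $V\circ W$ is a conformal extension of $V^{k}(\Lg)\otimes V^{k[m+n]}(\Lg)$.

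For simplicity, write the adjoint module as $V\circ W=\bigoplus_{\lambda}U_{\lambda}$ with $U_{\lambda}=V^{k}_{\lambda}\otimes V^{k[m+n]}_{-\omega_0\lambda}$; by Theorem \ref{theorem Kazhdan--Lusztig categories for irrational levels} each $U_\lambda$ is a simple $V^{k}(\Lg)\otimes V^{k[m+n]}(\Lg)$-module and they are pairwise non-isomorphic, so every ideal $I$ (equivalently, every $V^{k}(\Lg)\otimes V^{k[m+n]}(\Lg)$-submodule of the adjoint module) is $I=\bigoplus_{\lambda\in S}U_{\lambda}$ for some $S\subseteq X^{*}(T)_{+}$. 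If $0\in S$ then $\vac\in I$ and $I=V\circ W$; so suppose $S\neq\emptyset$ and $0\notin S$ and seek a contradiction. Choose $\lambda\in S$ and consider the summand $U_{-\omega_0\lambda}\subset V\circ W$: since $V_{0}$ occurs in $V_{-\omega_0\lambda}\otimes V_{\lambda}$ with multiplicity one and $I$ is an ideal, the $U_{0}$-component of $Y(U_{-\omega_0\lambda},z)U_{\lambda}$ lies in $I$, hence vanishes because $0\notin S$; in particular the products of the relevant highest-weight vectors have vanishing $U_0$-component. The assertion of the lemma is that this is impossible, and here the simplicity of $V$ and $W$ enters essentially: if every such trivial-summand OPE coefficient of $V\circ W$ vanished, then tracing the construction back shows the analogous trivial-summand coefficients in $V$ (or in $W$) would vanish, whence $\bigoplus_{\mu\neq 0}A_{\mu}$ would be a proper nonzero ideal of $V$ (respectively of $W$) — contradicting their simplicity. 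Equivalently, and more geometrically, one may run the associated-scheme argument: the convolution computes the classical BRST (symplectic) reduction of the associated schemes of $V$ and $W$ along the glued copy of $\Lg^{*}$, which for these chiral-differential-operator-type vertex algebras is again the reduced symplectic variety $T^{*}G$, so the criterion of \cite{ArakawaMoreau2021} applies exactly as in Proposition \ref{proposition cdo is a simple vertex algebra}. I expect this last point — the non-vanishing of the trivial-summand OPE coefficients, equivalently the identification of the associated scheme of $V\circ W$ with $T^{*}G$ rather than a proper degeneration — to be the main obstacle; everything preceding it is bookkeeping with the vanishing \eqref{equation vanishing of convolution on category KL}, the decomposition \eqref{equation convolution of weyl modules}, and the standard conformal structure on semi-infinite cohomology.
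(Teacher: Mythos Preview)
The paper does not prove this lemma; it is imported verbatim as Corollary~3.6 of \cite{CreutzigLinshawNakatsukaSato2025}. So there is no paper proof to compare against, and I evaluate your argument on its own merits.

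Your derivation of the degree concentration and of the $V^{k}(\Lg)\otimes V^{k[m+n]}(\Lg)$-module decomposition is correct and is exactly \eqref{equation convolution of two shifted cdo has peter weyl decomposition}. The conformal-extension step is also fine: the BRST conformal vector you describe is the standard one, and your central-charge count $\dim G\,(2-(m+n)\check h)$ is correct.

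The genuine gap is simplicity, and you have not closed it. Your route (a) asserts that if the $U_0$-component of $Y(U_{-\omega_0\lambda},z)U_{\lambda}$ vanishes in $V\circ W$, then ``tracing the construction back'' forces the analogous trivial-summand coefficients in $V$ or $W$ to vanish. This inference is not justified: the product on $V\circ W$ is the product on $V\otimes W$ followed by passage to BRST cohomology, and a cohomology class can vanish without its cocycle representative vanishing. Concretely, the product of representatives of $U_{-\omega_0\lambda}$ and $U_{\lambda}$ lives in a sum over many $(\mu,\nu)$-summands of $V\otimes W\otimes(\text{ghosts})$ before one projects to cohomology, and there is no obvious mechanism forcing the chain-level $(\mu,\nu)=(0,0)$ piece in $V$ (or in $W$) to vanish. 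Your route (b) via associated schemes is also incomplete: the lemma is stated for arbitrary simple $V,W$ with the prescribed decomposition, not for the shifted $\cD_G^{k}[n]$ themselves, so you would first need to compute the associated scheme of any such $V$ (it is not given in the paper outside the $n=0$ case) and then identify the $C_2$-algebra of the relative semi-infinite cohomology with the classical symplectic reduction --- neither step is carried out. The proof in \cite{CreutzigLinshawNakatsukaSato2025} uses extension theory inside the Kazhdan--Lusztig tensor category rather than either of these approaches.
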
 
	It is clear from this lemma that Proposition \ref{proposition convolution of shifted cdo} follows from equation \eqref{equation convolution of two shifted cdo has peter weyl decomposition} together with the following statement (see §3.1 of \cite{CreutzigGaiotto2020}).
	\begin{theoreme}
		\label{proposition unicity of shifted cdos}
		There exists on the $V^{k}(\Lg)\otimes V^{k[n]}(\Lg)$-module 
		$$
		\bigoplus_{\lambda \in X^{*}(T)_{+}} V_{\lambda}^{k} \otimes V_{-\omega_{0}\lambda}^{k[n]}
		$$
		a unique simple vertex algebra structure that is a conformal extension of $V^{k}(\Lg)\otimes V^{k[n]}(\Lg)$ given by that of $\mathcal{D}_{G}^{k}[n]$.
	\end{theoreme}
	\begin{proof}
		The existence part of the statement is Theorem \ref{conjecture existence and unicity of shifted cdo's}. For the uniqueness, assume we are given two families $\mathcal{A}^{k}[n]$ and $\mathcal{B}^{k}[n]$  depending on $k \in \C\setminus\Q$ and $n \in \mathbb{Z}$ with obvious notations. Let $k \in \C \setminus \Q$ and $n \in \Z$. It follows from Theorem \ref{theorem rigidity of chiral Peter-Weyl}, Lemma~\ref{lemma simplicity of convolution} and equation \eqref{equation convolution of two shifted cdo has peter weyl decomposition} that we have the equalities of vertex algebras:
		\begin{align*}
			\mathcal{D}_{G}^{k[n][0]} &= \mathcal{A}^{k[n][0]}[-n] \circ \mathcal{B}^{k}[n], \\
			\mathcal{D}_{G}^{k} &= \mathcal{A}^{k}[n] \circ \mathcal{A}^{k[n][0]}[-n].
		\end{align*}
		By successive applications of Lemma \ref{lemma associativity in KL}, Proposition \ref{proposition cdo is the identity for convolution} and the previous two equalities we have the equalities of vertex algebras:
		\begin{align*}
			\mathcal{A}^{k}[n] &= \mathcal{A}^{k}[n] \circ \mathcal{D}_{G}^{k[n][0]} \\
			&= \mathcal{A}^{k}[n] \circ (\mathcal{A}^{k[n][0]}[-n] \circ \mathcal{B}^{k}[n])\\
			&= (\mathcal{A}^{k}[n] \circ \mathcal{A}^{k[n][0]}[-n]) \circ \mathcal{B}^{k}[n]\\
			&= \mathcal{D}_{G}^{k} \circ \mathcal{B}^{k}[n]\\
			&= \mathcal{B}^{k}[n].
		\end{align*}
	\end{proof}
	
	\subsection{Proof of the fundamental local equivalence}
	The goal of this section is to prove the following
	\begin{theoreme}
		\label{theorem FLE for adjoint type ADE}
		Conjecture \ref{conjecture FLE} holds for a simple adjoint group of type A or D. That is, for $k \in \C \setminus \Q$ there is an equivalence of abelian categories 
		$$
		\mathcal{W}_{G}^{k}\mathrm{-Mod}^{\jetinf G} \simeq \check{G}\mathrm{-Mod}
		$$
		that sends for all $\gamma \in X_{*}(T)_{+}$ the simple object $H_{DS}^{2\rho(\gamma)}(\gamma \cdot \mathcal{D}_{G}^{k})$ to $V_{\gamma}$.
	\end{theoreme}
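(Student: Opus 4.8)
The plan is to transport the semisimplicity statement through the chain of equivalences constructed in the previous subsection, reducing the problem to the $1$-shifted side where the relevant vertex algebra is positively graded and the Zhu-algebra machinery becomes available. Concretely, by Proposition \ref{proposition convolution by 1 and -1 are inverses} the functor $-\circ \mathcal{D}_{G}^{k}[1]$ gives an equivalence of abelian categories
$$
\mathcal{W}_{G}^{k}\mathrm{-Mod}^{\jetinf G} \simeq \mathcal{W}_{G}^{k}[1]\mathrm{-Mod}^{\jetinf G},
$$
and it sends $H^{2\rho(\gamma)}_{DS}(\gamma\cdot\cdo) = \ewalg\circ V_{\gamma}^{\kappa}$ (Proposition \ref{proposition equivariant W algebra represents quantum Hamiltonian reduction functor} and Theorem \ref{theorem spectral flow drinfeld sokolov is the same as twisted up to cohomological shift}) to $\mathcal{W}_{G}^{k}[1]\circ V_{\gamma}^{k}$, which by associativity (Lemma \ref{lemma associativity in KL}) and equation \eqref{equation convolution of two shifted cdo has peter weyl decomposition} is $H^{0}_{DS}$ applied to $\cD_{G}^{k}[1]\circ V_{\gamma}^{k}$. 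So it suffices to prove the statement for $\mathcal{W}_{G}^{k}[1]$: that $\mathcal{W}_{G}^{k}[1]\mathrm{-Mod}^{\jetinf G}$ is semisimple with simple objects the $H^{0}_{DS}((\gamma\cdot\cdot)[1])$, $\gamma\in X_{*}(T)_{+}$, and is equivalent to $\check{G}\mathrm{-Mod}$.

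For the $1$-shifted side, the key point is that $\mathcal{D}_{G}^{k}[1]$ is positively graded with $(\mathcal{D}_{G}^{k}[1])_{0}=\C$, so $\mathcal{W}_{G}^{k}[1]=H^{0}_{DS}(\mathcal{D}_{G}^{k}[1])$ inherits good finiteness properties: it is a positively graded (in fact conical) simple vertex algebra, and one can identify its Zhu algebra. First I would compute $\Zhu(\mathcal{D}_{G}^{k}[1])$: using the shifted Peter--Weyl decomposition \eqref{equation shifted peter weyl} and the fact that the lowest conformal weights of the Weyl modules $V_{\lambda}^{k}\otimes V^{k[1]}_{-\omega_0\lambda}$ grow with $\lambda$ (because the Sugawara conformal weight of $V_\lambda^k$ is $\tfrac{C_{\Lg}(\lambda)}{2(k+\check h)}$ and the two levels no longer cancel), I expect $\Zhu(\mathcal{D}_{G}^{k}[1])$ to be a completion/filtered deformation of $\cU(\Lg)$, whose category of suitably integrable modules is $G\mathrm{-Mod}$ by the classical Beilinson--Bernstein/Peter--Weyl picture together with Theorem \ref{theorem Kazhdan--Lusztig categories for irrational levels}. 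Passing through $H^{0}_{DS}$, the corresponding statement for $\mathcal{W}_{G}^{k}[1]$ is that its Zhu algebra acts on each module with integrable right $V^{k[1]^{*}}(\Lg)$-structure in a way controlled by $\cU(\Lg)$ again; the $\jetinf G$-integrability forces the right-hand weights into $X^{*}(T)$, which by Corollary \ref{corollary sufficient condition for integrability} is exactly the statement that one lands in $G\mathrm{-Mod}\simeq\check G\mathrm{-Mod}$ combinatorially, i.e. simple objects indexed by $X^{*}(T)_+ = X_*(\check T)_+$.

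The semisimplicity is then obtained exactly as in the proof of Theorem \ref{theorem simplicity of modules for equivariant w algebra}: any object $M\in\mathcal{W}_{G}^{k}[1]\mathrm{-Mod}^{\jetinf G}$ decomposes, when restricted to $W^{k}(\Lg)\otimes V^{k[1]^{*}}(\Lg)$, as a direct sum of the simple modules $T^{k}_{\lambda}\otimes V^{k[1]^{*}}_{\mu}$ by Theorem \ref{theorem Kazhdan--Lusztig categories for irrational levels}, Proposition \ref{proposition computation of the Hamiltonian reduction of weyl modules} and the conformal-weight bookkeeping; Proposition \ref{proposition stability of highest weight starting from the vacuum} (applied to $\mathcal{W}_{G}^{k}[1]$, which is simple by Lemma \ref{lemma simplicity of convolution}) then shows that any submodule or quotient containing the $\mu=0$ piece must contain all the pieces with $\mu\in -\omega_0 X^{*}(T)_+$, forcing indecomposables to be simple. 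Finally, exactness of $H^{0}_{DS}$ on the Kazhdan--Lusztig category (Theorem \ref{theorem vanishing of cohomology on kazhdan lusztig category}) and the equivalence transported from $\mathcal{D}_{G}^{k}[1]$ yields the tensor-compatible equivalence with $\check G\mathrm{-Mod}$, with $H^{0}_{DS}((\gamma\cdot\cdot)[1])\mapsto V_\gamma$, hence $H^{2\rho(\gamma)}_{DS}(\gamma\cdot\cdo)\mapsto V_\gamma$ after transporting back. \textbf{The main obstacle} I anticipate is the identification of $\Zhu(\mathcal{D}_{G}^{k}[1])$ and the verification that $\jetinf G$-integrable modules over $\mathcal{W}_{G}^{k}[1]$ are precisely detected by their Zhu-module data lying in $G\mathrm{-Mod}$; this requires controlling the completion issues coming from the non-lower-bounded right factor and knowing enough about strong generators of $W^{k}(\Lg)$, which is why the detour through the convolution equivalence (rather than a direct attack on $\ewalg$) is essential and also why restricting to types $A$ and $D$, where Theorem \ref{conjecture existence and unicity of shifted cdo's} is available, is forced.
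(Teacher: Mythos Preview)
Your reduction to the $1$-shifted side via the convolution equivalence is exactly how the paper begins. But from there your proposal diverges from the paper and contains a genuine gap.

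You propose to analyze $\mathcal{W}_{G}^{k}[1]$ through its Zhu algebra, and you explicitly flag the identification of $\Zhu(\mathcal{D}_{G}^{k}[1])$ as ``the main obstacle''. That obstacle is real and you have not overcome it: the paragraph is a wish list (``I would compute'', ``I expect'') rather than an argument, and the semisimplicity sketch in your third paragraph does not actually force indecomposables to be simple. Proposition~\ref{proposition stability of highest weight starting from the vacuum} constrains which $V^{k^*}$-weights appear once the vacuum piece does; it does not by itself rule out extensions between the simple $\mathcal{W}_{G}^{k}[1]$-modules.

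The paper avoids the Zhu algebra entirely. Its decisive step is Theorem~\ref{theorem computation of shifted cdo's}: an isomorphism of vertex algebras
\[
\mathcal{W}_{G}^{k}[1]\ \simeq\ V^{k[1]-1}(\Lg)\otimes L_{1}(\Lg),
\]
which is the GKO coset realization (via \cite{CreutzigNakatsuka2023}), together with simplicity of $\mathcal{W}_{G}^{k}[1]$ transported from Lemma~\ref{lemma simplicity of the equivariant walgebra} through the equivalence. Once this is known, the problem becomes classical: $L_{1}(\Lg)$ is regular, so every module splits as $\bigoplus_{\lambda\in P^{1}_{+}} M_{\lambda}\otimes L_{1}(\lambda)$; the diagonal $\jetinf G$-integrability forces $\mu+\lambda\in Q$; and the minuscule-weight combinatorics (each coset of $P/Q$ has a unique representative in $P^{1}_{+}$) pins down a unique $\lambda(\mu)$ for each $\mu\in P_{+}$. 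Semisimplicity and the parametrization of simples by $P_{+}=X_{*}(\check{T})_{+}$ then follow from Theorem~\ref{theorem Kazhdan--Lusztig categories for irrational levels}, and the explicit matching $H^{*}_{DS}(\gamma\cdot\cdo)\mapsto V_{\gamma}$ uses the branching rules of \cite{ArakawaCreuztigLinshaw2019} and the Arakawa--Frenkel duality. The restriction to types $A$ and $D$ enters not only through Theorem~\ref{conjecture existence and unicity of shifted cdo's} (as you note) but also through the availability of the coset realization in \cite{CreutzigNakatsuka2023}.
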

	The rest of this section is devoted to the proof of this theorem. By assumption we have
	$
	\Lie(\check{G}) = \Lg
	$ and $\check{G}$ is simply connected so the simple objects of $\check{G}\mathrm{-Mod}$ are in bijection with $P_{+}$.

	We apply the strategy explained in §\ref{section the strategy}. The main ingredient is the following relation between the equivariant affine $\mathcal{W}$-algebra and the Goddard--Kent--Olive coset realization of $\mathcal{W}$-algebra:
	\begin{theoreme}
		\label{theorem computation of shifted cdo's}
		For $k \in \C \setminus \Q$ there is an isomorphism of vertex algebras 
		$$
		\mathcal{W}_{G}^{k}[1] \simeq V^{k[1] - 1}(\Lg) \otimes L_{1}(\Lg)
		$$
		where $L_{1}(\Lg)$ is the simple quotient of $V^{1}(\Lg)$.
	\end{theoreme}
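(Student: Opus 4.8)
The plan is to compute both sides of the asserted isomorphism as conformal extensions of $W^{k}(\Lg)\otimes V^{k[1]}(\Lg)$, check that the two decompositions agree, and then pass from a module isomorphism to a vertex algebra isomorphism by rigidity. For the left-hand side: since $G$ is simple adjoint of type $A$ or $D$ we have $X^{*}(T)=Q$, so Theorem \ref{conjecture existence and unicity of shifted cdo's} gives $\cD_{G}^{k}[1]=\bigoplus_{\lambda\in X^{*}(T)_{+}} V_{\lambda}^{k}\otimes V_{-\omega_{0}\lambda}^{k[1]}$ as a $V^{k}(\Lg)\otimes V^{k[1]}(\Lg)$-module, and this module lies in the Kazhdan--Lusztig category for the left $V^{k}(\Lg)$-action (it is an infinite sum of integrable Weyl modules and is positively graded). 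Applying $H^{*}_{DS}$ on the left factor, Theorem \ref{theorem vanishing of cohomology on kazhdan lusztig category} kills the higher cohomology, so $\mathcal{W}_{G}^{k}[1]=H^{0}_{DS}(\cD_{G}^{k}[1])$, and Proposition \ref{proposition computation of the Hamiltonian reduction of weyl modules} identifies each summand (as in Lemma \ref{lemma chiral peter weyl decomposition for equivariant walgebra}, \textit{mutatis mutandis}) to give
$$
\mathcal{W}_{G}^{k}[1]=\bigoplus_{\lambda\in X^{*}(T)_{+}} T^{k}_{\lambda,0}\otimes V^{k[1]}_{-\omega_{0}\lambda},
$$
a conformal extension of $W^{k}(\Lg)\otimes V^{k[1]}(\Lg)$ whose summands are simple (Theorem \ref{theorem Kazhdan--Lusztig categories for irrational levels}, Proposition \ref{proposition computation of the Hamiltonian reduction of weyl modules}), pairwise non-isomorphic, and of multiplicity one.

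For the right-hand side I would invoke the Goddard--Kent--Olive coset realization of the principal $\mathcal{W}$-algebra in simply-laced type (\cite{ArakawaCreuztigLinshaw2019}, \cite{CreutzigNakatsuka2023}, \cite{creutziglinshawtrialities}, \cite{creutziglinshawtrialitiesortho}). Since $\Lg$ is simply laced, $L_{1}(\Lg)$ is the lattice vertex algebra $V_{Q}$ of the root lattice $Q$, of rank $\mathrm{rk}(\Lg)$ and central charge $\dim(\Lg)/(\check{h}+1)=\mathrm{rk}(\Lg)$; hence the diagonal embedding $V^{k[1]}(\Lg)=V^{(k[1]-1)+1}(\Lg)\hookrightarrow V^{k[1]-1}(\Lg)\otimes V^{1}(\Lg)\twoheadrightarrow V^{k[1]-1}(\Lg)\otimes L_{1}(\Lg)$ is conformal, and a central-charge computation using $\frac{1}{k+\check{h}}+\frac{1}{k[1]+\check{h}}=1$ shows that its commutant has central charge exactly $c(W^{k}(\Lg))$. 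The cited theorems then yield $\mathrm{Com}\!\big(V^{k[1]}(\Lg),\,V^{k[1]-1}(\Lg)\otimes L_{1}(\Lg)\big)\simeq W^{k}(\Lg)$ together with a branching rule: because the $\Lg$-weights of both $V^{k[1]-1}(\Lg)$ and $V_{Q}$ lie in $Q$, only $V^{k[1]}(\Lg)$-types of highest weight in $Q\cap P_{+}=X^{*}(T)_{+}$ occur, each branching space is a simple highest-weight $W^{k}(\Lg)$-module, and matching highest (equivalently, conformal) weights identifies it with $T^{k}_{\lambda,0}$, so that, with the sign convention of Theorem \ref{theorem chiral Peter-Weyl},
$$
V^{k[1]-1}(\Lg)\otimes L_{1}(\Lg)=\bigoplus_{\lambda\in X^{*}(T)_{+}} T^{k}_{\lambda,0}\otimes V^{k[1]}_{-\omega_{0}\lambda}.
$$
Both hypotheses enter here: simply-laced-ness makes $L_{1}(\Lg)=V_{Q}$ a lattice vertex algebra of rank $\mathrm{rk}(\Lg)$ (so the commutant is a principal $\mathcal{W}$-algebra), and adjointness ($X^{*}(T)=Q$) makes the indexing monoid on the $\cD_{G}^{k}[1]$-side coincide with the one on the coset side.

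It then remains to conclude that two simple vertex algebras that are conformal extensions of $W^{k}(\Lg)\otimes V^{k[1]}(\Lg)$ sharing the same multiplicity-one decomposition into pairwise non-isomorphic simple modules must be isomorphic. Both are simple: $V^{k[1]-1}(\Lg)$ is simple at the irrational level $k[1]-1$ by Theorem \ref{theorem Kazhdan--Lusztig categories for irrational levels} and $L_{1}(\Lg)$ is simple by definition, while $\mathcal{W}_{G}^{k}[1]$ is simple by an associated-variety argument in the spirit of Lemma \ref{lemma simplicity of the equivariant walgebra}. Since $X^{*}(T)_{+}$ is a finitely generated monoid, each side is generated over $W^{k}(\Lg)\otimes V^{k[1]}(\Lg)$ by vectors in the finitely many summands attached to a fixed generating set, exactly as in the proof of Theorem \ref{theorem rigidity of chiral Peter-Weyl}; mapping those generators across and observing that the operator product expansions are forced to take values in the fixed $W^{k}(\Lg)\otimes V^{k[1]}(\Lg)$-module maps between the $T^{k}_{\lambda,0}\otimes V^{k[1]}_{-\omega_{0}\lambda}$ produces a nonzero vertex algebra homomorphism $V^{k[1]-1}(\Lg)\otimes L_{1}(\Lg)\to\mathcal{W}_{G}^{k}[1]$, injective by simplicity of the source and surjective because the decompositions match, hence an isomorphism.

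The main obstacle is this last rigidity step. Unlike Theorem \ref{theorem rigidity of chiral Peter-Weyl}, whose proof reduces the comparison to the classification of smooth affine spherical varieties, here the common subalgebra $W^{k}(\Lg)$ is not a ring of functions on a variety, so there is no geometric shortcut: one must either verify directly that the structure constants appearing in the operator product expansions can be simultaneously normalised (a vanishing of an obstruction of second-cohomology type) or transport the comparison through the convolution equivalences of §\ref{section the strategy}, reducing it to the already-established uniqueness of $\cD_{G}^{k}[1]$ as a conformal extension of $V^{k}(\Lg)\otimes V^{k[1]}(\Lg)$ (Proposition \ref{proposition unicity of shifted cdos}). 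A closely related point is to pin down the precise reindexing and normalisation in the branching rule of Step~2 so that the coset modules are identified with the $T^{k}_{\lambda,0}$ with the conventions of Theorem \ref{theorem chiral Peter-Weyl}; this is bookkeeping, but it is where the cited coset results must be applied with care.
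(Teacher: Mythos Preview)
Your approach contains the right ingredients---the decomposition of both sides as $W^{k}(\Lg)\otimes V^{k[1]}(\Lg)$-modules and the appeal to the GKO coset realization---but the paper's proof is much shorter because it outsources both of your open problems.

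For the simplicity of $\mathcal{W}_{G}^{k}[1]$, you invoke ``an associated-variety argument in the spirit of Lemma \ref{lemma simplicity of the equivariant walgebra}'', but that lemma rests on knowing that the associated scheme of $\mathcal{W}_{G}^{k}$ is the equivariant Slodowy slice; no analogous geometric description is available for the shifted object $\mathcal{W}_{G}^{k}[1]$, so this step is not justified. The paper instead obtains simplicity from Proposition \ref{proposition convolution by 1 and -1 are inverses}: convolution with $\cD_{G}^{k}[1]$ is an equivalence of categories, $\mathcal{W}_{G}^{k}$ is simple (Lemma \ref{lemma simplicity of the equivariant walgebra}), and equivalences preserve simple objects, so $\mathcal{W}_{G}^{k}[1]=\mathcal{W}_{G}^{k}\circ\cD_{G}^{k}[1]$ is simple. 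This is precisely your option (b), executed at the level of objects rather than used to transport a rigidity statement.

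For the rigidity step, the paper simply records that once $\mathcal{W}_{G}^{k}[1]$ is known to be simple, (the proof of) Theorem~1.1 of \cite{CreutzigNakatsuka2023} applies and yields the isomorphism. That reference already packages the coset decomposition together with the uniqueness argument you are trying to rebuild by hand; there is no need to prove a $W^{k}(\Lg)$-analogue of Theorem \ref{theorem rigidity of chiral Peter-Weyl}. Your diagnosis that this is the hard part is correct---the Zhu-algebra and spherical-variety input that drives Theorem \ref{theorem rigidity of chiral Peter-Weyl} has no evident substitute once the degree-zero piece is no longer a commutative ring---which is exactly why the paper does not attempt a self-contained argument here.
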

	\begin{proof}
		Because $\mathcal{W}_{G}^{k}$ is simple (Lemma \ref{lemma simplicity of the equivariant walgebra}), it follows from Proposition \ref{proposition convolution by 1 and -1 are inverses} that~$\mathcal{W}_{G}^{k}[1]$ is simple. Now we notice that (the proof of) Theorem 1.1 of \cite{CreutzigNakatsuka2023} (see §4 of \textit{loc.cit.}) applies and shows the desired result. 
	\end{proof}
	
	According to Proposition \ref{proposition convolution by 1 and -1 are inverses} the following functors are equivalences of categories
	\[
	\begin{tikzcd}[column sep=6em, row sep=4em]
		\mathcal{W}_{G}^{k}\mathrm{-Mod}^{\jetinf G}
		\arrow[r, bend left=30, "{-\!\circ \mathcal{D}_{G}^{k}[1]}"{above}]&
		\mathcal{W}_{G}^{k}[1]\mathrm{-Mod}^{\jetinf G}
		\arrow[l, bend left=30, "{-\!\circ \mathcal{D}_{G}^{k[1][0]}[-1]}"{below}].
	\end{tikzcd}
	\]		
	Under the isomorphism of Theorem \ref{theorem computation of shifted cdo's} one checks that 
	$$
	\mathcal{W}^{k}_{G}[1]\mathrm{-Mod}^{\jetinf G} \simeq V^{k[1]-1}(\Lg)\otimes L_{1}(\Lg)\mathrm{-Mod}^{\jetinf G}
	$$
	where the right-hand side consists of the $V^{k[1]-1}(\Lg) \otimes L_{1}(\Lg)$-modules that when seen as~$V^{k[1]}(\Lg)$-modules via the diagonal embedding 
	$$
	V^{k[1]}(\Lg) \lra V^{k[1]-1}(\Lg) \otimes L_{1}(\Lg)
	$$
	belong to $V^{k[1]}(\Lg)\mathrm{-Mod}^{\jetinf G}$. The representation theory of $L_{1}(\Lg)$ is well-understood (see \textit{e.g.}, §6.6 of \cite{LepowskyLi1994}), its simple modules are parametrized by the set of 
	$$
	P^{1}_{+} =\{ \lambda \in P_{+} \hspace{0.5mm}|\hspace{0.5mm} \lambda(\theta) \leqslant1 \}
	$$
	where $\theta$ is the longest root of $\Lg$. Given $\lambda \in P^{1}_{+}$ we denote the corresponding simple module by $L_{1}(\lambda)$, it is the unique simple quotient of the Weyl module $V^{1}_{\lambda}$.	
	
	Because $L_{1}(\Lg)$ is a regular vertex algebra (see \textit{e.g.}, Theorem 3.7 of \cite{DongLiMason1997}), any~$V^{k[1]-1}(\Lg) \otimes L_{1}(\Lg)$-module can be written 
	\begin{equation}
		\label{equation decomposition coset L1lambda}
		\bigoplus_{\lambda \in P^{1}_{+}} M_{\lambda} \otimes L_{1}(\lambda)
	\end{equation}
	with $M_{\lambda} \in V^{k[1]-1}(\Lg)\mathrm{-Mod}$. If we assume moreover that $M \in V^{k[1]-1}(\Lg)\otimes L_{1}(\Lg)\mathrm{-Mod}^{\jetinf G}$ then it is easy to see that for all $\lambda \in P^{1}_{+}$ we must have $M_{\lambda} \in V^{k[1]}(\Lg)\mathrm{-Mod}^{\jetinf \check{G}}$. 
	
	Now if we fix $\mu \in P_{+}$ and $\lambda \in P^{1}_{+}$ we can consider the simple $V^{k[1]-1}(\Lg) \otimes L_{1}(\Lg)$-module 
	$$
	V_{\mu}^{k[1]-1} \otimes L_{1}(\lambda).
	$$ 
	It belongs to $V^{k[1]-1}(\Lg)\otimes L_{1}(\Lg)\mathrm{-Mod}^{\jetinf G}$ if and only if 
	$$
	\mu + \lambda \in Q,
	$$
	\textit{i.e.},
	$$
	\lambda \in -\mu + Q.
	$$
	The set $P^{1}_{+}$ consists of $0$ and the minuscule weights (see Chapter VI, §1, Exercice~24 of \cite{Bourbaki2002}). One can show that minuscule weights are lifts of nonzero classes of the quotient $P/Q$ (Chapter VI, §2, Exercice 5 of \textit{loc.cit.}). This means that given $\mu \in P_{+}$ there exists a unique $\lambda = \lambda(\mu) \in P^{1}_{+}$ such that 
	$$
	\mu + \lambda(\mu) \in Q.
	$$
	This observation paired with equation \eqref{equation decomposition coset L1lambda} and Theorem \ref{theorem Kazhdan--Lusztig categories for irrational levels} shows that the category 
	$$
	V^{k[1]-1}(\Lg)\otimes L_{1}(\Lg)\mathrm{-Mod}^{\jetinf G}
	$$
	is semisimple with simple objects given by the 
	$$
	V_{\mu}^{k[1]-1} \otimes L_{1}(\lambda(\mu))
	$$
	with $\mu \in P_{+}$. Hence as abelian categories we indeed have 
	$$
	V^{k[1]-1}(\Lg)\otimes L_{1}(\Lg)\mathrm{-Mod}^{\jetinf G} \simeq \check{G}\mathrm{-Mod}.
	$$
	Now to see that the simple objects are mapped to one another under this correspondence, it suffices to combine the branching rules (Theorem 11.1 of \cite{ArakawaCreuztigLinshaw2019}) and Arakawa--Frenkel duality (Theorem 2.2 of \cite{ArakawaFrenkel2019}).
	\begin{remarque}
		\label{remarque right hypothesis implies left decomposition simple} 	
		The semisimplicity of the category $\mathcal{W}_{G}^{k}\mathrm{-Mod}^{\jetinf G}$ is very surprising to us. For instance, it does not seem to be known (beyond the $\mathfrak{sl}_{2}$ case) that the full subcategory of $W^{k}(\Lg)\mathrm{-Mod}$ generated by the $T^{k}_{\lambda,0}$ with $\lambda \in P_{+}$ (or the $T^{k}_{0, \check{\mu}}$ with $\check{\mu} \in \check{P}_{+}$) is semisimple. 
	\end{remarque}
	
	\begin{remarque}
		Remarkably, Theorem \ref{theorem computation of shifted cdo's} also shows that $\mathcal{W}_{G}^{k}[1]$ comes from a $1$-parameter family of vertex algebras (compare with Remark \ref{remark open problem deformable family}). The key fact we use from \cite{CreutzigNakatsuka2023} is that there exists exactly one structure of a simple vertex algebra on the $W^{k}(\Lg) \otimes V^{k[1]}(\Lg)$-module 
		$$
		\bigoplus_{\lambda \in X^{*}(T)_{+}} T^{k}_{\lambda,0}\otimes V^{k[1]}_{-\omega_{0}\lambda}
		$$
		that is a conformal extension of $W^{k}(\Lg)\otimes V^{k[1]}(\Lg)$ given by that of $V^{k[1]-1}(\Lg) \otimes L_{1}(\Lg)$. By the proof of Theorem \ref{proposition unicity of shifted cdos} we see that for all $n \in \Z$ there exists a unique structure of a simple vertex algebra on the $W^{k}(\Lg)\otimes V^{k[n]}(\Lg)$-module 
		$$
		\bigoplus_{\lambda \in X^{*}(T)_{+}} T^{k}_{\lambda,0}\otimes V^{k[n]}_{-\omega_{0}\lambda}
		$$
		that is a conformal extension of $W^{k}(\Lg)\otimes V^{k[n]}(\Lg)$ given by that of $\mathcal{W}_{G}^{k}[n]$. In particular, for $n=0$, it shows that the vertex algebra structure on $\mathcal{W}_{G}^{k}$ is determined by the fact that it is a simple conformal extension of $W^{k}(\Lg)\otimes V^{k^{*}}(\Lg)$ together with its Peter--Weyl decomposition (Lemma \ref{lemma chiral peter weyl decomposition for equivariant walgebra}). It seems reasonable to expect the same kind of uniqueness statement beyond the simply laced case and principal cases, as well as for the chiral universal centralizer (see §\ref{subsection beyond the principal case} and §\ref{subsection the chiral universal centralizer}).
	\end{remarque}

	\section{Further perspectives}  
	\label{section further perspectives}
	
	\subsection{Beyond the principal case}
	\label{subsection beyond the principal case}
	Assume that $G$ is a simple algebraic group and that $k \in \C \setminus \Q$. In this work we considered the quantum Hamiltonian reduction functor $H^{*}_{DS} = H^{*}_{DS,f_{prin}}$ associated with a principal nilpotent element $f_{prin}$. From there we defined the principal affine $\mathcal{W}$-algebra $W^{k}(\Lg) = W^{k}(\Lg,f_{prin})$ and the principal equivariant affine $\mathcal{W}$-algebra $\mathcal{W}_{G,f_{prin}}^{k}$ on~$G$.
	
	More generally, given any nilpotent element $f$ there exists a quantum Hamiltonian reduction functor $H^{*}_{DS,f}$ associated with $f$ by the work of \cite{kac-roan-wakimoto-quantum-reduction}. One may define in a similar fashion the affine $\mathcal{W}$-algebra associated with $(\Lg,f)$ at level $k$ to be  
	$$
	W^{k}(\Lg,f) := H^{*}_{DS,f}(V^{k}(\Lg))
	$$ 
	and the equivariant affine $\mathcal{W}$-algebra associated with $(G,f)$ at level $k$ by
	$$
	\mathcal{W}_{G,f}^{k} := H^{*}_{DS,f}(\cD_{G}^{k}).
	$$
	This vertex algebra comes equipped with two morphisms 
	$$
	W^{k}(\Lg,f) \lra \mathcal{W}_{G,f}^{k} \longleftarrow V^{k^{*}}(\Lg)
	$$
	so that we can define the category $\mathcal{W}_{G,f}^{k}\mathrm{-Mod}^{\jetinf G}$. It is natural to ask for a description of this category. Unlike in the principal case, there is to the best of the author's knowledge no clear quantum geometric Langlands interpretation of this category. An affine version of the Skryabin equivalence was studied in the principal case in \cite{Raskin2021} and this was a motivation to state Conjecture \ref{conjecture FLE}. Since the Skryabin equivalence holds for any nilpotent element $f$ (\cite{Skryabin2002}), it seems natural to expect an affine analogue involving $W^{k}(\Lg,f)$.
	
	Our construction still provides certain (possibly) interesting modules. Given a cocharacter $\gamma \in X_{*}(T)$ we can once again consider the family of $\mathcal{W}_{G,f}^{k}$-modules $H^{*}_{DS,f}(\gamma \cdot \mathcal{D}_{G}^{k})$. If the work of Arakawa and Frenkel \cite{ArakawaFrenkel2019} can be generalized beyond the principal case, one may carry over what we did in §\ref{section building modules on ewalg}. For instance if one can show that for $\lambda \in P_{+}$ the $W^{k}(\Lg,f)$-module 
	$$
	T_{\lambda,\gamma}^{k} := H^{*}_{DS,f}(\gamma \cdot V_{\lambda}^{k}) 
	$$ 
	is either zero or simple, then the proof of Theorem \ref{theorem simplicity of modules for equivariant w algebra} applies to show that the modules $H^{*}_{DS,f}(\gamma \cdot \mathcal{D}_{G}^{k})$ are either zero or simple. This would then allow for the formulation of a conjecture in the spirit of Conjecture \ref{conjecture FLE}. This might in turn be related to the work of Creutzig and Linshaw on the trialities of $\mathcal{W}$-algebras (\cite{creutziglinshawtrialities}, \cite{creutziglinshawtrialitiesortho}). We plan to explore these directions in the future in collaboration with Thomas Creutzig. 
	
	The question of generalizing Arakawa--Frenkel duality beyond the principal case seems particularly interesting to us. 
	
	\subsection{The chiral universal centralizer}
	\label{subsection the chiral universal centralizer}
	We keep the previously introduced notations and consider a pair $f,f'$ of nilpotent elements of $\Lg$. Recall that we have morphisms of vertex algebras
	\begin{align*}
		V^{k}(\Lg) \lra &\mathcal{D}_{G}^{k} \longleftarrow V^{k^{*}}(\Lg),\\
		W^{k}(\Lg,f) \lra &\mathcal{W}_{G,f}^{k} \longleftarrow V^{k^{*}}(\Lg).
	\end{align*}
	We may further apply, following §7 of \cite{Arakawa2018}, the functor $H^{*}_{DS,f'}$ with respect to the right embedding to obtain the chiral universal centralizer associated with $(G,f,f')$ at level $k$
	$$
	\mathcal{I}_{G,f,f'}^{k} := H^{*}_{DS,f'}(\mathcal{W}_{G,f}^{k}).
	$$
	This vertex algebra comes equipped with two morphisms 
	$$
	W^{k}(\Lg,f) \lra \mathcal{I}^{k}_{G,f,f'} \longleftarrow W^{k^{*}}(\Lg,f').
	$$
	
	Let us comment that our construction builds (possibly) interesting modules for~$\mathcal{I}^{k}_{G,f,f'}$. Recall from Proposition \ref{proposition computation of the spectral flow group for the cdo} that 
	$$
	\cSF(\cdo) \simeq X_{*}(T) \times \check{P}.
	$$
	We mostly exploited the subgroup $X_{*}(T) \times \{0\} \subset \cSF(\cD_{G}^{k})$, because these are the only parameters that produce objects in $\mathcal{W}_{G,f}^{k}\mathrm{-Mod}^{\jetinf G}$. Let $(\gamma,x) \in X_{*}(T) \times \check{P}$, we can consider the $\mathcal{I}_{G,f,f'}$-module
	$$
	H^{*}_{DS,f'}(H^{*}_{DS,f}((\gamma,x)\cdot \cdo)).
	$$
	
	For instance if $f=f'=f_{prin}$ are principal nilpotent elements, and $\mathcal{I}_{G}^{k} := \mathcal{I}^{k}_{G,f_{prin},f_{prin}}$ then the category 
	$
	\mathcal{I}_{G}^{k}\mathrm{-Mod}
	$
	should be related to $\kappa$-twisted $\mathcal{D}$-modules on the loop group satisfying Whittaker equivariance conditions with respect to both the left and right actions of $\mathcal{L}G$ on itself. We are unaware of any published references mentioning conjectures on this category or its relevance to the quantum geometric Langlands program.
	
	Note that when the level $\kappa$ is critical, \textit{i.e.}, $\kappa=\kappa^{*}$, and $f,f'$ are principal or zero, then $\mathcal{I}^{k}_{G,f,f'}$ has been introduced and studied by Arakawa from a vertex algebraic point of view in his work on the class $\mathcal{S}$ (\cite{Arakawa2018}). This level is very special as it allows for the definition of an inverse quantum Hamiltonian reduction procedure using the Feigin--Frenkel center (see §9 of \textit{loc.cit.}).

	For irrational values of $k$, in view of the chiral Peter--Weyl decomposition (Theorem~\ref{theorem chiral Peter-Weyl}), we have the equality as $V^{k}(\Lg) \otimes V^{k^{*}}(\Lg)$-modules:
	$$
	(\gamma,x) \cdot \cdo = \bigoplus_{\lambda \in X^{*}(T)_{+}} ((\gamma + x) \cdot V_{\lambda}^{k}) \otimes (\omega_{0}^{T}(x) \cdot V_{-\omega_{0}\lambda}^{k^{*}}).
	$$
	So that by taking the left and right quantum Hamiltonian reduction we obtain by Proposition \ref{proposition computation of the Hamiltonian reduction of weyl modules} the equalities as $W^{k}(\Lg) \otimes W^{k^{*}}(\Lg)$-modules:
	\begin{align*}
		H^{*}_{DS}(H^{*}_{DS}((\gamma,x)\cdot \mathcal{D}_{G}^{k})) &= \bigoplus_{\lambda \in X^{*}(T)_{+}} T_{\lambda, \gamma + x}^{k} \otimes T^{k^{*}}_{-\omega_{0}\lambda, \omega_{0}^{T}(x)} &&\textrm{ if $\gamma + x,\omega_{0}^{T}(x) \in \check{P}_{+}$,}\\
		H^{*}_{DS}(H^{*}_{DS}((\gamma,x)\cdot \mathcal{D}_{G}^{k})) &= 0 &&\textrm{ else.}
	\end{align*}
	We do not know how to prove that these modules are simple, but we conjecture that they are. 
	
	\begin{remarque}
		The free field realization of the chiral universal centralizer given in~\cite{On_The_Beem_Nair} might be useful in the study of its representation theory. We note that for $G = \mathrm{SL}_{2}$ and generic $\kappa$ the chiral universal centralizer $\mathcal{I}_{G}^{k}$ was introduced by Frenkel and Styrkas in~\cite{FrenkelStyrkas2006} and Frenkel and Zhu in~\cite{frenkelzhusl2centralizer}. McRae and Yang have studied the representation theory of $\mathcal{I}_{SL_{2}}^{k}$ for generic values of $k$ (see Remark 8.4 of \cite{McRaeYang}) and for $k=-1$ (see Theorem 8.2 of \textit{loc.cit.}). In particular, in their result for $k=-1$, the Langlands dual group $\mathrm{PSL}_{2}$ of $\mathrm{SL}_{2}$ appears. 
	\end{remarque}
	
	Of course, a similar construction holds for arbitrary pairs of nilpotent elements~$(f,f')$ and the study of the representation theory of $\mathcal{I}_{G,f,f'}^{k}$ appears to us as a natural and wide-open question.

	\subsection{Studying the vertex algebras of shifted chiral differential operators}
	
	The $\Z$-family of vertex algebras $\cD_{G}^{k}[n]$ introduced in §\ref{section shifted chiral differential operators on a simple group} played a central role in our work. Their existence is still conjectural in general and when they have been shown to exist, they are rather mysterious vertex algebras. For instance, their associated schemes do not seem to be known. We have answered these questions in the case of an algebraic torus (see Chapter 10 of \cite{my_phd}).
	
	It would be interesting to have more examples beyond the multiplicative group case, for instance in the case of $G=\mathrm{PSL}_{2}$. More generally this raises the general question of understanding the interaction between the convolution operation and the operation of passing to the associated scheme.

	\subsection{Beyond irrational levels}
	
	Another natural question is that of rational levels. One first step is to understand what happens to the Peter--Weyl decomposition of $\cdo$, and this has been studied for simple groups by Zhu (\cite{Zhu2011}). In the generic case, the theory of quantum groups was somehow hidden all along, it seems that for the rational case it should play a deep role (recall also Corollary \ref{corollaire of theorem fle tore} in which the combinatorics that appear seem to be related to the duality Lusztig introduces in §2.2.4 of \cite{Introduction_to_quantum_groups}). Some of the technology we used in our work, for instance the spectral flow group and the twisted quantum Hamiltonian reduction, makes sense for all levels. It is unclear to us what the shifted algebras of chiral differential operators should be in this case and what role they should play. Understanding the interplay between twisted principal Hamiltonian reduction and the Feigin--Frenkel duality beyond irrational levels also seems interesting but subtle (see~§4.4 of \cite{ArakawaFrenkel2019}). 
	
	The most general question our work raises is the following: given a reductive group~$G$, a level $\kappa$ and two nilpotent elements $f,f' \in \Lg$, how does one define and study a suitable subcategory of 
	$
	\mathcal{I}_{G,f,f'}^{\kappa}\mathrm{-Mod}
	$?
	\printbibliography	
\end{document}